\tikzset{cross/.style={cross out, draw=black, fill=none, minimum size=2*(#1-\pgflinewidth), inner sep=0pt, outer sep=0pt}, cross/.default={2pt}}
\DeclareFontFamily{U}{mathx}{}
\DeclareFontShape{U}{mathx}{m}{n}{ <-> mathx10 }{}
\DeclareSymbolFont{mathx}{U}{mathx}{m}{n}
\DeclareMathAccent{\widecheck}{0}{mathx}{"71}
\definecolor{mygray}{gray}{0.75} 
\definecolor{shadecolor}{rgb}{1,0.9,0.7}
\newtheorem{theorem}{Theorem}[section]
\newtheorem{lemma}[theorem]{Lemma}
\newtheorem{lemma-definition}[theorem]{Lemma-Definition}
\newtheorem{proposition}[theorem]{Proposition}
\newtheorem{corollary}[theorem]{Corollary}
\newtheorem{conjecture}[theorem]{Conjecture}
\newtheorem*{theoremx}{Theorem}
\newtheorem*{corollaryx}{Corollary}
\theoremstyle{definition}
\newtheorem{definition}[theorem]{Definition}
\newtheorem{convention}[theorem]{Convention}
\newtheorem{example}[theorem]{Example}
\newtheorem*{definitionx}{Definition}
\newtheorem{remark}[theorem]{Remark}
\numberwithin{equation}{section}
\numberwithin{figure}{section}
\newcommand {\lfor} {\llbracket}
\newcommand {\rfor} {\rrbracket}
\newcommand{\barM}{\overline{\mathcal{M}}}
\newcommand{\ZZ} {\mathbb{Z}}
\newcommand{\QQ} {\mathbb{Q}}
\newcommand{\CC} {\mathbb{C}}
\newcommand{\FF} {\mathbb{F}}
\newcommand{\PP} {\mathbb{P}}
\newcommand{\bC}{\mathbb{C}}
\newcommand{\bP}{\mathbb{P}}
\newcommand{\bZ}{\mathbb{Z}}
\newcommand{\cD}{\mathcal{D}}
\newcommand{\cI}{\mathcal{I}}
\newcommand{\cN}{\mathscr{N}}
\newcommand{\cO}{\mathcal{O}}
\newcommand{\cV}{\mathcal{V}}
\newcommand {\ua} {\underline{a}}
\newcommand {\im}  {\operatorname{im}}
\renewcommand {\ker } {\operatorname{ker}}
\newcommand {\liminv} {\varprojlim}
\newcommand {\NE}  {\operatorname{NE}}
\newcommand {\ord}  {\operatorname{ord}}
\newcommand {\qh} {q^{\frac{1}{2}}}
\newcommand {\qmh} {q^{-\frac{1}{2}}}
\newcommand{\q} {q}
\newcommand {\res}  {\operatorname{Res}}
\def\const{{\sf Const}}
\def\mydate{\ifcase\month \or January\or February\or March\or
April\or May\or June\or July\or August\or September\or October\or 
November\or December\fi \space\number\day,\space\number\year}
\let\oldcite\cite
\renewcommand{\cite}{\@ifnextchar[{\@newcite}{\oldcite}}
\def\@newcite[#1]#2{\oldcite{#2},\,#1}
\title{Enumerative Geometry of Quantum Periods}
\author{Tim Gr\"afnitz}
\author{Helge Ruddat}
\author{Eric Zaslow}
\author{Benjamin Zhou}
\begin{document}

\begin{abstract}
We interpret the $q$-refined theta function $\vartheta_1$ of a log Calabi-Yau surface $(\bP,E)$ as a natural $q$-refinement of the open mirror map, defined by quantum periods of mirror curves for outer Aganagic-Vafa branes on the local Calabi-Yau $K_\bP$. The series coefficients are
all-genus logarithmic
two-point invariants, directly
extending the genus-zero relation found in \cite{GRZ}.  
We show that the winding-$1$ open-closed correspondence of Chan and Lau-Leung-Wu  \cite{Cha,LLW} generalizes to higher genus in the toric situation by using the Topological Vertex.  We conjecture a more general, higher-winding open-closed relation involving LMOV invariants.
Yet we find an explicit discrepancy with the log invariants of $(\bP,E)$, expressible in terms of stationary Gromov-Witten invariants of an elliptic curve.
\end{abstract}

\maketitle

{\tiny
\setcounter{tocdepth}{1}
\tableofcontents
}

\section{Introduction}

In \cite{GRZ}, the first three authors found new enumerative interpretations involving
periods of the mirror curve of a toric
Calabi-Yau threefold, $Y$.\footnote{
Other enumerative interpretations had previously
been found \cite{CLL,CLT,CCLT,CLLT,LLW}.  See 
in particular Theorem 1.1 and Example 4.6 of \cite{LLW} and Example 6.5 (3) of \cite{CCLT}. The conjecture in \cite{GRZ} was partly proved in \cite{You}.}  
The Picard-Fuchs equations determine
the periods, and in particular the open mirror map,
and its coefficients have two equal
interpretations:
(1)  two-point relative invariants used to define the first
theta function, or proper
Landau-Ginzburg potential;
(2) open invariants of an Aganagic-Vafa brane
on $Y.$
In this paper, we extend (1) to higher
genus by relating the $q$-refined
quantum periods of the quantum mirror
curve.  We find a discrepancy in (2) at
higher genus, and give an explicit expression
for the correction by analyzing the degeneration
formula.
Here, by a $q$-refinement we mean the following.

\begin{definition}
\label{def:q-refine}
A \emph{q-refinement} of $f\in R\llbracket t\rrbracket$ is $g\in R[q^{\pm\frac{1}{2}}]\llbracket t\rrbracket$ such that $g|_{q=1}=f$.
\end{definition}

\begin{example}
\label{ex:introex}
For example, when $Y = K_{\bP^2},$ the periods of (classical) local mirror symmetry are determined by
an ODE whose logarithmic solution has holomorphic
part $$F(z) = \sum_{k>0} (-1)^k\frac{(3k)!}{k(k!)^3}z^k$$ --- see, e.g.,
\cite[Section~2.1]{GRZ}.
The quantum period (see Example \ref{ex:ptf})
is $$a(z,q) = -\left(\qh + \qmh\right)z - \left(q^2 + \frac{7}{2}q + 6 + \frac{7}{2}q^{-1} + q^{-2}\right)z^2 + \cdots,$$
which $q$-refines $-\frac{1}{3}F(-z) = -2z - 15z^2 - \cdots.$ 
The non-refined period $-\frac{1}{3}F(-z)$ leads to open Gromov-Witten predictions for the Aganagic-Vafa (AV) brane in framing zero, which for genus zero and winding one in low degrees gives
$2, 5, 32, \cdots$.  These same numbers were also found in 
\cite{GRZ} to  be two-pointed logarithmic
invariants in $\bP^2$ in genus zero.  The
same procedure with the quantum period,
gives a $q$-refinement of these numbers --- specifically
$$\left(\qh + \qmh\right), \left(q^2+q+1+q^{-1}+q^{-2}\right),$$
$$\left( q^{\frac{9}{2}} + 3q^{\frac{7}{2}} + 4q^{\frac{5}{2}}+ 4q^{\frac{3}{2}}+ 4q^{\frac{1}{2}}+4q^{-\frac{1}{2}}+ 4q^{-\frac{3}{2}}+ 4q^{-\frac{5}{2}}+ 3q^{-\frac{7}{2}}+ q^{-\frac{9}{2}}\right), \cdots$$ --- see Example \ref{ex:qM}.
However, the $q$-dependence of the open Gromov-Witten invariants
for the AV brane is computed via the topological
vertex \cite[Equation (9.10)]{AKMV} to be
$$2, 5, (9q + 14 + 9q^{-1}),\cdots,$$ quite different.
\end{example}

\vskip0.01in

The aim of this paper is first to prove
that the quantum periods computed via the mirror symmetry procedure determine the all-genus logarithmic
two-point invariants, then to explain
the discrepancy from the open Gromov-Witten invariants.

\vskip0.17in

We now describe our results.
Let $\bP$ be a smooth projective Fano surface\footnote{The combinatorial Laurent series methods used in Section~\ref{S:agrees} work in any dimension, but for the treatment of higher genus invariants with $\lambda$-insertions, we have to restrict to surfaces.} and let $E$ be a smooth anticanonical divisor, so $(\bP,E)$ is a log Calabi-Yau pair. 
Section~\ref{sec:notation} contains our full notation conventions and Section~\ref{S:period} relevant definitions. 

\subsection{Quantum periods}

\begin{definitionx}[Definition \ref{def:aper}]
Let $p(X,Y)$ be the \emph{quantum mirror curve} defined by $\bP$ and write $p=1-f$. We define the \emph{quantum A-period} by
\[ a(z,q) := \const_{X,Y}(\log p(X,Y)) = - \sum_{n>0} \frac{1}{n}\const_{X,Y}(f^n). \]
In \cite{ACDKV} the quantum A-period is defined as
\[ \ua(z,q) = \const_X\left(\frac{\Psi(qX)}{\Psi(X)}\right), \]
where the \emph{wavefunction} $\Psi(X)$ is a solution to the difference equation $p\cdot \Psi=0$ defined by the quantum mirror curve, see Section~\ref{sec:wfns-pers}.
\end{definitionx}

\begin{theoremx}[Theorem \ref{thm:per}]
The definitions give identical series, $a(z,q) = \ua(z,q)$.
\end{theoremx}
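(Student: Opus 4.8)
The plan is to prove $a(z,q)=\underline a(z,q)$ by expanding both sides as power series in $z$ inside the quantum torus $\mathbb{T}_q=\mathbb{C}[q^{\pm 1/2}]\langle X^{\pm 1},Y^{\pm 1}\rangle/(YX=qXY)$ and matching them term by term, via the standard dictionary between the operator form of the quantum mirror curve and its $q$-difference equation. First I would fix the realization of $\mathbb{T}_q$ on Laurent series in $X$ with coefficients in $\mathbb{C}[q^{\pm 1/2}][[z]]$: let $X$ act by multiplication and $Y$ by the $q$-shift $\phi(X)\mapsto\phi(qX)$, so that $p\cdot\Psi=0$ is exactly the difference equation cut out by $p$. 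Writing $p=1-f$ and normal-ordering $f=\sum_j f_j(X)\,Y^j$ with all $Y$'s on the right, the equation reads $\Psi(X)=\sum_j f_j(X)\,\Psi(q^jX)$, and it determines $\Psi$ in the relevant completion once its $z$-independent part — an explicit $q$-Pochhammer-type solution of the homogeneous shift equation determined by $f|_{z=0}$ — is fixed by the normalization of Section~\ref{sec:wfns-pers}.

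The engine of the comparison is the intertwining identity $\Psi^{-1}Y\Psi=R(X)\,Y$, where $R(X):=\Psi(qX)/\Psi(X)$: conjugating the whole curve gives $\Psi^{-1}p(X,Y)\Psi=p(X,R(X)Y)$, and since $p\cdot\Psi=0$ this operator annihilates the constant function $1$, which unwinds to the scalar functional equation $\sum_j f_j(X)\prod_{i=0}^{j-1}R(q^iX)=1$. Thus $R$ is the quantum-corrected momentum attached to the curve, and I can expand $\underline a=\const_X\log R$ (cf.\ \cite{ACDKV}) in powers of $z$ by solving this functional equation recursively in $z$, in the manner of a $q$-deformed Lagrange inversion. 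On the other side, $a=-\sum_{n>0}\frac1n\const_{X,Y}(f^n)$, and $\const_{X,Y}(f^n)$ is the sum over those monomials of $f^n$ whose $X$- and $Y$-exponents both vanish after moving all $Y$'s past all $X$'s, each weighted by the $q$-power produced by that reordering.

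The crux is to show these two $z$-expansions agree. At $q=1$ the torus is commutative, $\const_{X,Y}$ is the ordinary constant term, the quantum mirror curve degenerates to the classical one, and the identity is the familiar relation between the holomorphic $A$-period of the mirror curve and $\log$ of the curve equation — the genus-zero statement behind \cite{GRZ}; so the real content is the $q$-refinement. Concretely one must check that the $q$-weights coming from the nested shifts $R(q^iX)$ on the wavefunction side match, order by order, the normal-ordering $q$-weights on the operator side, uniformly in $n$ and in the $z$-degree. I expect this to follow from one $q$-deformed constant-term identity — morally, that $\sum_n\frac1n\const_{X,Y}(f^n)$ is unchanged when normal-ordered products of copies of $f$ are replaced by the corresponding nested $q$-shifts of $R$ — provable either by induction on the $z$-degree using the functional equation for $R$, or by a residue manipulation in $\mathbb{T}_q$. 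The main obstacle is precisely this $q$-bookkeeping: keeping the reordering corrections on the operator side in exact, $q$-weighted correspondence with the shifts $R(q^iX)$ as the number of factors grows.

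A less self-contained alternative, worth stating as a cross-check, is to verify that $a$ and $\underline a$ solve the same $q$-difference Picard--Fuchs equation with the same coefficient of $z$ (for $K_{\bP^2}$, namely $-(\qh+\qmh)$, and its analogue for general $\bP$), and then invoke uniqueness of the holomorphic solution vanishing at $z=0$. For $\underline a$ this is built into the construction of $\Psi$ in \cite{ACDKV}; for $a$ one would derive the $q$-difference equation directly from the operator $p$, where again the delicate point is how $\const_{X,Y}$ interacts with multiplication by $f$ in $\mathbb{T}_q$.
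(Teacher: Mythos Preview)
Your proposal is a plan, not a proof: you correctly isolate the crux --- matching the normal-ordering $q$-weights in $\const_{X,Y}(f^n)$ against the nested shifts $R(q^iX)$ --- but then defer it to an unspecified ``$q$-deformed constant-term identity'' to be proved ``either by induction on the $z$-degree \ldots\ or by a residue manipulation''. That identity \emph{is} the theorem; without it, nothing is established. The Picard--Fuchs alternative is likewise left as a suggestion. Also note that your functional equation $\sum_j f_j(X)\prod_{i=0}^{j-1}R(q^iX)=1$ collapses drastically under Convention~\ref{convention:fano}: there $f$ has $Y$-order exactly $1$, so $j\in\{0,1\}$ and the equation is just $f_0(X)+f_1(X)R(X)=1$, i.e.\ $R(X)=Y(X)$ --- which is Proposition~\ref{prop:Ypsi}, not a new tool.

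The paper's argument avoids your obstacle entirely and looks nothing like a term-by-term $q$-match. It first applies the grading operator $D$ to both sides, so that $-Da=\sum_n\const_{X,Y}(f^n)$ and one must show this equals $1+D\const_X(\log Y)$. After the homogenizing substitution $X=xt$, $Y=yt$, the left side becomes $\const_{x,y}\big((1-tf)^{-1}\big)$. Now the key move: since $\ord_Y(f)=1$ by Convention~\ref{convention:fano}, one can invert to get $y(x,f)$ (Lemma~\ref{lem:ord1}) and apply the one-variable constant-term substitution formula (Lemma~\ref{lem:subst}) in the $y$-variable to rewrite $\const_{x,y}\big((1-tf)^{-1}\big)=\const_{x,f}\big((1-tf)^{-1}\,f\partial_f y/y\big)$. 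On the curve $p=0$ one has $f=1/t$, which converts $\partial_f$ to $-t^2\partial_t$, and the $f$-expansion then telescopes to $-\const_x(t\,\partial_t\log y)$. A two-line unwinding gives $D\const_X\log Y=1-\const_{x,y}((1-tf)^{-1})$. The $q$-dependence rides along in the coefficient ring throughout; no explicit $q$-bookkeeping is ever performed. What you are missing is this change-of-variable/residue mechanism (Lemma~\ref{lem:subst}) together with the realization that Convention~\ref{convention:fano} makes $f$ linear in $Y$, so the inversion $y\mapsto f$ is available.
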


To compute $\ua(z,q)$ from \cite{ACDKV}, one can write $Y(X):=\Psi(qX)/\Psi(X)$ as an ansatz $Y(X)=\sum_{k\geq 0}Y_k(X)s^k$, with $s=\qh z$ and then solve order-by-order in $s$, see Section~\ref{sec:wfns-pers}. Theorem \ref{thm:per} states that one can work with the alternative definition $a(z,q)$, which can be computed combinatorially.

\begin{definitionx}[Definition \ref{defi:desc}]
Let $q=e^{i\hbar}$.
The \emph{$q$-refined regularized GW period} of $\bP$ is
\[ \widehat{G}(z,q) = 1 + \sum_{\beta\in \textup{NE}(\bP)}\sum_{g\geq 0} (\beta\cdot E)! D_{g,1}(\bP,\beta)\hbar^{2g}z^\beta, \]
with descendant Gromov-Witten invariants
\[ D_{g,1}(\bP,\beta) = \int_{\mathcal{M}_{g,1}(\bP,\beta)} (-1)^g\lambda_g\psi^{\beta\cdot{E}-2}\textup{ev}^\star[\textup{pt}], \]
where $\lambda_g=c_g(\mathbb{E})$ are the top Chern classes of the Hodge bundle.
\end{definitionx}

\begin{theoremx}[Theorem \ref{thm:descendant}]
Let $D$ be defined as in Section \ref{sec:notation}.
\[ -Da(z,q)=\widehat{G}(z,q). \]
\end{theoremx}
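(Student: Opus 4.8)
The plan is to prove the identity coefficient-by-coefficient: after setting $q=e^{i\hbar}$, both $-Da(z,q)$ and $\widehat{G}(z,q)$ lie in $\QQ[q^{\pm 1/2}]\llbracket z\rrbracket$, so it suffices to compare the coefficient of each monomial $z^\beta$, $\beta\in\NE(\bP)$. The $\beta=0$ coefficients agree by the definition of $D$, so the work is in the classes $\beta\neq 0$.

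For the left-hand side I would expand everything combinatorially from the mirror-curve side. Write the quantum mirror curve as $p=1-f$ with $f$ a Laurent polynomial in the $q$-commuting variables $X,Y$ whose monomials are $z^{\beta_i}$ times half-integer powers of $q$, read off from the Newton polygon defining the quantum mirror curve. Expanding $f^n$ by the noncommutative multinomial theorem and extracting the $X^0Y^0$-term turns $\const_{X,Y}(f^n)$ into a sum over words in the monomials of $f$ of total bidegree $(0,0)$ in the $X$- and $Y$-gradings, each weighted by the product of coefficients and a power of $q$ recording the Weyl reordering. For a fixed class $\beta$, this balancedness together with the $z$-grading pins $n$ down to finitely many values (for $E$ irreducible, to $n=\beta\cdot E$), so the coefficient of $z^\beta$ in $a$ is an explicit finite $q$-multinomial sum; applying $-D$ scales it by the relevant combinatorial factor. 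The result is a manifest $q$-refinement of the genus-zero lattice count of \cite{GRZ}, and at $q=1$ it reproduces $\widehat{G}(z,1)$ as in that reference.

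For the right-hand side the task is to rewrite the descendant integrals over $\M_{g,1}(\bP,\beta)$ in the same combinatorial terms. I would degenerate $\bP$ to the normal cone of $E$ and apply the log/relative-to-descendant comparison (in the spirit of van Garrel--Graber--Ruddat together with rubber calculus), in its $(-1)^g\lambda_g$-refined form, to identify $(\beta\cdot E)!\sum_{g}D_{g,1}(\bP,\beta)\hbar^{2g}$ with an all-genus two-point logarithmic invariant of $(\bP,E)$ carrying one maximal-tangency point and one point constraint: the factor $(\beta\cdot E)!$ is precisely the bookkeeping relating the $\psi^{\beta\cdot E-2}$-descendant to the rigid tangency condition, and the $\lambda_g$-insertion is what the degeneration obstruction bundle contributes. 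Then, by the $q$-refined tropical correspondence for log Calabi--Yau surfaces (the quantum tropical vertex / quantum scattering of Bousseau, with $q=e^{i\hbar}$ encoding the $\lambda_g$), this two-point invariant is computed by broken lines / tropical disks of class $\beta$ weighted by quantum-integer multiplicities --- the same objects, with the same weights, as the balanced words above, since the quantum-curve structure constants and reordering powers reproduce precisely the quantum tropical vertex multiplicities.

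Matching the two expansions is then term-by-term. The step I expect to be the main obstacle is reconciling the two a priori unrelated origins of the $q$-dependence: on the mirror-curve side $q$ is algebraic, entering through normal-ordering in the quantum torus, whereas on the Gromov--Witten side it is $e^{i\hbar}$, entering through the genus expansion with $\lambda_g$-insertions. Establishing their coincidence needs a careful $\lambda_g$-refined degeneration/log-descendant formula --- the rubber and obstruction contributions must integrate to the predicted $q^{\pm1/2}$-powers --- together with the identification of the quantum tropical vertex with the explicit reordering combinatorics of $p$; a secondary, bookkeeping difficulty is the bijection, for each fixed $\beta$, between monomial words in $\const_{X,Y}(f^n)$ and tropical curves of class $\beta$, and checking that $-D$ simultaneously produces the factor $(\beta\cdot E)!$ and the constant term $1$. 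As an alternative to the tropical route, one could instead show that $\widehat{G}$ and $-Da$ satisfy the same $\hbar$-deformed ordinary differential equation --- the $\lambda_g$-regularized quantum differential equation of $\bP$ versus the Nekrasov--Shatashvili-limit Picard--Fuchs equation of the quantum mirror curve --- with matching leading term, whose $q=1$ case is the classical relation between the quantum period of $\bP$ and the period of $K_\bP$.
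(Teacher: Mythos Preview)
Your combinatorial unpacking of $-Da(z,q)$ is fine and matches the paper: homogeneity in $z$ (every monomial of $f$ has $z$-degree $1$) means $D$ simply cancels the $1/n$ in $-a=\sum_{n>0}\tfrac1n\const_{X,Y}(f^n)$, giving $-Da=\sum_{n>0}\const_{X,Y}(f^n)$. The gap is on the Gromov--Witten side. You propose to degenerate $\bP$ to the normal cone of $E$ and invoke a ``log/relative-to-descendant comparison in the spirit of \cite{vGGR} together with rubber calculus'' to turn $(\beta\cdot E)!\,D_{g,1}(\bP,\beta)$ into a two-point log invariant with one maximal-tangency marking and one point constraint, and then feed that to Bousseau's quantum tropical vertex. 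But $D_{g,1}$ is an \emph{absolute} one-marked descendant carrying $\psi^{\beta\cdot E-2}\ev^\star[\mathrm{pt}]$ on a single interior point; \cite{vGGR} relates maximal-tangency log invariants to \emph{local} invariants of $K_\bP$, not to absolute descendants, and neither it nor normal-cone degeneration converts a $\psi$-power on an interior marking into a tangency condition along $E$---the degeneration formula produces a sum over all contact profiles, not a single maximal-tangency term. So the identification you need is not supplied by the tools you cite, and without it Bousseau's correspondence (which computes log invariants with tangency, not absolute descendants) has nothing to act on. Your alternative ODE route would likewise require an $\hbar$-deformed quantum differential equation for the $\lambda_g$-regularized period and a matching quantum Picard--Fuchs equation for $p$, neither of which is established here.

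The paper's route is direct and avoids log invariants on this side altogether. Since $\vartheta_1=f$ in the central chamber (Lemma~\ref{lem:thetaper}), the broken lines contributing to $\const(\vartheta_1^m)$ are straight, and an $m$-tuple of them balanced at the endpoint is literally a tropical curve through a prescribed point with an $m$-valent vertex there, counted with its $q$-refined multiplicity. The $q$-refined tropical correspondence for \emph{descendant} invariants of Kennedy-Hunt--Shafi--Urundolil~Kumaran \cite{KSU} (their Theorem~A with one marking and $\psi$-power $\beta\cdot E-2$) identifies this tropical count directly with $m!\sum_g D_{g,1}(\bP,\beta)\hbar^{2g}$. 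That is the bridge between the quantum-torus $q$ and the $\lambda_g$-genus expansion; the two-point log invariants $R_{g,(1,\beta\cdot E-1)}$ you are implicitly aiming at are the content of the separate Theorem~\ref{thm:agrees}, not of this one.
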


The theorem is a $q$-refinement of the mirror description in \cite{CCGGK}, Definition 4.9, which states that the regularized quantum period of $\bP$ equals the classical period of its mirror potential. Theorem \ref{thm:descendant} is proved using the tropical correspondence for refined descendant invariants of \cite{KSU}.

\subsection{Natural \texorpdfstring{$q$}{q}-refinements of mirror maps}

Having a quantum period allows us to use the relation between periods and mirror maps to define $q$-refinements.

\begin{definitionx}[Definition \ref{def:mirmap}]
Put $U = xe^{a(-z,q)},$ a relation between
two formal parameters
$x$ and $U$ via the quantum period.  
We define $Q^\beta = z^\beta e^{-(\beta\cdot E)a(-z,q)}$. By Proposition \ref{prop:autom} there exists an inverse expression $z^\beta(Q,q)$.
We define the series
\[ M(Q,q) := e^{-a(-z(Q,q),q)} = \frac{U}{x} = \left(\frac{z^\beta(Q,q)}{Q^\beta}\right)^{-1/(\beta\cdot E)} \]
where the last equality holds for every non-zero class $\beta\in\NE(\PP)$. 
\end{definitionx}

\begin{remark}
The definition for $M(Q,q)$ is a natural $q$-refinement of the open mirror map $M(Q)$ in the sense that it uses the quantum period of the mirror curve of outer Aganagic-Vafa branes, analogous to how $M(Q)$ is defined. 
It is not known to us whether $M(Q,q)$ is analytic in a suitable domain. However, it specializes under $q=1$ to the analytic function $M(Q)$.
\end{remark}

\begin{definitionx}[Section \ref{S:theta}]
Let $\vartheta_1(y,z,q)_\infty$ be the $q$-refined proper Landau-Ginzburg potential (or primitive theta function at infinity) of $(\bP,{E})$. This is defined as the series enumerating $q$-refined broken lines
in a scattering diagram defined by a toric degeneration of $(\bP,{E})$ in a chamber asymptotically close to $E$. By \cite{GRZ}, Theorems 4.14, 4.8 and 5.4, we can equivalently define $\vartheta_1(y,z,q)_\infty$ as a generating function of $2$-marked logarithmic Gromov-Witten invariants, with $q=e^{i\hbar}$,
\[ \vartheta_1(y,z,q)_\infty = y + \sum_{\beta\in \textup{NE}(\bP)} \sum_{g\geq 0} \frac{1}{\beta\cdot{E}-1}R_{g,(1,\beta\cdot{E}-1)}(\bP(\log E),\beta)\hbar^{2g}z^\beta y^{-(\beta\cdot{E}-1)}. \]
\end{definitionx}

\begin{theoremx}[Theorem \ref{thm:agrees}]
Under the change of variables $Q^\beta=z^\beta\cdot (-1/y)^{\beta\cdot E}$ we have
\[ \vartheta_1(y,z,q)_\infty = yM(Q,q). \]
\end{theoremx}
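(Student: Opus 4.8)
The plan is to compare the two generating functions term by term after translating both sides into the same combinatorial language of broken lines in the scattering diagram. On the left, $\vartheta_1(y,z,q)_\infty$ is by definition the broken-line sum in the chamber at infinity; on the right, $yM(Q,q) = y\,e^{-a(-z(Q,q),q)} = U$, the ``upper'' formal coordinate attached to the quantum period. First I would reduce the desired identity to a statement purely about the $q$-refined open mirror map: using Definition \ref{def:mirmap}, $M(Q,q) = U/x$ where $U = x\,e^{a(-z,q)}$, so the claim $\vartheta_1(y,z,q)_\infty = yM(Q,q)$ becomes, after the substitution $Q^\beta = z^\beta(-1/y)^{\beta\cdot E}$, an identity expressing the theta function's coefficients through the coefficients of $a(z,q)$. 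The structural fact that makes this plausible is that both the broken-line expansion of $\vartheta_1$ and the exponential $e^{a(-z,q)}$ are governed by the same wall-crossing combinatorics of the canonical scattering diagram of $(\bP,E)$.

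The key steps, in order: (1) Invoke the genus-zero version of this statement from \cite{GRZ} — there the identity $\vartheta_1(y,z)_\infty = yM(Q)$ at $q=1$ is established, so it suffices to check that the $q$-refinement on both sides is ``the same'' refinement. (2) Identify the broken-line count defining $\vartheta_1(y,z,q)_\infty$ with a sum over scattering functions whose wall-crossing automorphisms are exactly the $q$-refined ones appearing in the consistency of the quantum scattering diagram; this is where Theorem \ref{thm:descendant} and the identification of $a(z,q)$ with the $q$-refined GW period $\widehat G(z,q)$ enters — it lets us rewrite $a(-z,q)$ as a sum of two-point log invariants $R_{g,(1,\beta\cdot E-1)}$ weighted by $\hbar^{2g}$. (3) Match coefficients: expand $U = x\,e^{a(-z,q)}$ as a power series in $z$, read off the coefficient of $z^\beta$, and compare with the coefficient $\frac{1}{\beta\cdot E-1}R_{g,(1,\beta\cdot E-1)}(\bP(\log E),\beta)$ appearing in $\vartheta_1$; the change of variables $Q^\beta = z^\beta(-1/y)^{\beta\cdot E}$ and the defining relation $Q^\beta = z^\beta e^{-(\beta\cdot E)a(-z,q)}$ converts the $y$-powers correctly, since $(-1/y)^{\beta\cdot E}$ matching $e^{-(\beta\cdot E)a(-z,q)}$ is precisely the statement $-1/y \leftrightarrow M(Q,q)^{-1}$, i.e. $y = M(Q,q)\cdot(\text{something})$, consistent with the last displayed equality in Definition \ref{def:mirmap}. (4) Check the leading term $y$ on both sides, and invoke Proposition \ref{prop:autom} to guarantee the change of variables $z^\beta(Q,q)$ is well-defined so the two formal series live in the same ring.

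The main obstacle I expect is step (2): making precise that the $q$-refined broken-line count of \cite{GRZ} and the $q$-refinement coming from the quantum mirror curve / quantum period of \cite{ACDKV} produce the same $\hbar$-series, rather than two a priori different refinements that merely agree at $\hbar = 0$. This is essentially a compatibility between the Topological-Vertex / wavefunction definition of the quantum period and the refined log Gromov-Witten theory with $\lambda_g$-insertions; the bridge is Theorem \ref{thm:descendant} together with the tropical correspondence of \cite{KSU}, so the real work is to verify that the tropical/broken-line models underlying $\widehat G(z,q)$ on one side and $\vartheta_1(y,z,q)_\infty$ on the other are literally the same weighted count of tropical curves, differing only by the bookkeeping of the marked point versus the asymptotic direction. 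Once that combinatorial identification is in place, the remaining manipulations — inverting the mirror map, tracking powers of $y$, and specializing — are formal and follow the genus-zero template of \cite{GRZ} verbatim.
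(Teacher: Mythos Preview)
Your proposal has a genuine gap: it never identifies the mechanism that actually forces the two series to agree. Steps (1) and (2) amount to ``both sides specialize to the known genus-zero identity, and both are $q$-refinements built from the same scattering diagram, so they should match,'' but two $q$-refinements of the same classical series need not be equal. You sense this yourself in the final paragraph, but you don't offer a way to close it. There is also a confusion in step (2): Theorem~\ref{thm:descendant} relates $a(z,q)$ to \emph{descendant} invariants $D_{g,1}(\bP,\beta)$, not to the two-point log invariants $R_{g,(1,\beta\cdot E-1)}$. The identification of $\vartheta_1$-coefficients with the $R$'s is Proposition~\ref{prop:infty}, and relating the descendant side to the two-point side is precisely the content of the theorem you are trying to prove, so invoking it here is circular.

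The paper's proof proceeds quite differently and is essentially combinatorial. The key input is that the structure constant $[y^0](\vartheta_1)^k$ is chamber-independent; computed in the central chamber it equals $\const_{X,Y}(f^k)$, so $\sum_{k>0}\frac{(-1)^k}{k}[y^0]\vartheta_1^k = -a(-z,q)$ by Definition~\ref{def:aper} and Proposition~\ref{prop:thetaper}. The real work is then a purely algebraic statement (Proposition~\ref{prop:agrees}) valid for \emph{any} Laurent series $\theta(y)=y+\sum_{k\ge 0}a_ky^{-k}$ with $a_k$ in degree $k+1$: the element $\tilde y=-\exp\big(-\sum_{k>0}\frac{(-1)^k}{k}[y^0]\theta^k\big)$ satisfies $\theta(\tilde y)=-1$, and moreover $\tilde y$ \emph{uniquely determines} $\theta$. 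This is proved via Lagrange inversion together with a Bell-polynomial exponentiation identity (Proposition~\ref{lem:3}). Applied to $\theta=\vartheta_1(\cdot,z,q)_\infty$, one gets $\tilde y=-e^{a(-z,q)}=-1/M(Q(z,q),q)$, so $\vartheta_1(\tilde y)=-1=\tilde y\, M$; then the uniqueness clause promotes this single evaluation to the full identity in $y$. Your step (3), ``match coefficients,'' is exactly where this Lagrange-inversion/Bell-polynomial lemma is needed and cannot be replaced by a soft appeal to the genus-zero case.
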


\begin{corollaryx}[Corollary \ref{cor:agrees}]
Let $q=e^{i\hbar}$. Then
\[ M(Q,q) = 1+\sum_{\beta\in \textup{NE}(\bP)}\sum_{g\geq 0} \frac{(-1)^{\beta\cdot E}}{\beta\cdot{E}-1}R_{g,(1,\beta\cdot{E}-1)}(\mathbb{P}(\log E),\beta)\hbar^{2g}Q^\beta. \]
\end{corollaryx}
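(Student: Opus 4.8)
The plan is to derive the corollary directly from Theorem \ref{thm:agrees} by substituting the logarithmic Gromov--Witten description of $\vartheta_1(y,z,q)_\infty$ recalled in Section \ref{S:theta} and then rewriting everything in the variable $Q$ via the change of variables $Q^\beta = z^\beta(-1/y)^{\beta\cdot E}$. No new geometric input is needed; the content is entirely in Theorem \ref{thm:agrees}, and what remains is a bookkeeping of signs.

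Concretely, I would first divide the identity $\vartheta_1(y,z,q)_\infty = yM(Q,q)$ of Theorem \ref{thm:agrees} by $y$, obtaining $M(Q,q) = y^{-1}\vartheta_1(y,z,q)_\infty$. Plugging in the definition
\[ \vartheta_1(y,z,q)_\infty = y + \sum_{\beta\in\NE(\bP)}\sum_{g\geq 0}\frac{1}{\beta\cdot E-1}R_{g,(1,\beta\cdot E-1)}(\bP(\log E),\beta)\hbar^{2g}z^\beta y^{-(\beta\cdot E-1)}, \]
the constant term $y$ yields the leading $1$, and the degree-$\beta$, genus-$g$ term becomes $\frac{1}{\beta\cdot E-1}R_{g,(1,\beta\cdot E-1)}(\bP(\log E),\beta)\hbar^{2g}z^\beta y^{-(\beta\cdot E)}$ after division by $y$. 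It remains to express the monomial $z^\beta y^{-(\beta\cdot E)}$ in terms of $Q^\beta$: from $Q^\beta = z^\beta(-1/y)^{\beta\cdot E} = (-1)^{\beta\cdot E}z^\beta y^{-(\beta\cdot E)}$ one gets $z^\beta y^{-(\beta\cdot E)} = (-1)^{\beta\cdot E}Q^\beta$, using $(-1)^{-(\beta\cdot E)} = (-1)^{\beta\cdot E}$. Substituting this produces exactly the asserted formula for $M(Q,q)$.

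The steps are routine, so the only genuine care needed --- the closest thing to an obstacle --- is formal. One must know that $M(Q,q)$ is a well-defined element of the relevant power series ring in the $Q^\beta$, which is what Proposition \ref{prop:autom} and Definition \ref{def:mirmap} provide via invertibility of $z^\beta \mapsto Q^\beta$; that the substitution $Q^\beta = z^\beta(-1/y)^{\beta\cdot E}$ is used simultaneously and consistently for all $\beta\in\NE(\bP)$, as in the statement of Theorem \ref{thm:agrees}; and that reindexing the double sum over $(\beta,g)$ is legitimate, which holds because for each fixed $\beta$ the dimension constraint built into $R_{g,(1,\beta\cdot E-1)}$ leaves only finitely many contributing $g$. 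As a consistency check one can set $q=1$, i.e. $\hbar=0$: only the $g=0$ terms survive and one recovers the genus-zero open mirror map of \cite{GRZ}, which also confirms that the signs are correctly tracked.
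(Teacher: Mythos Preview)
Your proposal is correct and follows exactly the paper's approach: the paper's proof is the one-liner ``The statement follows from Proposition~\ref{prop:infty} and Theorem~\ref{thm:agrees},'' and you have spelled out precisely that substitution and sign bookkeeping. One small inaccuracy: for fixed $\beta$ the sum over $g$ need not be finite; rather, by Proposition~\ref{prop:infty} it packages into a Laurent polynomial in $q^{\pm 1/2}$, which is what makes the reindexing legitimate.
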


\begin{example}
\label{ex:gwex}
For $\bP^2$, the quantum period is (see Example \ref{ex:ptf}),
\[ a(z,q) = -\left(\qh + \qmh\right)z - \left(q^2 + \frac{7}{2}q + 6 + \frac{7}{2}q^{-1} + q^{-2}\right)z^2 + \cdots. \]
The natural $q$-refinement of the closed mirror map is
\[ Q = ze^{-3a(-z,q)} = z - \left(3\qh+3\qmh\right)z^2 + \left(3q^2+15q+27+15q^{-1}+3q^{-2}\right)z^3 + \cdots, \]
with inverse
\[ z(Q,q) = Q + \left(3\qh+3\qmh\right)Q^2 + \left(-3q^2+3q+9+3q^{-1}-3q^{-2}\right)Q^3 + \cdots. \]
The definition gives
$$
\begin{array}{l}
M(Q,q) = \left(\frac{z(Q,q)}{Q}\right)^{-1/3} = 1 - \left(\qh + \qmh\right)Q + \left(q^2+q+1+q^{-1}+q^{-2}\right)Q^2 \\[5pt]
\ \ - \left( q^{\frac{9}{2}} + 3q^{\frac{7}{2}} + 4q^{\frac{5}{2}}+ 4q^{\frac{3}{2}}+ 4q^{\frac{1}{2}}+ 4q^{-\frac{1}{2}}+ 4q^{-\frac{3}{2}}+ 4q^{-\frac{5}{2}}+ 3q^{-\frac{7}{2}}+ q^{-\frac{9}{2}}\right)Q^3 + \cdots.
\end{array}
$$
\end{example}
Theorem \ref{thm:agrees} and Corollary \ref{cor:agrees} extend the results of \cite{GRZ} to the $q$-refined setting. We prove Theorem \ref{thm:agrees} using combinatorial results that are proved in Section \ref{sec:combinatorics}. These include a Lagrange inversion formula for formal Laurent series and an exponentiation formula for classical periods of these. The latter follows from a relation for partial ordinary Bell polynomials proved in \cite{BGW}. Then Corollary \ref{cor:agrees} follows from the tropical correspondence results in \cite{GRZ}. The combinatorial arguments of Section \ref{S:agrees} work in any dimension. 
For the $q$-refined tropical correspondence of Section \ref{S:3} we need surfaces.

\subsection{Open-log correspondence}
In Section \ref{S:openlog}, we prove an all-genus, open-log correspondence between two-pointed log invariants $R_{g, (1, \beta\cdot E-1)}(\bP(\log E), \beta)$ and certain open invariants of $K_{\bP}.$
\subsubsection{Log-local correspondence and correction terms}
\label{S:14}
Let $\pi:\widehat{\bP} \rightarrow \bP$ be the blow up at a point with exceptional curve $C$, and let $n_g(K_{\widehat{\bP}}, \pi^*\beta - C)$ be the genus-$g$ Gopakumar-Vafa invariant of $K_{\widehat{\bP}}$ in curve class $\pi^*\beta - C$ defined by multiple cover formulas \cite{GV1} \cite{GV2}. In Section \ref{sec:log_GV}, we first show the following theorem relating $R_{g, (1, \beta\cdot E-1)}(\bP(\log E), \beta)$ and $n_g(K_{\widehat{\bP}}, \pi^*\beta - C)$.

\begin{theoremx}[Theorem \ref{thm:tropical_local}]

Let $(\mathbb{P}, E)$ be a smooth log Calabi-Yau pair. Then,

\begin{multline*}
    \sum_{\substack{g \geq 0, \\ \beta \in \textup{NE}(\bP)}}\frac{1}{(\beta\cdot E-1)}R_{g, (1, \beta\cdot E-1)}(\bP(\log E), \beta) \hbar^{2g}Q^{\beta} =\\
    \sum_{\substack{g \geq 0, \\ \beta \in \textup{NE}(\bP)}}\left[(-1)^{\beta\cdot E}n_g(K_{\widehat{\bP}}, \pi^*\beta-C)
    \left(2\sin \frac{\hbar}{2}\right)^{2g-2}Q^{\beta}\right] - \Delta  
\end{multline*}
where $\Delta$, defined in Equation \ref{eq:delta_ol}, is a discrepancy expressed by the stationary Gromov-Witten theory of $E$ and two-pointed log invariants of $\bP(\log E)$.
\end{theoremx}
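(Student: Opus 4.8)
My plan is to establish a two-pointed higher-genus log--local correspondence and to pin down its error term at once, by degenerating the geometry along the boundary elliptic curve $E$ and reading off the log degeneration formula: the terms by which the log count overshoots the Gopakumar--Vafa count will be exactly those supported on components that fall into a neighbourhood of $E$, and these are what produce the stationary Gromov--Witten theory of $E$.

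I would start with two reductions. Keeping $q = e^{i\hbar}$ so that $2\sin\tfrac\hbar2 = -i(\qh - \qmh)$ and the right-hand side is a genuine $\hbar$-series, I first trade the point constraint defining $R_{g,(1,\beta\cdot E-1)}(\bP(\log E),\beta)$ for the blow-up: because the constrained point is disjoint from $E$, the blow-up formula for log invariants rewrites these as point-free log invariants of $\widehat\bP$ in the class $\pi^*\beta - C$, relative to a smooth anticanonical divisor of $\widehat\bP$. On the other side I apply the standard Gopakumar--Vafa repackaging of $\sum_g n_g(K_{\widehat\bP},\pi^*\beta - C)(2\sin\tfrac\hbar2)^{2g-2}$, so that both sides become $\hbar$-series built from genus-$g$ invariants carrying a $(-1)^g\lambda_g$ twist.

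The heart of the argument is a degeneration to the normal cone of $E$, with central fibre $\widehat\bP \cup_E Y$, where $Y = \mathbb{P}(\mathcal O_E \oplus N_{E/\bP}) \to E$ is a $\bP^1$-bundle over the boundary elliptic curve and $C$ remains in the $\widehat\bP$-component. The log degeneration formula expands the left-hand invariants over bipartite configurations --- a $\widehat\bP$-vertex, a $Y$-vertex relative to its two sections, edges recording the contact orders along $E$, and a splitting $\lambda_g = \sum\lambda_{g_1}\lambda_{g_2}$ of the Hodge class over the two sides. I would single out the configurations in which the $Y$-vertex is a disjoint union of multiple covers of a fibre $\bP^1$, so that $g_2 = 0$ and there is no horizontal class: the rubber integrals over a genus-zero fibre are standard and generate precisely the $2\sin\tfrac\hbar2$ kernels and the multiple-cover definition of the $n_g$, while the sign $(-1)^{\beta\cdot E}$ is read off from the contact orders and the comparison of $\lambda_g$-twisted log and local invariants (this is the two-pointed higher-genus analogue of van Garrel--Graber--Ruddat and Bousseau--Brini--van Garrel, here obtained as the leading part of the degeneration); summing these reproduces the first term on the right. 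All remaining configurations --- those with a $Y$-component of positive genus or positive degree over $E$ --- constitute $\Delta$: by rubber calculus on the $\bP^1$-bundle $Y$ one integrates out the scaling and pushes to $E$, reducing the relative invariants to absolute Gromov--Witten invariants of $E$ decorated by $\psi$-classes at the relative markings and by the $\lambda_{g_2}$ twist, i.e.\ to stationary Gromov--Witten invariants of the elliptic curve, convolved with the residual two-pointed log invariants $R_{g',(1,\,\cdot)}(\bP(\log E),\cdot)$ coming from the other component. This produces the closed expression for $\Delta$ in Equation \ref{eq:delta_ol}. As a check, in genus zero one has $g_2 = 0$ forced, and since the positive-degree genus-zero invariants of an elliptic curve all vanish, $\Delta$ contributes nothing, recovering the clean correspondence of \cite{GRZ}.

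The principal difficulty is the joint control of the Hodge class and the rubber integrals over $Y$. One needs the vanishing of products $\lambda_{g_1}\lambda_{g_1}$, together with the $\psi$--$\lambda$ relations on the $Y$-side moduli, to force the Hodge class to localize on a single component of the central fibre, so that no cross term with both $g_1 > 0$ and $g_2 > 0$ survives; and one needs to evaluate the relative invariants of $Y = \mathbb{P}(\mathcal O_E \oplus N_{E/\bP})$ precisely enough to exhibit simultaneously the $2\sin\tfrac\hbar2$ normalization matching the Gopakumar--Vafa side and the quasi-modular stationary descendants of $E$ that feed $\Delta$. Propagating the sign $(-1)^{\beta\cdot E}$ and reconciling the normalizations $\hbar^{2g}$ and $(2\sin\tfrac\hbar2)^{2g-2}$ through the degeneration formula is routine but lengthy.
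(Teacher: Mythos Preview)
Your degeneration-to-the-normal-cone strategy is the geometric idea behind the log--local principle, but the paper's proof is far more economical: it simply invokes two cited results and reads off $\Delta$ as the leftover. Concretely, the paper (i) applies \cite{GRZ}, Corollary~6.6, to rewrite $\frac{1}{\beta\cdot E-1}R_{g,(1,\beta\cdot E-1)}(\bP(\log E),\beta)$ as the maximal-tangency invariant $R_g(\widehat\bP(\log\,\pi^*E-C),\pi^*\beta-C)$ \emph{plus} a correction $\sum_{i<g}R_{i,(1,\beta\cdot E-1)}(\bP(\log E),\beta)\,N(g-i,1)$ coming from genus bubbling onto the exceptional curve; (ii) applies the higher-genus log--local principle of \cite{BFGW}, Propositions~3.1 and~3.4, to convert $R_g(\widehat\bP(\log\,\pi^*E-C),\pi^*\beta-C)$ into $(-1)^{\beta\cdot E}N_g(K_{\widehat\bP},\pi^*\beta-C)$ \emph{plus} a correction expressed through stationary invariants $N_{h,(\mathbf a,1^m)}(E,d_E)$ of $E$ convolved with maximal-tangency invariants of $\widehat\bP$ in smaller classes; (iii) applies the Gopakumar--Vafa formula in the primitive class $\pi^*\beta-C$ to pass from $N_g$ to $n_g$. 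The discrepancy $\Delta$ is then \emph{defined} to be the sum of the two correction blocks, so the theorem is a bookkeeping identity once \cite{GRZ} and \cite{BFGW} are granted.

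Your sketch has two structural gaps relative to the explicit $\Delta$ in Equation~\eqref{eq:delta_ol}. First, the blow-up step is not clean in higher genus: the corrections $R_{i,(1,\beta\cdot E-1)}(\bP(\log E),\beta)\cdot N(g-i,1)$ from \cite{GRZ}, Corollary~6.6, form the second summand of $\Delta$ and do not come out of your normal-cone degeneration of $\widehat\bP$; you have assumed the passage to $\widehat\bP$ is exact. Second, in the log--local part, the non-fibre contributions on the $Y$-side are convolved with \emph{maximal-tangency} invariants $R_{g_j,(\beta_j\cdot E)}(\widehat\bP(\log\,\pi^*E-C),\beta_j)$ of the blow-up, not with two-pointed invariants of $\bP$ as you wrote; and there is no vanishing that forces the Hodge class onto a single component --- the \cite{BFGW} formula genuinely sums over all splittings $g=h+g_1+\cdots+g_n$ and all decompositions $\pi^*\beta-C=d_E[E]+\beta_1+\cdots+\beta_n$, with the $E$-piece carrying its own genus $h$. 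Your route could in principle be pushed through, but it would amount to re-proving \cite{BFGW} and the higher-genus blow-up formula; the paper simply cites them.
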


We prove Theorem \ref{thm:tropical_local} by using higher genus analogues of arguments in \cite{GRZ}, Theorem 1.1. The discrepancy term $\Delta$ comes from application of the $g\ge 0$ log-local principle of \cite{BFGW} and a blow up formula for log invariants in \cite{GRZ}, Corollary 6.6 and is expressed by the stationary Gromov-Witten theory of $E$ and, after fixing a genus-$g$, the two-pointed log invariants $R_{h, (1, \beta\cdot E-1)}(\bP(\log E), \beta)$ for $h < g.$ 

\subsubsection{Open-closed correspondence}

\label{sec:intro_winding}

In Section \ref{sec:winding-w}, we additionally assume that $\bP$ is toric and $\pi: \widehat{\bP}\to \bP$ is a toric blow up, so that $K_{\bP}, K_{\widehat{\bP}}$ are toric Calabi-Yau 3-folds. Let $L \subset K_{\bP}$ be an outer Aganagic-Vafa brane. Denote $N^{\mathrm{LMOV}}_{g, \nu}(K_{\bP}/L, \beta)$ to be the genus-$g$, LMOV invariant of $L$ in representation $\nu$, and $n_g^{open}(K_{\bP}/L, \beta+w\beta_0, \vec{k})$ be the genus-$g$, winding-$w$, open-BPS invariant of $L$ in winding profile $\vec{k}$, curve class $\beta + w\beta_0 \in H_2(K_{\bP}, L)$ defined by multiple cover formulas \cite{MV} (see beginning of Section \ref{S:openlog} and Section \ref{sec:winding-w} for more details of these two invariants). The LMOV invariants $N^{\mathrm{LMOV}}_{g, \nu}$ are related to the open-BPS invariants $n^{open}_g$ by a linear transformation (Equation \ref{eq:change_of_basis}). In Theorem \ref{thm:winding}, for fixed $w \in \mathbb{Z}_{>0}$, we prove an equality relating LMOV invariants of $L \subset K_{\bP}$ in representations of length $w$ to the closed partition function of $K_{\widehat{\bP}}$ in curve classes $\pi^*\beta - wC$. Specializing Theorem \ref{thm:winding} to $w = 1, 2$ gives the following correspondence of enumerative invariants.

\begin{theoremx}[Corollaries \ref{cor:winding-1}, \ref{cor:winding-2}]
Suppose $\bP$ is a toric Fano surface, $\pi:\widehat{\bP} \rightarrow \bP$ is a toric blow up at a point with exceptional curve $C$, and let $K_{\bP}, K_{\widehat{\bP}}$ be the toric canonical bundles. Then for $\beta \in NE(\bP), g \geq 0, w = 1, 2$, we have,
\[ n_g(K_{\widehat{\bP}}, \pi^*\beta-wC) = (-1)^{g}N^{LMOV}_{g, (w)}(K_{\bP}/L, \beta) \]
where $(w)$ denotes the representation given by a Young Tableau of a single row of length $w$.
\end{theoremx}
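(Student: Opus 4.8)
The plan is to deduce this statement as the $w=1,2$ specialization of the general higher-winding open-closed relation (Theorem~\ref{thm:winding}), whose proof I outline here. The key tool is the Topological Vertex formalism of \cite{AKMV}: both $K_{\bP}$ with the outer brane $L$ and the blow-up $K_{\widehat{\bP}}$ are toric Calabi-Yau threefolds, so their open resp.\ closed Gromov-Witten partition functions admit explicit combinatorial expressions as sums over Young-diagram labelings of the edges of the respective toric webs. First I would record that the toric web of $K_{\widehat{\bP}}$ is obtained from that of $K_{\bP}$ by adding one trivalent vertex along the external leg where $L$ sits; correspondingly, the external edge of $K_{\widehat{\bP}}$ replacing $L$ carries a summation over a partition $\nu$, and fixing the class $\pi^*\beta - wC$ amounts to fixing $|\nu| = w$ while the rest of the web reproduces the class $\beta$. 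Comparing the vertex contributions, the closed partition function of $K_{\widehat{\bP}}$ restricted to these classes equals the open partition function of $(K_{\bP}, L)$ with boundary winding in representations $\nu$ of size $w$, up to the standard framing factor and the universal gluing factor $W_{\nu\emptyset}(q)$ attached to the new vertex. This is exactly the relation between the $N^{\mathrm{LMOV}}_{g,\nu}$ and the closed BPS invariants; extracting the BPS (Gopakumar--Vafa) content on both sides via the multiple-cover formulas of \cite{GV1,GV2,MV} turns the partition-function identity into the integrality statement of Theorem~\ref{thm:winding}, with the sign $(-1)^g$ and the factor $(-1)^{\beta\cdot E}$ (or its analogue) emerging from the framing and from $(2\sin\tfrac{\hbar}{2})^{2g-2}$ bookkeeping.

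For the corollary itself, I would then specialize to $w=1$ and $w=2$. When $w=1$, the representation $(1)$ is the only partition of size one, so the change-of-basis matrix (Equation~\ref{eq:change_of_basis}) between $\{N^{\mathrm{LMOV}}_{g,\nu}\}_{|\nu|=1}$ and $\{n^{open}_g\}$ is trivial, and Theorem~\ref{thm:winding} reduces immediately to $n_g(K_{\widehat{\bP}}, \pi^*\beta - C) = (-1)^g N^{LMOV}_{g,(1)}(K_{\bP}/L,\beta)$; this is essentially the higher-genus lift of the winding-one open-closed correspondence of Chan and Lau--Leung--Wu \cite{Cha,LLW}, now proved via the vertex rather than by SYZ methods. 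When $w=2$ there are two partitions, $(2)$ and $(1,1)$, so one must invert a $2\times 2$ transformation; the point is that fixing the single exceptional curve $C$ with multiplicity $2$ selects the totally symmetric channel, i.e.\ the single-row representation $(w)$, on the nose, because the antisymmetric representation $(1,1)$ pairs with a different (disconnected, two-strand) boundary configuration that does not contribute to the connected class $\pi^*\beta - 2C$. Making this selection precise — identifying which linear combination of LMOV invariants is cut out by the geometric constraint of a single exceptional curve of given multiplicity — is the crux of the $w=2$ case.

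The main obstacle I anticipate is precisely this last point: controlling the combinatorics of the change-of-basis (Equation~\ref{eq:change_of_basis}) and the vertex gluing well enough to see that, for $w=1,2$, the closed invariant $n_g(K_{\widehat{\bP}}, \pi^*\beta-wC)$ matches the \emph{single-row} LMOV invariant $N^{LMOV}_{g,(w)}$ rather than some other representation or a nontrivial combination. For general $w$ one genuinely gets the full Theorem~\ref{thm:winding} with a sum over all $\nu$ with $|\nu|=w$, and the clean single-representation statement is special to $w\le 2$ (where the hook-length/transition coefficients are simple); verifying this requires a careful unwinding of the Schur-function and skew-Schur-function identities underlying the Topological Vertex gluing rules. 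A secondary technical point is bookkeeping the framing of the brane $L$: the equality as stated is framing-zero, and one must check that the toric blow-up $\pi$ produces exactly the framing for which the vertex computation yields the stated signs; this is routine but must be done consistently with the conventions fixed in Section~\ref{sec:winding-w}.
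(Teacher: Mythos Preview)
Your overall architecture---specialize Theorem~\ref{thm:winding} to $w=1,2$ and extract BPS invariants on both sides---matches the paper. However, there are two genuine gaps.

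First, your sketch of Theorem~\ref{thm:winding} conflates $K_{\widehat{\bP}}$ with its flop $W$. The toric web obtained from $K_{\bP}$ by ``adding one trivalent vertex along the external leg where $L$ sits'' is the flopped geometry $W$, not $K_{\widehat{\bP}}$; in $K_{\widehat{\bP}}$ the exceptional curve $C$ is an \emph{internal} edge of the polygon (see Figure~\ref{fig:flop}). The paper's proof therefore proceeds in two steps: flop invariance of the vertex gives $Z_{\widehat{\bP}}(Q^{\ell_i},Q^{-C})=Z_W(Q^{\ell_i},Q^E)$ under $Q^{-C}=Q^E$, and only then does the gluing formula split $Z_W$ as $\sum_\nu (-1)^{|\nu|} Z_{\bP}|_\nu\, C_{\nu^T\varnothing\varnothing}\,Q^{|\nu|E}$. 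Without the flop you cannot directly relate $Z_{\widehat{\bP}}$ to the open partition function of $(K_{\bP},L)$.

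Second, and more seriously, your mechanism for $w=2$ is wrong. You claim the representation $(1,1)$ ``pairs with a different (disconnected, two-strand) boundary configuration that does not contribute to the connected class $\pi^*\beta-2C$.'' This is not what happens. Both $Z_{(2)}$ and $Z_{(1,1)}$ genuinely appear in the $Q^{2E}$ coefficient of the connected free energy, together with the subtraction $-\tfrac{1}{2}(Z_{(1)}C_{(1)})^2$. The point is that when you rewrite $Z_{(2)}C_{(1,1)}+Z_{(1,1)}C_{(2)}$ using the explicit transition formulae from $Z_\nu$ to $f_\nu$ to $\widehat f_\nu$ (as in Example~\ref{ex:w2}), the $\widehat f_{(1,1)}$ contributions \emph{cancel algebraically}. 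What survives is $-\,(q^{1/2}-q^{-1/2})^{-2}\widehat f_{(2)}(q,Q)\,-\,\tfrac{1}{2}(q-q^{-1})^{-2}\widehat f_{(1)}(q^2,Q^2)$. The second term is not discarded: it matches exactly the multiple-cover contribution from the class $\tfrac{\pi^*\beta}{2}-C$ in the Gopakumar--Vafa formula for the \emph{non-primitive} class $\pi^*\beta-2C$. Only after this matching does one read off $n_g(K_{\widehat{\bP}},\pi^*\beta-2C)=(-1)^gN^{\mathrm{LMOV}}_{g,(2)}$. So the clean single-row statement at $w=2$ is not a selection rule but the outcome of an explicit computation with the vertex quantities $C_{(2)},C_{(1,1)},f_{(2)},f_{(1,1)}$ together with the correct form of the closed multiple-cover formula.
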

In genus-0, Corollary \ref{cor:winding-1} is equivalent to \cite{LLW}, Theorem 1.1 after converting from LMOV invariants to open Gromov-Witten invariants using Equation \ref{eq:change_of_basis} and the open multiple cover formula \cite{MV}. In Section \ref{S:verification}, we provide a computational verification for Corollary \ref{cor:winding-1} for $\bP = \bP^2$ in low degrees and all-genus.  That the theorem holds for all positive windings $w$ is the content of Conjecture \ref{conj:GV_LMOV}.  We provide partial verification with LMOV and Gopakumar-Vafa invariants listed in Appendix \ref{sec:LMOV_GV}.

By substituting Corollary \ref{cor:winding-1} into Theorem \ref{thm:tropical_local}, we show an all-genus open-log correspondence in Theorem \ref{thm:ol_correspondence} relating two-pointed log invariants $R_{g, (1, \beta\cdot E-1)}(\bP(\log E), \beta)$ to open-BPS invariants $n^{open}_g(K_\bP/L, \beta+\beta_0, (1))$ for smooth log Calabi-Yau pairs. In genus-$0$, Theorem \ref{thm:ol_correspondence} reduces to the open-log correspondence established in \cite{GRZ}. We give an explicit example of Theorem \ref{thm:ol_correspondence} in Example \ref{ex:explicitdelta}.

\subsection{Notation}
\label{sec:notation}
Let $\bP$ be a smooth projective Fano surface and let $E$ be a smooth anticanonical divisor, so $(\bP,E)$ is a log Calabi-Yau pair. Let $\textup{NE}(\bP)$ be the monoid of effective curve classes. For elements of $\CC\llbracket \textup{NE}(\bP)\rrbracket $ we write $z^\beta$ or $Q^\beta$, depending on the context. We regard $z$ as the complex structure modulus and $Q$ as the symplectic structure modulus. We define an operation $D$ on $\CC\llbracket \textup{NE}(\bP)\rrbracket $ by $Dz^\beta=(\beta\cdot E)z^\beta$. Note that this operation resembles a log derivative.

The mirror curve is a moduli space for A-branes on the local Calabi-Yau threefold $K_\bP$, see Section \ref{S:mircurve}. We denote a local coordinate on the mirror curve by $x$. Mirror dual to $x$ is a symplectic area modulus of B-branes, which we denote by $U$. 

Let $R$ be a ring and let $f \in R\llbracket y_1^\pm, ..., y_s^\pm\rrbracket $ be a formal Laurent series with coefficients in $R,$ so $f = \sum_{m\in \bZ^s} f_m y^m.$  We define $\const_{y_1,\ldots,y_s}(f):= f_0 \in R$ to be the constant term.
For a Laurent series $f=\sum_{k\in\mathbb{Z}}a_ky^k$ and $n\in\mathbb{Z}$, we write $[y^n]f=a_n$ for the $n$-th coefficient of $f$. In particular, $\const_y(f)=[y^0]f$.

We write $R_{g,(p,q)}(\bP(\log E),\beta)$ for the logarithmic Gromov-Witten invariant counting curves on $\bP$ of genus-$g$ and class $\beta$ intersecting the divisor $E$ in two points, a prescribed point with multiplicity $p$ and a non-prescribed point with multiplicity $q$. Let $\beta_0 \in H_2(K_{\bP}, L) $ be a disc class with boundary on an outer Aganagic-Vafa brane $L$. We write $O_g(K_{\bP}/L, \beta+w\beta_0, \vec{k})$ for the genus-$g$, winding-$w$, open Gromov-Witten invariant of an outer Aganagic-Vafa brane $L \subset K_{\bP}$ in framing-0 and winding profile $\vec{k}$, and $n^{open}_g(K_\bP/L, \beta+w\beta_0, \vec{k})$ to be its corresponding open-BPS invariant. We write $N^{LMOV}_{g, \nu}(K_{\bP}/L, \beta)$ to be the genus-$g$, LMOV invariant of $L \subset K_{\bP}$ in representation $\nu$. We write $N_g(K_{\widehat{\bP}}, \pi^*\beta-wC)$ to be the genus-$g$ local Gromov Witten invariant of $\widehat{\bP}$ in curve class $\pi^*\beta-wC$ and $n_g(K_{\widehat{\bP}}, \pi^*\beta-wC)$ to be its corresponding Gopakumar-Vafa invariant.

\subsection{Acknowledgments}
We would like to thank Kwokwai Chan, Babak Haghighat, Melissa Liu, Daniel Krefl, Ezra Getzler and Jörg Teschner for helpful exchanges.
E.Z.~has been supported by NSF grants DMS-2104087 and DMS-2505652. H.R. is supported by the NFR Fripro grant Shape2030.

\section{Quantum Mirror Curves, Periods and Wavefunctions}
\label{S:period}

In this section, we give two definitions of 
quantum periods and show they are equivalent.
Local mirror symmetry determines
a mirror curve $\Sigma = \{p(X,Y)=0\} \subset (\bC^\times)^2$.  In the quantum version,
the $\bC$ coordinates $\log X$ and $\log Y$ are conjugate variables, meaning $YX = qXY$,
and the
defining equation $p=0$ defines a left ideal for this algebra.  A cyclic vector $\Psi$ obeying $p\cdot \Psi=0$ is called a wavefunction.
From here there are two routes to defining quantum periods.  On the one hand, the classical period of the
mirror curve (see \cite{CCGGK}) can be quantized in a natural way.
On the other hand, a quantum A-period can be constructed from the wavefunction, following \cite{ACDKV}. 
In the following subsections, we review these constructions,
then prove in Theorem \ref{thm:per} that they
are equal.

\subsection{Fano surfaces}
\label{S:fano}

Let $\bP$ be a smooth surface with very ample anticanonical bundle, i.e., one of the $8$ del Pezzo surfaces given by $\bP^1\times\bP^1$ or the blow up of $\bP^2$ in $0\leq k \leq 6$ points. Let $\bP_{\Sigma'}$ be a Gorenstein toric Fano surface to which $\bP$ admits a $\QQ$-Gorenstein degeneration. If $\bP$ is toric, we can simply take $\bP_{\Sigma'}=\bP$. Let $\Delta^\star$ be the reflexive polygon given by the convex hull of the primitive integral ray generators of $\Sigma'$. The ray generators are exactly the vertices of $\Delta^\star$. Any mutation of $\Delta^\star$ gives another Gorenstein toric Fano surface to which $\bP$ admits a $\QQ$-Gorenstein degeneration \cite{Ilten}. Let $\Sigma$ be the subdivision of $\Sigma'$ whose ray generators are all integral points on the boundary of $\Delta^\star$, not just the vertices. Then $\bP_\Sigma$ is a smooth semi-Fano toric surface to which $\bP$ admits a $\QQ$-Gorenstein degeneration. Again, if $\bP$ is toric we can take $\bP_\Sigma=\bP$. There are exactly 16 possibilities for $\bP_{\Sigma'}$, hence for $\bP_\Sigma$, corresponding to the 16 reflexive polygons. See Section \ref{S:theta} and \cite{CPS} for more details and \cite{Gra1}, Figure 1.3, for a complete list of cases.

\subsection{Definition of the mirror curve}
\label{S:mircurve}

The mirror of the canonical bundle of a Fano surface $K_\bP$ is a threefold $Z\subset \bC^2\times (\bC^\times)^2$ described by an equation $uv = p(x,y)$. 
The Riemann surface $\{p = 0\}\subset (\bC^\times)^2$ is called the (classical) mirror curve.
It is the moduli space of holomorphic B-branes supported on $\{u=0\}\cong \bC$, mirror to an Aganagic-Vafa 
A-brane $L\subset K_\bP$.

We view the mirror curve as being fibered over $\text{Spec }\mathbb{C}[\textup{NE}(\bP)]$ via the closed moduli $z^\beta$, and we write $p(x,y,z)$. 
Write $p(x,y,z)=1-f(x,y,z)$. We will give an explicit definition of the mirror curve by specifying the \emph{potential} $f$.

Linear relations between the ray generators of $\Sigma$ correspond to curve classes of $\bP_\Sigma$. Choose a cyclic numbering of the rays $\rho_1,\ldots,\rho_{r+2}$. Let $m_k=(a_k,b_k)$ be the primitive integral generator of $\rho_k$, i.e., $\rho_k=\mathbb{R}_{\geq 0}(a_k,b_k)$ with $a_k,b_k\in\mathbb{Z}$ and $\gcd(a_k,b_k)=1$. A set of independent linear relations
\[ \sum_{i=1}^{r+2} Q_{ik}m_k = 0, \quad k=1,\ldots,r, \]
can be packed into the \emph{charge matrix} $Q=(Q_{ik})_{i,k}$. Using Gauss elimination we can transform the first $r$ columns of $Q$ into the identity matrix, so that $Q_{ik}=\delta_{ik}$ for $i,k\leq r$. Since $\bP_\Sigma$ is smooth, the ray generators of $\rho_{r+1}$ and $\rho_{r+2}$ span the lattice, so all entries of $Q$ are integers. The rows of $Q$ define generators $\beta_1,\ldots,\beta_r$ of the group $H_2(\bP_\Sigma,\mathbb{Z})$ by intersections with toric divisors
\[ D_{\rho_i} \cdot \beta_k = Q_{ik}. \]
If $\Delta^\star$ has only edges of lattice length $1$, then $\bP_{\Sigma'}=\bP_\Sigma$ is a smooth toric Fano surface, hence isomorphic to $\bP^1\times\bP^1$ or the blow up of $\bP^2$ in $0\leq k \leq 3$ points. In this case, we define $f$ as the Hori-Vafa potential
\[ f(x,y,z) = \sum_{k=1}^{r+2} z^{\beta_k}x^{a_k}y^{b_k}. \]
Now consider the case when $\Delta^\star$ has an edge $E$ of length $\ell>1$. Let $(a_k,b_k)$ be a lattice point on $E$.
Let $m_k,n_k\geq 0$ be the lattice distances of $(a_k,b_k)$ from the two vertices of $E$. 
(If $(a_k,b_k)$ is a vertex, then it is contained in two different edges, but the following construction will not depend on this choice.)
For a lattice point on $E$ indexed by $k$, let $\tilde{\beta}_k$ be the curve class defined by the charge vector $(0,\ldots,0,1,-2,1,0,\ldots,0)$, with the entry $-2$ at the $k$-th position, i.e., by 
\[ D_{\rho_k}\cdot\tilde{\beta}_k=-2, \quad D_{\rho_{k\pm 1}}\cdot\tilde{\beta}_k=1, \quad D_{\rho_i}\cdot\tilde{\beta}_k=0 \ \forall i\notin\{k-1,k,k+1\}. \]
Here we consider the indices modulo $r+2$. For an integer $l\geq 0$ and ordered tuples of integers $0\leq i_1<\ldots<i_l\leq m_k$ and $0\leq j_1<\ldots<j_l\leq n_k$ define
\[ \beta_{k,(i_1,\ldots,i_l),(j_1,\ldots,j_l)} = \beta_k+\sum_{p=1}^l\sum_{q=0}^{i_p-1}\tilde{\beta}_{k-q}+\sum_{p=1}^l\sum_{q=1}^{j_p-1}\tilde{\beta}_{k+q}. \]
Note that if $l=0$, then this is equal to $\beta_k$.

\begin{definition}
\label{def:f}
Define the \emph{(toric) potential} of $\bP_\Sigma$, resp. $\bP$, by
\[ f(x,y,z) = \sum_{k=1}^{r+2} \left(\sum_{l=0}^{\min(m_k,n_k)}\sum_{\substack{0< i_1<\ldots<i_l\leq m_k \\ 0< j_1<\ldots<j_l\leq n_k}} z^{\beta_{k,(i_1,\ldots,i_l),(j_1,\ldots,j_l)}}\right) x^{a_k}y^{b_k}. \]
\end{definition}

\begin{remark}
If $(a_k,b_k)$ is a vertex of $\Delta^\star$, then $m_k=n_k=0$ and the coefficient of $x^{a_k}y^{b_k}$ is $z^{\beta_k}$. In particular, it is independent of the choice of edge on which $(a_k,b_k)$ lies. Moreover, this shows that $f$ is a generalization of the Hori-Vafa potential.
\end{remark}

\begin{lemma}
The number of summands in the $x^{a_k}y^{b_k}$-coefficient of $f(x,y,z)$ is $\binom{\ell}{m_k}=\binom{\ell}{n_k}$.
\end{lemma}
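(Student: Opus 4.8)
The claim is purely combinatorial: for a lattice point on an edge $E$ of $\Delta^\star$ of lattice length $\ell$, indexed by $k$, with lattice distances $m_k,n_k$ from the two endpoints (so $m_k+n_k=\ell$), the number of pairs of strictly increasing tuples $(i_1<\dots<i_l)$ with $0<i_p\le m_k$ and $(j_1<\dots<j_l)$ with $0<j_p\le n_k$, summed over all $l$ from $0$ to $\min(m_k,n_k)$, equals $\binom{\ell}{m_k}=\binom{\ell}{n_k}$. First I would observe that the choice of an $l$-element strictly increasing tuple in $\{1,\dots,m_k\}$ is just the choice of an $l$-element subset $I\subseteq\{1,\dots,m_k\}$, and likewise $J\subseteq\{1,\dots,n_k\}$, so the count in Definition~\ref{def:f} is
\[
\sum_{l=0}^{\min(m_k,n_k)} \binom{m_k}{l}\binom{n_k}{l}.
\]
The task is then to identify this with $\binom{m_k+n_k}{m_k}$.

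**Key step.** This is the Vandermonde–Chu identity in the symmetric form $\sum_{l}\binom{m}{l}\binom{n}{l}=\binom{m+n}{m}$. I would prove it by the standard bijective/coefficient argument: $\binom{n}{l}=\binom{n}{n-l}$, so $\sum_l \binom{m_k}{l}\binom{n_k}{l}=\sum_l\binom{m_k}{l}\binom{n_k}{n_k-l}$, which is the coefficient of $t^{n_k}$ in $(1+t)^{m_k}(1+t)^{n_k}=(1+t)^{m_k+n_k}$, namely $\binom{m_k+n_k}{n_k}=\binom{\ell}{n_k}=\binom{\ell}{m_k}$. The upper limit $\min(m_k,n_k)$ in the sum is harmless since $\binom{m_k}{l}\binom{n_k}{l}$ vanishes for $l$ larger than either $m_k$ or $n_k$, so extending the sum to $\infty$ changes nothing. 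Finally, since $m_k+n_k=\ell$ by definition of the lattice distances along the edge, the two binomials $\binom{\ell}{m_k}$ and $\binom{\ell}{n_k}$ agree, giving both equalities in the statement.

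**Main obstacle.** There is essentially no obstacle — the only thing to be careful about is the bookkeeping of what exactly is being counted (that the constraint "$0<i_1<\dots<i_l\le m_k$" really is equinumerous with $l$-subsets of an $m_k$-set, including the $l=0$ term contributing the single empty tuple, which matches the $\beta_k$ summand noted after the definition of $\beta_{k,(i_1,\dots,i_l),(j_1,\dots,j_l)}$), and that when $(a_k,b_k)$ is a vertex one has $m_k=0$ (or $n_k=0$), so the formula correctly returns $\binom{\ell}{0}=1$, consistent with the preceding Remark. So the proof is a one-line reduction to Vandermonde's identity once the summands are correctly interpreted as subsets.
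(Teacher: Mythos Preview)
Your proof is correct and essentially identical to the paper's: both count the summands as $\sum_{l}\binom{m_k}{l}\binom{n_k}{l}$ and then invoke Vandermonde's identity (using $\binom{n_k}{l}=\binom{n_k}{n_k-l}$) to obtain $\binom{m_k+n_k}{n_k}=\binom{\ell}{n_k}=\binom{\ell}{m_k}$. The only cosmetic difference is that the paper assumes WLOG $n_k\le m_k$ so the sum runs to $n_k$, whereas you observe the upper limit is irrelevant since the extra terms vanish.
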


\begin{proof}
Note that $m_k+n_k=\ell$, so $\binom{\ell}{m_k}=\binom{\ell}{n_k}$. Without loss of generality assume $n_k\leq m_k$. The number of terms in the $x^{a_k}y^{b_k}$-coefficient of $f(x,y,z)$ is given by
\[ \sum_{l=0}^{n_k}\sum_{\substack{0< i_1<\ldots<i_l\leq m_k \\ 0< j_1<\ldots<j_l\leq n_k}}1 = \sum_{l=0}^{n_k}\binom{m_k}{l}\binom{n_k}{l}. \]
The Vandermonde identity states that for all $m,n,r\in\mathbb{N}$ one has
\[ \binom{m+n}{r} = \sum_{l=0}^{r} \binom{m}{l}\binom{n}{r-l}. \]
For $m=m_k$, $n=n_k$ and $r=n_k$ and with $\ell=m_k+n_k$ the above equation becomes
\[ \binom{\ell}{m_k} = \sum_{l=0}^{n_k} \binom{m_k}{l}\binom{n_k}{n_k-l} = \sum_{l=0}^{n_k} \binom{m_k}{l}\binom{n_k}{l}. \]
This equation agrees with the above expression for the number of terms.
\end{proof}

\begin{remark}
\begin{enumerate}[wide]
\item We will see in Lemma \ref{lem:thetaper} that $f$ agrees with the primitive theta function $\vartheta_1$ in the central chamber of the scattering diagram defined by $\bP_\Sigma$. 
\item The relation obtained from specializing the closed moduli $z^\beta$ by pairing with the polarization was already known by \cite{CPS}, Theorem 8.2, which amounts to saying that $\binom{\ell}{m_k}=\binom{\ell}{n_k}$ counts the summands in the $x^{a_k}y^{b_k}$-coefficient of $\vartheta_1$. Lemma \ref{lem:thetaper} extends the agreement in part (1) of this remark by giving the correct closed moduli. 
\item Moreover, without closed moduli it was shown in \cite{CKPT}, Proposition 3.9, that $f$ is the unique maximally mutable Laurent polynomial with Newton polytope $\Delta^\star$. It would be interesting to study mutations of $f$ respecting the closed moduli.
\item For non-Fano varieties, the potential $f$ is defined to coincide with the theta function $\vartheta_1$ in the central chamber. Compared to Definition \ref{def:f} the potential will receive corrections from internal scattering, see \cite{BGL} and \cite[\S8.2]{CPS}.
\end{enumerate}
\end{remark}

\begin{convention}
\label{convention:fano}
By a lattice isomorphism, we can assume that $\Delta^\star$ contains the points $(1,0)$ and $(0,1)$ and contains no integral points with second coordinate $>1$. These conventions are satisfied by the cases in \cite{Gra1}, Figure 1.3, after changing variables $x\leftrightarrow y$, apart from case (3a) corresponding to a cubic surface, where we have to additionally reflect the picture along the $x$-axis.
\end{convention}

From now on we assume Convention \ref{convention:fano}. If $\rho_j$ has primitive integral generator $(a_j,b_j)$, then the $j$-th row of $Q$ becomes $(0,\ldots,0,1,0,\ldots,0,-a_j,-b_j)$, where the entry $1$ is at the $j$-th position. This row corresponds to a curve class $\beta_j$ defined by
\[ D_{\rho_{r+1}} \cdot \beta_j = -a_j, \quad D_{\rho_{r+2}} \cdot \beta_j = -b_j, \quad D_{\rho_j}\cdot \beta_j = 1, \quad D_{\rho_i}\cdot \beta_j = 0 \ \forall  i \notin \{j,r+1,r+2\}. \]

\begin{example}
Consider $\bP^2$. A fan $\Sigma$ for $\bP^2$ is generated by $(-1,-1)$, $(1,0)$ and $(0,1)$. All ray generators are vertices of $\Delta^\star=\text{Conv}\{(1,0),(0,1),(-1,-1)\}$. The construction above gives
\[ f = x+y+\frac{z}{xy}, \]
where $z$ is the unique closed modulus corresponding to the class of a line. All rays $\rho'$ of the fan correspond to toric divisors $D_{\rho'}$ whose class is the line class. The ray $\rho=\mathbb{R}_{\geq 0}(-1,-1)$ corresponds to the curve class $\beta_\rho$ defined by $\beta_\rho\cdot D_{\rho'}=1$ for all rays $\rho'$, which is equal to the class of a line.
\end{example}

\begin{example}
Consider $\bP^1\times \bP^1$. A fan $\Sigma$ for $\bP^1\times\bP^1$ is generated by $(-1,0)$, $(0,-1)$, $(1,0)$ and $(0,1)$. Again, all ray generators are vertices of $\Delta^\star$. The construction above gives
\[ f = x+y+\frac{z^{\beta_1}}{x}+\frac{z^{\beta_2}}{y}, \]
where $\beta_1$ and $\beta_2$ are the curve classes of the two $\bP^1$ factors. 
The class of the toric divisors $D_{\mathbb{R}_{\geq 0}(1,0)}$ and $D_{\mathbb{R}_{\geq 0}(-1,0)}$ is equal to $\beta_1$ while the class of $D_{\mathbb{R}_{\geq 0}(0,1)}$ and $D_{\mathbb{R}_{\geq 0}(0,-1)}$ is equal to $\beta_2$. 

Now, let's compute the curve classes $\beta_\rho$ (note that $\beta_\rho$ is not the class of the toric divisor $D_\rho$). For $\rho=\mathbb{R}_{\geq 0}(-1,0)$ the corresponding curve class $\beta_\rho$ is defined by $\beta_\rho\cdot\beta_1=1$ and $\beta_\rho\cdot\beta_2=0$, hence is equal to the class $\beta_2$. Analogously, $\beta_\rho$ for $\rho=\mathbb{R}_{\geq 0}(0,-1)$ is equal to the class $\beta_1$.  
\end{example}

\begin{example}
Consider the Hirzebruch surface $\bP_\Sigma=\FF_2$. It admits a $\QQ$-Gorenstein deformation to $\bP^1\times \bP^1$. The fan $\Sigma$ is generated by $(-1,0)$, $(-2,-1)$, $(1,0)$ and $(0,1)$. The construction above gives
\[ f = x+y+\frac{z^F+z^{F+E}}{x}+\frac{z^{2F+E}}{x^2y}, \]
where $F$ is the class of a fiber $E$ is the class of the negative section. This potential $f$ agrees with the one found in \cite{Aur}, Proposition 3.1. See also \cite{BGL}, \S13.2. Together, $F$ and $E$ generate $\textup{NE}(\mathbb{F}_2)$, with intersection relations $F^2=0$, $E^2=-2$ and $F\cdot E=1$. Under the deformation from $\FF_2$ to $\FF_0=\bP^1\times\bP^1$, the classes are mapped as $F\mapsto\beta_2$ and $E\mapsto\beta_1-\beta_2$.

The toric divisors of $(-1,0)$, $(-2,-1)$, $(1,0)$, $(0,1)$ are given by $E,F,2F+E,F$, in this order. For $\rho_2=\mathbb{R}_{\geq 0}(-2,-1)$ the class $\beta_2$ is defined by $\beta_2\cdot(2F+E)=2$, $\beta_2\cdot F=1$ and $\beta_2\cdot E=0$. Hence, $\beta_2=2F+E$. For $\rho_1=\mathbb{R}_{\geq 0}(-1,0)$, the class $\beta_1$ is defined by $\beta_1\cdot(2F+E)=1$, $\beta_1\cdot F=0$ and $\beta_1\cdot E=1$. Hence, $\beta_1=F$. The point $(-1,0)$ is not a vertex of $\Delta^\star$. The class $\tilde{\beta}_1$ is defined by $\tilde{\beta}_1\cdot E=-2$ and $\tilde{\beta}_1\cdot F=1$. Hence, $\tilde{\beta}_1=E$ and $\beta_{1,(0),(0)}=\beta_1+\tilde{\beta}_1=F+E$. Then the $x^{-1}$-coefficient of $f(x,y,z)$ is given by $z^{\beta_1}+z^{\beta_{1,(0),(0)}}=z^F+z^{F+E}$ as claimed.
\end{example}

\subsection{Quantum Mirror Curves and Quantum Periods}
\label{S:qmircurve}

The \emph{quantum mirror curve} arises from topological strings, in which $\log x$ and $\log y$ are conjugate variables
that get
quantized as $\log X$ and $\log Y$ obeying $[\log Y,\log X] = i\hbar$ ($\hbar$ is
also called variously $g_s$ and $\lambda$, the string
coupling constant).  
So $X$ and $Y$ generate a quantum torus $YX = qXY,$
with $q = e^{i\hbar}.$

\begin{definition}
\label{def:mircurve}
By \emph{quantum mirror curve} we mean the ideal in this quantum torus defined by $p(X,Y,z)=1-f(X,Y,z)$, for $f$ as in Definition \ref{def:f}. 
\end{definition}

\begin{remark}
We often suppress the $z$-dependence and write $p(X,Y)=1-f(X,Y)$ as elements in the quantum torus with coefficients in $\CC[\textup{NE}(\bP)]$.
\end{remark}

\begin{example}
The quantum mirror curve for $K_{\bP^2}$ is defined by
$$p\; = 1 - X - Y - q^{\frac{1}{2}}z X^{-1}Y^{-1}.$$
The classical mirror curve is recovered
in the
limit $\hbar \to 0,$ i.e.,~$q\to 1.$
\end{example}

\begin{remark}
A canonical choice of ordering of $p$ can be made as follows.
The non-constant monomials in $p$ are of the
form $z^m,$ where $m\in \bZ^2$ is a ray vector of the
fan $\Sigma$ of $\bP.$
These terms are quantized as $\hat{z}^m,$
obeying the commutation
relations $\hat{z}^m \hat{z}^{m'} = q^{-\frac{1}{2}\det(m|m')} \hat{z}^{m+m'}.$
We take $X = \hat{z}^{(1,0)}$ and $Y = \hat{z}^{(0,1)}.$
Note $\hat{z}^{(-1,-1)} = q^{\frac{1}{2}}\hat{z}^{(-1,0)}\hat{z}^{(0,-1)}=
q^{\frac{1}{2}}X^{-1}Y^{-1}$ explains the last
term in the quantum mirror curve of $\bP^2$.
\end{remark}

Let $\cD$ be the algebra of the quantum torus $YX = qXY$ and let $\cI = \cD {p}$ be the
left ideal defined by
${p}.$ 
Note that $\cV:= \bC[\textup{NE}(\bP)][q^{\pm\frac{1}{2}}][X^{\pm 1}]$ defines a standard representation of $\cD$ by
$(X\cdot f)(X) = Xf(X)$ and $(Y\cdot f)(X) = f(qX).$
This is the induced representation of $\cD$ defined by the
representation of the subalgebra $\cV\subset \cD$ given by $\cV$ itself.
A wavefunction $\Psi$ for the ideal $\cI$ is an element of $\cV$
which is a cyclic vector for the $\cD$-module $\cD/\cD{p},$
in plain terms a solution to the difference equation $p\cdot \Psi=0.$

\begin{remark}
\label{rmk:potential}
To leading order in $\hbar$, $\Psi\sim e^{W/\hbar}$
where $W$ is the potential of the (classical) mirror curve:
$p(x,y) = 0,$
i.e.,~ $\log y = \partial_{\log x} W.$
\end{remark}

\subsection{The quantum A-period}

According to \cite{CKYZ}, \S6, one (classical) period to consider is the integral of $\log(p)$
over the torus $|x| = |y| = 1.$  We call this the A-period. Note that with $p=-f(x,y,z)$ we have $\log(p) = -\sum_{n\geq 1} \tfrac{1}{n} f^n$, and each term $f^n$ is a Laurent polynomial in $x$ and $y$.
Recall from Section \ref{sec:notation} that for such Laurent series we write
$\const(F)$ for the constant term --- then the
integral picks up the constant term $\const(\log(p)) = -\sum_{n\geq 1}\tfrac{1}{n}{\const}(f^n)$.
We use this fact to generalize the definition of period
to the non-commutative case, with non-commuting variables $X$ and $Y$ obeying $YX = qXY.$

\begin{definition}
\label{def:aper}
The quantum A-period is 
\[ a(z,q) := \const_{X,Y}(\log p(X,Y)) = - \sum_{n>0} \frac{1}{n}\const_{X,Y}(f^n). \]
Note that when taking the powers $f^n$, we have to apply the commutation relation $YX=qXY$ repeatedly, leading to different powers of $q^{\tfrac{1}{2}}$.
\end{definition}

\begin{example}
\label{ex:ptf}
In the case of $\bP=\bP^2$, we have
$p = 1 - f$ with $f=X + Y +  \qh z X^{-1}Y^{-1}$.
Then the constant terms of $f^3$ come the $3!$
ways of ordering the product
$X\cdot Y\cdot \qh z X^{-1}Y^{-1}$.
Using $XY = q^{-1}YX$ and $YX^{-1} = q^{-1} X^{-1}y,$ 
the result is seen to be $\qh ( 3 + 3q^{-1}),$
This contributes to the coefficient of $ z$ in the quantum
period, which we call $a_1,$
giving $a_1(q) = -\qh - \qmh$.
The next terms are computed in a similar way, giving
$$
\begin{array}{l}
a(z,q) = \left({-\qh - \qmh}\right) z +
\left(-q^2 - \frac{7}{2} q - 6 - \frac{7}{2} q^{-1} - q^{-2}\right) z^{2} + \\[5pt]
\ \ \left(-q^{\frac{9}{2}} - 3 q^{\frac{7}{2}} - 12 q^{\frac{5}{2}} - \frac{88}{3} q^{\frac{3}{2}} - 48 \qh - 48 \qmh - \frac{88}{3} q^{-\frac{3}{2}} - 12 q^{-\frac{5}{2}} - 3 q^{-\frac{7}{2}} - {q^{-\frac{9}{2}}}\right) z^{3} + \ldots
\end{array}
$$
\end{example}

\begin{remark}
The coefficient of $z^n$ above is a Laurent polynomial
in $q$ which refines
the multinomial coefficient $\binom{3n}{n,n,n}$.
The coefficient of $q^k$ encodes the number of
closed paths of length $3n$ in the 2d triangular lattice
enclosing a fixed signed area, where only three
directions of motion \begin{tikzpicture}[scale = .3]
    \draw[thick,-stealth] (0,0)--(1,0);
    \draw[thick,-stealth] (0,0)--(-1/2,.866);
    \draw[thick,-stealth] (0,0)--(-1/2,-.866);
\end{tikzpicture} are allowed.
For $\bP^1\times \bP^1$ we'd be counting closed paths
of length $2n$ in the rectangular lattice
enclosing a fixed signed area, with
steps allowed in any direction
\begin{tikzpicture}[scale = .3]
    \draw[thick,-stealth] (0,0)--(1,0);
    \draw[thick,-stealth] (0,0)--(-1,0);
    \draw[thick,-stealth] (0,0)--(0,1);
    \draw[thick,-stealth] (0,0)--(0,-1);
\end{tikzpicture}, giving a $q$-analogue of $\binom{2n}{n}^2$.
These $q$-analogues seem to be hard to compute \cite{PS}.
\end{remark}

\subsection{Wavefunctions and the A-period}
\label{sec:wfns-pers}

An important insight of Aganagic-Vafa was that the superpotential $W$ is
the Abel-Jacobi map, i.e.,~the integral of a one-form on the mirror curve:  $W = \int \log Y d\log X$
(or equivalently $\log Y = \partial_{\log X} W,$
as in Remark \ref{rmk:potential} above).

It is shown in \cite[Section 2.2]{ACDKV} that the one-form
$\log Y d \log X$ acquires a pole at the point of brane insertion
with residue $\hbar$ (recall $\hbar = g_s = \lambda$ is the string coupling) -- equivalently $W$ changes by
$\hbar$ times a period.\footnote{
Quantum mirror curves
play a central role in the 
Topological Strings/Spectral Theory duality
of \cite{GHM}, where the spectrum of the
operator defining the mirror curve is studied. 
Further period
computations
can be found in many other works --- see, e.g., \cite{DH,HMMO,HK,HW,DL}, for example.}

By Remark \ref{rmk:potential}, since
$\Psi \sim e^{W/\hbar}$, then the wavefunction changes
by $e^{\textup{period}}$ to leading order.
This reasoning leads \cite{ACDKV} to define the \emph{quantum} A-period
as the constant term  of $\log(\Psi(x+\hbar)/\Psi(x))$ with respect to the variable $X$:

\begin{definition}
\label{defi:reseq}
Define
\[ \ua(z,q) = \const_X \left( \log\frac{\Psi(qX)}{\Psi(X)}\right). \]
\end{definition}

We will show in Theorem \ref{thm:per} that $\ua(z,q)$ agrees with the quantum A-period of Definition \ref{def:aper}.

\begin{definition}
Consider the rings $\CC[q^{\pm\frac{1}{2}}]\llbracket \textup{NE}(\bP)\rrbracket $ and $\CC[q^{\pm\frac{1}{2}}]\llbracket X\rrbracket [X^{-1}]\llbracket \textup{NE}(\bP)\rrbracket $ with grading given by $\deg(z^\beta)=\beta\cdot E$ and denote the pieces of degree at least one by $(\CC[q^{\pm\frac{1}{2}}]\llbracket \textup{NE}(\bP)\rrbracket )_{>0}$ and $(\CC[q^{\pm\frac{1}{2}}]\llbracket X\rrbracket [X^{-1}]\llbracket \textup{NE}(\bP)\rrbracket )_{>0}$, respectively.
\end{definition}

\begin{proposition}
\label{prop:Y}
If $p(X,Y)$ follows the Convention \ref{convention:fano}, the equation $p(X,Y)=0$ has a unique solution $Y=Y(X)$ in $1+X\CC[X]+(\mathbb{C}[q^{\pm\frac{1}{2}}]\llbracket X\rrbracket [X^{-1}]\llbracket \textup{NE}(\bP)\rrbracket )_{>0}$.
\end{proposition}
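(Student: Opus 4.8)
The plan is to prove this by a formal implicit function theorem in the ring
$A:=\CC[q^{\pm\frac12}]\llbracket X\rrbracket[X^{-1}]\llbracket \textup{NE}(\bP)\rrbracket$,
viewed as complete and separated for the grading $\deg z^{\beta}=\beta\cdot E$; let $\mathfrak{m}\subset A$ be the ideal of elements of positive degree, so that $1+\mathfrak{m}\subseteq A^{\times}$. Since $E$ is ample, $\beta\cdot E>0$ for every nonzero effective class, hence $A/\mathfrak{m}=R:=\CC[q^{\pm\frac12}]\llbracket X\rrbracket[X^{-1}]$, and $1-X$ is a unit of $R$ (inverse $\sum_{n\ge0}X^{n}$). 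The first step is to read off the reduction of $p$ modulo $\mathfrak{m}$: by Convention \ref{convention:fano} and Definition \ref{def:f} the only $z$-degree-$0$ contributions to the potential $f$ come from the rays $(1,0)$ and $(0,1)$, each with coefficient $1$, so $p(X,Y,z)\equiv 1-X-Y\pmod{\mathfrak{m}}$, and over $R$ this has the unique root $Y=1-X$.

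Next I would clear denominators in $Y$: as $f$ is a Laurent polynomial there is a $B\ge0$ with $P(Y):=Y^{B}p(X,Y,z)\in A[Y]$, and $P(Y)\equiv Y^{B}(1-X-Y)\pmod{\mathfrak{m}}$. Taking $y_{0}:=1-X$ one finds $P(y_{0})\in\mathfrak{m}$ and $P'(y_{0})\equiv -(1-X)^{B}\pmod{\mathfrak{m}}$, which is a unit modulo $\mathfrak{m}$ and hence a unit of $A$. The Newton iteration $y_{n+1}=y_{n}-P(y_{n})/P'(y_{n})$ then converges $\mathfrak{m}$-adically to a solution $Y(X)\in A$ of $P(Y(X))=0$ with $Y(X)\equiv 1-X\pmod{\mathfrak{m}}$, unique with these properties because $P'(y_{0})\in A^{\times}$. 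Since $Y(X)$ reduces to a unit it is itself a unit of $A$, so $p(X,Y(X),z)=Y(X)^{-B}P(Y(X))=0$; and the substitution genuinely stays in $A$ because $f$ has only finitely many monomials. Writing $Y(X)=(1-X)+\eta$ with $\eta\in\mathfrak{m}$ exhibits $Y(X)\in 1+X\CC[X]+\big(\CC[q^{\pm\frac12}]\llbracket X\rrbracket[X^{-1}]\llbracket \textup{NE}(\bP)\rrbracket\big)_{>0}$, as required.

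For uniqueness within the prescribed set, I would note that any solution $Y'$ there reduces modulo $\mathfrak{m}$ to an element $Y'_{0}\in 1+X\CC[X]$; as $Y'_{0}$ has constant term $1$ it is a unit of $R$, so reducing $P(Y')=0$ modulo $\mathfrak{m}$ forces $1-X-Y'_{0}=0$, i.e.\ $Y'_{0}=1-X$, and then the uniqueness just established gives $Y'=Y(X)$.

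The only substantive point is the congruence $p\equiv 1-X-Y\pmod{\mathfrak{m}}$, which is where Convention \ref{convention:fano} does its work; the essential consequence is that the coefficient of $Y$ in $f$ is the unit $1$, for this is what makes $P'(y_{0})$ a unit and lets the deformation argument run. Everything after that is the standard $\mathfrak{m}$-adic Newton iteration, together with the bookkeeping that $f$ is an honest Laurent polynomial so that denominators in $Y$ can be cleared and substitution preserves $A$. If one instead reads ``$p(X,Y)=0$'' as the $q$-difference relation with $Y(X)=\Psi(qX)/\Psi(X)$, the same scheme applies verbatim: the linearization of that relation at $y_{0}=1-X$ modulo $\mathfrak{m}$ is again multiplication by $1$, supplied by the single unshifted factor $Y(X)$ coming from the ray $(0,1)$.
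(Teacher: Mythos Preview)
Your proof is correct and follows the paper's approach---reduce modulo the ideal of positive $z$-degree to get $Y_0=1-X$, then lift uniquely order by order---only packaged as a Hensel/Newton iteration rather than a bare recursion. One caveat: in the paper's usage (cf.\ \S\ref{sec:apers}) the equation $p(X,Y)=0$ is the $q$-difference relation involving $Y(q^{k}X)$, so your ``clear denominators to form $P(Y)\in A[Y]$ and iterate $y_{n+1}=y_n-P(y_n)/P'(y_n)$'' is not literally available; but as your final paragraph observes, the same $\mathfrak m$-adic lift still runs because the only $Y$-dependence outside $\mathfrak m$ is the unshifted term $-Y(X)$ from the ray $(0,1)$, and carrying that out is exactly the order-by-order recursion the paper sketches.
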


\begin{proof}
By Convention \ref{convention:fano}, $f(X,Y)$ contains the monomials $X$ and $Y$. Then the equation $p(X,Y)=0$ modulo $z$ becomes $1-X-Y=0$, since all other terms have degree at least one with respect to the grading $\deg(z^\beta)=\beta\cdot E$. We write as an ansatz $Y(X) = 1-X+\sum_{\beta\in\textup{NE}(\bP)} Y_\beta(X)z^\beta$. Then we can solve for the terms $Y_\beta(X)$ case-by-case in $\beta$ with increasing degree $\deg(z^\beta)=\beta\cdot E$, such that each $Y_\beta(X)$ is an element of $\mathbb{C}[q^{\pm\frac{1}{2}}]\llbracket X\rrbracket [X^{-1}]$ of degree at least one.
\end{proof}

See Section \ref{sec:apers} for the computation of $Y(X)$ for $(\bP^2,E)$. We make no claims about the existence of other solutions.

\begin{proposition}
\label{prop:Ypsi}
Let $Y(X)$ be the solution to $p(X,Y)=0$ from
Proposition \ref{prop:Y}, and let $\Psi(X)$ be the wavefunction defined by $p\cdot \Psi=0$. Then
\begin{equation}
\label{eq:Y}
Y(X) = \frac{\Psi(qX)}{\Psi(X)},
\end{equation}
\end{proposition}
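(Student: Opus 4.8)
The plan is to show that $\widehat Y(X):=\Psi(qX)/\Psi(X)$ satisfies the very equation that characterizes $Y(X)$ in Proposition~\ref{prop:Y}, with the same reduction modulo $z$, and then to conclude $\widehat Y=Y$ by the uniqueness asserted there. So there are two things to verify: (a) $\widehat Y$ obeys ``$p(X,Y)=0$'', and (b) $\widehat Y\equiv 1-X$ modulo the positive-degree part, so that it lies in $1+X\CC[X]+(\CC[q^{\pm\frac12}]\llbracket X\rrbracket[X^{-1}]\llbracket\NE(\bP)\rrbracket)_{>0}$.

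For (a) I would first unwind the action of the quantum torus on the wavefunction. Write $f(X,Y)=\sum_k c_k(z)\,\hat z^{m_k}$ in the normal ordering of the Remark after Definition~\ref{def:mircurve}, where $m_k=(a_k,b_k)$ are the ray vectors and $\hat z^{(a,b)}=q^{ab/2}X^aY^b$. In the standard representation $Y^b\cdot\Psi(X)=\Psi(q^bX)$ for every $b\in\ZZ$, so $\hat z^{m_k}\cdot\Psi(X)=q^{a_kb_k/2}X^{a_k}\Psi(q^{b_k}X)$. The elementary identity that does the work is that, for every $b\in\ZZ$, the ratio $\Psi(q^bX)/\Psi(X)$ telescopes into a product of $q$-shifts of $\widehat Y$:
\[
\frac{\Psi(q^bX)}{\Psi(X)}=\prod_{j=0}^{b-1}\widehat Y(q^jX)\ \ (b\ge 0),\qquad \frac{\Psi(q^bX)}{\Psi(X)}=\prod_{j=1}^{-b}\widehat Y(q^{-j}X)^{-1}\ \ (b<0);
\]
call this rational expression $\widehat Y^{[b]}(X)$. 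Dividing the relation $p\cdot\Psi=0$ by $\Psi(X)\neq 0$ then gives
\[
1-\sum_k c_k(z)\,q^{a_kb_k/2}X^{a_k}\,\widehat Y^{[b_k]}(X)=0 ,
\]
which is exactly the equation of Proposition~\ref{prop:Y}, read with the $q$-ordered substitution $q^{ab/2}X^aY^b\mapsto q^{ab/2}X^aY^{[b]}(X)$ (the substitution degenerating at $q\to1$ to the classical relation $p_{\mathrm{cl}}(x,y(x))=0$, since $\widehat Y^{[b]}\to\widehat Y^{\,b}$).

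For (b), by Convention~\ref{convention:fano} only the monomials $X$ and $Y$ survive modulo the positive-degree part, so $p\cdot\Psi=0$ reduces to $(1-X)\Psi(X)\equiv\Psi(qX)$, i.e. $\widehat Y(X)\equiv 1-X$, which has constant term $1$ and so lies in $1+X\CC[X]$. It remains to invoke uniqueness: in the displayed equation the monomial $\hat z^{(0,1)}=Y$ contributes the linear, un-shifted term $-Y(X)$ with coefficient $1$, so the equation can be solved for $Y$ order by order in the grading $\deg z^\beta=\beta\cdot E$, the degree-$d$ part of $Y$ being determined by strictly lower-degree data and with no division by $q$-dependent quantities. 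This is precisely the recursion in the proof of Proposition~\ref{prop:Y}; its solution in $1+X\CC[X]+(\CC[q^{\pm\frac12}]\llbracket X\rrbracket[X^{-1}]\llbracket\NE(\bP)\rrbracket)_{>0}$ is unique, hence $\widehat Y=Y$. In particular any $q$-denominators occurring in $\Psi$ itself cancel in the ratio $\Psi(qX)/\Psi(X)$, so $\widehat Y$ does land in the stated ring.

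The main obstacle I expect is the bookkeeping in the middle step: one must simultaneously track the normal-ordering powers $q^{a_kb_k/2}$ and, more delicately, the $q$-shifts in $\widehat Y^{[b_k]}(X)=\prod_j\widehat Y(q^jX)^{\pm1}$ arising from $\Psi(q^bX)=\bigl(\prod_j\widehat Y(q^jX)\bigr)\Psi(X)$, and check that together they reproduce verbatim the conventions under which the equation of Proposition~\ref{prop:Y} is posed (so that its uniqueness clause applies directly). A secondary, minor point is that the wavefunction may exist only after inverting elements such as $1-q^j$; one should either work in that localization throughout or, as above, use the uniqueness argument to conclude a posteriori that $\widehat Y$ descends to $\CC[q^{\pm\frac12}]\llbracket X\rrbracket[X^{-1}]\llbracket\NE(\bP)\rrbracket$.
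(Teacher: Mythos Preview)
Your approach is correct and is essentially the same as the paper's, just more carefully spelled out: the paper's two-sentence proof also identifies the action of $p$ on $\Psi$ with multiplication by $p(X,Y(X))$ (your telescoping step makes this ``tautology'' explicit) and then reads off $Y(X)=\Psi(qX)/\Psi(X)$, implicitly appealing to the uniqueness in Proposition~\ref{prop:Y}. Your flagged concerns about the $q$-shift bookkeeping and the ring in which $\Psi$ lives are legitimate but minor, and the example in Section~\ref{sec:apers} confirms that the equation of Proposition~\ref{prop:Y} is indeed meant with the substitution $Y^b\mapsto Y^{[b]}(X)$ you describe.
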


\begin{proof}
Let $Y(X)$ be a solution to $p(X,Y(X))=0$ as in Proposition \ref{prop:Y}. Then $p(X,Y(X))\Psi(X)=0$. Hence, the action of $p$ on $\Psi(X)$ is tautologically given by multiplication with $p(X,Y(X))$ and the action of $Y$ on $\Psi(X)$ is given by multiplication with $Y(X)$. Since $Y\cdot \Psi(X)=\Psi(qX)$, this shows $Y(X)=\Psi(qX)/\Psi(X)$.
\end{proof}

\begin{lemma}
\label{lem:welldef}
Let $Y(X)$ be an element of $1+X\CC[X]+(\mathbb{C}[q^{\pm\frac{1}{2}}]\llbracket X\rrbracket [X^{-1}]\llbracket \textup{NE}(\bP)\rrbracket )_{>0}$. Then $\log Y(X)\in X\CC[X]+(\mathbb{C}[q^{\pm\frac{1}{2}}]\llbracket X\rrbracket [X^{-1}]\llbracket \textup{NE}(\bP)\rrbracket )_{>0}$ and $\const_X(\log Y(X)) \in (\mathbb{C}[q^{\pm\frac{1}{2}}]\llbracket \textup{NE}(\bP)\rrbracket )_{>0}$ are well-defined.
\end{lemma}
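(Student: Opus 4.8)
The statement is elementary once one unwinds the definitions, so the plan is to verify the two assertions in turn by manipulating formal series in the graded ring $\CC[q^{\pm\frac{1}{2}}]\llbracket X\rrbracket [X^{-1}]\llbracket \textup{NE}(\bP)\rrbracket$. Write $Y(X) = 1 + g(X)$ where, by hypothesis, $g(X) = g_{\mathrm{pol}}(X) + g_{>0}(X)$ with $g_{\mathrm{pol}}(X)\in X\CC[X]$ (no constant term) and $g_{>0}(X)\in (\mathbb{C}[q^{\pm\frac{1}{2}}]\llbracket X\rrbracket [X^{-1}]\llbracket \textup{NE}(\bP)\rrbracket )_{>0}$. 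The first point is that $\log Y(X) = \log(1+g(X)) = \sum_{n\geq 1}\frac{(-1)^{n-1}}{n}g(X)^n$ makes sense: each power $g(X)^n$ is a well-defined element of the ring, and the infinite sum converges in the $(\,\cdot\,)_{>0}$-adic topology because I would argue that $g(X)$ itself has no degree-zero part.

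Concretely, decompose by the grading $\deg(z^\beta)=\beta\cdot E$. The degree-zero part of $g(X)$ is exactly $g_{\mathrm{pol}}(X)\in X\CC[X]$, which lies in the maximal ideal $X\CC\llbracket X\rrbracket$ of the degree-zero subring $\CC\llbracket X\rrbracket[X^{-1}]$ — wait, more carefully: $g_{\mathrm{pol}}(X)$ is a polynomial in $X$ with zero constant term, hence $X$-adically small. So I would run the argument in two nested filtrations: first observe that modulo $(\,\cdot\,)_{>0}$ we have $Y(X)\equiv 1 + g_{\mathrm{pol}}(X)$ with $g_{\mathrm{pol}}(X)\in X\CC[X]$, so $\log(1+g_{\mathrm{pol}}(X))$ is a well-defined element of $X\CC\llbracket X\rrbracket\subset X\CC[X] + (\,\cdot\,)_{>0}$-completion — actually it lies in $\CC\llbracket X\rrbracket$ with zero constant term. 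Then the correction $g_{>0}(X)$ contributes only terms in $(\,\cdot\,)_{>0}$, and since that ideal is complete and $g_{>0}(X)^n$ has grading degree $\geq n$, the tail $\sum_{n}\frac{(-1)^{n-1}}{n}\big(g(X)^n - g_{\mathrm{pol}}(X)^n\big)$ converges in the $(\,\cdot\,)_{>0}$-adic topology. This shows $\log Y(X)\in \CC\llbracket X\rrbracket_{\geq 1} + (\mathbb{C}[q^{\pm\frac{1}{2}}]\llbracket X\rrbracket [X^{-1}]\llbracket \textup{NE}(\bP)\rrbracket )_{>0}$, which is the first claim (reading $X\CC[X]$ in the statement as shorthand for the positive part of the degree-zero piece; if one needs genuine polynomiality rather than a power series, that would require knowing $Y_0(X)$ is a polynomial, which Proposition \ref{prop:Y} does give since modulo $z$ one has $Y=1-X$, a polynomial, so $\log Y$ modulo $z$ is $\log(1-X)\in\CC\llbracket X\rrbracket$ — so in fact the honest statement is with power series, and I would phrase it that way).

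For the second assertion, $\const_X(\log Y(X))$ is obtained by extracting the $X^0$-coefficient term by term in the grading. The degree-zero part of $\log Y(X)$ is $\log(1-X)$ plus possibly more (depending on $Y_0$), which has $X^0$-coefficient zero since it lies in $X\CC\llbracket X\rrbracket$; hence $\const_X(\log Y(X))$ has no degree-zero contribution, i.e.\ it lies in $(\mathbb{C}[q^{\pm\frac{1}{2}}]\llbracket \textup{NE}(\bP)\rrbracket )_{>0}$. The only subtlety worth flagging is well-definedness of $\const_X$ on the ring $\CC\llbracket X\rrbracket [X^{-1}]\llbracket \textup{NE}(\bP)\rrbracket$: for a fixed curve class $\beta$, the coefficient of $z^\beta$ in $\log Y(X)$ is an element of $\CC[q^{\pm\frac{1}{2}}]\llbracket X\rrbracket[X^{-1}]$ — a Laurent series with finitely many negative powers — so its $X^0$-coefficient is a single well-defined element of $\CC[q^{\pm\frac{1}{2}}]$; summing over $\beta$ keeps us in $\CC[q^{\pm\frac{1}{2}}]\llbracket \textup{NE}(\bP)\rrbracket$.

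**Main obstacle.** There is no deep obstacle; this is a bookkeeping lemma. The one place to be careful is the interaction of the two gradings — the $(\,\cdot\,)_{>0}$-filtration by $\beta\cdot E$ and the $X$-adic filtration on the degree-zero piece — when justifying that the logarithmic series converges. The cleanest route is to note that $g(X) = g_{\mathrm{pol}}(X) + g_{>0}(X)$ lies in the ideal generated by $X$ (in the completed ring) together with $(\,\cdot\,)_{>0}$, so $g(X)^n$ lies in the $n$-th power of that ideal, and the ring is complete with respect to it; hence $\log(1+g(X))$ is well-defined there, and one reads off membership in the two summands by reducing mod $(\,\cdot\,)_{>0}$.
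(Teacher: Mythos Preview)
Your proposal is correct and follows essentially the same approach as the paper: decompose $Y(X)=1+Y_1(X)+Y_2(X)$ with $Y_1\in X\CC[X]$ and $Y_2\in(\,\cdot\,)_{>0}$, expand $\log(1+Y_1+Y_2)$ as a series, and sort the terms $Y_1^k$, $Y_1^kY_2^l$, $Y_2^k$ into the two summands. You also correctly flag that the degree-zero part of $\log Y(X)$ lands in $X\CC\llbracket X\rrbracket$ rather than $X\CC[X]$; the paper's own proof makes exactly the same observation (``taking the formal limit $k,l\to\infty$ changes $X\CC[X]$ to $X\CC\llbracket X\rrbracket$''), so the statement is to be read with this completion in mind.
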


\begin{proof}
We write $Y(X)=1+Y_1(X)+Y_2(X)$ with $Y_1(X)\in X\mathbb{C}[X]$ and $Y_2(X)\in (\mathbb{C}[q^{\pm\frac{1}{2}}]\llbracket X\rrbracket [X^{-1}]\llbracket \textup{NE}(\bP)\rrbracket )_{>0}$. Then 
\[ \log Y(X) = \sum_{k>0} \frac{(-1)^k}{k} (Y_1(X)+Y_2(X))^k. \]
For all $k,l>0$ we have $Y_1(X)^k\in X\mathbb{C}[X]$, $Y_1(X)^kY_2(X)^l\in (\mathbb{C}[q^{\pm\frac{1}{2}}]\llbracket X\rrbracket [X^{-1}]\llbracket \textup{NE}(\bP)\rrbracket )_{>0}$ and $Y_2(X)^k\in (\mathbb{C}[q^{\pm\frac{1}{2}}]\llbracket X\rrbracket [X^{-1}]\llbracket \textup{NE}(\bP)\rrbracket )_{>0}$. Taking the formal limit $k,l\rightarrow \infty$ changes $X\mathbb{C}[X]$ to $X\mathbb{C}\llbracket X\rrbracket $ and doesn't affect $(\mathbb{C}[q^{\pm\frac{1}{2}}]\llbracket X\rrbracket [X^{-1}]\llbracket \textup{NE}(\bP)\rrbracket )_{>0}$.
\end{proof}

\begin{corollary}
\label{cor:ua}
We have
\[ \ua(z,q) = \const_X(\log Y(X)). \]
\end{corollary}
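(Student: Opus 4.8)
The plan is to observe that this corollary is essentially a bookkeeping consequence of the two preceding propositions, so the work is almost entirely in checking that the expressions involved lie in the rings where the operations $\log$ and $\const_X$ make sense. First I would unwind Definition \ref{defi:reseq}, which sets $\ua(z,q) = \const_X\!\big(\log(\Psi(qX)/\Psi(X))\big)$. Then I would invoke Proposition \ref{prop:Ypsi}, which identifies the ratio $\Psi(qX)/\Psi(X)$ with the distinguished solution $Y(X)$ of $p(X,Y)=0$ produced in Proposition \ref{prop:Y}, namely the unique one lying in $1 + X\CC[X] + (\mathbb{C}[q^{\pm\frac12}]\llbracket X\rrbracket[X^{-1}]\llbracket \textup{NE}(\bP)\rrbracket)_{>0}$. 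Substituting this identity gives $\log(\Psi(qX)/\Psi(X)) = \log Y(X)$ verbatim, and hence $\const_X(\log(\Psi(qX)/\Psi(X))) = \const_X(\log Y(X))$.

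The one point that deserves care is that the right-hand side $\const_X(\log Y(X))$ is a priori only a formal symbol: one must know that $\log Y(X)$ can be formed and that extracting its $X^0$-coefficient lands in a well-defined ring. This is exactly the content of Lemma \ref{lem:welldef}: because $Y(X)$ has the form $1 + (\text{element of }X\CC[X]) + (\text{element of degree} \ge 1)$, the series $\log Y(X) = \sum_{k\ge 1}\frac{(-1)^{k-1}}{k}(Y(X)-1)^k$ converges $z$-adically, lies in $X\CC\llbracket X\rrbracket + (\mathbb{C}[q^{\pm\frac12}]\llbracket X\rrbracket[X^{-1}]\llbracket \textup{NE}(\bP)\rrbracket)_{>0}$, and its constant term in $X$ is a well-defined element of $(\mathbb{C}[q^{\pm\frac12}]\llbracket \textup{NE}(\bP)\rrbracket)_{>0}$. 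So I would simply cite Lemma \ref{lem:welldef} for this and then chain the equalities.

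Putting it together: $\ua(z,q) \overset{\ref{defi:reseq}}{=} \const_X\!\big(\log(\Psi(qX)/\Psi(X))\big) \overset{\ref{prop:Ypsi}}{=} \const_X(\log Y(X))$, with the middle and last expressions well-defined by Lemma \ref{lem:welldef}. That is the claim.

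I do not anticipate a genuine obstacle here; the corollary is a one-line corollary of Propositions \ref{prop:Y}--\ref{prop:Ypsi} and Lemma \ref{lem:welldef}. If anything, the only thing to be mildly careful about is not to conflate $\Psi$ itself — which need not be a genuine element of the representation ring in a naive sense — with the ratio $\Psi(qX)/\Psi(X)$, which \emph{is} controlled by Proposition \ref{prop:Y}; but Proposition \ref{prop:Ypsi} has already done precisely that work, so nothing further is needed. The real payoff of this corollary is downstream: it reduces the comparison $a(z,q) = \ua(z,q)$ in Theorem \ref{thm:per} to comparing $\const_{X,Y}(\log p(X,Y))$ with $\const_X(\log Y(X))$, i.e.\ to a statement purely about the solution series $Y(X)$ rather than the wavefunction $\Psi$.
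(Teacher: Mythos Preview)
Your proposal is correct and matches the paper's own proof essentially line for line: the paper also cites Lemma~\ref{lem:welldef} for well-definedness and then says the statement follows directly from Definition~\ref{defi:reseq} and Proposition~\ref{prop:Ypsi}. Your write-up is more verbose but the logical content is identical.
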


\begin{proof}
The right hand side is well-defined by Lemma \ref{lem:welldef}. Then the statement follows directly from Definition \ref{defi:reseq} and Proposition \ref{prop:Ypsi}.
\end{proof}

\subsection{Computing the quantum A-Period}
\label{sec:apers}

Here we explain the method of computing the quantum A-period \`a la \cite{ACDKV}, using the example $(\bP^2,E)$.\footnote{We
thank Daniel Krefl for explaining the calculation in Section 7.3 of \cite{ACDKV}.}
We have
\begin{equation}
\label{eq:pereq2}
1 - X - Y(X) - \frac{s}{X Y(q^{-1}X)} = 0,
\end{equation}
where we have put $s = q^{\frac{1}{2}} z$.  We will later specialize
$s$ in our solution to $q^{\frac{1}{2}}z$.

We write as an ansatz $Y(X) = \sum_{k\geq 0}Y_k(X)s^k,$ then solve order-by-order in $s$.  Taking the equation modulo $s$, we get $Y_0 = 1 - X.$ 
Taking \eqref{eq:pereq2} modulo $s^2$ and inserting $Y_0$ gives the equation
$$1 - X - (1 - X) - Y_1 s - \frac{s}{X(1-q^{-1}X + ...)} = 0,$$
so $Y_1 = -\frac{1}{X(1-q^{-1}X)}.$

Similarly, taking \eqref{eq:pereq2} modulo $s^3$ and inserting $Y_0$ and $Y_1$ gives the equation
$$\frac{1}{X(1-q^{-1}X)} s - Y_2 s^2 - \frac{s}{X(1-q^{-1}X - \frac{1}{q^{-1}X(1-q^{-2}X)} s + ...)} = 0,$$
We rewrite the last term as
\begin{eqnarray*}
&& \frac{s}{X(1-q^{-1}X - \frac{1}{q^{-1}X(1-q^{-2}X)} s + ...)} \\
&=& \frac{s}{X(1-q^{-1}X)} \cdot \frac{1}{1 - \frac{1}{(1-q^{-1}X)q^{-1}X(1-q^{-2}X)} s + ...)} \\
&=& \frac{s}{X(1-q^{-1}X)} \cdot \left(1 + \frac{1}{(1-q^{-1}X)q^{-1}X(1-q^{-2}X)} s + ...\right)
\end{eqnarray*}
The first term cancels the first term in the equation above. To cancel the second term we need to have
$$Y_2 = -\frac{1}{X^2q^{-1}(1-q^{-1}X)^2(1-q^{-2}X)}.$$

It is clear that the period equation \eqref{eq:pereq2}
can be written in terms
of the coefficients $Y_n(X).$ 
First, some notation.
For a power series $B = \sum B_n s^n,$ recall we write $[s^n]B$ for the coefficient $B_n$,
and let us define the truncation $\hat{B}_{\leq n} := \sum_{k\leq n} B_k s^k.$

Then we can write the difference equation as the recursion $Y_0 = 1-X$ and
$$Y_{n+1} = [s^n]\frac{-1}{XY_{\leq n}(q^{-1}X)}$$
for $n\geq 0.$

Note that $Y(X) \in 1 + X\bC[X] + s\bC[q^{\pm 1},X^{-1}]\llbracket X\rrbracket \llbracket s\rrbracket $, hence $\log(Y(X)) \in X\bC\llbracket X\rrbracket  + s\bC[q^{\pm 1},X^{-1}]\llbracket X\rrbracket \llbracket s\rrbracket $.  In particular, $\log(Y(X))$
has a well-defined residue and constant term with respect to $X$.
Let's compute:
$$Y = 1 - X + Y_1 s + Y_2 s^2 + ... = (1-X)\left(1 + \frac{Y_1}{1-X} s + \frac{Y_2}{(1-X)}s^2 + ...\right),$$
so
$$\log(Y) = \log(1-X) + \log\left(1 + \frac{Y_1}{1-X} s + \frac{Y_2(1-X)-Y_1^2/2}{(1-X)^2} s^2 + ...\right).$$

Next write
$$\ua(z,q) = \const(\log Y(X)) = \sum {R}_n(q) s^n = \sum {R}_n(q) q^{\frac{n}{2}}z^n =: \sum \ua_n(q) z^n,$$
where we have recalled $s = q^{\frac{1}{2}}z$ and defined the coefficients $\ua_n(q)$ of $\ua(z,q).$ 

Then $\ua_0(q) = 0.$  Next 
$Y_1 = -\frac{1}{X(1-q^{-1}X)}$ gives
$R_1 = -q^{-1} - 1,$ and $\ua_1(q) = -q^{\frac{1}{2}}-q^{-\frac{1}{2}},$ as in Example \ref{ex:ptf}.
From the above form of $Y_2,$ we also find
$$\ua_2(q) = -q^2 - \frac{7}{2}q - 6 - \frac{7}{2}q^{-1} - q^{-2}.$$

Using this method, it is simple to find
the quantum period to any desired order.
To $O(z^3)$ we find
$$
\begin{array}{l}
\ua(z,q) = \left({-\qh - \qmh}\right) z +
\left(-q^2 - \frac{7}{2} q - 6 - \frac{7}{2} q^{-1} - q^{-2}\right) z^{2} + \\[5pt]
\ \left(-q^{\frac{9}{2}} - 3 q^{\frac{7}{2}} - 12 q^{\frac{5}{2}} - \frac{88}{3} q^{\frac{3}{2}} - 48 \qh - 48 \qmh - \frac{88}{3} q^{-\frac{3}{2}} - 12 q^{-\frac{5}{2}} - 3 q^{-\frac{7}{2}} - {q^{-\frac{9}{2}}}\right) z^{3} + \ldots
\end{array}
$$
As remarked in Example \ref{ex:introex}, this $q$-refines the classical period. Moreover, it agrees with $a(z,q)$ in Example \ref{ex:ptf}. By Theorem \ref{thm:per} below, this is not a coincidence. 
\begin{remark}
Tropical methods have also been made to work recently by Singh and Zeitlin for different quantum integrable systems, \cite{SZ}.   
\end{remark}

\subsection{Equality of definitions for the quantum A-period}

Recall from Section \ref{S:mircurve} that $p(X,Y)=1-f(X,Y)$ gets quantized to an operator which acts on a function $\Phi(X)$ by $X\cdot\Phi(X)=X\Phi(X)$ and $Y\cdot\Phi(X)=\Phi(qX)$, and the wavefunction $\Psi(X)$ is defined by $p\cdot\Psi=0$. We have defined
\[ a(z,q) = \const_{X,Y}(\log p(X,Y)) = - \sum_{n>0} \frac{1}{n} \const_{X,Y}(f^n) \]
and by Corollary \ref{cor:ua},
\[ \ua(z,q) = \const_X(\log Y(X)), \quad Y(X) = \frac{\Psi(qX)}{\Psi(X)}, \]
so that the action of $Y$ on $\Psi(X)$ is given by multiplication with $Y(X)$. Note that for simplicity we have hidden the dependencies on $t$, $z$ and $q$ here.

\begin{theorem}
\label{thm:per}
We have $a(z,q) = \ua(z,q)$.
\end{theorem}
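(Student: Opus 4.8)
The plan is to conjugate the quantum mirror curve operator $p$ by the wavefunction $\Psi$, which converts the defining relation of $Y(X)$ into the statement that the conjugated operator kills the constant function, and then to extract $\ua(z,q)$ from the conjugated operator using cyclicity of $\const_{X,Y}$. First I would record two structural facts. (i) $\const_{X,Y}\colon\cD\to\CC[q^{\pm\frac12}]\llbracket\NE(\bP)\rrbracket$ is a trace: on monomials $\hat z^m,\hat z^{m'}$ one has $\const_{X,Y}(\hat z^m\hat z^{m'})=\delta_{m+m',0}=\const_{X,Y}(\hat z^{m'}\hat z^m)$ (when $m+m'=0$ the prefactor is $q^{-\frac12\det(m\mid -m)}=1$), so $\const_{X,Y}(AB)=\const_{X,Y}(BA)$ in general, and this persists on the completion $\widehat\cD$ that we work in. (ii) There is an algebra automorphism $\Phi$ of $\cD$ — conjugation by $\Psi$ — acting on a normal-ordered element $g=\sum_b g_b(X)Y^b$ by multiplying the $Y^b$-component by $Y^{[b]}(X):=\prod_{j=0}^{b-1}Y(q^jX)$ for $b\ge0$, and by the reciprocal product for $b<0$, with $Y(X)$ the series of Proposition \ref{prop:Y}. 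One checks directly that $\Phi(YX)=\Phi(qXY)$, that $\Phi$ is invertible, and that $\Phi$ fixes every $Y^0$-component (since $Y^{[0]}=1$); in particular $\const_{X,Y}\circ\Phi=\const_{X,Y}$.

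Set $\tilde f:=\Phi(f)$ and $\tilde p:=\Phi(p)=1-\tilde f$. Since $\Phi$ is a ring homomorphism and, for each fixed $z$-degree, only finitely many $n$ contribute to $\const_{X,Y}(f^n)$ (here one uses $\beta\cdot E>0$ for $0\ne\beta\in\NE(\bP)$), the exchange of $\sum_n$ with $\const_{X,Y}$ is legitimate and
\[ a(z,q)=-\sum_{n>0}\tfrac1n\const_{X,Y}(f^n)=-\sum_{n>0}\tfrac1n\const_{X,Y}\big(\Phi(f^n)\big)=-\sum_{n>0}\tfrac1n\const_{X,Y}(\tilde f^{\,n})=\const_{X,Y}(\log\tilde p). \]
The gain is that $\tilde p\cdot 1=\sum_b\tilde p_b(X)$ equals $p$ evaluated at $Y=Y(X)$ in the ordered sense, and this is precisely the left-hand side of \eqref{eq:pereq2}, which vanishes by the very definition of $Y(X)$. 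Hence $\tilde p$ annihilates the constant function $1$.

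Next I would factor $\tilde p$. By Convention \ref{convention:fano} the top $Y$-power of $p$ is $Y^1$ with coefficient $-1$, so $\tilde p_1=-Y(X)$; since $\tilde p\cdot 1=0$, a telescoping of coefficients yields $\tilde p=\tilde q\,(Y-1)$ with $\tilde q=\sum_{b\le0}\tilde q_b(X)Y^b$, $\tilde q_b=\sum_{c>b}\tilde p_c$, where $\tilde q_0=\tilde p_1=-Y(X)$ and — because $\tilde p_0=1-X$ while $\tilde p_1\equiv-(1-X)\bmod z$ — each $\tilde q_b$ with $b<0$ has positive $z$-degree. Rewriting, $\tilde p=Y(X)\cdot B\cdot(1-Y)$ with $B:=1-Y(X)^{-1}\sum_{b<0}\tilde q_b(X)Y^b\in 1+(\textup{positive }z\text{-degree})$, and with $1-Y$ having only non-negative $Y$-powers. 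As $\const_{X,Y}$ is a trace it kills every commutator, hence every higher Baker–Campbell–Hausdorff term, so $\log$ splits under the trace:
\[ \const_{X,Y}(\log\tilde p)=\const_{X,Y}(\log Y(X))+\const_{X,Y}(\log B)+\const_{X,Y}(\log(1-Y)). \]
(The BCH series converges in $\widehat\cD$: brackets with $\log B$ raise $z$-degree, and the bracket of $\log(1-X)$ with $\log(1-Y)$ raises $(X,Y)$-degree.) Now the three terms are immediate: $\const_{X,Y}(\log Y(X))=\const_X(\log Y(X))=\ua(z,q)$ by Corollary \ref{cor:ua}, as $\log Y(X)$ is $Y$-free; $\const_{X,Y}(\log(1-Y))=\const_Y\big(-\sum_{k\ge1}Y^k/k\big)=0$; and $\const_{X,Y}(\log B)=0$ because $1-B$ has only strictly negative $Y$-powers, hence so does every $(1-B)^n$ with $n\ge1$, so each has vanishing $Y^0$-component. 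Therefore $a(z,q)=\const_{X,Y}(\log\tilde p)=\ua(z,q)$.

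The main obstacle is making the last circle of ideas precise: choosing the completion $\widehat\cD$ in which $\log p$, $\log\tilde p$, the products $Y^{[b]}(X)$ and the BCH series all live and converge (this is forced by the $z$-grading together with an $(X,Y)$-adic filtration, but needs a careful bookkeeping, since e.g. $\log p$ already has $Y$-powers that are unbounded at each fixed $z$-degree), and checking that ``$\log$ splits under a trace'' — conceptually the triviality that a trace annihilates commutators, but dependent on the BCH convergence just mentioned. A secondary point is the general-Fano bookkeeping behind $\tilde q_0=-Y(X)$: one must confirm from the Convention and Fano-ness that the coefficient of $Y$ in $f$ is exactly $1$ (so that the factor $B$ has no $Y^0$-deviation and no extra term $\const_{X,Y}(\log(1+O(z)))$ survives); alternatively, that any such extra term vanishes.
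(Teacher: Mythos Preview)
Your approach is essentially correct and is genuinely different from the paper's. The paper does \emph{not} conjugate by the wavefunction or invoke BCH; instead it runs a residue/substitution argument in formal Laurent series. Concretely, it introduces a redundant variable $t$ via $X=xt$, $Y=yt$ so that the $z$-grading becomes a $t$-grading, rewrites both sides as $1-\const_{x,y}\big((1-tf)^{-1}\big)$, and then applies the change-of-variables identity $\const_{x,y}(g)=\const_{x,f}\big(g\cdot f\partial_f y/y\big)$ (their Lemma~\ref{lem:subst}, using $\ord_Y f=1$ and the Laurent inversion Lemma~\ref{lem:ord1}) to pass from the variable $y$ to the variable $f$; on the curve $f=1/t$ this collapses to $-\const_x(t\partial_t\log y)$, which is $D\const_X(\log Y)$ after undoing the $t$-bookkeeping. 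So the paper's proof is a single substitution lemma, with all ``quantum'' content hidden in the coefficients.

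What each approach buys: your wavefunction-conjugation argument is conceptually transparent about \emph{why} the two definitions coincide --- you literally transport $p$ to an operator that kills the constant function and then read off $\log Y(X)$ as the only surviving trace --- and it makes the role of Convention~\ref{convention:fano} (the $Y^1$-coefficient of $f$ is exactly $1$) completely explicit. The cost, as you correctly flag, is the housekeeping: one must fix a completion in which $\log p$, the products $Y^{[b]}(X)$, and the BCH series simultaneously live, and check that the trace indeed annihilates the BCH remainder (your two-step filtration, $z$-degree for anything touching $B$ or the $z$-part of $\log Y(X)$, and $(X,Y)$-degree for $[\log(1-X),\log(1-Y)]$, is the right idea). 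The paper's route avoids BCH entirely and needs only the elementary Appendix lemmas, but it is less self-evidently ``quantum'': the substitution lemma is stated for commutative Laurent series, and one has to convince oneself that the same manipulation is valid when the $q$-dependence is carried in the normal-ordered coefficients. Your secondary worry --- that the $Y^1$-coefficient of $f$ might differ from $1$ by $z$-terms --- does not arise: under Convention~\ref{convention:fano} only $(1,0)$ and $(0,1)$ carry the trivial curve class, so modulo $z$ one has $p_1=-1$, $p_0=1-X$, and $p_c=0$ for $c\neq 0,1$, exactly as you use.
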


\begin{proof}
Substituting $X=xt$ and $Y=yt$, we get $f(X,Y,z) = tf(x,y,z)|_{z^\beta=t^{\beta\cdot E}z^\beta}$. E.g., for $\bP^2$ we have $f = X+Y+zX^{-1}Y^{-1} = t(x+y+t^3zx^{-1}y^{-1})$. Applying $D$ as in Section \ref{sec:notation}, the claim is equivalent to 
$$D\const_{X,Y}(\log p(X,Y)) = 1 - \const_{x,y}\left(\frac{1}{1-tf(x,y)}\right) = D\const_X(\log Y).$$
Note that $g(t,y):=(1-tf)^{-1} \in (R[y]\llbracket y^{-1}\rrbracket)\llbracket t\rrbracket$ for $R=\CC[\textup{NE}(X)][x]\llbracket x^{-1}\rrbracket$. By Convention \ref{convention:fano}, we can assume that $f(X,Y)$ does not contain any terms of $Y$-order $>1$. In other words, $\ord_Y(f)=1$, with Definition \ref{def:ord}. By Lemma \ref{lem:ord1}, $y(x,f)$ is a Laurent series of order $1$ in $f$. Then we can use Lemma \ref{lem:subst} to get
\begin{eqnarray*} 
\const_{x,y}\left(\frac{1}{1-tf(x,y)}\right) 
&=& \const_{x,f}\left(\frac{1}{1-tf}\frac{f\partial_f y(x,f)}{y(x,f)}\right) \\
&=& \const_{x,f}\left(\left(1+tf+t^2f^2+\ldots\right)\frac{f\partial_fy(x,f)}{y(x,f)}\right) \\
&=& \sum_{k=0}^\infty \const_x\left([f^{-k}]\frac{f\partial_f y(x,f)}{y(x,f)}\right) t^k.
\end{eqnarray*}
Here $[f^{-k}]$ is the operator that takes the coefficient of $f^{-k}$ in the Laurent series right of it.
In the last equality we have used that $y(x,f)$ does not depend on $t$. Now recall $p=1-tf(x,y)=0$. Hence we have $f=1/t$ and $\partial_f y(x,f) = -t^2 \partial_t y(x,t)$. Inserting this into the equation above gives
\begin{eqnarray*} 
\const_{x,y}\left(\frac{1}{1-tf(x,y)}\right) 
&=& \sum_{k=0}^\infty \const_x\left([t^{k}]\frac{f\partial_f y(x,f)}{y(x,f)}\right) t^k \\
&=& \const_x\left(\frac{f\partial_f y(x,f)}{y(x,f)}\right) \\
&=& \const_x\left(-\frac{t\partial_t y(x,t)}{y(x,t)}\right) \\
&=& -\const_x\left(t\tfrac{d}{dt}\log y\right),
\end{eqnarray*}
Note that taking the logarithmic derivative $t\tfrac{d}{dt}$ has the same effect as $D$, since $t$ captures the total $z$-degree. Using $\log Y = \log y + \log t$ we get
\[ D\const_X(\log Y) = 1 + \const_x(D\log y) = 1-\const_{x,y}\left(\frac{1}{1-tf(x,y)}\right), \]
as claimed.
\end{proof}

\subsection{Natural \texorpdfstring{$q$}{q}-refinements of mirror maps}
\label{S:mirmap}

We recall that in \cite{GRZ},
the open mirror map was defined
by $U = xe^{a(z)},$ relating
complex and symplectic open moduli $x$ and
$U$ via $a(z)$, the A-period of the mirror curve, which
depended on the closed complex modulus, $z$.
Moreover $z$ and the symplectic modulus $Q$ were related by $Q = ze^{-a(z)},$
and the function $M(Q) = e^{-a(z)}$ was given
enumerative interpretation.  Here we define
some straightforward $q$-analogues.

\begin{lemma} 
\label{lemma-completed-iso}
Let $A$ be a Noetherian algebra over $A_0$ with an ideal $I$ so that the composition $A_0\to A\to A/I$ is an isomorphism.
The formal completion of $A$ in the ideal $I$ is denoted $\widehat{A}$. 
Suppose $\phi\colon \widehat{A} \to \widehat{A}$ is an $A_0$-algebra homomorphism satisfying $\phi(I)\subseteq I$ and inducing an isomorphism of $\phi\colon \widehat{A}/I^2\to \widehat{A}/I^2$ then $\phi$ is an isomorphism.
\end{lemma}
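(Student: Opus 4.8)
The plan is to show that $\phi\colon\widehat{A}\to\widehat{A}$ is an isomorphism by exhibiting its inverse degree-by-degree with respect to the $I$-adic filtration, using the hypothesis that $\phi$ induces an isomorphism modulo $I^2$ as the base case of an induction. First I would observe that since $\phi(I)\subseteq I$, the map $\phi$ is automatically continuous for the $I$-adic topology, so it descends to compatible maps $\phi_n\colon\widehat{A}/I^n\to\widehat{A}/I^n$ for all $n\geq 1$, and $\phi$ is an isomorphism if and only if every $\phi_n$ is. (Here I use that $\widehat{A}=\varprojlim \widehat{A}/I^n = \varprojlim A/I^n$, so $\widehat{A}$ is $I$-adically complete and separated; Noetherianity of $A$ guarantees $I$-adic completion behaves well and $\bigcap_n I^n\widehat{A}=0$.) The case $n=1$ is the hypothesis that $A_0\to A/I$ is an isomorphism together with $\phi$ being an $A_0$-algebra map, which forces $\phi_1=\mathrm{id}$; the case $n=2$ is the assumed isomorphism $\widehat{A}/I^2\to\widehat{A}/I^2$.

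Next I would run the induction: assuming $\phi_n$ is an isomorphism, I want $\phi_{n+1}$ to be an isomorphism. Consider the commutative diagram with exact rows
\[
\begin{CD}
0 @>>> I^n/I^{n+1} @>>> \widehat{A}/I^{n+1} @>>> \widehat{A}/I^n @>>> 0\\
@. @VV{\gr_n\phi}V @VV{\phi_{n+1}}V @VV{\phi_n}V @.\\
0 @>>> I^n/I^{n+1} @>>> \widehat{A}/I^{n+1} @>>> \widehat{A}/I^n @>>> 0
\end{CD}
\]
By the five lemma, $\phi_{n+1}$ is an isomorphism provided $\phi_n$ is (induction hypothesis) and the induced map $\gr_n\phi\colon I^n/I^{n+1}\to I^n/I^{n+1}$ is an isomorphism. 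So the crux is to show each associated-graded piece $\gr_n\phi$ is bijective. Since $A$ is Noetherian, $I$ is finitely generated, say by $x_1,\dots,x_r$, and $I^n/I^{n+1}$ is generated as an $A_0$-module (equivalently an $A/I$-module) by the degree-$n$ monomials in the $x_i$; the map $\gr_n\phi$ is the degree-$n$ part of the induced graded ring endomorphism $\gr_\bullet\phi$ of $\gr_I A=\bigoplus_n I^n/I^{n+1}$. The hypothesis that $\phi$ induces an isomorphism on $\widehat{A}/I^2$ says exactly that $\gr_1\phi\colon I/I^2\to I/I^2$ is an isomorphism (it restricts the degree-$1$ part), and since $\gr_\bullet\phi$ is a ring homomorphism that is the identity in degree $0$ and an isomorphism in degree $1$, it is surjective in every degree; a surjective graded endomorphism of a Noetherian graded ring is an isomorphism (a standard application of Noetherianity: the kernels stabilize, or one argues degree-by-degree on finitely generated pieces). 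Hence $\gr_n\phi$ is an isomorphism for all $n$.

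The main obstacle — and the only nontrivial point — is the last assertion: that a surjective graded ring endomorphism of $\gr_I A$ which is the identity in degree $0$ and an isomorphism in degree $1$ is injective. I would handle this via Noetherianity of $\gr_I A$ (which follows from $A$ Noetherian): for fixed $n$, the module $I^n/I^{n+1}$ is a finitely generated $A_0$-module, and the ascending chain of kernels of the iterates $(\gr_n\phi)^k$ stabilizes; combined with surjectivity of $\gr_n\phi$ this forces the kernel to be zero. Once every $\gr_n\phi$ is an isomorphism, the five-lemma induction gives that every $\phi_n$ is an isomorphism, and passing to the inverse limit — using that $\widehat{A}$ is $I$-adically complete and the inverse system of inverses $\{\phi_n^{-1}\}$ is compatible — produces a two-sided inverse to $\phi$. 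This completes the proof.
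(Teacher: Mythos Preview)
Your proof is correct and follows essentially the same approach as the paper's: first show that $\phi$ induces an isomorphism on each graded piece $I^n/I^{n+1}$ (using that the graded ring is generated in degree $1$), then apply the five lemma to the short exact sequences $0\to I^{n}/I^{n+1}\to \widehat{A}/I^{n+1}\to \widehat{A}/I^{n}\to 0$ and pass to the inverse limit. You are more explicit than the paper about why surjectivity of $\gr_n\phi$ forces injectivity---namely the standard fact that a surjective endomorphism of a Noetherian module is injective, which needs $A_0\cong A/I$ to be Noetherian---a step the paper's proof leaves implicit when it says an isomorphism on the source of the multiplication map ``implies it on the target.''
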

\begin{proof} 
By assumption, $\phi$ respects the filtration by powers of $I$.
We first show by induction over $k$ that $\phi$ induces an isomorphism on $I^k/I^{k+1}$.
The base case $I^0/I=A_0$ is given. The multiplication map $m\colon I/I^2 \otimes I^n/I^{n+1}  \to  I^{n+1}/I^{n+2}$ is easily seen to be surjective. The induction hypothesis gives an isomorphism on the source of $m$ which implies it on the target, inducing the induction step.

By another induction over $k$, we now show that $\phi$ induces an isomorphism on $A/I^k$ which is straightforward when applying the five lemma to the $\phi$-induced endomorphism of the exact sequence $0\to I^{k-1}/I^k\to A/I^k \to A/I^{k-1}\to 0$. 

Since $\widehat{A}=\liminv A/I^k$, the assertion follows.
\end{proof}

\begin{proposition}
\label{prop:autom} Mapping monomials by $\phi\colon Q^\beta \mapsto z^\beta e^{-(\beta\cdot E)a(-z,q)}$ defines an automorphism of $\CC[q^{\pm\frac12}]\llbracket \textup{NE}(\bP)\rrbracket.$
\end{proposition}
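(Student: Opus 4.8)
The plan is to deduce everything from Lemma~\ref{lemma-completed-iso}. Take $A_0=\CC[q^{\pm\frac12}]$, a one-variable Laurent polynomial ring over $\CC$ and hence Noetherian, and let $A=A_0[\textup{NE}(\bP)]$ be the monoid algebra. Since $\bP$ is Fano, its Mori cone is rational polyhedral, so $\textup{NE}(\bP)$ is a finitely generated monoid by Gordan's lemma, and therefore $A$ is a Noetherian $A_0$-algebra. Because $E=-K_\bP$ is ample, $\beta\cdot E>0$ for every nonzero effective class, so the grading $\deg(z^\beta)=\beta\cdot E$ is strictly positive off $\beta=0$; let $I\subseteq A$ be the monomial ideal generated by all $z^\beta$ with $\beta\neq0$. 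Then $A_0\to A\to A/I$ is an isomorphism and the $I$-adic completion is $\widehat A=\CC[q^{\pm\frac12}]\llbracket\textup{NE}(\bP)\rrbracket$, the ring in the statement.

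First I would check that $\phi$ is a well-defined $A_0$-algebra endomorphism of $\widehat A$ with $\phi(I)\subseteq I$. The assignment $\beta\mapsto z^\beta e^{-(\beta\cdot E)a(-z,q)}$ is multiplicative in $\beta$, hence defines a monoid homomorphism $\textup{NE}(\bP)\to(\widehat A,\cdot)$, provided its values lie in $\widehat A$. This holds because $a(-z,q)$ has vanishing constant term, i.e.\ $a(-z,q)\in I\widehat A$: by Definition~\ref{def:aper} it is a sum of terms $\tfrac1n\const_{X,Y}(f^n)$ with $n\geq1$, and every monomial occurring there carries $z^\gamma$ for a nonzero effective class $\gamma$ (equivalently, $f$ has no $X^0Y^0$-term and every monomial of $f$ has strictly positive closed-moduli degree). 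Hence $e^{-(\beta\cdot E)a(-z,q)}$ converges $I$-adically and lies in $1+I\widehat A$. The resulting $A_0$-algebra map $A\to\widehat A$ sends each generator $z^\beta$, $\beta\neq0$, into $I\widehat A$, so it carries $I^k$ into $I^k$ for all $k$; passing to the inverse limit of the induced maps $A/I^k\to A/I^k$ extends it to an endomorphism $\phi$ of $\widehat A$ with $\phi(I)\subseteq I$.

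Next I would verify the remaining hypothesis of Lemma~\ref{lemma-completed-iso}, namely that $\phi$ is an isomorphism modulo $I^2$; in fact it is the identity there. Since $a(-z,q)\in I\widehat A$ we have $e^{-(\beta\cdot E)a(-z,q)}\equiv1\pmod{I\widehat A}$, so for every $\beta$,
\[ \phi(z^\beta)-z^\beta=z^\beta\bigl(e^{-(\beta\cdot E)a(-z,q)}-1\bigr)\in I^2, \]
and $\phi$ is $A_0$-linear, so $\phi\equiv\id$ on $\widehat A/I^2$. Lemma~\ref{lemma-completed-iso} applies with $A$, $A_0$, $I$, $\phi$ as above and gives that $\phi$ is an automorphism of $\widehat A=\CC[q^{\pm\frac12}]\llbracket\textup{NE}(\bP)\rrbracket$, which is the assertion.

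The only genuine subtlety, and the step I would be most careful about, is the claim that $a(-z,q)$ lies in the maximal ideal $I\widehat A$: this is what both makes $\phi$ land in $\widehat A$ and, more importantly, forces $\phi\equiv\id\pmod{I^2}$. If $a$ had a nonzero constant term $c(q)$, the map $\phi$ would rescale each $z^\beta$ by $e^{-(\beta\cdot E)c(q)}$, and Lemma~\ref{lemma-completed-iso} would not apply directly (even though $\phi$ would still be invertible). Positivity of the grading $\beta\mapsto\beta\cdot E$ on $\textup{NE}(\bP)\setminus\{0\}$, together with the fact that every monomial of $f$ has strictly positive closed-moduli degree, makes this immediate.
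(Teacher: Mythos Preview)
Your proof follows exactly the paper's approach: set up $A_0$, $A$, $I$ as you do and apply Lemma~\ref{lemma-completed-iso}, after checking that $\phi$ is a well-defined endomorphism that reduces to the identity modulo $I^2$. The paper's proof is terser but identical in structure.

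One inaccuracy to fix: in your justification that $a(-z,q)\in I\widehat A$, the parenthetical ``every monomial of $f$ has strictly positive closed-moduli degree'' is false --- for instance, for $\bP^2$ the monomials $X$ and $Y$ in $f$ carry $z^0$. The correct reason is that $f$ is homogeneous of degree $1$ for the grading $\deg(z^\gamma X^aY^b)=(\gamma\cdot E)+a+b$ (this is the content of the substitution $X=xt$, $Y=yt$ used in the proof of Theorem~\ref{thm:per}), so any monomial contributing to $\const_{X,Y}(f^n)$ has $a=b=0$ and hence $\gamma\cdot E=n\geq 1$; this is also recorded in the proof of Proposition~\ref{prop:thetaper}. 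Your conclusion $a(-z,q)\in I\widehat A$ is correct, only the stated reason needs adjusting.
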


\begin{proof} It is straightforward to check that 
$\phi(Q^\beta)\cdot \phi(Q^{\beta'})=\phi(Q^{\beta+\beta'})$, so $\phi$ gives a well-defined endomorphism. 
If $I$ denotes the ideal generated by all monomials $Q^\beta$ in the ring 
$A=\CC[q^{\pm\frac12}][\textup{NE}(\bP)]$ then $A$, $I$, $\widehat{A}=\CC[q^{\pm\frac12}]\llbracket \textup{NE}(\bP)\rrbracket$ and $\phi$ satisfy the assumptions of Lemma~\ref{lemma-completed-iso} which gives the assertion.
\end{proof}

\begin{definition}
\label{def:mirmap}
Put $U = xe^{a(-z,q)},$ a relation between
two formal parameters
$x$ and $U$ via the quantum period of Definition \ref{def:aper}.  
We define $Q^\beta(z,q) = z^\beta e^{-(\beta\cdot E)a(-z,q)}$ and by Proposition \ref{prop:autom} obtain the inverse expressions $z^\beta(Q,q)$.
We define the function
\begin{equation}
    \label{eq:Mdef}
    M(Q,q) := e^{-a(-z(Q,q),q)} = \frac{U}{x} = \left(\frac{z^\beta(Q,q)}{Q^\beta}\right)^{-1/(\beta\cdot E)}
\end{equation}
where the last equality holds for every non-zero class $\beta\in\NE(\PP)$.
We call the series $M(Q,q)$ a $q$-refined open mirror map.  We will give it an enumerative interpretation in
Theorem \ref{thm:agrees}.
\end{definition}

\begin{example}
\label{ex:qM}
In the case of $\bP^2$, we have
$$
\begin{array}{l}
M(Q,q) = 1 - \left(\qh + \qmh\right)Q + \left(q^2+q+1+q^{-1}+q^{-2}\right)Q^2 \\[5pt]
\ \ - \left( q^{\frac{9}{2}} + 3q^{\frac{7}{2}} + 4q^{\frac{5}{2}}+ 4q^{\frac{3}{2}}+ 4q^{\frac{1}{2}}+ 4q^{-\frac{1}{2}}+ 4q^{-\frac{3}{2}}+ 4q^{-\frac{5}{2}}+ 3q^{-\frac{7}{2}}+ q^{-\frac{9}{2}}\right)Q^3 + \cdots,
\end{array}
$$
as computed in Example \ref{ex:gwex}.
\end{example}

\section{Quantum periods, broken lines and descendant invariants}
\label{S:3}

In this section, we give a combinatorial construction of the quantum period (Definition \ref{def:aper}) via broken lines in a consistent scattering diagram. In Section \ref{S:agrees} we will use this description to prove that the primitive theta function, defined using broken lines, equals the open mirror map, also after $q$-refinement. Moreover, we discuss a relationship with descendant Gromov-Witten invariants with $\lambda$-class insertions.

\subsection{Toric degenerations and scattering diagrams}

\label{S:tordeg}

In this subsection and the next we summarize the definition of scattering diagrams and broken lines with closed moduli $z^\beta\in\CC[\textup{NE}(\bP)]$ and non-commutative variables $X,Y$ ($q$-refined). For more details see \cite{Bou}\cite{CPS}\cite{GRZ}.

Let $\mathbb{P}$ be a smooth projective Fano surface and let ${E}$ be a smooth anticanonical divisor, so that $(\mathbb{P},{E})$ is a smooth log Calabi-Yau pair. As in Section \ref{S:fano}, if $\bP$ is not toric, consider a $\mathbb{Q}$-Gorenstein degeneration of $\mathbb{P}$ to a Gorenstein toric Fano variety $\bP_{\Sigma'}$ or a smooth toric semi-Fano variety $\bP_\Sigma$. Let $\Delta^\star$ be the reflexive polygon given by the convex hull of the ray generators of $\Sigma'$ (or $\Sigma$) and let $\Delta$ be the polar dual polytope. Then $\Delta$ is the toric polytope of $\bP_{\Sigma'}$. The lattice points of $\Delta$ define an embedding of the projective toric variety $\bP_{\Sigma'}$ into $\bP^{|\Delta\cap\mathbb{Z}^2|-1}$. The polytope $\Delta$ is reflexive. In particular it contains a unique interior lattice point. 

Consider the central subdivision of $\Delta$ by adding, for each vertex, an edge that connects the vertex with the interior point. We can smooth the boundary of $\Delta$ by adding affine singularities on these new edges. This induces a toric degeneration of $(\mathbb{P},{E})$, i.e., a degeneration into a union of toric pieces such that the family is strictly semi-stable away from a codimension $2$ subset. The irreducible components of the central fiber of the toric degeneration correspond to the maximal cells of the central subdivision of $\Delta$. In other words, the central subdivision of $\Delta$ is an affine manifold with singularities that is the \emph{intersection complex} of the toric degeneration.

By dualizing we get the \emph{dual intersection complex} $B$. This is an affine manifold with singularities that has a natural polyhedral decomposition (dual to the central subdivision of $\Delta$). It has a unique bounded maximal cell (dual to the interior lattice point of $\Delta$) that is isomorphic to $\Delta^\star$. The other maximal cells are unbounded. The affine singularities are such that all unbounded edges of $B$ are parallel, so that there is a unique unbounded direction $m_{\text{out}}$. We write $\textup{Sing}(B)$ for the singular locus of $B$, which is a finite set of points, and $B_0:=B\setminus\textup{Sing}(B)$.

\begin{example}
\label{expl:P2int}
For $(\mathbb{P}^2,E)$, with $E\subset\mathbb{P}^2$ an elliptic curve, a toric degeneration into a union of three $\mathbb{P}(1,1,3)$'s is given by
\begin{eqnarray*}
\mathcal{X} &=& \{XYZ=t^3(W+X^3+Y^3+Z^3)\} \subset \mathbb{P}(1,1,1,3) \times \mathbb{A}_t^1 \rightarrow \mathbb{A}_t^1 \\
\mathcal{D} &=& \{W=0\} \subset \mathcal{X}
\end{eqnarray*}
Figure \ref{fig:P2int} shows the corresponding intersection complex (= subdivision of the moment polytope) and the dual intersection complex (sometimes called tropicalization). 
\end{example}

\begin{figure}[h!]
\centering
\begin{tikzpicture}[scale=1.6]
\coordinate[fill,circle,inner sep=1pt] (0) at (0,0);
\coordinate[fill,circle,inner sep=1pt] (1) at (-1,-1);
\coordinate[fill,circle,inner sep=1pt] (2) at (2,-1);
\coordinate[fill,circle,inner sep=1pt] (3) at (-1,2);
\coordinate (1a) at (-0.5,-0.5);
\coordinate (2a) at (1,-0.5);
\coordinate (3a) at (-0.5,1);
\draw (1) -- (2) -- (3) -- (1);
\draw (0) -- (1a);
\draw (0) -- (2a);
\draw (0) -- (3a);
\draw[gray,dashed] (1a) node[opacity=1,rotate=45]{$\times$} -- (1);
\draw[gray,dashed] (2a) node[opacity=1,rotate=63.43]{$\times$} -- (2);
\draw[gray,dashed] (3a) node[opacity=1,rotate=26.57]{$\times$} -- (3);
\coordinate[fill,circle,inner sep=1pt] (a) at (0,-1);
\coordinate[fill,circle,inner sep=1pt] (b) at (1,-1);
\coordinate[fill,circle,inner sep=1pt] (c) at (-1,0);
\coordinate[fill,circle,inner sep=1pt] (d) at (-1,1);
\coordinate[fill,circle,inner sep=1pt] (e) at (1,0);
\coordinate[fill,circle,inner sep=1pt] (f) at (0,1);
\draw[<->] (2,0.5) -- (2.5,0.5);
\end{tikzpicture}
\begin{tikzpicture}[scale=1.1]
\draw (1,0) -- (0,1) -- (-1,-1) -- (1,0);
\draw (1,0) -- (2.2,0);
\draw (0,1) -- (0,2.2);
\draw (-1,-1) -- (-2.2,-2.2);
\draw[gray,dashed,fill,fill opacity=0.2] (0.5,2.2) -- (0.5,0.5) node[opacity=1,rotate=45]{$\times$} -- (2.2,0.5);
\draw[gray,dashed,fill,fill opacity=0.2] (-2.2,-1.7) -- (-0.5,0) node[opacity=1,rotate=63.43]{$\times$} -- (-0.5,2.2);
\draw[gray,dashed,fill,fill opacity=0.2] (2.2,-0.5) -- (0,-0.5) node[opacity=1,rotate=26.57]{$\times$} -- (-1.7,-2.2);
\end{tikzpicture}
\caption{The intersection complex (left) and dual intersection complex (right) of $(\mathbb{P}^2,E)$.}
\label{fig:P2int}
\end{figure}
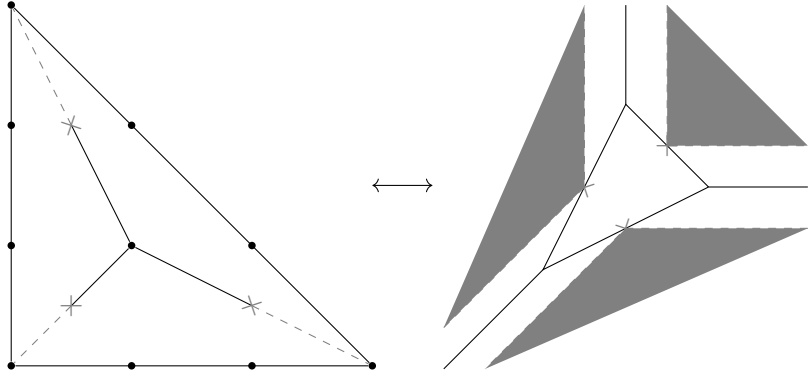

A \emph{multi-valued piecewise linear function} on an open subset $U\subset B_0 = B\setminus\textup{Sing}(B)$ is given by a collection of piecewise linear functions on an open covering $\{U_i\}$ of $U$ which differ only by affine linear functions on overlaps. Choose a cyclic labeling $\rho_1,\ldots,\rho_{r+2}$ of the rays of $\Sigma$, hence of the boundary lattice points $m_1,\ldots,m_{r+2}$ of $\Delta^\star$. For each $k=1,\ldots,r$ define a multi-valued piecewise linear function $\varphi_k$ on $B_0$ as follows. For each maximal cell $\sigma$ of the polyhedral decomposition of $B$ (there are $r+3$ such cells, $1$ bounded and $r+2$ unbounded), let $V_\sigma$ be the interior of $\sigma$. For each $i=1,\ldots,r+2$, let $U_i \subset B_0$ be a neighborhood of $m_i$ such that its closure $\overline{U}_i$ contains both affine singularities adjacent to $m_i$. Then $\{U_i\}\cup\{V_\sigma\}$ gives an open covering of $B_0$. Define $\varphi_k|_{V_\sigma}=0$ for all $\sigma$ and $\varphi_k|_{U_i}=0$ for all $i\neq k$. Define $\varphi_k|_{U_k}$ by the conditions that it is $0$ on $\Delta^\star$ and has slope $1$ along the ray $\rho_k$. Note that the bend locus of $\varphi_k$ is given by $\rho_k$ and the two line segments between $m_k$ and its adjacent affine singularities.

The dual intersection complex $B$ defines a scattering diagram (see \cite{Bou}) with initial rays emanating from the affine singularities. The rays in the scattering diagram carry functions in $\CC[\textup{NE}(\bP)]\llbracket x^m\rrbracket $, where $m$ is the tangent direction of the ray. The scattering algorithm gives a unique consistent scattering diagram.

For $q$-refined scattering (see \cite{Bou}), the commutative variables $x,y$ are replaced by non-commutative variables $X,Y$ with commutation law $XY=q^{-1}YX$.

\begin{example}
\label{expl:P2scat}
For $(\mathbb{P}^2,E)$, the multi-valued piecewise linear function $\varphi$ and the scattering diagram are shown in Figure \ref{fig:P2scat}.
\end{example}

\begin{figure}[h!]
\centering
\begin{tikzpicture}[scale=1.5]
\draw (1,0) -- (0,1) -- (-1,-1) -- (1,0);
\draw (1,0) -- (2.2,0);
\draw (0,1) -- (0,2.2);
\draw (-1,-1) -- (-2.2,-2.2);
\draw[gray,dashed,fill,fill opacity=0.2] (0.5,2.2) -- (0.5,0.5) node[opacity=1,rotate=45]{$\times$} -- (2.2,0.5);
\draw[gray,dashed,fill,fill opacity=0.2] (-2.2,-1.7) -- (-0.5,0) node[opacity=1,rotate=63.43]{$\times$} -- (-0.5,2.2);
\draw[gray,dashed,fill,fill opacity=0.2] (2.2,-0.5) -- (0,-0.5) node[opacity=1,rotate=26.57]{$\times$} -- (-1.7,-2.2);
\draw (-.9,-.9) circle (9pt);
\draw[<-,blue] (-.8,-.8) to[bend right=20] (-.5,-.5);
\draw[<-,blue] (-.8,-1.1) to[bend right=20] (-.3,-1);
\draw[<-,blue] (-1.1,-.9) to[bend left=20] (-1.4,-.3);
\draw[blue] (-.5,-.5) node[above]{\tiny$\ \ \ \ \varphi=0$};
\draw[blue] (-.3,-1) node[right]{\tiny$\varphi=x-2y-1$};
\draw[blue] (-1.4,-.3) node[above]{\tiny$\varphi=-2x+y-1$};
\draw (1,0) circle (9pt);
\draw (0,1) circle (9pt);
\draw[blue] (1,0) node{\tiny$\varphi=0$};
\draw[blue] (0,1) node{\tiny$\varphi=0$};
\draw[blue] (0,0) node{\tiny$\varphi=0$};
\draw[blue] (1.8,0) node{\tiny$\varphi=0$};
\draw[blue] (0,1.8) node{\tiny$\varphi=0$};
\draw[blue] (-1.8,-1.8) node{\tiny$\varphi=0$};
\end{tikzpicture}
\hspace{1cm}
\begin{tikzpicture}[scale=1.1]
\clip (-3,-3) rectangle (3,3);
\draw[dashed] (0.5, 0.5) -- (1.5, 0.5);
\draw[dashed] (0.5, 0.5) -- (0.5, 1.5);
\draw[dashed] (-0.5, 0.0) -- (-0.5, 1.5);
\draw[dashed] (-0.5, 0.0) -- (-1.5, -1.0);
\draw[dashed] (0.0, -0.5) -- (-1.0, -1.5);
\draw[dashed] (0.0, -0.5) -- (1.5, -0.5);
\draw[->] (-6.5, -6.0) -- (-17.5, -16.0);
\draw[->] (-6.0, -6.5) -- (-16.0, -17.5);
\draw[->] (-6.0, -6.5) -- (-16.0, -17.5);
\draw[->] (-6.0, -6.5) -- (-16.0, -17.5);
\draw[->] (-4.0, -4.0) -- (-17.5, -15.812);
\draw[->] (-4.0, -4.0) -- (-15.812, -17.5);
\draw[-] (0.0, -0.5) -- (-2.0, -1.5);
\draw[->] (-0.5, 1.5) -- (2.7, 17.5);
\draw[-] (-0.5, 0.0) -- (-1.5, -2.0);
\draw[->] (1.5, -0.5) -- (17.5, 2.7);
\draw[->] (-1.0, -1.0) -- (-17.5, -17.5);
\draw[->] (-2.0, -1.5) -- (-17.5, -17.0);
\draw[->] (-1.5, -2.0) -- (-17.0, -17.5);
\draw[->] (-4.0, -4.0) -- (-17.5, -17.5);
\draw[->] (-1.5, -2.0) -- (-17.0, -17.5);
\draw[->] (-4.0, -4.0) -- (-17.5, -17.5);
\draw[->] (-1.5, -2.0) -- (-17.0, -17.5);
\draw[-] (0.5, 0.5) -- (-0.5, 1.5);
\draw[->] (-2.0, -1.5) -- (-14.8, -17.5);
\draw[->] (0.0, 4.0) -- (-1.929, 17.5);
\draw[->] (-0.5, 6.0) -- (-1.65, 17.5);
\draw[->] (0.0, 1.0) -- (0.0, 17.5);
\draw[->] (-0.5, 1.5) -- (-0.5, 17.5);
\draw[->] (0.5, 2.0) -- (0.5, 17.5);
\draw[->] (0.0, 4.0) -- (0.0, 17.5);
\draw[->] (0.0, 4.0) -- (0.0, 17.5);
\draw[-] (0.5, 0.5) -- (1.5, -0.5);
\draw[->] (-1.5, -2.0) -- (-17.5, -14.8);
\draw[->] (4.0, 0.0) -- (17.5, 0.0);
\draw[->] (1.0, -0.0) -- (17.5, -0.0);
\draw[->] (1.5, -0.5) -- (17.5, -0.5);
\draw[->] (2.0, 0.5) -- (17.5, 0.5);
\draw[->] (4.0, 0.0) -- (17.5, 0.0);
\draw[-] (-0.5, 0.0) -- (0.5, 2.0);
\draw[->] (2.0, 0.5) -- (17.5, -3.375);
\draw[->] (0.0, 4.0) -- (1.688, 17.5);
\draw[->] (0.5, 6.5) -- (1.5, 17.5);
\draw[-] (0.0, -0.5) -- (2.0, 0.5);
\draw[->] (0.5, 2.0) -- (-3.375, 17.5);
\draw[->] (4.0, 0.0) -- (17.5, -1.929);
\draw[->] (4.0, 0.0) -- (17.5, 1.688);
\draw[->] (6.0, -0.5) -- (17.5, -1.65);
\draw[->] (6.5, 0.5) -- (17.5, 1.5);
\draw[gray,dashed,fill,fill opacity=0.2] (0.5,3) -- (0.5,0.5) node[opacity=1,rotate=45]{$\times$} -- (3,0.5);
\draw[gray,dashed,fill,fill opacity=0.2] (-3,-2.5) -- (-0.5,0) node[opacity=1,rotate=63.43]{$\times$} -- (-0.5,3);
\draw[gray,dashed,fill,fill opacity=0.2] (3,-0.5) -- (0,-0.5) node[opacity=1,rotate=26.57]{$\times$} -- (-2.5,-3);
\end{tikzpicture}
\caption{The multi-valued piecewise linear function (left) and the scattering diagram (right) for $(\bP^2,E)$.}
\label{fig:P2scat}
\end{figure}
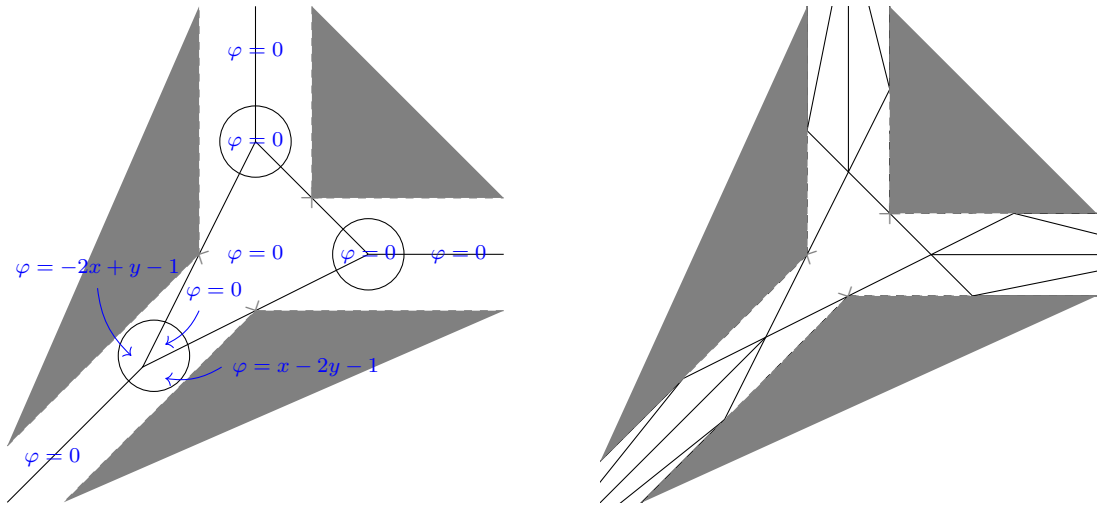

\subsection{Theta functions and quantum periods}

\label{S:theta}

We recall the definitions of broken lines and theta functions from \cite{CPS}\cite{GHS}, to which we refer for further details.

A broken line is a piecewise affine linear embedding $\mathfrak{b}:(-\infty,0]\rightarrow B_0=B\setminus\textup{Sing}(B)$ with $\mathfrak{b}(0)=P$. The domains of affine linearity carry functions that are Laurent monomials in a local coordinate $x^m$ with coefficients in $\CC[\textup{NE}(\bP)]$, where $m$ is the tangent direction of (the graph of) $\mathfrak{b}$. A broken line can ``break'' (i.e., change its domain of affine linearity) only if it intersects a ray of the scattering diagram. In this case, the monomials of the adjacent domains of affine linearity are related by a wall crossing morphism. 

When a broken line $\mathfrak{b}$  crosses the bend locus of $\varphi_k$ such that $\varphi_k$ changes slope along $\mathfrak{b}$ by $\kappa\in\ZZ$, then $\mathfrak{b}$ picks up a factor of $z^{\kappa\beta_k}$. Moreover, the initial rays starting in an affine singularity contained in $\overline{U}_k$ but emanating in the opposite direction carry a factor of $z^{\beta_k}$. If a broken line $\mathfrak{b}$ breaks at such a ray or at any ray that is produced from it later in the scattering procedure, then $\mathfrak{b}$ picks up a factor of $z^{\beta_k}$.

The consistent scattering diagram supports theta functions $(\vartheta_1)_P$ defined as a sum over broken lines with primitive incoming monomial $x^{m_{\text{out}}}$ and ending in a prescribed point $P\in B_0$. Note that the incoming monomial has $z$-degree zero. Hence, it can be seen as a tangent direction lying on the graph of $\varphi_1\times\cdots\times\varphi_k$. The theta functions $(\vartheta_1)_P$ only depend on the chamber of the scattering diagram in which $P$ lies, and are independent of the point within a chamber. We write $(\vartheta_1)_0$ for the theta function inside the bounded maximal cell of $B$, which is a chamber of the scattering diagram by convexity of $\Delta^\star$. We write $(\vartheta_1)_\infty$ for the theta function in an unbounded chamber of the consistent scattering diagram. In this case, the endpoint $P$ will be ``infinitely'' far away (we take a formal limit here) from $\Delta^\star$, hence the notation.

For $q$-refined theta functions, the commutative variables $x,y$ are replaced by non-commutative variables $X,Y$ with commutation law $XY=q^{-1}YX$.

\begin{example}
\label{expl:P2theta}
For $(\bP^2,E)$, Figure \ref{fig:P2theta} shows the broken lines that give
\[ \vartheta_1(X,Y,z)_0 = X+Y+zq^{\frac{1}{2}}X^{-1}Y^{-1}. \]
\end{example}

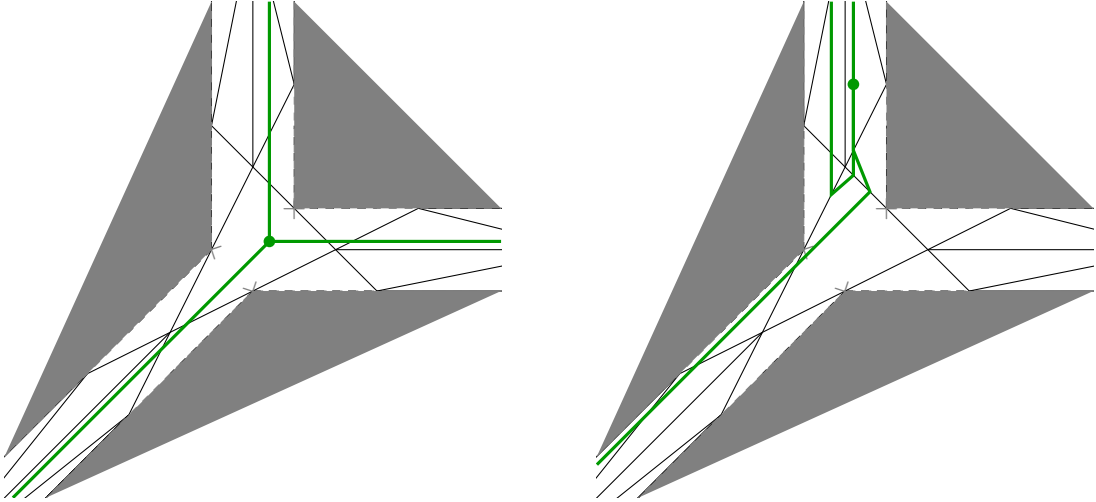
\begin{figure}[h!]
\centering
\begin{tikzpicture}[scale=1.1]
\clip (-3,-3) rectangle (3,3);
\draw[dashed] (0.5, 0.5) -- (1.5, 0.5);
\draw[dashed] (0.5, 0.5) -- (0.5, 1.5);
\draw[dashed] (-0.5, 0.0) -- (-0.5, 1.5);
\draw[dashed] (-0.5, 0.0) -- (-1.5, -1.0);
\draw[dashed] (0.0, -0.5) -- (-1.0, -1.5);
\draw[dashed] (0.0, -0.5) -- (1.5, -0.5);
\draw[->] (-6.5, -6.0) -- (-17.5, -16.0);
\draw[->] (-6.0, -6.5) -- (-16.0, -17.5);
\draw[->] (-6.0, -6.5) -- (-16.0, -17.5);
\draw[->] (-6.0, -6.5) -- (-16.0, -17.5);
\draw[->] (-4.0, -4.0) -- (-17.5, -15.812);
\draw[->] (-4.0, -4.0) -- (-15.812, -17.5);
\draw[-] (0.0, -0.5) -- (-2.0, -1.5);
\draw[->] (-0.5, 1.5) -- (2.7, 17.5);
\draw[-] (-0.5, 0.0) -- (-1.5, -2.0);
\draw[->] (1.5, -0.5) -- (17.5, 2.7);
\draw[->] (-1.0, -1.0) -- (-17.5, -17.5);
\draw[->] (-2.0, -1.5) -- (-17.5, -17.0);
\draw[->] (-1.5, -2.0) -- (-17.0, -17.5);
\draw[->] (-4.0, -4.0) -- (-17.5, -17.5);
\draw[->] (-1.5, -2.0) -- (-17.0, -17.5);
\draw[->] (-4.0, -4.0) -- (-17.5, -17.5);
\draw[->] (-1.5, -2.0) -- (-17.0, -17.5);
\draw[-] (0.5, 0.5) -- (-0.5, 1.5);
\draw[->] (-2.0, -1.5) -- (-14.8, -17.5);
\draw[->] (0.0, 4.0) -- (-1.929, 17.5);
\draw[->] (-0.5, 6.0) -- (-1.65, 17.5);
\draw[->] (0.0, 1.0) -- (0.0, 17.5);
\draw[->] (-0.5, 1.5) -- (-0.5, 17.5);
\draw[->] (0.5, 2.0) -- (0.5, 17.5);
\draw[->] (0.0, 4.0) -- (0.0, 17.5);
\draw[->] (0.0, 4.0) -- (0.0, 17.5);
\draw[-] (0.5, 0.5) -- (1.5, -0.5);
\draw[->] (-1.5, -2.0) -- (-17.5, -14.8);
\draw[->] (4.0, 0.0) -- (17.5, 0.0);
\draw[->] (1.0, -0.0) -- (17.5, -0.0);
\draw[->] (1.5, -0.5) -- (17.5, -0.5);
\draw[->] (2.0, 0.5) -- (17.5, 0.5);
\draw[->] (4.0, 0.0) -- (17.5, 0.0);
\draw[-] (-0.5, 0.0) -- (0.5, 2.0);
\draw[->] (2.0, 0.5) -- (17.5, -3.375);
\draw[->] (0.0, 4.0) -- (1.688, 17.5);
\draw[->] (0.5, 6.5) -- (1.5, 17.5);
\draw[-] (0.0, -0.5) -- (2.0, 0.5);
\draw[->] (0.5, 2.0) -- (-3.375, 17.5);
\draw[->] (4.0, 0.0) -- (17.5, -1.929);
\draw[->] (4.0, 0.0) -- (17.5, 1.688);
\draw[->] (6.0, -0.5) -- (17.5, -1.65);
\draw[->] (6.5, 0.5) -- (17.5, 1.5);
\draw[gray,dashed,fill,fill opacity=0.2] (0.5,3) -- (0.5,0.5) node[opacity=1,rotate=45]{$\times$} -- (3,0.5);
\draw[gray,dashed,fill,fill opacity=0.2] (-3,-2.5) -- (-0.5,0) node[opacity=1,rotate=63.43]{$\times$} -- (-0.5,3);
\draw[gray,dashed,fill,fill opacity=0.2] (3,-0.5) -- (0,-0.5) node[opacity=1,rotate=26.57]{$\times$} -- (-2.5,-3);
\fill[black!40!green] (0.2,0.1) circle (2pt);
\draw[black!40!green,line width=1.2pt] (3,0.1) -- (0.2,0.1);
\draw[black!40!green,line width=1.2pt] (0.2,3) -- (0.2,0.1);
\draw[black!40!green,line width=1.2pt] (-2.9,-3) -- (0.2,0.1);
\end{tikzpicture}
\hspace{1cm}
\begin{tikzpicture}[scale=1.1]
\clip (-3,-3) rectangle (3,3);
\draw[dashed] (0.5, 0.5) -- (1.5, 0.5);
\draw[dashed] (0.5, 0.5) -- (0.5, 1.5);
\draw[dashed] (-0.5, 0.0) -- (-0.5, 1.5);
\draw[dashed] (-0.5, 0.0) -- (-1.5, -1.0);
\draw[dashed] (0.0, -0.5) -- (-1.0, -1.5);
\draw[dashed] (0.0, -0.5) -- (1.5, -0.5);
\draw[->] (-6.5, -6.0) -- (-17.5, -16.0);
\draw[->] (-6.0, -6.5) -- (-16.0, -17.5);
\draw[->] (-6.0, -6.5) -- (-16.0, -17.5);
\draw[->] (-6.0, -6.5) -- (-16.0, -17.5);
\draw[->] (-4.0, -4.0) -- (-17.5, -15.812);
\draw[->] (-4.0, -4.0) -- (-15.812, -17.5);
\draw[-] (0.0, -0.5) -- (-2.0, -1.5);
\draw[->] (-0.5, 1.5) -- (2.7, 17.5);
\draw[-] (-0.5, 0.0) -- (-1.5, -2.0);
\draw[->] (1.5, -0.5) -- (17.5, 2.7);
\draw[->] (-1.0, -1.0) -- (-17.5, -17.5);
\draw[->] (-2.0, -1.5) -- (-17.5, -17.0);
\draw[->] (-1.5, -2.0) -- (-17.0, -17.5);
\draw[->] (-4.0, -4.0) -- (-17.5, -17.5);
\draw[->] (-1.5, -2.0) -- (-17.0, -17.5);
\draw[->] (-4.0, -4.0) -- (-17.5, -17.5);
\draw[->] (-1.5, -2.0) -- (-17.0, -17.5);
\draw[-] (0.5, 0.5) -- (-0.5, 1.5);
\draw[->] (-2.0, -1.5) -- (-14.8, -17.5);
\draw[->] (0.0, 4.0) -- (-1.929, 17.5);
\draw[->] (-0.5, 6.0) -- (-1.65, 17.5);
\draw[->] (0.0, 1.0) -- (0.0, 17.5);
\draw[->] (-0.5, 1.5) -- (-0.5, 17.5);
\draw[->] (0.5, 2.0) -- (0.5, 17.5);
\draw[->] (0.0, 4.0) -- (0.0, 17.5);
\draw[->] (0.0, 4.0) -- (0.0, 17.5);
\draw[-] (0.5, 0.5) -- (1.5, -0.5);
\draw[->] (-1.5, -2.0) -- (-17.5, -14.8);
\draw[->] (4.0, 0.0) -- (17.5, 0.0);
\draw[->] (1.0, -0.0) -- (17.5, -0.0);
\draw[->] (1.5, -0.5) -- (17.5, -0.5);
\draw[->] (2.0, 0.5) -- (17.5, 0.5);
\draw[->] (4.0, 0.0) -- (17.5, 0.0);
\draw[-] (-0.5, 0.0) -- (0.5, 2.0);
\draw[->] (2.0, 0.5) -- (17.5, -3.375);
\draw[->] (0.0, 4.0) -- (1.688, 17.5);
\draw[->] (0.5, 6.5) -- (1.5, 17.5);
\draw[-] (0.0, -0.5) -- (2.0, 0.5);
\draw[->] (0.5, 2.0) -- (-3.375, 17.5);
\draw[->] (4.0, 0.0) -- (17.5, -1.929);
\draw[->] (4.0, 0.0) -- (17.5, 1.688);
\draw[->] (6.0, -0.5) -- (17.5, -1.65);
\draw[->] (6.5, 0.5) -- (17.5, 1.5);
\draw[gray,dashed,fill,fill opacity=0.2] (0.5,3) -- (0.5,0.5) node[opacity=1,rotate=45]{$\times$} -- (3,0.5);
\draw[gray,dashed,fill,fill opacity=0.2] (-3,-2.5) -- (-0.5,0) node[opacity=1,rotate=63.43]{$\times$} -- (-0.5,3);
\draw[gray,dashed,fill,fill opacity=0.2] (3,-0.5) -- (0,-0.5) node[opacity=1,rotate=26.57]{$\times$} -- (-2.5,-3);
\fill[black!40!green] (.1,2) circle (2pt);
\draw[black!40!green,line width=1.2pt] (.1,2) -- (.1,3);
\draw[black!40!green,line width=1.2pt] (.1,2) -- (.1,.9) -- (-.1666,.6666) -- (-.1666,3);
\draw[black!40!green,line width=1.2pt] (.1,2) -- (.1,1.2) -- (.3,.7) -- (-3,-2.6);
\end{tikzpicture}
\caption{Broken lines in the scattering diagram defining $\vartheta_1$ in the central chamber (left) and at infinity up to order $2$ (right).}
\label{fig:P2theta}
\end{figure}

By \cite{Man}, Proposition 2.15, the structure constants of $q$-refined theta multiplication do not depend on the chamber. In particular $\const(\vartheta_1(q)^n)$ does not depend on the chamber. Hence, the following definition makes sense.

\begin{definition}
\label{def:classper}
The \emph{$q$-refined classical period} of $\vartheta_1(q)$ is
\[ \pi(z,q) = \sum_{n>0}\const_{X,Y}(\vartheta_1(q)^n), \]
independent of the chamber in which $\vartheta_1(q)$ is defined.
\end{definition}

\begin{example}
\label{expl:P2classper}
For $(\bP^2,E)$ we have
\[ \pi(z,q) = (3q^{\frac{1}{2}}+3q^{-\frac{1}{2}})z + (6q^2+21q+36+21q^{-1}+6q^{-2})z^2 + \ldots. \]
\end{example}

\begin{lemma}
\label{lem:thetaper}
In the central chamber of the scattering diagram, we have
\[ \vartheta_1(X,Y,z)_0 = f(X,Y,z), \]
where $f(X,Y,z)$ is the Laurent polynomial from Definition \ref{def:f}, with variables $x,y$ replaced by non-commutative variables $X,Y$ with commutation law $XY=q^{-1}YX$.
\end{lemma}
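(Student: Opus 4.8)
The plan is to show that the broken lines contributing to $(\vartheta_1)_0$ are in bijection with the monomials appearing in $f(X,Y,z)$ of Definition \ref{def:f}, matching both the monomial $X^{a_k}Y^{b_k}$ and its $z$-coefficient. The key observation is that in the bounded chamber (the interior of $\Delta^\star$), a broken line ending at a generic point $P$ can only bend finitely often and, by convexity of $\Delta^\star$, cannot cross back into $\Delta^\star$ after leaving, so the combinatorics is tightly controlled. First I would recall that a broken line with incoming direction $m_{\text{out}}$ travels from infinity in the unique unbounded direction and must end at $P$ inside $\Delta^\star$; the outgoing monomial direction $m=(a_k,b_k)$ therefore points from $P$ toward the boundary lattice point $m_k$ or an adjacent edge, and the possible final directions are exactly the primitive directions $m_1,\dots,m_{r+2}$ of the rays $\rho_k$ of $\Sigma$.

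Next I would organize the count of $z$-coefficients by the edge structure. For a vertex $m_k$ of $\Delta^\star$ (so $m_k=n_k=0$), there is essentially a single broken line reaching $P$ with outgoing direction $(a_k,b_k)$: it comes in along $m_{\text{out}}$, bends once when crossing the bend locus of $\varphi_k$, and picks up exactly $z^{\beta_k}$ — matching the vertex term. For a lattice point $(a_k,b_k)$ on an edge $E$ of length $\ell>1$ at lattice distances $m_k,n_k$ from the two vertices, I would show that the broken lines with that outgoing direction correspond to choices of how many times the line bends at the rays emanating from the affine singularities sitting along the two segments of the edge adjacent to $m_k$. Each such bend contributes a factor $z^{\tilde\beta_{k\pm q}}$ according to the wall-crossing rule recalled in Section~\ref{S:theta}, and a choice of an increasing subsequence $0<i_1<\dots<i_l\le m_k$ on one side and $0<j_1<\dots<j_l\le n_k$ on the other yields precisely the class $\beta_{k,(i_1,\dots,i_l),(j_1,\dots,j_l)}$. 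This is where the structure of Definition \ref{def:f} is reproduced term by term; the count of broken lines with fixed outgoing direction is $\binom{\ell}{m_k}$, matching the Lemma proved just above.

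Rather than redo the full scattering computation, I would invoke the known description of $(\vartheta_1)_0$ from \cite{CPS} (and the $\bP^1\times\bP^1$/del Pezzo cases in \cite{GRZ}, \cite{BGL}): it is established there that the theta function in the central chamber is the maximally mutable Laurent polynomial with Newton polytope $\Delta^\star$, and by \cite{CKPT}, Proposition~3.9, such a polynomial is \emph{unique}. So the argument reduces to checking that $f$ of Definition \ref{def:f} \emph{is} maximally mutable with the correct Newton polytope and that its specialization of closed moduli under pairing with the polarization agrees with \cite{CPS}, Theorem~8.2 — both of which are essentially bookkeeping, the latter being exactly the content of the Vandermonde Lemma above. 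For the $q$-refined statement, I would note that the broken-line bijection is insensitive to the commutation law: the wall-crossing automorphisms in the $q$-deformed setting have the same support, and the monomial $X^{a_k}Y^{b_k}$ one reads off is the same, with the $q$-powers that arise being precisely those produced by reordering under $XY=q^{-1}YX$, which is the ordering convention built into $f(X,Y,z)$. The main obstacle I expect is making the correspondence between broken-line bends and the index sets $(i_1,\dots,i_l),(j_1,\dots,j_l)$ fully precise — in particular tracking, via the definition of $\varphi_k$ and the initial $z^{\beta_k}$-decorations of rays emanating "backwards" from the singularities near $m_k$, that each admissible bending pattern contributes $z^{\beta_{k,(i_1,\dots,i_l),(j_1,\dots,j_l)}}$ with coefficient exactly $1$ (no cancellation, no overcounting). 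Once that dictionary is pinned down, the equality $\vartheta_1(X,Y,z)_0=f(X,Y,z)$ follows by comparing coefficients of each $X^{a_k}Y^{b_k}$.
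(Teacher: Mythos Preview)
Your direct broken-line bijection is exactly the paper's approach, and your identification of the choices $(i_1,\dots,i_l),(j_1,\dots,j_l)$ with bending patterns is the heart of the argument. Two corrections are needed, though. First, the ``obstacle'' you flag is resolved in the paper by a clean geometric observation you miss: choosing $P$ near the origin, the relevant broken lines all have image equal to the straight ray $P+\RR_{\ge 0}m_k$, and the bendings for an interior edge point happen \emph{simultaneously} at the single point $P_k$ where this ray meets $\partial\Delta^\star$; the condition that the image remain straight forces the number of bends on the two sides of $P_k$ to coincide, which is exactly why the same $l$ indexes both tuples. (Your vertex case description is also off: those broken lines do not bend at all --- crossing the bend locus of $\varphi_k$ contributes a $z$-factor but is not a change of direction.)

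Second, your proposed shortcut via \cite{CKPT}, Proposition~3.9 and \cite{CPS}, Theorem~8.2 does not prove the lemma as stated. Those results characterize $f$ only \emph{after} specializing or forgetting the closed moduli $z^\beta$; the paper's remark following the lemma says precisely this. The point of Lemma~\ref{lem:thetaper} is the full equality with all curve-class decorations, and for that one must do the broken-line bookkeeping directly. So drop the mutability detour and tighten the geometric description of the bends at $P_k$; then your argument coincides with the paper's.
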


\begin{proof}
Choose the endpoint $P$ close to the origin. Then for each boundary lattice point $p_k$ of $\Delta^\star$ we get broken line with image $P+\mathbb{R}_{\geq 0}p_k$ that does not break at any ray. In other words, this broken line is actually a straight line. If $p_k$ is lies in the interior of an edge of $\Delta^\star$, then we have more broken lines with image $P+\mathbb{R}_{\geq 0}p_k$, obtained as follows. Let $P_k$ be the point where $P+\mathbb{R}_{\geq 0}p_k$ intersects the boundary of $\Delta^\star$. Let $m_k$ and $n_k$ be the lattice distances of $p_k$ from the two vertices of the edge. Choose $0<l\leq\min(m,n)$ and choose $l$ distinct affine singularities on each side of $P_i$. Then we can break at the rays emanating from these singularities. This breaking all happens simultaneously at the same point $P_k\in B$. Since we have chosen the same number $l$ on both sides, the breaking does not change the image of the broken line. The choice of affine singularities corresponds to the choice of $0<i_1<\ldots<i_l<m_k$ and $0<j_1<\ldots<j_l<n_k$ in the definition of $f$ (Definition \ref{def:f}). The $z$-dependence in the definition of $f$ is exactly the one we get from breaking at these rays. Hence $(\vartheta_1)_P$ close to the origin agrees with $f$. By \cite{Man}, Theorem 2.14, $(\vartheta_1)_P$ is constant in a chamber. Hence we have $(\vartheta_1)_0=f$ in the whole central chamber.
\end{proof}

Recall from Section \ref{sec:notation} that we defined an operation $D$ on $\CC[\textup{NE}(\bP)]$ by $Dz^\beta=(\beta\cdot D)z^\beta$.

\begin{proposition}
\label{prop:thetaper}
We have
\[ \pi(z,q) = -Da(z,q), \]
\end{proposition}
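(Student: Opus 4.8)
The plan is to rewrite both sides in terms of the constant terms $\const_{X,Y}(f^n)$ and then compare them coefficient by coefficient. For the left-hand side, Lemma~\ref{lem:thetaper} gives $(\vartheta_1)_0=f$ in the central chamber, and by \cite{Man}, Proposition~2.15, $\const_{X,Y}(\vartheta_1(q)^n)$ does not depend on the chamber; hence $\pi(z,q)=\sum_{n>0}\const_{X,Y}(f^n)$. For the right-hand side, Definition~\ref{def:aper} gives $a(z,q)=-\sum_{n>0}\frac1n\const_{X,Y}(f^n)$, so $-Da(z,q)=\sum_{n>0}\frac1n\,D\,\const_{X,Y}(f^n)$. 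Thus the proposition reduces to the claim that $\const_{X,Y}(f^n)$ is homogeneous of degree $n$ for the grading $\deg z^\beta=\beta\cdot E$: then $D$ acts on it as multiplication by $n$ and the two displays agree.

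To establish this homogeneity I would introduce the finer grading on monomials $z^\beta X^aY^b$ by additionally setting $\deg X=\deg Y=1$ (powers of $q^{\pm\frac12}$ carrying no grading), and show that $f$ is homogeneous of degree $1$. Concretely, this is the statement that every monomial $z^{\beta_k}X^{a_k}Y^{b_k}$ appearing in Definition~\ref{def:f} satisfies $\beta_k\cdot E=1-a_k-b_k$. Since $\bP_\Sigma$ is a toric semi-Fano surface, $E=-K_{\bP_\Sigma}=\sum_\rho D_\rho$, so $\beta_k\cdot E=\sum_\rho(D_\rho\cdot\beta_k)$; under Convention~\ref{convention:fano} the row of the charge matrix attached to $\rho_k$ is $(0,\dots,0,1,0,\dots,0,-a_k,-b_k)$, whose entries sum to $1-a_k-b_k$, which gives the claim. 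When $\Delta^\star$ has an edge of length $\ell>1$, the classes $\beta_{k,(i_1,\dots,i_l),(j_1,\dots,j_l)}$ differ from $\beta_k$ by a nonnegative combination of the classes $\tilde\beta_j$, and $\tilde\beta_j\cdot E=\sum_\rho(D_\rho\cdot\tilde\beta_j)=-2+1+1=0$, so the identity is unaffected; when $\bP$ is merely $\QQ$-Gorenstein deformation-equivalent to $\bP_\Sigma$, one uses in addition that $\beta\cdot E$ is a deformation invariant, so homogeneity of the potential of $\bP$ follows from that of $\bP_\Sigma$.

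Granting that $f$ is homogeneous of degree $1$, $f^n$ is homogeneous of degree $n$, because forming the product only requires the relation $YX=qXY$, which rescales monomials by powers of $q^{\pm\frac12}$ but preserves the exponents of $X$, $Y$ and the class $\beta$. Applying $\const_{X,Y}$ keeps exactly the terms of $X,Y$-degree $0$, and all of these have $z$-degree equal to $n$; hence $D\,\const_{X,Y}(f^n)=n\,\const_{X,Y}(f^n)$. Substituting this into $-Da(z,q)=\sum_{n>0}\frac1n\,D\,\const_{X,Y}(f^n)$ yields $\sum_{n>0}\const_{X,Y}(f^n)=\pi(z,q)$, as desired. (Alternatively, the same relation $D\,\const_{X,Y}(f^n)=n\,\const_{X,Y}(f^n)$ can be extracted from the $t$-bookkeeping in the proof of Theorem~\ref{thm:per}, where $t$ records the total $z$-degree.)

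The only step that requires any real care is the homogeneity of $f$ in the second paragraph, i.e.\ the identity $\beta_k\cdot E=1-a_k-b_k$ for the curve classes attached to the monomials of $f$, together with its stability under the corrections $\tilde\beta_j$ coming from long edges and under $\QQ$-Gorenstein deformation; everything else is a formal rearrangement of the two series.
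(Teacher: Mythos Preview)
Your proof is correct and follows essentially the same approach as the paper: identify $\pi(z,q)=\sum_{n>0}\const_{X,Y}(f^n)$ via Lemma~\ref{lem:thetaper}, then use the homogeneity $D\,\const_{X,Y}(f^n)=n\,\const_{X,Y}(f^n)$ to cancel the $\frac1n$ in the definition of $a(z,q)$. The paper simply asserts ``every monomial in $\const_{X,Y}(f^n)$ takes the form $c(q)z^\beta$ with $\beta\cdot E=n$'' without justification, whereas you supply the argument (the charge-vector identity $\beta_k\cdot E=1-a_k-b_k$ and $\tilde\beta_j\cdot E=0$); this extra detail is helpful but not a different method.
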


\begin{proof}
By Definition \ref{def:aper}, we have
\[ -a(z,q) = \sum_{n>0} \frac{1}{n}\const_{X,Y}(f^n). \]
By the definition of $f$, every monomial in $\const_{X,Y}(f^n)$ takes the form $c(q)z^\beta$ with $\beta\cdot E=n$. By the definition of $D$ we have $Dz^\beta = (\beta\cdot E)z^\beta$. Hence, $D\const_{X,Y}(f^n)=n\const_{X,Y}(f^n)$. Hence, applying $D$ to the equation above gives 
\[ -Da(z,q)=\sum_{n>0}\const_{X,Y}(f^n), \]
which by Definition \ref{def:classper} and Lemma \ref{lem:thetaper} agrees with $\pi(z,q)$.
\end{proof}

It was shown in \cite{GRZ}, Proposition 3.5, that the broken line expansion of the theta function at infinity, denoted  $(\vartheta_1)_\infty$, is a Laurent series in a single variable $y=x^{m_{\text{out}}}$ corresponding to the unique asymptotic direction $m_{\text{out}}$. Consequently, the monomials in $(\vartheta_1)_\infty$ are commutative. The series $(\vartheta_1)_\infty$ has the following enumerative interpretation.

\begin{proposition}
\label{prop:infty}
Let $q=e^{i\hbar}$. Then
\[ \vartheta_1(y,z,q)_\infty = y + \sum_{\beta\in \textup{NE}(\bP)} \sum_{g\geq 0} \frac{1}{\beta\cdot E-1}R_{g,(1,\beta\cdot E-1)}(\mathbb{P}(\log E),\beta) \hbar^{2g} z^\beta y^{-(\beta\cdot E-1)}, \]
where $R_{g,(1,\beta\cdot E-1)}(\mathbb{P}(\log E),\beta)$ is the $2$-marked logarithmic Gromov-Witten invariant of genus $g$ with $\lambda_g$-insertion. Moreover, $\vartheta_1(y,z,q)_\infty$ is a formal Laurent series in $y$ over the ring $\CC[\q^{\pm\frac{1}{2}}][\textup{NE}(\bP)]$.
\end{proposition}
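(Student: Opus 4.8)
The plan is to deduce the statement from the genus-zero broken-line description of $(\vartheta_1)_\infty$ in \cite{GRZ} together with the $q$-refined tropical correspondence of \cite{Bou}, upgrading the genus-zero argument order by order in $\hbar$. The claim that $(\vartheta_1)_\infty$ is a formal Laurent series in the single commutative variable $y=x^{m_{\text{out}}}$ over $\CC[\q^{\pm\frac12}][\textup{NE}(\bP)]$ is purely combinatorial and essentially contained in \cite{GRZ}, Proposition 3.5: any $q$-refined broken line ending far out in the unique unbounded direction has terminal tangent direction parallel to $m_{\text{out}}$, so its final monomial has the form $c\,q^{k/2}z^\beta y^{-e}$ for integers $k,e$ and a class $\beta\in\textup{NE}(\bP)$; the coefficients lie in $\CC[\q^{\pm\frac12}][\textup{NE}(\bP)]$ because each wall function and each break contributes a monomial $z^{\beta_k}$ (per the scattering rules recalled in Section \ref{S:theta}) together with a half-integer power of $q$ coming from the quantized monomials $\hat z^m$ and from Bousseau's refined wall-crossing automorphisms; and since $\deg z^\beta=\beta\cdot E>0$ on $\textup{NE}(\bP)\setminus\{0\}$, only finitely many broken lines contribute in each class $\beta$, so each coefficient is a Laurent polynomial in $q^{1/2}$. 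The exponent $e=\beta\cdot E-1$ is likewise forced by the balancing at the breaks of a broken line of total class $\beta$, exactly as in the unrefined case; equivalently $(\vartheta_1)_\infty$ is homogeneous of degree $1$ for the grading $\deg y=1$, $\deg z^\beta=\beta\cdot E$. Setting $q=1$ recovers the classical statement of \cite{GRZ}.

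The substantive step is to identify the coefficient of $\hbar^{2g}z^\beta y^{-(\beta\cdot E-1)}$ with $\tfrac1{\beta\cdot E-1}R_{g,(1,\beta\cdot E-1)}(\bP(\log E),\beta)$. For this I would invoke the $q$-refined tropical correspondence of \cite{Bou}: with $q=e^{i\hbar}$, the coefficients of a consistent $q$-refined scattering diagram and of the associated broken-line expansions are generating series $\sum_{g\ge 0}(\,\cdots)\hbar^{2g}$ whose coefficients are genus-$g$ logarithmic Gromov-Witten invariants carrying a top Hodge ($\lambda_g$) insertion. Applying this to the broken lines at infinity of a toric degeneration of $(\bP,E)$ as in Section \ref{S:fano}, and then running the toric-degeneration comparison of \cite{GRZ}, Theorems 4.8, 4.14 and 5.4 — which reexpresses the toric log invariants of the degenerate model as the two-pointed invariants of the smooth pair — yields the claimed coefficient. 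The factor $\tfrac1{\beta\cdot E-1}$ is the usual normalization: a broken line records an unconstrained tangency point with $E$ of contact order $\beta\cdot E-1$, whereas $R_{g,(1,\beta\cdot E-1)}$ fixes that contact order at a non-prescribed point, and dividing by the order converts between the two counts, exactly as in the genus-zero statement of \cite{GRZ}.

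The main obstacle is precisely this last identification. One must check that Bousseau's refined correspondence, originally phrased for scattering functions and the refined tropical vertex, applies verbatim to broken lines terminating in the unbounded chamber attached to $E$, and that the $\QQ$-Gorenstein degeneration of $(\bP,E)$ to a Gorenstein toric Fano followed by the toric partial resolution of Section \ref{S:fano} is compatible with the $\hbar$-expansion — i.e.\ that the degeneration and gluing formulas for log invariants used in \cite{GRZ} hold genus by genus with the $\lambda_g$-class carried along. The underlying geometry is exactly that of the genus-zero case already handled in \cite{GRZ}; the only genuinely new ingredient is the bookkeeping of the Hodge class, which is what the refinement $q=e^{i\hbar}$ of broken lines encodes. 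The shape of $(\vartheta_1)_\infty$, its ring of coefficients, and the $y$-exponents are insensitive to the refinement and need nothing beyond the combinatorics already in \cite{GRZ}.
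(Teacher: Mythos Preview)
Your approach is essentially the paper's: broken lines at infinity $\Rightarrow$ $q$-refined tropical counts $\Rightarrow$ two-pointed log invariants with $\lambda_g$-insertion, followed by the combinatorial argument that each $y^{-k}$-coefficient is a Laurent polynomial in $q^{1/2}$. Two points where you should sharpen the citations and the mechanism.

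First, the tropical-to-log step. You invoke \cite{Bou} directly, but Bousseau's quantum tropical vertex is formulated for scattering in the plane, not for the affine manifold with singularities coming from the toric degeneration of $(\bP,E)$. The paper instead cites \cite{Gra2}, Theorem~6.2, which establishes the $q$-refined tropical correspondence precisely in this setting and gives
\[
R^{\textup{trop}}_{g,(\beta\cdot E-1,1)}(\bP(\log E),\beta)=(\beta\cdot E-1)\,R_{g,(\beta\cdot E-1,1)}(\bP(\log E),\beta).
\]
So the passage from \cite{GRZ}, Theorem~4.14 (broken lines $=$ tropical invariants) to actual log invariants does not require you to check compatibility of \cite{Bou} with the degeneration by hand; it is exactly the content of \cite{Gra2}.

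Second, the factor $\tfrac{1}{\beta\cdot E-1}$. Your explanation (``dividing by the order converts between the two counts'') is not quite how it arises. The tropical correspondence above produces a \emph{multiplicative} factor $(\beta\cdot E-1)$, landing on $R_{g,(\beta\cdot E-1,1)}$, where the \emph{high-order} contact point is the one through the prescribed point. The swap to $R_{g,(1,\beta\cdot E-1)}$ (prescribed point at the order-$1$ contact) is a separate identity, \cite{GRZ}, Theorem~5.4:
\[
(\beta\cdot E-1)\,R_{g,(\beta\cdot E-1,1)}(\bP(\log E),\beta)=\frac{1}{\beta\cdot E-1}\,R_{g,(1,\beta\cdot E-1)}(\bP(\log E),\beta).
\]
The $\tfrac{1}{\beta\cdot E-1}$ in the statement is thus the net effect of these two steps, not a single broken-line normalization.
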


\begin{proof}
By \cite{GRZ}, Theorem 4.14, we have
\[ \vartheta_1(y,z,q)_\infty = y + \sum_{\beta\in \textup{NE}(\bP)} \sum_{g\geq 0} R^{\textup{trop}}_{g,(\beta\cdot E-1,1)}(\mathbb{P}(\log E),\beta) \hbar^{2g} z^\beta y^{-(\beta\cdot E-1)}, \]
where $R^{\textup{trop}}_{g,(\beta\cdot E-1,1)}(\mathbb{P}(\log E),\beta)$ is the tropical analogue of $R_{g,(\beta\cdot E-1,1)}(\mathbb{P}(\log E),\beta)$, defined as a count (with $q$-refined multiplicity) of rational tropical curves on the dual intersection complex $B$ of $(\bP,E)$ with an unbounded leg of weight $\beta\cdot E-1$ through a prescribed point, an unbounded leg of weight $1$ and no other unbounded legs. By \cite{Gra2}, Theorem 6.2, we have 
\[ R^{\textup{trop}}_{g,(\beta\cdot E-1,1)}(\mathbb{P}(\log E),\beta)=(\beta\cdot E-1)R_{g,(\beta\cdot E-1,1)}(\mathbb{P}(\log E),\beta), \]
and by \cite{GRZ}, Theorem 5.4 we have 
\[ (\beta\cdot E-1)R_{g,(\beta\cdot E-1,1)}(\mathbb{P}(\log E),\beta)=\frac{1}{\beta\cdot E-1}R_{g,(1,\beta\cdot E-1)}(\mathbb{P}(\log E),\beta). \]
The three displayed equations together yield the claimed equality. The $y^{-k}$-coefficient of $\vartheta_1(y,z,q)_\infty$ is a sum over finitely many $\beta\in\textup{NE}(\bP)$, those with $\beta\cdot E=k+1$. For each $\beta\in\textup{NE}(\bP)$, the coefficient $R^{\textup{trop}}_{g,(\beta\cdot E-1,1)}(\mathbb{P}(\log E),\beta)$ is a Laurent polynomial in $q^{\pm\frac{1}{2}}$ by the definition of $q$-refined tropical multiplicity (see \cite{GRZ}, Definition 4.6). Hence, $\vartheta_1(y,z,q)_\infty$ is a formal Laurent series over the ring $\CC[\q^{\pm\frac{1}{2}}][\textup{NE}(\bP)]$.
\end{proof}

\begin{example}
\label{expl:P2thetainf}
For $(\bP^2,E)$, Figure \ref{fig:P2theta} shows the broken lines giving the first few terms of
$$
\begin{array}{l}
\vartheta_1(y,z,q)_\infty  = y + \left(\qh + \qmh\right)zy^{-2} + \left(q^2+q+1+q^{-1}+q^{-2}\right)z^2y^{-5} \\[5pt]
\ \ + \left( q^{\frac{9}{2}} + 3q^{\frac{7}{2}} + 4q^{\frac{5}{2}}+ 4q^{\frac{3}{2}}+ 4q^{\frac{1}{2}}+ 4q^{-\frac{1}{2}}+ 4q^{-\frac{3}{2}}+ 4q^{-\frac{5}{2}}+ 3q^{-\frac{7}{2}}+ q^{-\frac{9}{2}}\right)z^3y^{-8} + \cdots.
\end{array}
$$
\end{example}

\subsection{Quantum descendant invariants}

The mirror symmetry proposal of \cite{CCGGK} states that a Laurent polynomial $f$ is mirror dual to $\bP$ if the classical period of $f$ agrees with the regularized GW period\footnote{This term was referred to as a \emph{quantum period} in \cite{CCGGK}, a terminology we need to avoid here.} of $\bP$, which is defined as a generating function of descendant invariants. We give a $q$-refinement of this statement and prove it using tropical geometry.

\begin{definition}
\label{defi:desc}
Let $q=e^{i\hbar}$.
The \emph{$q$-refined regularized GW period} of $\bP$ is
\[ \widehat{G}(z,q) = 1 + \sum_{\beta\in \textup{NE}(\bP)}\sum_{g\geq 0} (\beta\cdot E)! D_{g,1}(\bP,\beta)\hbar^{2g}z^\beta, \]
with descendant Gromov-Witten invariants
\[ D_{g,1}(\bP,\beta) = \int_{\mathcal{M}_{g,1}(\bP,\beta)} (-1)^g\lambda_g\psi^{\beta\cdot{E}-2}\textup{ev}^\star[\textup{pt}], \]
where $\lambda_g=c_g(\mathbb{E})$ are the top Chern classes of the Hodge bundle.
\end{definition}

\begin{remark}
\label{rmk:notation-table}
   This table summarizes our notation.
\begin{table}[h]
\centering
 \begin{tabular}{| c | c |} 
 \hline
 B-side & A-side \\ [0.5ex] 
 \hline\hline
 classical period  & GW period \\ 
 \hline
 quantum period & $q$-refined GW period\\
 \hline
 \end{tabular}
 \end{table}
\end{remark}

\begin{proposition}
\label{prop:descendant}
We have
\[ \pi(z,q)=\widehat{G}(z,q). \]
\end{proposition}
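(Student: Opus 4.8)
The plan is to deduce this from Proposition \ref{prop:thetaper} together with a refined tropical correspondence. By Definition \ref{def:classper} and Lemma \ref{lem:thetaper} (equivalently by Proposition \ref{prop:thetaper} and Definition \ref{def:aper}) one has $\pi(z,q)=\sum_{n>0}\const_{X,Y}(f^n)$, where $f$ is the toric potential of Definition \ref{def:f}, i.e.\ $\vartheta_1(q)$ in the central chamber. So the task is to match this B-model ``period of $f$'' with the A-model descendant generating series $\widehat{G}$. The bridge is tropical: first rewrite $\const_{X,Y}(f^n)$ as a $q$-weighted count of closed integral loops of combinatorial length $n$ in the Newton polygon $\Delta^\star$ — the powers of $q^{\pm 1/2}$ produced by repeatedly applying $XY=q^{-1}YX$ record signed enclosed areas, while the $z$-bookkeeping of Definition \ref{def:f} records the curve class $\beta$ (with $\beta\cdot E=n$) — and then identify such loop counts with counts of parametrized rational tropical curves on the dual intersection complex $B$ of $(\bP,E)$ carrying Block--G\"ottsche refined multiplicities; finally feed this into the refined tropical correspondence for descendant invariants of \cite{KSU}.

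The main steps are then: \emph{(i) Reduction to the toric case.} If $\bP$ is not toric, pass to the smooth semi-Fano toric surface $\bP_\Sigma$ of Section \ref{S:fano}; the period is \emph{defined} through the toric scattering diagram of $\bP_\Sigma$, and the $\lambda_g$-descendant invariants $D_{g,1}$ are unchanged under the $\QQ$-Gorenstein degeneration, reducing us to $\bP$ toric. \emph{(ii) From loops to refined tropical curves.} Using the tropical correspondence behind \cite{Bou} and \cite{Gra2} as applied in \cite{GRZ} — or, equivalently, chamber independence \cite{Man} to compute at infinity with the single-variable series $\vartheta_1(y,z,q)_\infty$ of Proposition \ref{prop:infty} and glue $n$ broken lines at a generic point — express the class-$\beta$ part of $\sum_{n>0}\const_{X,Y}(f^n)$ as a $q$-refined count of rational tropical curves of class $\beta$ through one generic point; writing $q=e^{i\hbar}$ and expanding, the $\hbar^{2g}$-coefficient isolates the genus-$g$ contribution. \emph{(iii) From refined tropical curves to descendant invariants.} Apply \cite{KSU} to identify this count with $\sum_{g\ge 0}(\beta\cdot E)!\,D_{g,1}(\bP,\beta)\hbar^{2g}$: the insertion $\psi^{\beta\cdot E-2}$ comes from the weight of the unbounded end passing through the point, the factor $(-1)^g\lambda_g$ from the refined-multiplicity/Hodge-bundle dictionary, and the combinatorial factor $(\beta\cdot E)!$ from the orderings of the $n=\beta\cdot E$ factors of $f^n$. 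Summing over $\beta$ and $g$ yields $\sum_{n>0}\const_{X,Y}(f^n)=\sum_{\beta,g}(\beta\cdot E)!\,D_{g,1}(\bP,\beta)\hbar^{2g}z^\beta=\widehat{G}(z,q)$ (the constant terms matching as well), which is the proposition. This also recovers, at $q=1$, the classical statement of \cite{CCGGK}.

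The hard part is step (iii): one must verify that the $q$-refined multiplicities produced by the $q$-refined scattering/broken-line formalism for $\const_{X,Y}(f^n)$ coincide exactly with the Block--G\"ottsche multiplicities appearing in the hypotheses of \cite{KSU}, and that the discrete decorations are matched correctly — in particular the $\psi$-power $\beta\cdot E-2$, the sign $(-1)^g$, and the factorial $(\beta\cdot E)!$. A secondary technical point is the reduction in step (i): at the level of refined descendant invariants with $\lambda_g$-insertions this needs either a deformation-invariance statement across the $\QQ$-Gorenstein degeneration or a direct log-geometric argument on it, which is one place the restriction to surfaces (flagged in the introduction) is used.
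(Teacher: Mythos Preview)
Your proposal is correct and takes essentially the same route as the paper: identify $\const_{X,Y}(f^n)=\const_{X,Y}(\vartheta_1(q)^n)$ with a $q$-refined tropical count of curves through a point, then invoke the refined descendant correspondence of \cite{KSU} (their Theorem~A with one point insertion and $\psi$-power $\beta\cdot E-2$) to obtain $(\beta\cdot E)!\,D_{g,1}(\bP,\beta)$.

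The paper's execution is slightly cleaner than what you sketch in step~(ii): rather than passing to the chamber at infinity or reading off signed-area loop counts, it stays in the \emph{central} chamber, where by Lemma~\ref{lem:thetaper} the broken lines contributing to $\vartheta_1$ are straight rays along the directions $m_\rho$. A balanced $m$-tuple of such rays at a generic point \emph{is} a tropical curve with an $m$-valent vertex at that point, counted with the Blechman--Shustin/Block--G\"ottsche refined multiplicity of \cite{BS}; the automorphism group of order $m!$ then gives the factorial in one stroke. Your alternative via $\vartheta_1(y,z,q)_\infty$ would work by chamber independence of the structure constants \cite{Man}, but the broken lines there bend, so the tropical identification is less transparent. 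Your step~(i), the deformation invariance of $D_{g,1}$ across the $\QQ$-Gorenstein degeneration, is a genuine technical point that the paper leaves implicit in its appeal to \cite{KSU}.
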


\begin{proof}
By the multiplication rule of theta functions, $\const(\vartheta_1(q)^m)$ is a sum over $m$-tuples of broken lines that are balanced in the endpoint. In the central chamber the broken lines are simply straight lines with ending monomials $z^{m_\rho}$. So $\const(\vartheta_1(q)^m)$ is the count of balanced $m$-tuples of broken lines. But such an $m$-tuple is nothing but a tropical curve with a fixed point lying on a vertex $V$ with valency $m$. The tropical curve has automorphism group of order $m!$ and is counted with a $q$-refined multiplicity defined in \cite{BS}, see \cite{KSU}, \S1.2. By \cite{KSU}, Theorem A, with $n=1$ and $k_1=|\Delta^\circ|-2$, the tropical count equals the descendant invariants $D_{g,1}(\mathbb{P},\beta)$, such that
\[ \const(\vartheta_1(q)^m) = \sum_{\beta: \beta\cdot{E}=m}\sum_{g\geq 0} m! D_{g,1}(\mathbb{P},\beta)\hbar^{2g}z^\beta. \]
\end{proof}

\begin{remark}
Proposition \ref{prop:descendant} is proved in genus zero in \cite{Joh} and for non-smooth divisors (Looijenga pairs) in \cite{KSU2}.
Propositions \ref{prop:thetaper} and \ref{prop:descendant} together yield a relation between $2$-marked invariants $R^{\textup{trop}}_{g,(\beta\cdot E-1,1)}(\mathbb{P}(\log E),\beta)$ and descendant invariants $D_{g,1}(\bP,\beta)$. In genus zero, this relation was also deduced in \cite{Joh}.
\end{remark}

\begin{theorem}
\label{thm:descendant}
We have
\[ -Da(z,q)=\widehat{G}(z,q). \]
\end{theorem}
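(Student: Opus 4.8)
The plan is to obtain the identity as a formal concatenation of the two preceding propositions, which together bracket both sides of the claimed equality against a common intermediate series, namely $\pi(z,q)$, the $q$-refined classical period of $\vartheta_1(q)$ from Definition \ref{def:classper}. So the proof amounts to: $-Da(z,q) \overset{\ref{prop:thetaper}}{=} \pi(z,q) \overset{\ref{prop:descendant}}{=} \widehat{G}(z,q)$, and the only thing to verify is that the $\pi(z,q)$ appearing on the two sides is literally the same object, which it is by Definition \ref{def:classper} together with the chamber-independence of $\const_{X,Y}(\vartheta_1(q)^n)$ coming from \cite{Man}.

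First I would apply Proposition \ref{prop:thetaper} to rewrite $-Da(z,q)$ as $\pi(z,q)$. The content behind this step is that $-a(z,q) = \sum_{n>0}\frac1n\const_{X,Y}(f^n)$ by Definition \ref{def:aper}, that every monomial of $\const_{X,Y}(f^n)$ has the form $c(q)z^\beta$ with $\beta\cdot E = n$, so that $D$ acts on it by multiplication by $n$ and cancels the weight $\frac1n$, and that $f = (\vartheta_1)_0$ in the central chamber by Lemma \ref{lem:thetaper}; this produces the unweighted sum $\sum_{n>0}\const_{X,Y}(\vartheta_1(q)^n) = \pi(z,q)$.

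Next I would apply Proposition \ref{prop:descendant} to rewrite $\pi(z,q)$ as $\widehat{G}(z,q)$. The content behind this step is the broken-line expansion of $\const_{X,Y}(\vartheta_1(q)^m)$ in the central chamber as a count of balanced $m$-tuples of straight broken lines, i.e.\ of rational tropical curves through a prescribed point sitting at an $m$-valent vertex, counted with the $q$-refined multiplicity of \cite{BS} and an automorphism factor $m!$, followed by the refined tropical correspondence theorem of \cite{KSU} (Theorem A with $n=1$, $k_1 = \beta\cdot E - 2$), which identifies this count with $\sum_{g\geq 0} m!\,D_{g,1}(\bP,\beta)\hbar^{2g}$; summing over $m = \beta\cdot E$ and over $\beta\in\textup{NE}(\bP)$ gives exactly $\widehat{G}(z,q)$.

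Chaining these gives $-Da(z,q) = \widehat{G}(z,q)$, as claimed. There is essentially no residual obstacle at the level of this theorem: all the genuine work has been done in Propositions \ref{prop:thetaper} and \ref{prop:descendant}, and behind them in Lemma \ref{lem:thetaper} and the tropical correspondence of \cite{KSU}; the theorem itself is their immediate combination, so the "hard part" is merely bookkeeping to ensure the two occurrences of $\pi(z,q)$ agree, which is automatic from the definitions.
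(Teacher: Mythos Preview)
Your proposal is correct and matches the paper's own proof exactly: the theorem is deduced immediately by chaining Proposition~\ref{prop:thetaper} and Proposition~\ref{prop:descendant} through the common intermediate $\pi(z,q)$. Your additional unpacking of the content behind each proposition is accurate and faithful to the paper.
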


\begin{proof}
The statement follows from Propositions \ref{prop:thetaper} and \ref{prop:descendant}.
\end{proof}

\begin{remark}
The non-$q$-refined version of Theorem \ref{thm:descendant} was proved in \cite{Giv} using
\[ \widehat{G}(z,q=1) = \sum_{\beta\in \textup{NE}(\bP)} \frac{(-K_{\bP}\cdot\beta)!}{(D_1\cdot\beta)!\cdots(D_r\cdot\beta)!}z^\beta. \]
\end{remark}

\section{The \texorpdfstring{$q$}{q}-refined open mirror map is the quantum theta function}
\label{S:agrees}
We prove the identification of the quantum theta function asymptotic broken line expansion and the $q$-refined open mirror map in this section. As a consequence, we obtain an enumerative interpretation of the $q$-refined
open mirror map. 

\begin{proposition}
\label{prop:agrees}
Let $A=\bigoplus_{k\ge 0}A_k$ be a finitely generated graded algebra over $A_0$ and let $\widehat{A}$ 
denote the formal completion of $A$ in the ideal $\bigoplus_{k\ge 1}A_k$. 
Let $\theta(y)\in A\lfor y^{-1}\rfor[y]$ be a formal Laurent series of the form $\theta(y)=y+\sum_{k\geq 0}a_ky^{-k}$ with $a_k\in A_{k+1}$. 
\begin{enumerate}
\item The series $\eta(w)=w\exp\left({-\sum_{k>0}\frac{1}{k}[y^0]\theta^kw^{-k}}\right)$ gives a well-defined element of $A\lfor w^{-1}\rfor[w]$ of the form $\eta(w)=w+\sum_{k\geq 0} b_kw^{-k}$ that is a compositional inverse of $\theta(y)$, i.e., it satisfies
$\theta(\eta(w))=w.$
\item The series $\widetilde{y}=-\exp\left({-\sum_{k>0}\frac{(-1)^k}{k}[y^0]\theta^k}\right)$ gives a well-defined element of $\widehat{A}$ that satisfies
$\theta(\tilde{y})=-1.$
Moreover, $\tilde{y}$ uniquely determines the coefficients $a_k$, hence it determines a function $\theta(y)$ of the given form.
\end{enumerate}
\end{proposition}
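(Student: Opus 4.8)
The plan is to derive both parts from the classical Lagrange--Bürmann inversion formula after the substitution $y=1/u$, and to extract part (2) from part (1) by specializing the formal parameter to $-1$. I would begin with the grading bookkeeping. Declaring an element of $A_d$ to have weight $d$ and $y$ to have weight $1$ makes $\theta(y)$ weight-homogeneous of weight $1$, since $a_ky^{-k}$ has weight $(k+1)-k=1$; hence $\theta^k$ has weight $k$ and its constant term $[y^0]\theta^k$, multiplying $y^0$, lies in $A_k$. Consequently $\sum_{k>0}\frac1k[y^0]\theta^k\,w^{-k}$ is a weight-$0$ series whose coefficient of $w^{-k}$ lies in $A_k$, its exponential is a well-defined weight-$0$ element of $A\lfor w^{-1}\rfor$ of the form $1+\sum_{k\ge1}c_kw^{-k}$ with $c_k\in A_k$, and so $\eta(w)=w+\sum_{k\ge0}b_kw^{-k}$ with $b_k=c_{k+1}\in A_{k+1}$, which is the asserted form and lies in $A\lfor w^{-1}\rfor[w]$. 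Setting $w=1$ in the same computation shows $\sum_{k>0}\frac{(-1)^k}{k}[y^0]\theta^k$ converges in $\widehat A$ with no degree-$0$ component, so $\tilde y$ is a well-defined element of $\widehat A$ of the form $-1+r$ with $r$ of positive degree.

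For part (1), I would substitute $y=1/u$, which writes $\theta$ as $\theta(1/u)=u^{-1}\psi(u)$ with $\psi(u)=1+\sum_{k\ge0}a_ku^{k+1}\in 1+uA\lfor u\rfor$; setting $v=1/w$, the equation $\theta(y)=w$ becomes $u=v\psi(u)$, which has a unique solution $\alpha(v)=v+O(v^2)\in A\lfor v\rfor$ by the contraction principle, with $[v^n]\alpha(v)\in A_{n-1}$ by the weight count. Applying Lagrange--Bürmann with $G=\log$ (equivalently, the partial ordinary Bell polynomial identity of \cite{BGW}) gives $\log\!\left(\alpha(v)/v\right)=\sum_{n\ge1}\frac1n[u^n]\psi(u)^n\,v^n$, i.e.\ $\alpha(v)=v\exp\!\left(\sum_{n\ge1}\frac1n[u^n]\psi(u)^n\,v^n\right)$; and expanding $\theta(y)^n=y^n\psi(1/y)^n$ identifies $[u^n]\psi(u)^n=[y^0]\theta^n$. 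Relabelling $v\leftrightarrow w^{-1}$, the series $\alpha(v)$ becomes $1/\eta(w)$ for $\eta$ exactly as in the statement, and the functional equation, now $\psi(\alpha(v))=v^{-1}\alpha(v)$, yields $\theta(\eta(w))=\alpha(v)^{-1}\psi(\alpha(v))=v^{-1}=w$.

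For part (2), I would specialize the identity $\theta(1/\alpha(v))=1/v$ at $v=-1$. Since $[v^n]\alpha(v)\in A_{n-1}$, the sum $\alpha(-1):=\sum_{n\ge1}(-1)^n[v^n]\alpha(v)$ converges in $\widehat A$ to $-1+(\text{positive degree})$, hence is a unit, and evaluation at $v=-1$ is a continuous ring homomorphism on the relevant graded completion of the Laurent-series ring, each fixed total degree receiving only finitely many contributions; thus it carries $\alpha(v)=v\psi(\alpha(v))$ to $\alpha(-1)=-\psi(\alpha(-1))$ and $\theta(1/\alpha(v))=1/v$ to $\theta(1/\alpha(-1))=-1$. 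Matching with the exponential formula gives $1/\alpha(-1)=-\exp\!\left(-\sum_{k>0}\frac{(-1)^k}{k}[y^0]\theta^k\right)=\tilde y$, so $\theta(\tilde y)=-1$. For the last assertion I would argue by induction on degree directly from $\theta(\tilde y)=-1$: writing $\tilde y=-1+\sum_{d\ge1}\tilde y_d$ with $\tilde y_d\in A_d$, using that $\tilde y^{-k}=(-1)^k$ plus terms of positive degree whose degree-$d$ part involves only $\tilde y_1,\dots,\tilde y_{d-k-1}$, and that $a_k\in A_{k+1}$, the degree-$d$ component of $\tilde y+\sum_{k\ge0}a_k\tilde y^{-k}+1=0$ reads $(-1)^{d-1}a_{d-1}=-\tilde y_d-(\text{an explicit expression in }a_0,\dots,a_{d-2}\text{ and }\tilde y_1,\dots,\tilde y_{d-1})$, which recovers each $a_{d-1}\in A_d$ from $\tilde y$ and the lower $a_k$; hence $\tilde y$ determines all $a_k$, and therefore $\theta$.

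The step requiring the most care is the numerical specialization $v=-1$ in part (2): one has to pin down the graded-completed Laurent-series ring in which this evaluation takes values and verify that it is a continuous ring homomorphism, so that the functional equations proved for a formal parameter in part (1) genuinely survive the substitution. Everything else is classical Lagrange inversion combined with the weight bookkeeping above.
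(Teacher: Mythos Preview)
Your proof is correct and follows essentially the same route as the paper: substitute $y\mapsto 1/u$ to reduce to the standard power-series Lagrange inversion setting, invoke the logarithmic form of Lagrange--B\"urmann (which the paper proves separately as an identity via the Bell-polynomial result of \cite{BGW}), and then specialize the formal parameter to $-1$ for part~(2) using the grading to justify convergence in $\widehat A$. The one place you diverge is the uniqueness clause: the paper argues that the grading lets one reconstruct the full inverse series $g(w)$ from its value $g(-1)$, and then appeals to the uniqueness statement in Lagrange inversion to recover $\theta$; your direct degree-by-degree induction from $\theta(\tilde y)=-1$ is an equally valid and slightly more self-contained alternative.
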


\begin{proof}
Define $f(y)=\theta(y^{-1})$ and let $A_r$ be the degree $r$ graded piece of $A$. Then $f(y)$ is a formal Laurent series in $y$ of the form $f(y)=y^{-1}+\sum_{k\geq 0}f_{k}y^k$ such that $f_{k}\in A_{k+1}$. Recall from Section \ref{sec:notation} that we write $[y^k]f$ for the coefficient of $y^k$ in $f$. The power series $h(y):=yf(y)=1+\sum_{k>0} f_{k-1}y^k\in A\lfor y\rfor$ has the convenient property 
$[y^k]h^l\in A_k$ for every $l\ge 0$. 
In particular, $[y^k](yf)^k\in A_k$, so $[y^0]f^k\in A_k$ and
\begin{equation}
[y^{-1}]f^k\in A_{k-1}.
\label{grading-of-f}
\end{equation}
 We apply Proposition~\ref{lem:3} to $h$ to have
\[ \exp\left(\sum_{k>0}\frac{1}{k}[y^k]h^kw^k\right) = \sum_{k>0}\frac{1}{k}[y^{k-1}]h^kw^{k-1}, \]
with a formal variable $w$. We multiply both sides with $w$, insert $h=yf$ and define
\[ g(w) := w\exp\left(\sum_{k>0}\frac{1}{k}[y^0]f^kw^k\right) = \sum_{k>0}\frac{1}{k}[y^{-1}]f^kw^k. \]
Comparing the very last expression with Lemma~\ref{lem:inversion} shows\footnote{While Lemma~\ref{lem:inversion} is only stated for complex coefficient, we can embed our coefficient ring $A\lfor y\rfor [y^{-1}]$ into $\CC$ before using the lemma.} that $g(w^{-1})$ is the compositional inverse of $f(y)$. 
That is, we have $f(g(w^{-1}))=w$, or equivalently $\theta(g(w^{-1})^{-1})=w$. From the definitions of $g(w)$ and $\eta(w)$ we see that $\eta(w)=g(w^{-1})^{-1}$, proving (1).

We write $g(w)=\sum_{k>0}g_kw^k$ and have $g_k=[w^{k}]g=\frac{1}{k}[y^{-1}]f^k$. 
By \eqref{grading-of-f}, $g_k\in A_{k-1}$. This allows us to specialize $w$ to any fixed value $c$ and the resulting series $g(c)$ gives a well-defined element in the formal completion $\widehat A$.
We set $w=-1$. Then $\theta(g(w^{-1})^{-1})=w$ becomes $\theta(g(-1)^{-1})=-1$. 
It is the substitution of an element of $\widehat A$ for $y$ in an element of $A\lfor y^{-1}\rfor [y]$.
We recognize that $\widetilde{y} = g(-1)^{-1}$, giving $\theta(\widetilde{y})=-1$.

By Lemma~\ref{lem:inversion}, the compositional inverse $g(w)$ uniquely determines $\theta(y)$. We have argued above that $g_k\in A_{k-1}$. Hence, the $w$-exponent in $g$ only captures the degree of $g_k$ in the graded ring $A$. In turn, we can recover $g(w)$ from any fixed value of $w$. In particular, we can recover $g(w)$ from $\tilde{y}=\frac{1}{g(-1)}$ and this determines $\theta(y)$ under the requirement that $\theta(y)$ is of the given form. This proves (2), completing the proof.
\end{proof}

Using the quantum A-period $a(z,q)$ from Definition \ref{def:aper},
in Definition \ref{def:mirmap} we defined a natural $q$-refinement of the open mirror map by
\begin{equation}
    M(Q,q) = e^{-a(-z(Q),q)}, \quad Q^\beta(z,q) = z^\beta e^{-(\beta\cdot{E})a(-z,q)}. 
\label{eq-o-c-mm}
\end{equation}

\begin{theorem}
\label{thm:agrees}
The insertion of $Q^\beta(z,y)=z^\beta\cdot(-1/y)^{\beta\cdot{E}}$ into $M(Q,q)$ gives the identity
\[ \vartheta_1(y,z,q)_\infty = yM(Q,q). \]
\end{theorem}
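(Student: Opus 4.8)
The plan is to read both sides of the claimed identity as compositional-inverse data attached to the theta function at infinity, and then invoke Proposition~\ref{prop:agrees}. Set $A=\CC[q^{\pm\frac{1}{2}}][\textup{NE}(\bP)]$, a finitely generated graded algebra over $A_0=\CC[q^{\pm\frac{1}{2}}]$ with $\deg z^\beta=\beta\cdot E$, and write $\theta(y):=\vartheta_1(y,z,q)_\infty$. By Proposition~\ref{prop:infty} we have $\theta(y)=y+\sum_{k\geq0}a_ky^{-k}$ with $a_k\in A_{k+1}$, so $\theta$ meets the hypotheses of Proposition~\ref{prop:agrees}.

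The first step is to express the compositional inverse of $\theta$ through the quantum A-period. Since $\const_{X,Y}(\vartheta_1(q)^n)$ is independent of the chamber (\cite{Man}, Proposition~2.15), I evaluate it in the central chamber, where $\vartheta_1=f$ by Lemma~\ref{lem:thetaper}, and in an unbounded chamber, where every monomial is a power of $y=x^{m_{\mathrm{out}}}$ so that $\const_{X,Y}=[y^0]$; this yields $[y^0]\theta^n=\const_{X,Y}(f^n)$ for all $n\geq1$, hence $\sum_{n>0}\tfrac1n[y^0]\theta^n=-a(z,q)$ by Definition~\ref{def:aper}. Every monomial of $\theta$ satisfies ($y$-exponent)$+$(degree)$=1$, so $[y^0]\theta^n$ is homogeneous of degree $n$; replacing $z^\beta$ by $z^\beta w^{-\beta\cdot E}$ therefore multiplies this piece by $w^{-n}$, giving $\sum_{n>0}\tfrac1n[y^0]\theta^nw^{-n}=-a(z,q)\big|_{z^\beta\mapsto z^\beta w^{-\beta\cdot E}}$. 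Substituting into Proposition~\ref{prop:agrees}(1), the compositional inverse $\eta$ of $\theta$ is
\[ \eta(w)=w\exp\!\Big(a(z,q)\big|_{z^\beta\mapsto z^\beta w^{-\beta\cdot E}}\Big). \]

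The second step unwinds the right-hand side. I perform the substitution $Q^\beta=z^\beta(-1/y)^{\beta\cdot E}$, which is legitimate since $\beta\cdot E>0$ for $\beta\neq0$ on a Fano surface, and abbreviate $v:=yM(Q,q)$; by a degree count $v=y+\sum_{k\geq0}c_ky^{-k}$ with $c_k\in A_{k+1}$. From the defining relation $Q^\beta=z(Q,q)^\beta e^{-(\beta\cdot E)a(-z(Q,q),q)}=z(Q,q)^\beta M(Q,q)^{\beta\cdot E}$ I get $z(Q,q)^\beta=z^\beta(-1/v)^{\beta\cdot E}$; the two sign changes then cancel to give $a(-z(Q,q),q)=a(z,q)\big|_{z^\beta\mapsto z^\beta v^{-\beta\cdot E}}$, so $M(Q,q)=\exp\!\big(-a(z,q)\big|_{z^\beta\mapsto z^\beta v^{-\beta\cdot E}}\big)$. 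Multiplying by $y$ and rearranging yields $y=v\exp\!\big(a(z,q)\big|_{z^\beta\mapsto z^\beta v^{-\beta\cdot E}}\big)=\eta(v)$, the substitution of $v$ into $\eta$ being well-defined because $v$ has the shape the formula for $\eta$ requires. Applying $\theta$ and using $\theta\circ\eta=\mathrm{id}$ from Proposition~\ref{prop:agrees}(1) gives $\theta(y)=\theta(\eta(v))=v$, i.e.\ $\vartheta_1(y,z,q)_\infty=yM(Q,q)$.

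I expect the main obstacle to be the identification $[y^0]\theta^n=\const_{X,Y}(f^n)$, which rests on chamber-independence of the constant term of powers of $\vartheta_1(q)$ and on correctly reading the constant term in an unbounded chamber as the coefficient of $y^0$ in the single-variable asymptotic expansion of Proposition~\ref{prop:infty} (and \cite{GRZ}, Proposition~3.5). Everything else is bookkeeping: the homogeneity argument turning the $w^{-n}$-substitution into the grading rescaling $z^\beta\mapsto z^\beta w^{-\beta\cdot E}$, and verifying that the formal substitutions (of $v$ into $\eta$, and of $v$ into the inverse identity) converge — all routine degree-counting.
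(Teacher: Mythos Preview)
Your proof is correct and follows essentially the same route as the paper: both arguments identify $\sum_{k>0}\tfrac{1}{k}[y^0]\vartheta_1^k$ with $-a(z,q)$ via chamber-independence of the constant term together with Lemma~\ref{lem:thetaper} and Definition~\ref{def:aper}, and then invoke Proposition~\ref{prop:agrees}. The only difference is which half of that proposition is used: the paper specializes to $\tilde y\in\widehat A$ and appeals to part~(2) (evaluation at $-1$ plus the uniqueness clause), whereas you work with the full compositional inverse $\eta$ from part~(1) and verify $\eta(v)=y$ directly, which is a slightly more streamlined variant that bypasses the specialization-and-uniqueness step.
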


\begin{proof}
The ring $A:=\CC[q^{\pm\frac{1}{2}}][\textup{NE}(\bP)]$ is graded by $\deg(z^\beta)=\beta\cdot E$. By Proposition \ref{prop:infty}, $\vartheta_1(y,z,q)_\infty$ is a formal Laurent series over $A$ of the form $\vartheta_1(y,z,q)_\infty=y+\sum_{k\geq  0}a_ky^{-k}$ with $a_k\in A_{k+1}$. Then Proposition \ref{prop:agrees}, (2) shows that the insertion of 
\[ y(z,q) := -\exp\left({-\sum_{k>0}\frac{(-1)^k}{k}[y^0]\vartheta_1^k}\right)\in \CC[q^{\pm\frac{1}{2}}]\lfor\textup{NE}(\bP)\rfor  \]
gives $\vartheta_1(y(z,q),z,q)_\infty=-1$.
Comparing $y(z,q)$ with Proposition~\ref{prop:thetaper} shows
\[ y(z,q) = -\exp({a(-z,q)}). \]
Comparing with $M(Q,q)=\exp\big({-a(-z(Q,q),q)}\big)$ from \eqref{eq-o-c-mm}, yields 
\begin{equation}
\label{eq-y}
y(z,q) = \frac{-1}{M(Q(z,q),q)}
\end{equation}
and therefore
\begin{equation}
\label{eq-minus-one}
    \vartheta_1(y(z,q),z,q)_\infty=-1=y(z,q)M(Q(z,q),q).
\end{equation}
Inserting \eqref{eq-y} into $Q^\beta(z,y)$ from \eqref{eq-o-c-mm} yields
\begin{equation*}
    Q^\beta(z,y(z,q)) = z^\beta M(Q(z,q),q)^{\beta\cdot E}=z^\beta \exp\big({-a(-z(Q(z,q),q),q)}\big)^{\beta\cdot E}=Q^\beta(z,q).
\end{equation*}
We see from Proposition \ref{prop:infty} and the definition of $Q^\beta(z,y(z,q))$ in \eqref{eq-o-c-mm} that \eqref{eq-minus-one} takes the form $\theta_1(\widetilde{y})=\theta_2(\widetilde{y})$ for $\widetilde{y}=y(z,q)$. 
The final part of the statement of Proposition~\ref{prop:agrees} now gives $\theta_1(y)=\theta_2(y)$ as elements of $A\lfor y^{-1}\rfor[y]$ which gives the assertion.
\end{proof}

\begin{corollary}
\label{cor:agrees}
Put $q=e^{i\hbar}$.  Then
\[ M(Q,q) = 1+\sum_\beta\sum_{g\geq 0} \frac{(-1)^{\beta\cdot E}}{\beta\cdot E-1}R_{g,(1,\beta\cdot E-1)}(\mathbb{P}(\log E),\beta)\hbar^{2g}Q^\beta. \]
\end{corollary}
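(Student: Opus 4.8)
The plan is to obtain the corollary by directly substituting the enumerative expansion of $\vartheta_1(y,z,q)_\infty$ supplied by Proposition \ref{prop:infty} into the identity $\vartheta_1(y,z,q)_\infty = yM(Q,q)$ of Theorem \ref{thm:agrees}. There is essentially no new content to prove: all of the substance already sits inside Theorem \ref{thm:agrees} (which rests on the combinatorial inversion and exponentiation results of Proposition \ref{prop:agrees}) and inside the $q$-refined tropical correspondence of Proposition \ref{prop:infty}; the corollary is the bookkeeping consequence.

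Concretely, I would first take the identity of Theorem \ref{thm:agrees}, valid under the change of variables $Q^\beta = z^\beta\cdot(-1/y)^{\beta\cdot E}$, and divide both sides by the formal variable $y$, giving $M(Q,q) = y^{-1}\,\vartheta_1(y,z,q)_\infty$. Next, I would insert the expression for $\vartheta_1(y,z,q)_\infty$ from Proposition \ref{prop:infty}; dividing that expansion by $y$ shifts the $y$-exponent of the class-$\beta$ contribution from $-(\beta\cdot E-1)$ to $-(\beta\cdot E)$, so that
\[ M(Q,q) = 1 + \sum_{\beta\in\textup{NE}(\bP)}\sum_{g\geq 0}\frac{1}{\beta\cdot E - 1}\,R_{g,(1,\beta\cdot E-1)}(\mathbb{P}(\log E),\beta)\,\hbar^{2g}\,z^\beta y^{-(\beta\cdot E)}. \]
Finally I would rewrite each monomial $z^\beta y^{-(\beta\cdot E)}$ in the $Q$-variables: since $Q^\beta = z^\beta(-1/y)^{\beta\cdot E} = (-1)^{\beta\cdot E} z^\beta y^{-(\beta\cdot E)}$ and $(-1)^{\beta\cdot E}$ is its own inverse, we have $z^\beta y^{-(\beta\cdot E)} = (-1)^{\beta\cdot E} Q^\beta$, and substituting produces exactly the claimed formula.

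I do not anticipate a genuine obstacle. The only points needing a sentence of care are: (i) checking that the substitution and term-by-term rearrangement are legitimate inside $\CC[q^{\pm\frac12}]\llbracket\textup{NE}(\bP)\rrbracket$ — which holds because, by Proposition \ref{prop:infty}, each power of $y$ occurring in $\vartheta_1(y,z,q)_\infty$ is pinned to a single value of $\beta\cdot E$, so the relation $Q^\beta = z^\beta(-1/y)^{\beta\cdot E}$ is unambiguous and compatible with the grading $\deg(z^\beta)=\beta\cdot E$ used throughout Section \ref{S:agrees}; and (ii) tracking the sign $(-1)^{\beta\cdot E}$, which is precisely where the numerator in the stated formula comes from.
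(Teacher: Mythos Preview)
Your proposal is correct and matches the paper's own proof, which simply states that the corollary follows from Proposition \ref{prop:infty} and Theorem \ref{thm:agrees}. You have just spelled out the substitution and sign-tracking that the paper leaves implicit.
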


\begin{proof}
The statement follows from Proposition \ref{prop:infty} and Theorem \ref{thm:agrees}.
\end{proof}

\subsection{An equation satisfied by the full log potential}

\begin{definition}
\label{defi:fulllog}
We define the \emph{full log potential} as
\[ W = \sum_{n>0} \vartheta_n U^n = \sum_{n>0}\sum_{m>0}\sum_{\beta : \beta\cdot{E}=n+m}\sum_{g\geq 0} mR_{g,(m,n)}(\bP(\log E),\beta)\hbar^{2g}z^\beta y^{-m} U^n. \]
\end{definition}

By \cite{GRZ}, Theorem 4.14, $W$ is a generating series of $2$-marked logarithmic Gromov-Witten invariants. However, it is not a generating series for open Gromov-Witten invariants, even in genus $0$, because the open-log correspondence receives correction terms in both, higher genus and higher winding (resp. order of the fixed point). For genus $0$ open Gromov-Witten invariants, the full potential can be explicitly written in terms of the winding $1$ invariants, \cite{LM}, (2.23). The aim of this section is to find an equation satisfied by the full log potential $W$. Since the coefficients of $\vartheta_n$ depend on $\vartheta_1$ in a difficult way, involving $2$-marked log invariants $R_{g,(p,q)}(\bP(\log E),\beta)$ that are not known to have a simple generating function, this equation will be less elegant than the Lerche-Mayr equation. 
For a formal Laurent series $f(y)$, we use $\res_{y=0}f(y)$ to refer to its \emph{formal residue}, the $-1$st coefficient.

\begin{proposition}
\label{prop:full}
The full log potential $W$ satisfies the equation
\[ 0 = \res_{y'=0}W(y',U) + \left(\frac{1}{U}-\vartheta_1(y) + \res_{y'=0}\left(D_U^{-1}W(y',U)\right)\right)W(y,U). \]
\end{proposition}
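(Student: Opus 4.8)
The plan is to derive the asserted identity from the $q$-refined theta multiplication rule, realized on the broken-line side, following the same bookkeeping that produces the Lerche--Mayr equation in genus zero (\cite{LM}, (2.23)) but keeping track of all genera via the $\hbar^{2g}$-grading. Recall from Definition~\ref{defi:fulllog} that $W=\sum_{n>0}\vartheta_n U^n$, and that the theta functions $\vartheta_n$ at infinity satisfy a multiplication rule $\vartheta_1\cdot\vartheta_n=\sum_m c^m_{1,n}\vartheta_m$ whose structure constants $c^m_{1,n}$ are counts of broken lines; by \cite{Man}, Proposition~2.15, these are chamber-independent, which is what makes the constant-term manipulations in Theorem~\ref{thm:per} and Proposition~\ref{prop:descendant} legitimate. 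The first step is to write down the precise $q$-refined multiplication rule for $\vartheta_1\cdot W$ in the unbounded chamber: each product $\vartheta_1\cdot\vartheta_n U^n$ contributes $\vartheta_{n+1}U^n$ (the ``straight'' term, raising the asymptotic weight by one) plus correction terms $\vartheta_{m}U^n$ with $m\le n$ coming from broken lines that bend, each such term carrying the appropriate power of $z^\beta$ and $\hbar^{2g}$ and a residue/constant-term factor.

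Second, I would package the bending corrections. The key observation is that a broken line contributing to the product $\vartheta_1\cdot\vartheta_n$ and landing in asymptotic weight $m<n$ must, after its final bend, look like a broken line contributing to $\vartheta_m$, while the portion ``absorbed'' in the bend is exactly accounted for by a residue of $W$ (this is the role of $\res_{y'=0}W(y',U)$ and $\res_{y'=0}(D_U^{-1}W(y',U))$ in the stated equation: the operator $D_U^{-1}$ divides the $U^n$-coefficient by $n$, which is the combinatorial weight correcting for the automorphism/ordering of the bend, just as the $1/n$ appears in $\log p=-\sum\frac1n f^n$). Concretely, I expect the identity $\vartheta_1\cdot W = U^{-1}W + (\text{residue terms})\cdot W$ to drop out once one sums the multiplication rule over $n$ and reorganizes by the weight $m$ of the output theta function; the term $U^{-1}W$ collects the straight contributions $\sum_n\vartheta_{n+1}U^n=U^{-1}\sum_{n\ge 1}\vartheta_{n+1}U^{n+1}$, and the residue terms collect all bends. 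Rearranging $\vartheta_1 W - U^{-1}W - \res_{y'=0}(D_U^{-1}W)\cdot W = \res_{y'=0}W$ and moving everything to one side gives exactly the displayed equation (up to a sign, which I would fix by checking the $\beta\cdot E$ smallest case against Example~\ref{expl:P2thetainf}).

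Third, there is a subtlety that I would treat carefully: the series $W$ involves $\vartheta_n$ for all $n$, and $\vartheta_n$ depends on $\vartheta_1$ ``in a difficult way'' (as the paper itself warns), so I cannot simply invoke a closed formula for the structure constants. Instead I would argue purely formally: all the series live in $\CC[q^{\pm1/2}]\lfor\textup{NE}(\bP)\rfor\lfor U\rfor$ with the grading $\deg z^\beta=\beta\cdot E$, $\deg U$ chosen so that every coefficient is a polynomial in $q^{\pm1/2}$ and the sums are finite in each bidegree (this finiteness is guaranteed because $\beta\cdot E=n+m$ pins down $\beta$ given $n,m$, exactly as in Proposition~\ref{prop:infty}), and the asserted equation is then an identity of formal power series which it suffices to verify coefficient-by-coefficient. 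Each coefficient identity reduces to a finite statement about broken-line counts in a single chamber, which is precisely the associativity/consistency of the $q$-refined scattering diagram (\cite{GHS}, \cite{Man}).

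\textbf{Main obstacle.} The hard part will be the precise matching of the residue terms: identifying which bent broken lines contribute to $\res_{y'=0}W(y',U)$ versus to $\res_{y'=0}(D_U^{-1}W(y',U))\cdot W(y,U)$, and getting the combinatorial weight $1/n$ (encoded by $D_U^{-1}$) and the signs exactly right. In genus zero this is the content of the Lerche--Mayr derivation; here one must check that the $\hbar^{2g}$-grading is simply carried along by the $q$-refined multiplicities and introduces no new terms, i.e.\ that the genus is additive under the relevant gluing of broken lines. I would isolate this in a lemma describing the final bend of a broken line contributing to a product $\vartheta_1\cdot\vartheta_n$, and then the rest is formal rearrangement.
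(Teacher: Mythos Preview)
Your approach is essentially the paper's: sum the theta multiplication rule $\vartheta_n\cdot\vartheta_1=\vartheta_{n+1}+\cdots$ against $U^n$ and identify the resulting pieces with the terms of the displayed equation. Two points of vagueness in your outline are resolved much more directly in the paper than your ``final bend'' lemma would suggest.

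First, the paper does not rederive the structure constants from broken-line combinatorics; it simply quotes \cite{GRZ}, Proposition~5.2, which gives
\[
\vartheta_n\cdot\vartheta_1=\vartheta_{n+1}+\sum_{m=0}^n\alpha_{n,1}^m\vartheta_m+\alpha_{1,n}^0
\]
with explicit formulas $\alpha_{n,1}^m=\sum_{\beta,g}(n-m)R_{g,(n-m,1)}\hbar^{2g}z^\beta$ and $\alpha_{1,n}^0=\sum_{\beta,g}R_{g,(1,n)}\hbar^{2g}z^\beta$. This immediately answers your ``main obstacle'': the two residue terms are not two kinds of bends, but the two distinct structure constants. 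The scalar term $\alpha_{1,n}^0$ (point constraint on the weight-$1$ leg) sums to $\res_{y'=0}W(y',U)$, while the terms $\alpha_{n,1}^m\vartheta_m$ (point constraint on the weight-$n$ leg) sum to $\res_{y'=0}(D_U^{-1}W)\cdot W$.

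Second, your explanation of the $D_U^{-1}$ factor as an ``automorphism/ordering'' correction is not right. The $\tfrac{1}{m'}$ produced by $D_U^{-1}$ acting on $U^{m'}$ is matched to the structure constant $\alpha_{n,1}^m=(n-m)R_{g,(n-m,1)}$ only after invoking the symmetry $\tfrac{1}{n-m}R_{g,(1,n-m)}=(n-m)R_{g,(n-m,1)}$ from \cite{GRZ}, Theorem~5.4; without this step the bookkeeping does not close. So your proposed lemma on the final bend would in effect have to reprove both GRZ~5.2 and GRZ~5.4, which is unnecessary.
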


\begin{proof}
By \cite{GRZ}, Proposition 5.2, the multiplication rule in the ring of theta functions gives
\begin{equation}
\label{eq:thetarule}
\vartheta_n \cdot \vartheta_1 = \vartheta_{n+1} + \sum_{m=0}^n \alpha_{n,1}^m \vartheta_m + \alpha_{1,n}^0,
\end{equation}
with
\[ \alpha_{n,1}^m = \sum_{\beta : \beta\cdot{E}=n-m+1}\sum_{g\geq 0}(n-m)R_{g,(n-m,1)}(\bP(\log E),\beta)\hbar^{2g}z^\beta, \]
and
\[ \alpha_{1,n}^0 = \sum_{\beta : \beta\cdot{E} = n+1}\sum_{g\geq 0} R_{g,(1,n)}(\bP(\log E),\beta)\hbar^{2g}z^\beta. \]
By inserting Proposition \ref{prop:infty} into equation \eqref{eq:thetarule}, we obtain a system of equations for the 2-marked invariants $R_{g,(m,n)}(\bP(\log E),\beta)$. This system of equations determines all invariants $R_{g,(m,n)}(\bP(\log E),\beta)$ from the invariants $R_{g,(n,1)}(\bP(\log E),\beta)$, hence all theta functions $\vartheta_n(q)$ from the primitive one $\vartheta_1(q)$. To phrase the system of equations as an equation for the generating function $W$, we sum equation \eqref{eq:thetarule} in the sum $\sum_{n>0} (-) U^n$. The left hand side becomes $W\vartheta_1$ and the first term of the right hand side is $\frac{1}{U}W$. The last term on the right hand side is 
\begin{eqnarray*}
\res_{y=0}W &=& \res_{y=0}\sum_{n\geq 0}\sum_{m>0}\sum_{\beta : \beta\cdot{E}=n+m}\sum_{g\geq 0} mR_{g,(m,n)}(\bP(\log E),\beta)\hbar^{2g}z^\beta y^{-m}U^n \\
&=& \sum_{n\geq 0}\sum_{\beta : \beta\cdot{E}=n+1}\sum_{g\geq 0} R_{g,(1,n)}(\bP(\log E),\beta)\hbar^{2g}z^\beta U^n. 
\end{eqnarray*}
Similarly, the middle term on the right hand side is (where $D_U^{-1}$ is log integration, i.e., the inverse to $D_U=U\frac{d}{dU}$)
\begin{align*}
&\res_{y'=0}\left(D_U^{-1}W(y',U)\right)W(y,U) = \res_{y'=0}\left(\sum_{m'\geq 0}\frac{1}{m'}\left((y')^{m'}\right.\right. \\
& \qquad \qquad \left.\left. +\sum_{k>0}\sum_{\beta : \beta\cdot{E}=m'+k}\sum_{g\geq 0}R_{g,(k,m')}(\bP(\log E),\beta)\hbar^{2g}z^\beta (y')^{-k}\right)U^{m'}\right)W(y,U) \\
&= \left(\sum_{m'\geq 0}\frac{1}{m'}\sum_{\beta : \beta\cdot{E}=m'+1}\sum_{g\geq 0}R_{g,(1,m')}(\bP(\log E),\beta)\hbar^{2g}z^\beta U^{m'}\right)\left(\sum_{m\geq 0}\vartheta_m U^m\right) \\
&= \sum_{n\geq 0}\sum_{m\geq 0}\sum_{\beta : \beta\cdot{E}=n-m+1}\sum_{g\geq 0}\frac{1}{n-m}R_{g,(1,n-m)}(\bP(\log E),\beta)\hbar^{2g}z^\beta \vartheta_m U^n \\
&= \sum_{n\geq 0}\sum_{m\geq 0}\sum_{\beta: \beta\cdot{E}=n-m+1}\sum_{g\geq 0} (n-m) R_{g,(n-m,1)}(\bP(\log E),\beta)\hbar^{2g}z^\beta \vartheta_m U^n.
\end{align*}
Putting everything together, we obtain the claimed equation.
\end{proof}

\section{Open-log correspondence for smooth log Calabi-Yau pairs}
\label{S:openlog}

In this section, we prove a higher genus open-log correspondence for smooth log Calabi-Yau pairs $(\bP, E)$. We first show a correspondence between higher genus two-pointed log invariants and local invariants in a similar approach as in \cite{GRZ}. Then assuming $\bP$ is toric, we use the Topological Vertex \cite{AKMV} to relate the log invariants of $(\bP, E)$ to open invariants of $K_{\bP}$ in all-genus.

In order to state our results, we first establish some notation. Let $w \in \mathbb{Z}_{\geq 0}$, and $\vec{k} = (k_1, \ldots, k_n) \in \mathbb{Z}_{\geq 0}^n$ be a vector of non-negative integers such that $\sum_{j=1}^n jk_j = w$. Let $|\vec{k}|$ be the number of non-zero entries of $\vec{k}$. We denote $O_g(K_{\bP}/L, \beta + w\beta_0, \vec{k})$ to be the genus-$g$, winding-$w$, open Gromov-Witten invariant of an outer Aganagic-Vafa brane $L \subset K_{\bP}$ in framing-0, in curve class $\beta+w\beta_0$ with $\beta_0 \in H_2(K_{\bP}, L)$, and in winding profile $\vec{k}$ if it has $k_j$ boundary components of winding $j$. The invariants $O_g(K_{\bP}/L, \beta+w\beta_0, \vec{k})$ can be defined using stable relative maps, and we refer to \cite{LLLZ} \cite{FL} for a more detailed definition. We write $n_g^{open}(K_{\bP}/L, \beta + w\beta_0, \vec{k})$ to be the corresponding open-BPS invariant defined from open multiple cover formulas for $O_g(K_{\bP}/L, \beta + w\beta_0, \vec{k})$ \cite{MV}.
As Gromov-Witten invariants and BPS invariants are uniquely determined by each other via the multiple-cover formulas, the $n^{open}_g(K_\bP/L, \beta+w\beta_0, \vec{k})$ can be defined by the $O_g(K_\bP/L, \beta+w\beta_0, \vec{k})$.

Let $\pi:\hat{\mathbb{P}} \rightarrow \mathbb{P}$ be the blow up of $\mathbb{P}$ at a point with exceptional curve $C$. In Section \ref{sec:GRZ}, we first review the genus-0 open-log correspondence for smooth log Calabi-Yau pairs $(\bP, E)$ established in \cite{GRZ}. In Section \ref{sec:log_GV},  we relate higher genus two-pointed log invariants $R_{g, (1,p)}(\mathbb{P}(\log E), \beta)$ with Gopakumar-Vafa invariants of local $\widehat{\bP}$ in Theorem \ref{thm:tropical_local}. In Section \ref{sec:winding-w}, additionally assuming $\bP$ is toric and for winding $w > 0$, we use the Topological Vertex \cite{AKMV} to relate Gopakumar-Vafa invariants of local $\widehat{\bP}$ in curve class $\pi^*\beta - wC$ to certain LMOV invariants of $L \subset K_{\bP}$ in representation $(w)$ that is a Young Tableau with a single row of $w$ boxes in Theorem \ref{thm:winding}. LMOV invariants are related to the open-BPS invariants $n_g^{open}$ by Equation \ref{eq:change_of_basis}. We then specialize Theorem \ref{thm:winding} to $w = 1$ in Corollary \ref{cor:winding-1} that generalizes \cite{LLW}, Theorem 1.1 to all-genus, and to $w = 2$ in Corollary \ref{cor:winding-2}. We conjecture a higher-winding open-closed relation involving LMOV invariants in Conjecture \ref{conj:GV_LMOV}.

In Section \ref{sec:ol_correspondence}, we prove an open-log correspondence in all-genus in Theorem \ref{thm:ol_correspondence} which relates two-pointed log invariants $R_{g, (1,p)}(\mathbb{P}(\log E), \beta)$ to winding-1, 1-boundary component, open-BPS invariants $n_g^{open}(K_\bP/L, \beta+\beta_0, (1))$ via a discrepancy quantified by stationary Gromov-Witten invariants of $E$, and $R_{h, (1,p)}(\mathbb{P}(\log E), \beta)$ for $h < g$. We provide a computational check of Theorem \ref{thm:ol_correspondence} in Example \ref{ex:explicitdelta} for $\bP = \bP^2$. In Section \ref{S:verification}, we provide a computational check of Corollary \ref{cor:winding-1} using the Topological Vertex with the relevant open and closed BPS invariants listed in Table \ref{table:open_closed_BPS}. 

\subsection{The \texorpdfstring{$g = 0$}{g=0} open-log correspondence}

\label{sec:GRZ}

We have the following genus-0 open-log correspondence due to \cite{GRZ}. 

\begin{theorem}[\cite{GRZ}, Theorem 1.1]
\label{thm:GRZ}
    Let $\mathbb{P}$ be a toric Fano surface with smooth anticanonical divisor $E$. Let $\beta \in \textup{NE}(\bP)$ be an effective curve class and $\beta_0 \in H_2(K_{\bP}, L)$ be a holomorphic disc class with boundary on $L$. Then, 
    
    \[
    O_0(K_{\bP}/L, \beta+\beta_0, (1)) = (-1)^{\beta\cdot E}(\beta\cdot E-1)R_{0, (1, \beta\cdot E-1)}(\bP(\log E), \beta)
    \]
\end{theorem}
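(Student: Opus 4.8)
The plan is to recognize the open mirror map $M(Q)$ (the $q=1$ case of Definition~\ref{def:mirmap}) as the common value of two generating functions --- one built from winding-$1$ open Gromov-Witten invariants of the outer brane $L \subset K_{\bP}$, the other from two-pointed log invariants of $(\bP,E)$ --- and to read off the stated identity by comparing coefficients of $Q^\beta$.

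The logarithmic half is already packaged for us: by Corollary~\ref{cor:agrees} specialized to $q=1$ we have $[Q^\beta]M(Q) = \tfrac{(-1)^{\beta\cdot E}}{\beta\cdot E-1}R_{0,(1,\beta\cdot E-1)}(\bP(\log E),\beta)$, which rests on Theorem~\ref{thm:agrees} and Proposition~\ref{prop:infty} --- whose genus-zero content is the tropical correspondence behind \cite{GRZ}, Theorems 4.14 and 5.4 --- all of which I take as given.

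The open half is the real work, and I see two routes. The first is to cite open mirror symmetry for outer Aganagic-Vafa branes on toric Calabi-Yau threefolds (Chan and Lau-Leung-Wu \cite{Cha,LLW}, or Fang-Liu), which expresses the winding-$1$, framing-$0$ disk invariants through the period $a(-z)$ and hence identifies $\sum_\beta O_0(K_{\bP}/L,\beta+\beta_0,(1))Q^\beta$ with an explicit transform of $M(Q)$. The second, which is the one extended to all genus in Section~\ref{S:openlog}, chains together two geometric statements: the open-closed correspondence of \cite{Cha,LLW} identifies $O_0(K_{\bP}/L,\beta+\beta_0,(1))$ with the genus-zero local invariant $N_0(K_{\widehat{\bP}},\pi^*\beta-C)$ of the toric blow-up along $C$; then the log-local correspondence of van Garrel-Graber-Ruddat, combined with the blow-up formula for log invariants of \cite{GRZ}, Corollary 6.6, rewrites $N_0(K_{\widehat{\bP}},\pi^*\beta-C)$ in terms of $R_{0,(\beta\cdot E-1,1)}(\bP(\log E),\beta)$, which the tangency-redistribution identity (\cite{GRZ}, Theorem 5.4) converts to $R_{0,(1,\beta\cdot E-1)}$. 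Either route produces an identity $O_0(K_{\bP}/L,\beta+\beta_0,(1)) = c(\beta\cdot E)\,R_{0,(1,\beta\cdot E-1)}(\bP(\log E),\beta)$ for an explicit constant $c$ depending only on $\beta\cdot E$.

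Matching this against $[Q^\beta]M(Q) = \tfrac{(-1)^{\beta\cdot E}}{\beta\cdot E-1}R_{0,(1,\beta\cdot E-1)}$ and the relation between $M(Q)$ and the $O_0$'s then forces $c(\beta\cdot E) = (-1)^{\beta\cdot E}(\beta\cdot E-1)$, which is the claim. The main obstacle is exactly the determination of $c$: one must reconcile the normalization of the disk function against the flat coordinate $U$ (including the open multiple-cover/BPS formula \cite{MV}), the sign $(-1)^{\beta\cdot E}$ coming from the substitution $Q^\beta = z^\beta(-1/y)^{\beta\cdot E}$, and the powers of $\beta\cdot E-1$ that enter from the asymptotic monomial $y^{-(\beta\cdot E-1)}$ of $\vartheta_1$ and from \cite{GRZ}, Theorem 5.4. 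In the second route this bookkeeping reduces to tracking van Garrel-Graber-Ruddat's $(-1)^{D\cdot\beta+1}$ against the blow-up formula, and the cleanness of the final constant reflects the vanishing in genus zero of the discrepancy term $\Delta$ of Theorem~\ref{thm:tropical_local}.
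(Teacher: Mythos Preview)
The paper does not give its own proof of this theorem; it is quoted from \cite{GRZ} and followed only by the one-sentence summary ``Its proof uses results from \cite{Gra2} \cite{vGGR} \cite{LLW} and \cite{GRZ}, Corollary 6.6 to relate two-pointed log invariants $R_{0,(1,\beta\cdot E-1)}$ to open invariants of $K_{\bP}$.'' Your second route is exactly this chain of ingredients --- the open-closed relation of \cite{Cha,LLW}, the log-local principle of \cite{vGGR}, and the blow-up formula \cite{GRZ}, Corollary 6.6 --- so on that route you are aligned with the paper's account of the original proof.

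Your first route, via Corollary~\ref{cor:agrees} at $q=1$ together with an open mirror theorem identifying $M(Q)$ with the winding-$1$ disk generating function, is logically independent and not circular (Corollary~\ref{cor:agrees} rests on Theorem~\ref{thm:agrees} and Proposition~\ref{prop:infty}, neither of which invokes \cite{GRZ}, Theorem 1.1). It is, however, anachronistic as a proof of the cited result: you are appealing to this paper's $q$-refined machinery to recover its $q=1$ predecessor, and the ``open mirror theorem'' step you need --- that the disk generating function \emph{is} $M(Q)$ --- is not something proved here but is itself a consequence of results in \cite{CLL,CLLT,CCLT} or, equivalently, of \cite{LLW} combined with the very log-local-blow-up chain of your second route. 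So route 1 is a valid reorganization rather than a genuinely independent argument; the substantive input is the same.
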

When $\bP = \bP^2$, the above invariants are the coefficients of the open mirror map $M(Q)$ of an outer Aganagic-Vafa brane $L \subset K_{\bP^2}$ in framing-0,

\[
M(Q) = 1 - 2Q + 5Q^2 - 32Q^3 + 286Q^4 - 3038Q^5 + \ldots
\]
Theorem \ref{thm:GRZ} implies that the mirror dual Landau-Ginzburg superpotential is equivalent to the open mirror map $M(Q)$. Its proof uses results from \cite{Gra2} \cite{vGGR} \cite{LLW} and \cite{GRZ}, Corollary 6.6 to relate two-pointed log invariants $R_{0, (1, \beta\cdot E - 1)}$ to open invariants of $K_{\bP}$. 

\subsection{Higher genus log/Gopakumar-Vafa correspondence}

\label{sec:log_GV}

Let $N_g(K_{\widehat{\bP}}, \pi^*\beta-C)$ be the genus-$g$, local Gromov-Witten invariant of $K_{\widehat{\bP}}$ in class $\pi^*\beta-C$ \cite{CKYZ}, and $n_g(K_{\widehat{\bP}}, \pi^*\beta-C)$ the genus-$g$, Gopakumar-Vafa invariant of $K_{\widehat{\bP}}$ in curve class $\pi^*\beta-C$ defined by multiple cover formulas \cite{GV1} \cite{GV2}. Let $Q^{\beta}$ be a formal variable tracking the curve class $\beta \in \textup{NE}(\bP)$. Using the higher genus analogues of arguments used to prove Theorem \ref{thm:GRZ}, we have the following theorem.

\begin{theorem}
    \label{thm:tropical_local}
Let $(\mathbb{P}, E)$ be a smooth log Calabi-Yau pair. Then,

\begin{multline*}
    \sum_{\substack{g \geq 0, \\ \beta \in \textup{NE}(\bP)}}\frac{1}{(\beta\cdot E-1)}R_{g, (1, \beta\cdot E-1)}(\bP(\log E), \beta) \hbar^{2g}Q^{\beta} =\\
    \sum_{\substack{g \geq 0, \\ \beta \in \textup{NE}(\bP)}}\left[(-1)^{\beta\cdot E}n_g(K_{\widehat{\bP}}, \pi^*\beta-C)
    \left(2\sin \frac{\hbar}{2}\right)^{2g-2}Q^{\beta}\right] - \Delta  
\end{multline*}
where $\Delta$, defined in Equation \ref{eq:delta_ol}, is a discrepancy expressed by the stationary Gromov-Witten theory of $E$ and two pointed log invariants of $\bP(\log E)$. 
\end{theorem}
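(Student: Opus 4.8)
The plan is to follow the genus-zero strategy of \cite{GRZ}, Theorem 1.1, and upgrade each step to accommodate the Hodge-class insertions $\lambda_g$ on the log side and the genus parameter on the local side. First I would recall the $g\ge 0$ log-local principle of \cite{BFGW}: for a smooth log Calabi-Yau pair, the generating function of maximal-tangency log invariants $R_{g,(1,\beta\cdot E-1)}(\bP(\log E),\beta)$ with a $\lambda_g$-insertion is, up to an explicit universal correction, the generating function of local Gromov-Witten invariants $N_g(K_\bP,\beta)$ assembled into Gopakumar-Vafa form with the $(2\sin\tfrac{\hbar}{2})^{2g-2}$ factor. The correction there is precisely what is responsible for the term $\Delta$ appearing later, and it is expressed through the stationary descendant GW theory of the elliptic curve $E$ (via the degeneration to the normal cone and the GW/Hurwitz-type evaluations on $E$); this needs to be matched against the correction term recorded in Equation \ref{eq:delta_ol}.

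The second ingredient is the blow-up comparison. I would invoke \cite{GRZ}, Corollary 6.6 (the blow-up formula for log invariants): under $\pi:\widehat\bP\to\bP$, the log invariant $R_{g,(1,\beta\cdot E-1)}(\bP(\log E),\beta)$ equals a corresponding log invariant of $(\widehat\bP,\widehat E)$ in class $\pi^*\beta - C$ with tangency adjusted by the intersection with the exceptional divisor, and this in turn — again by the log-local principle, now applied on $\widehat\bP$ where the relevant local invariants carry no extra insertion beyond the Hodge class forced by dimension — produces the Gopakumar-Vafa invariants $n_g(K_{\widehat\bP},\pi^*\beta-C)$. The sign $(-1)^{\beta\cdot E}$ is the standard one propagating through the log-local dictionary (it already appears in the genus-zero Theorem \ref{thm:GRZ} and in Corollary \ref{cor:agrees}), coming from the $\beta\cdot E$ boundary markings / the orientation of the relative obstruction theory. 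Collecting the two applications of the log-local principle — one on $\bP$ producing $\Delta$, one on $\widehat\bP$ producing the clean GV sum — and the blow-up identification between them yields the displayed equation, with $\Delta$ being exactly the difference of the two universal corrections.

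The last bookkeeping step is to organize the recursion: $\Delta$ as defined in Equation \ref{eq:delta_ol} should involve, after fixing a target genus $g$, only the lower-genus two-pointed log invariants $R_{h,(1,\beta\cdot E-1)}(\bP(\log E),\beta)$ for $h<g$ together with stationary GW invariants of $E$; this is because the \cite{BFGW} correction, when expanded, re-expresses the higher-genus piece of the $E$-contribution in terms of the already-known lower-genus log data. I would check this triangular structure explicitly and confirm that no $R_{g,\bullet}$ with the top genus reappears inside $\Delta$, so that the identity is not circular.

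The main obstacle I anticipate is the precise matching of the correction terms: the $g\ge 0$ log-local statement of \cite{BFGW} is formulated for log invariants without descendant or $\psi$-power insertions, whereas $R_{g,(1,\beta\cdot E-1)}$ carries the $\psi^{\beta\cdot E-2}$ from the non-prescribed point (cf.\ Definition \ref{defi:desc} and Proposition \ref{prop:infty}). Reconciling the $\psi$-power with the tangency order and showing that the resulting descendant-log/descendant-local correspondence still produces exactly the stationary GW theory of $E$ (and not some messier descendant theory) is the delicate point; I expect to handle it by using the divisor/string-type relations on $\barM_{g,1}(\bP,\beta)$ to trade the $\psi$-power for the tangency condition, exactly as in the genus-zero argument of \cite{GRZ} and the tropical correspondence of \cite{Gra2}, but one must verify the Hodge insertion is transported compatibly through these manipulations.
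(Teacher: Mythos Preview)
Your proposal contains a genuine structural gap. The invariant $R_{g,(1,\beta\cdot E-1)}(\bP(\log E),\beta)$ is \emph{not} a maximal-tangency invariant: it has two contact points with $E$, of orders $1$ and $\beta\cdot E-1$. The log-local principle of \cite{BFGW} (Propositions~3.1 and~3.4) is stated for maximal-tangency invariants, i.e.\ a single contact point of full order $\beta\cdot E$. So your first step --- applying log-local directly on $\bP$ to produce $N_g(K_\bP,\beta)$ --- does not go through, and there is no ``log-local correction on $\bP$'' to form half of your $\Delta$.

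The blow-up is not a parallel route to be compared with a direct computation on $\bP$; it is the mechanism that makes log-local applicable at all. Corollary~6.6 of \cite{GRZ} trades the order-$1$ contact point for the exceptional class $C$, so that on $\widehat\bP$ one is left with a genuine one-pointed maximal-tangency invariant $R_g(\widehat\bP(\log\,\pi^*E-C),\pi^*\beta-C)$, at the cost of a correction term involving the Bryan--Pandharipande local $\bP^1$ invariants $N(g-i,1)$ paired with lower-genus two-pointed invariants $R_{i,(1,\beta\cdot E-1)}$. Only then does one apply \cite{BFGW} on $\widehat\bP$, which converts the maximal-tangency invariant to $N_g(K_{\widehat\bP},\pi^*\beta-C)$ plus the stationary-$E$ correction. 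Finally the Gopakumar--Vafa formula (clean, since $\pi^*\beta-C$ is primitive) gives the $n_g\,(2\sin\tfrac{\hbar}{2})^{2g-2}$ form. Thus $\Delta$ is the sum of the blow-up correction and the \cite{BFGW} correction on $\widehat\bP$, not a difference of two log-local corrections.

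A smaller point: your anticipated obstacle is based on a confusion. The invariant $R_{g,(1,\beta\cdot E-1)}$ carries no $\psi$-power; the $\psi^{\beta\cdot E-2}$ you cite belongs to the descendant invariant $D_{g,1}(\bP,\beta)$ of Definition~\ref{defi:desc}, which is a different object. The two tangency conditions in $R_{g,(1,\beta\cdot E-1)}$ are honest contact orders, and no descendant--tangency trade is needed here.
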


\begin{proof}[Proof of Theorem \ref{thm:tropical_local}]

From here on, we simplify the notation by indexing sums $\displaystyle{\sum_{g,\beta}}$ where there is no confusion. We take the generating series of two-pointed log invariants $R_{g, (1, \beta\cdot E - 1)}(X(\log E), \beta)$ with one prescribed contact point with $E$ of order 1 and one non-prescribed contact point with $E$ of order $(\beta\cdot E - 1)$,

\[
\sum_{g,\beta}\frac{1}{(\beta\cdot E-1)}R_{g, (1, \beta\cdot E-1)}(\bP(\log E), \beta)\hbar^{2g}Q^{\beta}
\]
By \cite{GRZ}, Corollary 6.6, the above generating series becomes, 

\begin{equation}
\label{eq:tropical_local3}
    \begin{split}
        \sum_{g,\beta}\frac{1}{(\beta\cdot E-1)}&\left[R_g(\widehat{\bP}(\log \pi^*E - C), \pi^*\beta - C) \right.\\
        &- \left. \sum_{i=0}^{g-1}R_{i,(1, \beta\cdot E - 1)}(\bP(\log E), \beta)N(g-i, 1)\right]\hbar^{2g}Q^{\beta} \\
    \end{split} 
\end{equation}
where $R_g(\widehat{\bP}(\log \pi^* E - C),\pi^*\beta-C)$ are genus-$g$, maximal tangency invariants of the log Calabi-Yau pair $(\widehat{\bP}, \pi^* E - C)$ with $\lambda_g$-insertion (defined in \cite{GRZ}, Section 4), and $N(g,p)$ is the genus-$g$, local relative invariant of $\bP^1(\log \infty)$ with tangency order $p$ computed in \cite{BP}, Theorem 5.1 and is given by the $\hbar^{2g}$-coefficient of $\frac{(-1)^{p+1}}{p}\frac{i\hbar}{\q^{p/2}-\q^{-p/2}}$.

For reasons we shall see, define $\Delta$ to be the term,

\begin{equation}
\label{eq:delta_ol}
\begin{split}
    \Delta := \sum_{g,\beta}\Biggl[&(-1)^{\beta\cdot E}\biggl[\sum_{n \geq 0}\sum_{\substack{g = h+g_1+\ldots+g_n, \\ \mathbf{a} = (a_1,\ldots, a_n) \in \mathbb{Z}^n_{\geq 0}, \\ \pi^*\beta-C = d_E [E] + \beta_1 + \ldots + \beta_n, \\ d_E \geq 0, \beta_j \cdot D > 0}}\frac{(-1)^{g-1 + (E\cdot E)d_E}(E\cdot E)^m}{m!|Aut(\mathbf{a}, g)|}  \\
    &N_{h, (\mathbf{a}, 1^m)}(E, d_E)\prod_{j=1}^n ((-1)^{\beta_j \cdot E}(\beta_j \cdot E) R_{g_j, (\beta_j\cdot E)}(\widehat{\bP}(\log \pi^* E - C), \beta_j)\biggr]\\
    &- \frac{1}{(\beta\cdot E - 1)}\sum_{i=0}^{g - 1} R_{i,(1, \beta\cdot E - 1)}(\bP(\log E), \beta)N(g-i, 1)\Biggr]\hbar^{2g}Q^{\beta}
\end{split}
\end{equation}
where $N_{h, (\textbf{a}, 1^m)}(E, d_E)$ are genus-$h$, stationary invariants of $E$ defined as,
\begin{equation}
\label{eq:stationary_invariants_E}
    N_{h, (\textbf{a}, 1^m)}(E, d_E) := \int_{[\barM_{h,n+m}(E, d_E)]^{vir}}\prod_{i=1}^n ev_i^*[pt] \psi_i^{a_i} \prod_{j=1}^m ev_j^*[pt] \psi_j 
\end{equation}
where $\textbf{a} \in \mathbb{Z}^n_{\geq 0}$, $\psi_j \in H^2(\barM_{h,n+m}(E, d_E))$ is the $\psi-$class at the $j$-th marked point, and $[pt] \in H^2(E)$ is the Poincare-dual of a point.

For $g \geq 0$ and $\beta \in \textup{NE}(\bP)$, we define $\Delta(g,\beta)$ as the $\hbar^{2g}Q^{\beta}$-coefficient of $\triangle$ by,

\[
\Delta := \sum_{g, \beta} \Delta(g, \beta)\hbar^{2g}Q^{\beta}
\]
Applying the $g \geq 0$ log-local principle (combining \cite{BFGW}, Propositions 3.1 and 3.4) to $R_{g, (\beta\cdot E-1)}(\widehat{\bP}(\log \pi^*E - C), \pi^*\beta - C)$ in (\ref{eq:tropical_local3}), it becomes,

\begin{equation}
\label{eq:tropical_local5}
    \begin{split}
        \sum_{g,\beta}\left[(-1)^{\beta\cdot E}N_g(K_{\widehat{\bP}}, \pi^*\beta - C)\hbar^{2g}Q^{\beta}\right]  - \Delta
    \end{split}
\end{equation}
As $\pi^*\beta - C$ is a primitive curve class, the Gopakumar-Vafa formula for $N_g(K_{\widehat{\bP}}, \pi^*\beta-C)$ takes the form,

\begin{equation}
\label{eq:GV}
    \sum_{g \geq 0}N_g(K_{\widehat{\bP}}, \pi^*\beta-C) \hbar^{2g-2} = \sum_{g \geq 0}n_g(K_{\widehat{\bP}}, \pi^*\beta-C) \left(2\sin \frac{\hbar}{2}\right)^{2g-2}
\end{equation}
By combining (\ref{eq:tropical_local3}), (\ref{eq:delta_ol}), (\ref{eq:tropical_local5}), and (\ref{eq:GV}), we have,

\begin{equation}
    \begin{split}
        &\sum_{g,\beta}\frac{1}{(\beta\cdot E-1)}R_{g, (1, \beta\cdot E-1)}(\bP(\log E), \beta)\hbar^{2g}Q^{\beta}=\\ &\hskip.3in\sum_{g,\beta}\left[(-1)^{\beta\cdot E}
        n_g(K_{\widehat{\bP}}, \pi^*\beta-C) \left(2\sin \frac{\hbar}{2}\right)^{2g-2}Q^{\beta} \right] - \Delta \\
    \end{split}
\end{equation}

\end{proof}
Theorem \ref{thm:tropical_local} relates the generating function of genus-$g$, two-pointed log invariants $R_{g, (\beta\cdot E-1, 1)}(\bP(\log E), \beta)$ to the generating function of genus-$g$, Gopakumar-Vafa invariants $n_g(K_{\widehat{\bP}}, \pi^*\beta-C)$ in curve class $\pi^*\beta-C$. We give an explicit expression for the discrepancy $\Delta$ in Example \ref{ex:explicitdelta}. 

\subsection{Open/closed correspondence}
\label{sec:winding-w}

For this section, suppose that $\bP$ is toric and $\pi:\widehat{\bP} \rightarrow \bP$ is a toric blow up at a point. Then $K_{\bP}, K_{\widehat{\bP}}$ are toric Calabi-Yau 3-folds. For $w > 0$, we use the Topological Vertex \cite{AKMV} to derive an equality relating the genus-$g$, degree-$d$ LMOV invariant $N^{\mathrm{LMOV}}_{g, (w)}(K_{\bP}/L, \beta)$ of an outer AV-brane $L \subset K_{\bP}$ with Young Tableau given by a single row $(w)$ of length $w$ boxes to the closed invariants $N_g(K_{\widehat{\bP}}, \pi^*\beta - wC)$. By Equation \ref{eq:change_of_basis} and open multiple cover formulas \cite{MV}, this relates $O_g(K_{\bP}/L, \beta+w\beta_0, \vec{k})$, where $\vec{k}$ satisfies $\sum_j jk_j = w$, to $N_g(K_{\widehat{\bP}}, \pi^*\beta - wC)$. The work of \cite{LLLZ} \cite{LLZ1} \cite{LLZ2} \cite{MOOP} show that calculations with the Topological Vertex \cite{AKMV} compute the open Gromov-Witten invariants $O_g(K_{\bP}/L, \beta+w\beta_0, \vec{k})$ as defined with stable relative maps. We refer to \cite{Kon} \cite{LLLZ} \cite{AKMV} for more definitions and details on the Topological Vertex. 

The toric diagram of a local toric Fano surface $\bP$ is given by a convex $k$-gon with a non-compact edge emanating from each vertex. We refer to Figure \ref{fig:flop} for the relevant toric diagrams. Let $\{v_i\}_{i=1}^k$ be the torus-fixed points, and $\{\ell_i\}_{i=1}^k$ be the compact 1-dimensional torus orbits of the toric diagram of local $\bP$. Let $t_i$ be the K\"ahler parameter of $\ell_i$, and define the variable $Q^{\ell_i} := e^{-t_i}$. For each $\ell_i$, we assign a variable $\lambda_i$ that runs over all partitions or Young Tableaus of arbitrary length. We refer to a partition $\lambda$ and its $U(N$)-representation interchangeably. By the Topological Vertex, the partition function of local $\bP$ is given by,

\[
Z_{\bP}(Q^{\ell_i}) = \sum_{\lambda_1,\ldots, \lambda_k} (-1)^{\sum_i |\lambda_i|}q^{\sum_i \kappa_{\lambda_i}}\prod_{i=1}^k Q^{|\lambda_i|\ell_i}\left(\prod_{j=1}^{k-1} C_{\lambda^T_j \lambda_{j+1}\varnothing}\right)C_{\lambda_{k}^T \lambda_1 \varnothing}
\]
where the vertex functions $C_{\lambda_1 \lambda_2 \lambda_3}$ are defined in \cite{Kon}, Definition 2.2. We will write a partition $\lambda$ as a finite sequence of non-increasing integers $\lambda_1 \geq \lambda_2 \geq \ldots$.

The toric diagram of local $\widehat{\bP}$ has an additional compact 1-dimensional torus orbit in the polygon corresponding to the exceptional curve $C \cong \bP^1$, with K\"ahler parameter $t_{k+1}$ and $Q^C := e^{-t_{k+1}}$. We label the partition of $C$ as $\lambda_{k+1}$. The partition function of local $\widehat{\bP}$ is given by, 

\[
Z_{\widehat{\bP}}(Q^{\ell_i}, Q^C) = \sum_{\lambda_1,\ldots, \lambda_{k+1}}(-1)^{\sum_i |\lambda_i|}q^{\sum_i \kappa_{\lambda_i}}Q^{|\lambda_{k+1}|C}\prod_{i=1}^k Q^{|\lambda_i|\ell_i}\left(\prod_{j=1}^{k} C_{\lambda^T_j \lambda_{j+1}\varnothing}\right)C_{\lambda_{k+1}^T \lambda_1 \varnothing}
\]
We define $Z_{\widehat{\bP}}(Q^{\ell_i}, Q^{-C})$ by replacing $Q^C$ with $Q^{-C}$ in $Z_{\widehat{\bP}}(Q^{\ell_i}, Q^C)$.

As the normal bundle of $C$ in $K_{\widehat{\bP}}$ is $\cO_{\bP^1}(-1) \oplus \cO_{\bP^1}(-1)$, we denote $W$ to be the space obtained by flopping along $C \subset K_{\widehat{\bP}}$. We write $E$ to be the corresponding exceptional curve of $W$, with K\"ahler parameter $t'_{k+1}$ and $Q^E := e^{-t'_{k+1}}$. We label the partition of $E$ to be $\lambda'_{k+1}$. The partition function of $W$ is given by,

\begin{align*}
   \hspace{-2mm} Z_W(Q^{\ell_i}, Q^E) = \hspace{-3mm} \sum_{\lambda_1,\ldots, \lambda_k, \lambda'_{k+1}} \hspace{-5mm} (-1)^{\sum_i |\lambda_i|}q^{\sum_i \kappa_{\lambda_i}}Q^{|\lambda'_{k+1}|E}\prod_{i=1}^k Q^{|\lambda_i|\ell_i}\left(\prod_{j=1}^{k-1} C_{\lambda^T_j \lambda_{j+1}\varnothing}\right)
   C_{\lambda^T_k \lambda_1 \lambda'_{k+1}}C_{\lambda^{'T}_{k+1}\varnothing\varnothing}     
\end{align*}

\begin{figure}[h!]
    \centering

\tikzset{every picture/.style={line width=0.75pt}} 

\begin{tikzpicture}[x=0.90pt,y=0.75pt,yscale=-1,xscale=1]

\draw    (89.28,80.9) -- (112.39,110.45) ;
\draw    (112.39,110.45) -- (141.44,110.83) ;
\draw [shift={(132.92,110.72)}, rotate = 180.76] [color={rgb, 255:red, 0; green, 0; blue, 0 }  ][line width=0.75]    (10.93,-3.29) .. controls (6.95,-1.4) and (3.31,-0.3) .. (0,0) .. controls (3.31,0.3) and (6.95,1.4) .. (10.93,3.29)   ;
\draw    (89.28,138.46) -- (112.39,110.45) ;
\draw [shift={(104.65,119.83)}, rotate = 129.52] [color={rgb, 255:red, 0; green, 0; blue, 0 }  ][line width=0.75]    (10.93,-4.9) .. controls (6.95,-2.3) and (3.31,-0.67) .. (0,0) .. controls (3.31,0.67) and (6.95,2.3) .. (10.93,4.9)   ;
\draw    (161.25,139.23) -- (141.44,110.83) ;
\draw [shift={(155.35,130.77)}, rotate = 235.1] [color={rgb, 255:red, 0; green, 0; blue, 0 }  ][line width=0.75]    (10.93,-4.9) .. controls (6.95,-2.3) and (3.31,-0.67) .. (0,0) .. controls (3.31,0.67) and (6.95,2.3) .. (10.93,4.9)   ;
\draw    (141.44,110.83) -- (163.23,80.9) ;
\draw    (161.25,176.84) -- (161.25,139.23) ;
\draw    (89.28,176.07) -- (89.28,138.46) ;
\draw    (268.13,43.12) -- (280.35,58.74) -- (291.24,72.67) ;
\draw    (268.13,137.9) -- (291.24,109.89) ;
\draw [shift={(283.5,119.27)}, rotate = 129.52] [color={rgb, 255:red, 0; green, 0; blue, 0 }  ][line width=0.75]    (10.93,-4.9) .. controls (6.95,-2.3) and (3.31,-0.67) .. (0,0) .. controls (3.31,0.67) and (6.95,2.3) .. (10.93,4.9)   ;
\draw    (311.05,138.29) -- (291.24,109.89) ;
\draw [shift={(305.15,129.83)}, rotate = 235.1] [color={rgb, 255:red, 0; green, 0; blue, 0 }  ][line width=0.75]    (10.93,-4.9) .. controls (6.95,-2.3) and (3.31,-0.67) .. (0,0) .. controls (3.31,0.67) and (6.95,2.3) .. (10.93,4.9)   ;
\draw    (291.24,72.67) -- (313.03,42.73) ;
\draw    (311.05,175.9) -- (311.05,138.29) ;
\draw    (268.13,175.51) -- (268.13,137.9) ;
\draw    (291.24,109.89) -- (291.24,72.67) ;
\draw [shift={(291.24,85.28)}, rotate = 90] [color={rgb, 255:red, 0; green, 0; blue, 0 }  ][line width=0.75]    (10.93,-4.9) .. controls (6.95,-2.3) and (3.31,-0.67) .. (0,0) .. controls (3.31,0.67) and (6.95,2.3) .. (10.93,4.9)   ;
\draw  [dash pattern={on 0.84pt off 2.51pt}]  (89.28,176.07) -- (111,210.5) ;
\draw  [dash pattern={on 0.84pt off 2.51pt}]  (111,210.5) -- (140.49,210.33) ;
\draw  [dash pattern={on 0.84pt off 2.51pt}]  (142.43,211.29) -- (161.25,176.84) ;
\draw  [dash pattern={on 0.84pt off 2.51pt}]  (268.13,175.51) -- (281.33,210.82) ;
\draw  [dash pattern={on 0.84pt off 2.51pt}]  (281.33,210.82) -- (300.81,210.82) ;
\draw  [dash pattern={on 0.84pt off 2.51pt}]  (300.81,210.82) -- (311.05,175.9) ;
\draw    (338.45,106.72) -- (311.05,138.29) ;
\draw    (268.13,137.9) -- (241.88,106.72) ;
\draw    (89.28,138.46) -- (63.03,107.28) ;
\draw    (188.99,104.98) -- (161.25,139.23) ;
\draw    (419.29,137.57) -- (442.4,109.56) ;
\draw [shift={(434.67,118.94)}, rotate = 129.52] [color={rgb, 255:red, 0; green, 0; blue, 0 }  ][line width=0.75]    (10.93,-4.9) .. controls (6.95,-2.3) and (3.31,-0.67) .. (0,0) .. controls (3.31,0.67) and (6.95,2.3) .. (10.93,4.9)   ;
\draw    (462.21,137.95) -- (442.4,109.56) ;
\draw [shift={(456.31,129.5)}, rotate = 235.1] [color={rgb, 255:red, 0; green, 0; blue, 0 }  ][line width=0.75]    (10.93,-4.9) .. controls (6.95,-2.3) and (3.31,-0.67) .. (0,0) .. controls (3.31,0.67) and (6.95,2.3) .. (10.93,4.9)   ;
\draw    (462.21,175.56) -- (462.21,137.95) ;
\draw    (419.29,175.18) -- (419.29,137.57) ;
\draw    (442.4,109.56) -- (442.4,72.33) ;
\draw  [dash pattern={on 0.84pt off 2.51pt}]  (419.29,175.18) -- (432.5,210.48) ;
\draw  [dash pattern={on 0.84pt off 2.51pt}]  (432.5,210.48) -- (451.98,210.48) ;
\draw  [dash pattern={on 0.84pt off 2.51pt}]  (451.98,210.48) -- (462.21,175.56) ;
\draw    (489.62,106.39) -- (462.21,137.95) ;
\draw    (419.29,137.57) -- (393.05,106.39) ;

\draw (118.6,88.57) node [anchor=north west][inner sep=0.75pt]    {$C$};
\draw (297.2,73.94) node [anchor=north west][inner sep=0.75pt]    {$E$};
\draw (293.24,91.18) node [anchor=north west][inner sep=0.75pt]    {$v_{k}$};
\draw (316.41,128.84) node [anchor=north west][inner sep=0.75pt]    {$v_{1}$};
\draw (84.37,103.4) node [anchor=north west][inner sep=0.75pt]    {$v_{k}$};
\draw (146.76,100.55) node [anchor=north west][inner sep=0.75pt]    {$v_{k+1}$};
\draw (164.64,128.83) node [anchor=north west][inner sep=0.75pt]    {$v_{1}$};
\draw (110.44,113.05) node [anchor=north west][inner sep=0.75pt]    {$\lambda _{k+1}$};
\draw (255.5,57.74) node [anchor=north west][inner sep=0.75pt]    {$v_{k+1}$};
\draw (254.16,79.61) node [anchor=north west][inner sep=0.75pt]    {$\lambda '_{k+1}$};
\draw (303.55,105.87) node [anchor=north west][inner sep=0.75pt]    {$\lambda _{1}$};
\draw (138.87,129.13) node [anchor=north west][inner sep=0.75pt]    {$\lambda _{1}$};
\draw (95.29,129.6) node [anchor=north west][inner sep=0.75pt]    {$\lambda _{k}$};
\draw (233.53,127.74) node [anchor=north west][inner sep=0.75pt]    {$v_{k-1}$};
\draw (257.96,102.23) node [anchor=north west][inner sep=0.75pt]    {$\lambda _{k}$};
\draw (53.36,130.21) node [anchor=north west][inner sep=0.75pt]    {$v_{k-1}$};
\draw (116.53,224.1) node [anchor=north west][inner sep=0.75pt]    {$\widehat{\mathbb{P}}$};
\draw (282.17,230.07) node [anchor=north west][inner sep=0.75pt]    {$W$};
\draw (435.33,229.9) node [anchor=north west][inner sep=0.75pt]    {$\mathbb{P}$};
\draw (444.4,94.34) node [anchor=north west][inner sep=0.75pt]    {$v_{k}$};
\draw (467.58,128.51) node [anchor=north west][inner sep=0.75pt]    {$v_{1}$};
\draw (454.21,111.54) node [anchor=north west][inner sep=0.75pt]    {$\lambda _{1}$};
\draw (384.69,127.41) node [anchor=north west][inner sep=0.75pt]    {$v_{k-1}$};
\draw (410.13,108.4) node [anchor=north west][inner sep=0.75pt]    {$\lambda _{k}$};
\end{tikzpicture}

    \caption{Toric diagrams of $\widehat{\bP}, W$, and $\bP$.}
    \label{fig:flop}
\end{figure}
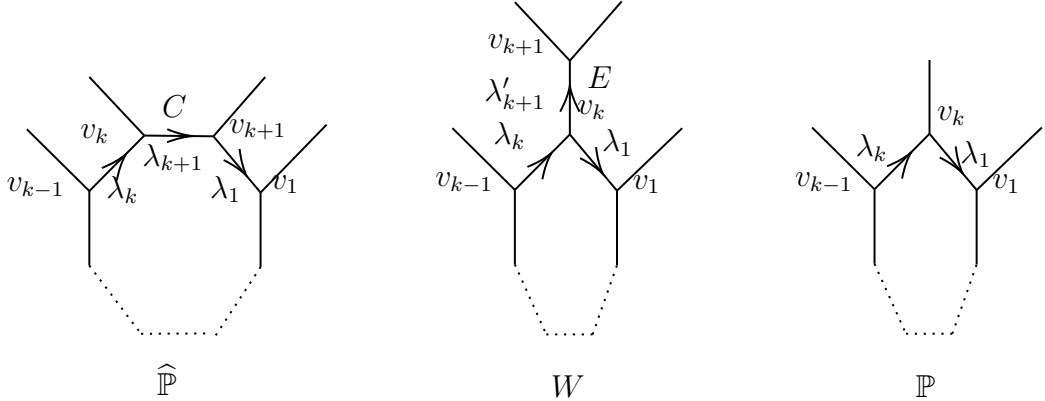
Let $V$ be an $U(N)$-matrix with $N$ suitably large, representing the monodromy of a flat connection on $L$. Let $Tr_{\nu} V$ be the trace of $V$ in a $U(N)$-representation given by a partition $\nu$. The open free energy of an outer AV-brane $L \subset K_{\bP}$ is given by,

\[
Z_{\bP}(Q^{\ell_i}, V) = \sum_{\lambda_1,\ldots, \lambda_k, \nu}(-1)^{\sum_i |\lambda_i|}q^{\sum_i \kappa_{\lambda_i}}\prod_{i=1}^k Q^{|\lambda_i|\ell_i}\left(\prod_{j=1}^{k-1} C_{\lambda^T_j \lambda_{j+1}\varnothing}\right) C_{\lambda^T_k\lambda_1 \nu} Tr_{\nu}V
\]
For a fixed partition $\nu$ attached to an outer AV-brane, define,

\[
Z_{\bP}(Q^{\ell_i}, V)|_{\nu} := \sum_{\lambda_1,\ldots, \lambda_k}(-1)^{\sum_i |\lambda_i|}q^{\sum_i \kappa_{\lambda_i}}\prod_{i=1}^k Q^{|\lambda_i|\ell_i}\left(\prod_{j=1}^{k-1} C_{\lambda^T_j \lambda_{j+1}\varnothing}\right) C_{\lambda^T_k\lambda_1 \nu} Tr_{\nu}V
\]
We write $Z_{\bP}(Q^{\ell_i}, V)  := \sum_\nu Z_{\bP}|_\nu Tr_\nu V$. Define the normalized open partition function $Z^{open}_{\bP}(V) := \frac{Z_{\bP}(Q^{\ell_i}, V)}{Z_{\bP}(Q^{\ell_i})}$, which is given by,

\[
Z^{open}_{\bP}(V) = \mathrm{Exp}\left(\sum_\nu f_{\nu\varnothing \varnothing}(q, Q) Tr_\nu V\right)
\]
where $\mathrm{Exp}$ denotes the plethystic exponential.  \footnote{We mention a minor typo in \cite{AKMV}, Equation 7.23. The subscript of $\widehat{f}$ should be $R'_i$ not $R_i$.} The $f_{\nu\varnothing \varnothing}(q, Q)$ are generating series related to LMOV invariants in representation $\nu$ of an outer AV-brane, and hence the $Z_{\nu}$ are expressible by LMOV invariants. Writing $Z^{open}_{\bP}(V) = \sum_\nu Z_\nu Tr_\nu V$, we have $Z_{\bP}|_\nu = Z_\nu Z_{\bP}$.

Denote $N^{\mathrm{LMOV}}_{g, \lambda}(K_{\bP}/L, \beta)$ to be the genus-$g$, LMOV invariant of an outer AV-brane $L \subset K_{\bP}$ in degree $\beta$ and representation $\nu$. We write its generating series as,

\begin{equation}
\label{eq:def_fhat}
  \widehat{f}_{\nu \varnothing \varnothing}(q, Q) = \sum_{\substack{g \geq 0, \\ \beta \in NE(\bP)}} N^{\mathrm{LMOV}}_{g, \nu}(K_{\bP}/L, \beta) (q^{\frac{1}{2}} - q^{\frac{-1}{2}})^{2g}Q^{\beta}  
\end{equation}
The $\widehat{f}_{\nu \varnothing \varnothing}$ are related to $f_{\nu \varnothing \varnothing}$ by \cite{AKMV}, Equation 7.23. The LMOV invariants are related to the open-BPS invariants by the linear transformation,

\begin{equation}
\label{eq:change_of_basis}
  n_g^{open}(K_{\bP}/L, \beta+\beta_0, \vec{k}) = \sum_{\nu} \chi_\nu(C(\vec{k}))N^{\mathrm{LMOV}}_{g, \nu}(K_{\bP}/L, \beta)  
\end{equation}
where $\chi_{\nu}(C(\vec{k}))$ is the character $\chi_{\nu}$ of the $U(N)$-representation $\nu$ evaluated on the conjugacy class $C(\vec{k})$ of the symmetric group $S_{w}$ determined by $\vec{k}$. The matrix $(\chi_\nu(C(\vec{k})))_{\nu, \vec{k}}$ is invertible over $\mathbb{Q}$ but not $\mathbb{Z}$. This suggests the LMOV invariants $N^{\mathrm{LMOV}}_{g, \nu}$ are in some sense more fundamental than the open-BPS invariants $n_g^{open}$. For more details on $Z^{open}_{\bP}(V)$, LMOV invariants and their relationship with open-BPS invariants, we refer to \cite{AKMV}, Section 7.3.

Recall that $C \subset K_{\widehat{\bP}}$ is the exceptional curve, $W$ is the flop of $K_{\widehat{\bP}}$ along $C$, and $E$ is the resulting exceptional curve of $W$.
\begin{theorem}
\label{thm:winding}
Let $\pi: \widehat{\bP} \rightarrow \bP$ be the toric blow up at a point. Under the identification $Q^{-C} = Q^E$, we have the equality of generating series relating closed invariants $N_g(K_{\widehat{\bP}}, \pi^*\beta - wC)$ to $\mathrm{LMOV}$ invariants $Z_{\nu}$ of an outer AV-brane $L \subset K_{\bP}$,

\[
    \sum_{\substack{g \geq 0, \\ \beta \in NE(\bP), \\ w \in \mathbb{Z}_{>0}}}N_g(K_{\widehat{\bP}}, \pi^*\beta - wC)\hbar^{2g-2}Q^{\pi^*\beta - wC} = \sum_{k \geq 1}\frac{(-1)^{k+1}}{k} \left(\sum_{\nu \neq \emptyset} (-1)^{|\nu|}Z_\nu C_{\nu^T \varnothing\varnothing}Q^{|\nu|E}\right)^k 
\]
Hence, for $w \in \mathbb{Z}_{> 0}$ the coefficient of $Q^{-wC} = Q^{wE}$ is given by,

\[
\sum_{j=1}^w \frac{(-1)^{j+1+w}}{j} \sum_{\substack{\ell_a \in \mathbb{Z}_{\geq 1}, \\ (\ell_1, \ldots, \ell_j), \\ \sum_{a=1}^j \ell_a = w}}\prod_{\substack{|\nu_a| = \ell_a, \\ 1 \leq a \leq j}} Z_{\nu_a}C_{\nu_a^T \varnothing \varnothing}
\]

\end{theorem}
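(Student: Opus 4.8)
We sketch a proof. The strategy is to stay inside the Topological Vertex formalism and to connect $K_{\widehat{\bP}}$ to the open geometry $(K_{\bP},L)$ through the flop $W$, using two ingredients: flop invariance of the topological string partition function, and a term-by-term comparison of the vertex sums for $Z_W(Q^{\ell_i},Q^E)$ and for the open partition function $Z_{\bP}(Q^{\ell_i},V)$. First I would invoke flop invariance: since the exceptional curve $C\subset K_{\widehat{\bP}}$ has normal bundle $\cO_{\bP^1}(-1)\oplus\cO_{\bP^1}(-1)$, flopping along $C$ produces $W$, and \cite{MOOP} (see also \cite{LLZ1}, \cite{LLZ2}) gives $Z_{\widehat{\bP}}(Q^{\ell_i},Q^C)=Z_W(Q^{\ell_i},Q^E)$ under the identification $Q^{-C}=Q^E$. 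Hence the free energies $F_{\widehat{\bP}}:=\log Z_{\widehat{\bP}}$ and $F_W:=\log Z_W$ agree after this substitution, so for each $w>0$ the part of $F_{\widehat{\bP}}$ of $C$-degree $-w$, namely $\sum_{g,\beta}N_g(K_{\widehat{\bP}},\pi^*\beta-wC)\hbar^{2g-2}Q^{\pi^*\beta-wC}$, coincides with the $Q^{wE}$-homogeneous part of $F_W$.

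Next I would identify the leg of $W$ carrying $E$ with the outer brane $L\subset K_{\bP}$. Comparing the vertex expressions of $Z_W(Q^{\ell_i},Q^E)$ and of $Z_{\bP}(Q^{\ell_i},V)$ recalled above, both are built from the identical chain $(-1)^{\sum_i|\lambda_i|}q^{\sum_i\kappa_{\lambda_i}}\prod_{i=1}^k Q^{|\lambda_i|\ell_i}\big(\prod_{j=1}^{k-1}C_{\lambda_j^T\lambda_{j+1}\varnothing}\big)C_{\lambda_k^T\lambda_1\nu}$; in $Z_W$ the partition $\nu=\lambda'_{k+1}$ on this leg is capped by $C_{\lambda_{k+1}^{'T}\varnothing\varnothing}$ and weighted by $(-1)^{|\nu|}q^{\kappa_\nu}Q^{|\nu|E}$, whereas in the open partition function it is weighted by $\mathrm{Tr}_\nu V$. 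Thus $Z_W(Q^{\ell_i},Q^E)$ is obtained from $Z_{\bP}(Q^{\ell_i},V)$ by the substitution $\mathrm{Tr}_\nu V\mapsto(-1)^{|\nu|}C_{\nu^T\varnothing\varnothing}Q^{|\nu|E}$, carried out at the framing of $L$ for which the residual $q^{\kappa_\nu}$ is absorbed (the same framing in which the $Z_\nu$ below are computed). Using $Z^{open}_{\bP}(V)=Z_{\bP}(Q^{\ell_i},V)/Z_{\bP}(Q^{\ell_i})=\sum_\nu Z_\nu\,\mathrm{Tr}_\nu V$ with $Z_\varnothing=1$, this gives
\[ \frac{Z_W(Q^{\ell_i},Q^E)}{Z_{\bP}(Q^{\ell_i})}=1+\sum_{\nu\neq\varnothing}(-1)^{|\nu|}Z_\nu C_{\nu^T\varnothing\varnothing}Q^{|\nu|E}. \]

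Finally, take logarithms: $F_W=\log Z_{\bP}(Q^{\ell_i})+\log\big(Z_W/Z_{\bP}\big)$, and the second summand has $Q^E$-order at least one, hence is exactly the positive-$Q^E$-degree part of $F_W$. Combining this with the first paragraph and expanding $\log(1+X)=\sum_{k\geq 1}\frac{(-1)^{k+1}}{k}X^k$ with $X=\sum_{\nu\neq\varnothing}(-1)^{|\nu|}Z_\nu C_{\nu^T\varnothing\varnothing}Q^{|\nu|E}$ yields the first displayed equality of the theorem. For the second, extract the coefficient of $Q^{wE}$: in $X^k$ it equals $\sum(-1)^{\ell_1+\cdots+\ell_k}\prod_{a=1}^k Z_{\nu_a}C_{\nu_a^T\varnothing\varnothing}$ over ordered tuples $(\nu_1,\dots,\nu_k)$ with $|\nu_a|=\ell_a\geq 1$ and $\sum_a\ell_a=w$; since $(-1)^{\ell_1+\cdots+\ell_k}=(-1)^w$ and all terms with $k>w$ vanish, summing $\frac{(-1)^{k+1}}{k}$ times this over $1\leq k\leq w$ produces $\sum_{j=1}^w\frac{(-1)^{j+1+w}}{j}\sum_{\ell_1+\cdots+\ell_j=w,\,\ell_a\geq 1}\prod_{|\nu_a|=\ell_a}Z_{\nu_a}C_{\nu_a^T\varnothing\varnothing}$, as claimed. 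The main obstacle is the sign-and-framing reconciliation in the middle step: one must verify that flopping $C$ exposes $E$ at precisely the framing of an outer Aganagic–Vafa brane in framing $0$ (up to the harmless overall $q$-power absorbing $q^{\kappa_\nu}$), and invoke flop invariance in exactly the stated form; the remainder is routine bookkeeping of the vertex sums and the $\log$ expansion.
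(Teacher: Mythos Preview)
Your argument follows essentially the same three-step route as the paper: flop invariance to pass from $Z_{\widehat{\bP}}$ to $Z_W$, then an identification of $Z_W/Z_{\bP}$ with $1+\sum_{\nu\neq\varnothing}(-1)^{|\nu|}Z_\nu C_{\nu^T\varnothing\varnothing}Q^{|\nu|E}$, then taking $\log$ and extracting coefficients. The only substantive difference is in how the middle step is justified: the paper invokes the gluing/degeneration formula for the Topological Vertex (\cite{LLLZ}, Proposition~5.8) directly, which cleanly absorbs the sign and $q^{\kappa_\nu}$ bookkeeping you flag as ``the main obstacle,'' whereas you argue by an explicit term-by-term substitution $\mathrm{Tr}_\nu V\mapsto(-1)^{|\nu|}C_{\nu^T\varnothing\varnothing}Q^{|\nu|E}$ and then hedge on the framing reconciliation. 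Your instinct that something must be checked there is correct, but rather than leaving it as a caveat you can simply cite the gluing formula, which is precisely the statement that capping an external leg with $C_{\nu^T\varnothing\varnothing}$ and the appropriate edge weight reproduces the closed vertex sum; this makes the step rigorous without any residual framing ambiguity. (For flop invariance the paper cites \cite{KM} rather than \cite{MOOP}; either suffices.)
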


\begin{proof}
By flop invariance of the Topological Vertex \cite{KM}, we have $Q^{-C} = Q^E$, and the equality of closed partition functions,

\[
Z_{\widehat{\bP}}(Q^{\ell_i}, Q^{-C}) = Z_W(Q^{\ell_i}, Q^E)
\]
The gluing or degeneration formula in the Topological Vertex (\cite{LLLZ}, Proposition 5.8) gives,

\begin{align*}
Z_W(Q^{\ell_i}, Q^E) &= \sum_\nu (-1)^{|\nu|} Z_{\bP}|_\nu C_{\nu^T \varnothing \varnothing}Q^{|\nu|E} \\
&= \sum_\nu (-1)^{|\nu|} Z_{\bP} Z_\nu C_{\nu^T \varnothing \varnothing}Q^{|\nu|E} 
\end{align*}
which implies,

\begin{equation}
\label{eq:partition_fns1}
\frac{Z_{\widehat{\bP}}(Q^{\ell_i}, Q^{-C})}{Z_\bP(Q^{\ell_i})} = \sum_\nu (-1)^{|\nu|} Z_\nu C_{\nu^T \varnothing \varnothing}Q^{|\nu|E} = 1 + \sum_{\nu\neq 0} (-1)^{|\nu|}Z_\nu C_{\nu^T \varnothing \varnothing}Q^{|\nu|E}
\end{equation}
Taking the log of Equation \ref{eq:partition_fns1} to obtain connected invariants, we obtain the first expression. Since we divide by $Z_\bP$, we have $w > 0$ in the left generating series in Theorem \ref{thm:winding}. The second expression of the claim follows from a combinatorial analysis.
\end{proof}

We provide some examples of Theorem \ref{thm:winding}. For explicit values of LMOV invariants and Gopakumar-Vafa invariants for $\bP = \bP^2$, we refer to tables in Appendix \ref{sec:LMOV_GV}.

\begin{example}[Theorem \ref{thm:winding} in winding-1]
\label{ex:w1}

Setting $w = 1$ in Theorem \ref{thm:winding}, we have,

\begin{align*}
  Q^{-C}\sum_{\substack{g \geq 0, \\ \beta \in NE(\bP)}}N_g(K_{\widehat{\bP}}, \pi^*\beta - C)Q^{\pi^*\beta}\hbar^{2g-2} &= -Z_{(1)}C_{(1)\varnothing \varnothing}Q^E \\
  &= -(q^{\frac{1}{2}} - q^{\frac{-1}{2}})^{-2}\widehat{f}_{(1)}Q^E
\end{align*}
The Gopakumar-Vafa formula \cite{GV1} \cite{GV2} for the curve class $\pi^*\beta - C$ is,

\[
\sum_{\substack{g \geq 0, \\ \beta \in NE(\bP)}}N_g(K_{\widehat{\bP}}, \pi^*\beta - C)Q^{\pi^*\beta - C}\hbar^{2g-2} = \sum_{\substack{g \geq 0, \\ \beta \in NE(\bP)}}(-1)^{g-1}n_g(K_{\widehat{\bP}}, \pi^*\beta - C) Q^{\pi^*\beta - C}(q^{\frac{1}{2}} - q^{\frac{-1}{2}})^{2g-2}
\]
where $q = e^{i \hbar}$. Hence multiplying by $-(q^{\frac{1}{2}} - q^{\frac{-1}{2}})^{2}$ and cancelling $Q^{-C}$ with $Q^E$, we have

\begin{equation}
\label{eq:winding-1_eq}
   \sum_{\substack{g \geq 0, \\ \beta \in NE(\bP)}}(-1)^{g}n_g(K_{\widehat{\bP}}, \pi^*\beta - C) Q^{\pi^*\beta}(q^{\frac{1}{2}} - q^{\frac{-1}{2}})^{2g} = \widehat{f}_{(1)} 
\end{equation}
under the variable change $z = (q^{\frac{1}{2}} - q^{\frac{-1}{2}})^{2}$. As the winding-1, open-BPS invariants are equal to the LMOV invariants in representation $\square$ by Equation \ref{eq:change_of_basis}, we have,

\begin{corollary}
  \label{cor:winding-1}
    For $w = 1$, Example \ref{ex:w1} of Theorem \ref{thm:winding} implies,
    \begin{align*}
      n_g(K_{\widehat{\bP}}, \pi^*\beta-C) &= (-1)^{g}N^{\mathrm{LMOV}}_{g, (1)}(K_{\bP}/L, \beta) \\
      &= (-1)^g n_g^{open}(K_\bP/L, \beta+\beta_0, (1)) 
    \end{align*}
\end{corollary}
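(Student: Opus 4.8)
The plan is to derive Corollary \ref{cor:winding-1} directly from Equation \eqref{eq:winding-1_eq}, which was established in Example \ref{ex:w1} as a consequence of Theorem \ref{thm:winding}. The left-hand side of \eqref{eq:winding-1_eq} repackages the Gopakumar--Vafa formula for $N_g(K_{\widehat{\bP}},\pi^*\beta-C)$: because $\pi^*\beta - C$ is a primitive class, the usual multiple-cover sum collapses and the genus-$g$ Gromov--Witten free energy in that class is $(-1)^{g-1}n_g(K_{\widehat{\bP}},\pi^*\beta-C)(q^{1/2}-q^{-1/2})^{2g-2}$, so that multiplying through by $-(q^{1/2}-q^{-1/2})^2$ and absorbing $Q^{-C}$ into $Q^E$ yields exactly $\sum_{g,\beta}(-1)^g n_g(K_{\widehat{\bP}},\pi^*\beta-C)Q^{\pi^*\beta}(q^{1/2}-q^{-1/2})^{2g}=\widehat f_{(1)}$.

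The right-hand side is handled by unwinding the definition of $\widehat f_{(1)}$. By \eqref{eq:def_fhat},
\[
\widehat f_{(1)}(q,Q) \;=\; \sum_{g\ge 0,\ \beta\in NE(\bP)} N^{\mathrm{LMOV}}_{g,(1)}(K_{\bP}/L,\beta)\,(q^{1/2}-q^{-1/2})^{2g}Q^{\beta}.
\]
Matching $Q^{\pi^*\beta}$ with $Q^{\beta}$ under $\pi^*$ (which is an isomorphism onto its image and injective on $NE(\bP)$, so the bookkeeping of curve classes is unambiguous) and comparing coefficients of $(q^{1/2}-q^{-1/2})^{2g}Q^\beta$ on both sides of \eqref{eq:winding-1_eq} gives
\[
(-1)^g n_g(K_{\widehat{\bP}},\pi^*\beta-C) \;=\; N^{\mathrm{LMOV}}_{g,(1)}(K_{\bP}/L,\beta),
\]
which is the first asserted equality. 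The second equality, $N^{\mathrm{LMOV}}_{g,(1)}(K_{\bP}/L,\beta)=n_g^{open}(K_\bP/L,\beta+\beta_0,(1))$, is immediate from the change-of-basis relation \eqref{eq:change_of_basis}: for the single partition $\nu=(1)$ of $w=1$, the character matrix $\chi_\nu(C(\vec k))$ is the $1\times 1$ identity, since $S_1$ is trivial and $\chi_{(1)}$ of its unique class is $1$. Hence the winding-$1$, one-boundary-component open-BPS invariant coincides with the LMOV invariant in representation $\square$.

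The only genuine subtlety — and the step I would write out most carefully — is the comparison of power series: one must check that the two sides of \eqref{eq:winding-1_eq} really do live in the same completed ring $\CC[q^{\pm\frac12}]\llbracket NE(\bP)\rrbracket$ (or its analogue in $z=(q^{1/2}-q^{-1/2})^2$) and that for each fixed $\beta$ only finitely many $g$ contribute at each order, so that extracting the $Q^\beta$-coefficient and then matching the $\hbar$- (equivalently $(q^{1/2}-q^{-1/2})$-) expansions is legitimate. This finiteness is standard: for fixed $\beta$ the genus is bounded by the arithmetic genus of curves in class $\pi^*\beta-C$, and the GV/LMOV packaging converts this into a polynomial (in the relevant variable) contribution at each $Q^\beta$. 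I expect no other obstacle; the corollary is essentially a coefficient-extraction from Theorem \ref{thm:winding} once the GV formula is invoked, and the genus-$0$ case recovers \cite{LLW}, Theorem 1.1 after translating LMOV invariants to open Gromov--Witten invariants via \eqref{eq:change_of_basis} and the open multiple-cover formula \cite{MV}.
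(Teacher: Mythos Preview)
Your proposal is correct and follows the same approach as the paper: the corollary is obtained by reading off coefficients of $(q^{1/2}-q^{-1/2})^{2g}Q^\beta$ from \eqref{eq:winding-1_eq} using the definition \eqref{eq:def_fhat} of $\widehat f_{(1)}$, and the second equality is precisely the observation (made in the paper just before the corollary) that for $w=1$ the change of basis \eqref{eq:change_of_basis} is trivial. Your added remarks on finiteness and the triviality of the $S_1$ character are correct elaborations of points the paper leaves implicit.
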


\end{example}

\begin{example}[Theorem \ref{thm:winding} in winding-2]
\label{ex:w2}
Setting $w = 2$ in Theorem \ref{thm:winding}, we have 

\[
Q^{-2C}\sum_{\substack{g \geq 0, \\ \beta \in NE(\bP)}}N_g(K_{\widehat{\bP}}, \pi^*\beta - 2C)Q^{\pi^*\beta} \hbar^{2g-2}= \left( Z_{(2)} C_{(1,1)} + Z_{(1,1)} C_{(2)} - \frac{1}{2}(Z_{(1)} C_{(1)})^2\right)Q^{2E} 
\]
The relevant quantities are described in \cite{AKMV} and are,
\begin{align*}
  C_{(2)} &= \frac{q^2}{(q-1)(q^2-1)} \\
  C_{(1,1)} &= \frac{q}{(q-1)(q^2-1)} \\
  C_{(1)} &= (q^{\frac{1}{2}} - q^{\frac{-1}{2}})^{-1} \\
  Z_{(2)} &= f_{(2)}(q, Q) + \frac{1}{2}f_{(1)}(q^2, Q^2) + \frac{1}{2}f_{(1)}(q, Q)^2  \\
  Z_{(1,1)} &= f_{(1,1)}(q, Q) - \frac{1}{2}f_{(1)}(q^2, Q^2) + \frac{1}{2}f_{(1)}(q, Q)^2 \\
  f_{(2)} &= (q^{\frac{1}{2}} - q^{\frac{-1}{2}})^{-1}(q^{\frac{-1}{2}}\widehat{f}_{(2)} - q^{\frac{1}{2}}\widehat{f}_{(1,1)}) \\
  f_{(1,1)} &= (q^{\frac{1}{2}} - q^{\frac{-1}{2}})^{-1}(-q^{\frac{1}{2}}\widehat{f}_{(2)} + q^{\frac{-1}{2}}\widehat{f}_{(1,1)}) \\
  Z_{(1)} &= f_{(1)} = (q^{\frac{1}{2}} - q^{\frac{-1}{2}})^{-1}\widehat{f}_{(1)}
\end{align*}
where $\widehat{f}$ are defined in Equation \ref{eq:def_fhat}. We have $C_{(1,1)} + C_{(2)} = (q^{1/2} - q^{-1/2})^{-2}, C_{(1,1)} - C_{(2)} = \frac{-1}{2(q-q^{-1})}$. The Gopakumar-Vafa formula for curve class $\pi^*\beta - 2C$ is,

\begin{equation}
\label{eq:winding-2_GV}
 \begin{split}
    Q^{\pi^*\beta - 2C}\sum_{\substack{g \geq 0, \\ \beta \in NE(\bP)}}N_g(K_{\widehat{\bP}}, \pi^*\beta - 2C) \hbar^{2g-2} &= \sum_{\substack{g \geq 0, \\ \beta \in NE(\bP)}}\left[(-1)^{g-1}n_g(K_{\widehat{\bP}}, \pi^*\beta - 2C)\left(q^{\frac{1}{2}} - q^{\frac{-1}{2}}\right)^{2g-2} \right. \\
&+ \left. (-1)^{g-1}n_g(K_{\widehat{\bP}}, \frac{\pi^*\beta}{2} - C)\frac{1}{2}(q-q^{-1})^{2g-2}\right]Q^{\pi^*\beta - 2C}
\end{split}   
\end{equation}
Plugging in expressions, we have,

\begin{equation}
\label{eq:winding2_vertex}
    \begin{split}
        Z_{(2)} C_{(1,1)} + Z_{(1,1)} C_{(2)} - \frac{1}{2}(Z_{(1)} C_{(1)})^2 &= f_{(2)} \frac{q}{(q-1)(q^2-1)} + f_{(1,1)}\frac{q^2}{(q-1)(q^2-1)} \\
 &- \frac{1}{2(q-q^{-1})}f_{(1)}(q^2, Q^2) \\
 &= (q^{\frac{1}{2}} - q^{\frac{-1}{2}})^{-1}\left(q^{\frac{-1}{2}}\widehat{f}_{(2)} - q^{\frac{1}{2}}\widehat{f}_{(1,1)}\right)\frac{q}{(q-1)(q^2-1)} \\
 &+ (q^{\frac{1}{2}} - q^{\frac{-1}{2}})^{-1}\left(-q^{\frac{1}{2}}\widehat{f}_{(2)} + q^{\frac{-1}{2}}\widehat{f}_{(1,1)}\right)\frac{q^2}{(q-1)(q^2-1)} \\
 &- \frac{1}{2(q-q^{-1})^2}\widehat{f}_{(1)}(q^2, Q^2) \\
 &= \frac{-1}{(q^{\frac{1}{2}} - q^{\frac{-1}{2}})^2}\widehat{f}_{(2)}(q, Q) - \frac{1}{2(q-q^{-1})^2}\widehat{f}_{(1)}(q^2, Q^2)  
    \end{split}
\end{equation}
Setting Equation \ref{eq:winding-2_GV} equal to Equation \ref{eq:winding2_vertex} implies, on the level of individual invariants,

\begin{corollary}
\label{cor:winding-2}
For $w = 2$, Example \ref{ex:w2} of Theorem \ref{thm:winding} implies,

\begin{align*}
n_g(K_{\widehat{\bP}}, \pi^*\beta - 2C) &= (-1)^g N^{\mathrm{LMOV}}_{g, (2)}(K_\bP/L, \beta) 
\end{align*} 
\end{corollary}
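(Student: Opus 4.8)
The plan is to read the identity off from the equality of the two expressions for the winding-$2$ generating series that precede the statement. Theorem~\ref{thm:winding} at $w=2$ (Example~\ref{ex:w2}) identifies $\sum_{g,\beta}N_g(K_{\widehat{\bP}},\pi^*\beta-2C)\hbar^{2g-2}Q^{\pi^*\beta-2C}$ with the coefficient of $Q^{-2C}$ in $\big(Z_{(2)}C_{(1,1)}+Z_{(1,1)}C_{(2)}-\tfrac12(Z_{(1)}C_{(1)})^2\big)$, which by \eqref{eq:winding2_vertex} equals
\[
Q^{-2C}\Big(\tfrac{-1}{(q^{\frac{1}{2}}-q^{\frac{-1}{2}})^2}\,\widehat{f}_{(2)}(q,Q)-\tfrac{1}{2(q-q^{-1})^2}\,\widehat{f}_{(1)}(q^2,Q^2)\Big).
\]
The Gopakumar--Vafa formula \eqref{eq:winding-2_GV} writes the same series, after cancelling the common factor $Q^{-2C}$, as
\[
\sum_{g,\beta}\Big[(-1)^{g-1}n_g(K_{\widehat{\bP}},\pi^*\beta-2C)(q^{\frac{1}{2}}-q^{\frac{-1}{2}})^{2g-2}+\tfrac12(-1)^{g-1}n_g\big(K_{\widehat{\bP}},\tfrac{\pi^*\beta}{2}-C\big)(q-q^{-1})^{2g-2}\Big]Q^{\pi^*\beta},
\]
the second summand being the degree-$2$ multiple-cover term, present only when $\pi^*\beta-2C$ is $2$-divisible, i.e.\ when $\beta$ is even. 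Equating these two displays is the whole content of the proof.

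First I would match the degree-$2$ multiple-cover term against the $\widehat{f}_{(1)}(q^2,Q^2)$ term. Expanding \eqref{eq:def_fhat} and substituting Corollary~\ref{cor:winding-1}, $N^{\mathrm{LMOV}}_{g,(1)}(K_{\bP}/L,\beta)=(-1)^g n_g(K_{\widehat{\bP}},\pi^*\beta-C)$, gives $-\tfrac{1}{2(q-q^{-1})^2}\widehat{f}_{(1)}(q^2,Q^2)=\sum_{g,\beta}\tfrac12(-1)^{g-1}n_g(K_{\widehat{\bP}},\pi^*\beta-C)(q-q^{-1})^{2g-2}Q^{2\pi^*\beta}$, which under the reindexing $\beta\mapsto\beta/2$ coincides term for term with the multiple-cover summand above. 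Cancelling it from the equality of the two displays, and using $\tfrac{-1}{(q^{\frac{1}{2}}-q^{\frac{-1}{2}})^2}\widehat{f}_{(2)}(q,Q)=-\sum_{g,\beta}N^{\mathrm{LMOV}}_{g,(2)}(K_{\bP}/L,\beta)(q^{\frac{1}{2}}-q^{\frac{-1}{2}})^{2g-2}Q^{\beta}$, leaves
\[
\sum_{g,\beta}(-1)^{g-1}n_g(K_{\widehat{\bP}},\pi^*\beta-2C)(q^{\frac{1}{2}}-q^{\frac{-1}{2}})^{2g-2}Q^{\pi^*\beta}=-\sum_{g,\beta}N^{\mathrm{LMOV}}_{g,(2)}(K_{\bP}/L,\beta)(q^{\frac{1}{2}}-q^{\frac{-1}{2}})^{2g-2}Q^{\beta}.
\]
Both sides are power series in $(q^{\frac{1}{2}}-q^{\frac{-1}{2}})^2$ with $Q$-graded coefficients, so under the identification $Q^{\pi^*\beta}\leftrightarrow Q^{\beta}$ comparing the coefficient of $(q^{\frac{1}{2}}-q^{\frac{-1}{2}})^{2g-2}Q^{\beta}$ gives $(-1)^{g-1}n_g(K_{\widehat{\bP}},\pi^*\beta-2C)=-N^{\mathrm{LMOV}}_{g,(2)}(K_{\bP}/L,\beta)$, which is the claim.

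The step I expect to demand the most care is this cancellation of the multiple-cover term: one must track the signs $(-1)^{g-1}$ versus $(-1)^g$, the factor $\tfrac12$, and the substitutions $q\mapsto q^2$, $Q\mapsto Q^2$ so that everything aligns exactly, and one must check that the reindexing $\beta\mapsto\beta/2$ respects the summation ranges on both sides, so that precisely the $2$-divisible classes contribute and nothing is lost. A secondary point to verify carefully is the algebraic identity \eqref{eq:winding2_vertex} itself, namely that $Z_{(2)}C_{(1,1)}+Z_{(1,1)}C_{(2)}-\tfrac12(Z_{(1)}C_{(1)})^2$ genuinely collapses to $\tfrac{-1}{(q^{\frac{1}{2}}-q^{\frac{-1}{2}})^2}\widehat{f}_{(2)}(q,Q)-\tfrac{1}{2(q-q^{-1})^2}\widehat{f}_{(1)}(q^2,Q^2)$ once the vertex values $C_{(2)},C_{(1,1)},C_{(1)}$ and the $\widehat{f}_\nu$-expressions for $Z_{(2)},Z_{(1,1)},Z_{(1)}$ from \cite{AKMV}, Equation~7.23, are inserted, since the rest of the argument rests on it holding as an identity of formal power series with coefficients rational in $q^{\frac{1}{2}}$.
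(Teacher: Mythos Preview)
Your proposal is correct and follows the same route as the paper: set the Gopakumar--Vafa expansion \eqref{eq:winding-2_GV} equal to the vertex simplification \eqref{eq:winding2_vertex} and read off the invariants term by term. The paper compresses this into the single sentence ``Setting Equation~\ref{eq:winding-2_GV} equal to Equation~\ref{eq:winding2_vertex} implies, on the level of individual invariants,'' whereas you have spelled out the one nontrivial step that this sentence hides, namely that the degree-$2$ multiple-cover term in \eqref{eq:winding-2_GV} cancels exactly against the $\widehat{f}_{(1)}(q^2,Q^2)$ term by virtue of Corollary~\ref{cor:winding-1}; this cancellation is indeed what makes the identification of the remaining $(q^{1/2}-q^{-1/2})$-series legitimate.
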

\end{example}
For $\bP = \bP^2, \widehat{\bP} = \FF_1$, Corollaries \ref{cor:winding-1}, \ref{cor:winding-2} agree with values of Gopakumar-Vafa invariants of $K_{\FF_1}$ in \cite{HKR}, Appendix B and LMOV invariants of $L \subset K_{\bP^2}$ in \cite{AKMV}, \S 9.4. We conjecture that,

\begin{conjecture} 
\label{conj:GV_LMOV}
    For $w \in \mathbb{Z}_{>0}$, $\beta \in NE(\bP)$ and all $g \geq 0$, 
    
    \[
    n_g(K_{\widehat{\bP}}, \pi^*\beta - wC) = (-1)^g N^{\mathrm{LMOV}}_{g, (w)}(K_\bP/L, \beta)
    \]
    with proof in $w = 1, 2$.
    \begin{proof}
        Refer to Examples \ref{ex:w1}, \ref{ex:w2}.
    \end{proof}
\end{conjecture}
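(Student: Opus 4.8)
The plan is to derive Conjecture \ref{conj:GV_LMOV} from Theorem \ref{thm:winding} by a careful combinatorial analysis of its right-hand side, organized as an induction on $w$. Fix $w \in \mathbb{Z}_{>0}$ and extract the coefficient of $Q^{wE}$ (equivalently of $Q^{-wC}$) from the first displayed identity in Theorem \ref{thm:winding}. On the closed side, the Gopakumar-Vafa multiple-cover formula \cite{GV1} \cite{GV2} for the (generally non-primitive) class $\pi^*\beta - wC$ decomposes $\sum_g N_g(K_{\widehat{\bP}}, \pi^*\beta - wC)\hbar^{2g-2}$ into a primitive part $\sum_g (-1)^{g-1} n_g(K_{\widehat{\bP}}, \pi^*\beta - wC)(q^{1/2}-q^{-1/2})^{2g-2}$ plus corrections coming from covers of classes $\tfrac{1}{d}(\pi^*\beta - wC)$, each obtained by the substitution $(q,Q)\mapsto(q^d,Q^d)$; this is the pattern already visible in Equation \ref{eq:winding-2_GV}. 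On the open side, I would expand each $Z_\nu$ with $|\nu|\le w$ in terms of the BPS generating series $\widehat f_\mu(q,Q)$ of Equation \ref{eq:def_fhat}, using the Adams-type relation between $f_\nu$ and $\widehat f_\nu$ from \cite{AKMV}, Equation 7.23, which again introduces substitutions $(q,Q)\mapsto(q^d,Q^d)$.

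The crux is then a purely combinatorial/representation-theoretic identity. After taking the logarithm in Theorem \ref{thm:winding} (to pass to connected invariants) and collecting the coefficient of $Q^{wE}$, the sum over compositions $(\ell_1,\dots,\ell_j)$ of $w$ and partitions $\nu_a$ with $|\nu_a|=\ell_a$, weighted by $\prod_a Z_{\nu_a} C_{\nu_a^T\varnothing\varnothing}$, must collapse to $\widehat f_{(w)}(q,Q)$ together with exactly the $(q,Q)\mapsto(q^d,Q^d)$ corrections produced by the closed-side multiple covers. The ingredients are: (i) each vertex weight $C_{\nu^T\varnothing\varnothing}$ is, up to framing, a principal specialization $s_{\nu^T}(q^\rho)$ of a Schur function (the quantum dimension / framed unknot invariant), so the relevant data obey symmetric-function identities; (ii) the change-of-basis matrix $(\chi_\nu(C(\vec k)))$ of Equation \ref{eq:change_of_basis} is the character table of $S_w$, which controls the passage between the partition basis and the winding-profile basis; (iii) the base case $w=1$, namely Corollary \ref{cor:winding-1}. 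With these, one argues by induction: assuming the equality for all $w'<w$, the closed-side multiple-cover corrections and every open-side term involving $\widehat f_{(w')}$ with $w'<w$ match termwise, so it remains to identify the top pieces, and that identification is precisely the claim for $w$. Equivalently, one must show that in the connected (logarithmic) sector only the single-row representation $(w)$ contributes to the primitive invariant, the $q$-refined analogue of the genus-zero statement underlying \cite{LLW}.

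The main obstacle is exactly that collapse for general $w$: proving that, in the logarithmic sector, all partitions $\nu$ of $w$ other than the single row either cancel among themselves or are absorbed into the multiple-cover corrections, with the surviving coefficient equal to $(-1)^g N^{\mathrm{LMOV}}_{g,(w)}$. For $w=2$ this rests on the elementary relation $C_{(2)}+C_{(1,1)} = (q^{1/2}-q^{-1/2})^{-2}$ and its companion among $C_{(2)},C_{(1,1)}$, but for general $w$ one needs the full structure of the quantum dimensions $\{C_{\nu^T\varnothing\varnothing}:|\nu|=w\}$ intertwined with the LMOV integrality structure encoded in \cite{AKMV}, Equation 7.23; making this explicit — rather than verifying it degree-by-degree as in Appendix \ref{sec:LMOV_GV} — is the content of the conjecture and the reason it is left open. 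An alternative, more geometric route would generalize the degeneration argument of \cite{Cha} \cite{LLW} to higher winding using \cite{MV}, \cite{LLLZ}, and flop invariance \cite{KM}, but the obstruction recurs: isolating the single-boundary-component, winding-$w$ open invariant from the full winding profile $\vec k$ requires inverting the $S_w$ character table, which is only possible over $\mathbb{Q}$, so integrality of $N^{\mathrm{LMOV}}_{g,(w)}$ is not automatic and must be extracted from the topological-vertex side.
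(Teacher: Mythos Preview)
Your proposal is not so much a proof as a proof \emph{strategy} for the full conjecture, together with an honest acknowledgment that the strategy breaks down at the key step. That diagnosis is correct, and it is precisely why the paper leaves the general $w$ case as a conjecture. The paper's own ``proof'' is only for $w=1,2$: it simply refers to Examples~\ref{ex:w1} and~\ref{ex:w2}, which carry out the explicit extraction of the $Q^{wE}$-coefficient from Theorem~\ref{thm:winding}, insert the explicit formulas for $C_{\nu^T\varnothing\varnothing}$, $Z_\nu$, $f_\nu$, $\widehat f_\nu$ with $|\nu|\le 2$, and compare term-by-term with the Gopakumar--Vafa expansion (Equations~\ref{eq:winding-1_eq} and \ref{eq:winding-2_GV}--\ref{eq:winding2_vertex}). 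No induction, no general representation-theoretic identity---just direct computation in two small cases.

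Your inductive scheme, specialized to $w=1$ and $w=2$, would reduce to exactly those computations: the base case is Example~\ref{ex:w1}, and the ``inductive step'' at $w=2$ is the cancellation you describe via $C_{(2)}+C_{(1,1)}=(q^{1/2}-q^{-1/2})^{-2}$, which is precisely what Example~\ref{ex:w2} does. So for the part of the statement that is actually claimed to be proven, your approach and the paper's coincide. The additional content of your write-up---the proposed general mechanism via Schur principal specializations, the $S_w$ character table, and the matching of multiple-cover corrections on both sides---is a reasonable outline of what a proof of the full conjecture would have to accomplish, but as you yourself note, the ``collapse to the single row $(w)$'' is the entire difficulty, and neither you nor the paper supplies it.
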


\begin{example}
  In Appendix \ref{sec:LMOV_GV}, we provide tables of LMOV and Gopakumar-Vafa invariants that we computed to verify Conjecture \ref{conj:GV_LMOV} for $\bP = \bP^2$ and $w \leq 4, d \leq 5$ and $g \leq 3$.  For example, the genus-$3$ Gopakumar-Vafa invariant of $K_{\FF_1}$ in curve class $\pi^*5H - 2C$ is $-992$ as listed in Table \ref{subtable:winding2_LMOV}, and the genus-3, degree-5 LMOV invariant of $L \subset K_{\bP}$ in representation $(2)$ is $992$.
\end{example}

\begin{remark}
\label{rem:1}
    In genus-0, we have $O_0(K_{\bP}/L, \beta+\beta_0, (1)) = n_0^{open}(K_{\bP}/L, \beta+\beta_0, (1))$ (see footnote in Example \ref{ex:explicitdelta}) and $N_0(K_{\widehat{\bP}}, \pi^*\beta-C) = n_0(K_{\widehat{\bP}}, \pi^*\beta-C)$ since $\pi^*\beta-C$ is a primitive curve class. By remarks made in \cite{GRZ}, Section 2.2, $O_0(K_{\bP}/L, \beta+\beta_0, (1))$ equals the genus-0, open Gromov-Witten invariant of a moment fiber of $K_{\bP}$.  Thus, Corollary $\ref{cor:winding-1}$ is equivalent to \cite{LLW}, Theorem 1.1 in genus-0.
\end{remark}

\begin{remark}
    For toric manifolds (both Fano and non-Fano), the genus-0, open Gromov-Witten invariant of a moment fiber was defined using symplectic geometry by \cite{FOOO}. Both the open invariant of an outer AV-brane $L \subset K_{\bP}$ and of a moment fiber in $K_{\bP}$ agree in genus-0, by remarks in \cite{GRZ}, Section 2.2. Hence by Corollary \ref{cor:winding-1} and \cite{LLW}, Theorem 1.1, genus-0 open Gromov-Witten invariants defined with either stable relative maps in algebraic geometry \cite{LLLZ} \cite{FL} or with symplectic geometry \cite{FOOO} agree, as they both equal $N_0(K_{\widehat{\bP}}, \pi^*\beta- C)$.
\end{remark}

\begin{remark}
    Corollary \ref{cor:winding-1} is also used to establish a correspondence between the winding-1, open-BPS invariants $n_g^{open}(K_{\bP}/L, \beta+\beta_0, (1))$ of an outer AV-brane $L \subset K_{\bP}$ and certain higher genus closed invariants of the projectivized canonical bundle $\bP(K_{\bP} \oplus \cO_{\bP})$ \cite{Zho}. 
\end{remark}

\subsection{Higher genus open-log correspondence}

\label{sec:ol_correspondence}

From results in Sections \ref{sec:log_GV} and \ref{sec:winding-w}, we have a higher genus open-log correspondence for smooth log Calabi-Yau pairs $(\bP, E)$ that relates the generating function of two-pointed log invariants $R_{g, (1, \beta\cdot E-1)}(\bP(\log E), \beta)$ to the genus-$g$, winding-1, 1-boundary component, open-BPS invariants $n^{open}_g(K_\bP/L, \beta+\beta_0, (1))$ of an outer AV-brane $L \subset K_{\bP}$ in framing-0,

\begin{theorem}[Open-log correspondence for smooth log Calabi-Yau pairs]
\label{thm:ol_correspondence}
Let $\bP$ be a toric Fano surface with smooth anticanonical divisor $E$. Let $L$ be an outer Aganagic-Vafa brane in $K_{\bP}$. Then, we have, 

\begin{multline*}
\sum_{\substack{g \geq 0, \\ \beta \in \textup{NE}(\bP)}}\frac{1}{(\beta\cdot E-1)}R_{g, (1, \beta\cdot E-1)}(\bP(\log E), \beta) \hbar^{2g}Q^{\beta} =\\  \sum_{\substack{g \geq 0, \\ \beta \in \textup{NE}(\bP)}}\left[(-1)^{\beta\cdot E + g}
 n_g^{open}(K_\bP/L, \beta+\beta_0, (1))
  \left(2\sin \frac{\hbar}{2}\right)^{2g-2}Q^{\beta}\right]  - \Delta  
\end{multline*}
where the discrepancy term $\Delta$ is as in Theorem \ref{thm:tropical_local}.
\end{theorem}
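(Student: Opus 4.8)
The plan is to obtain Theorem~\ref{thm:ol_correspondence} as a direct composition of the two results already established in this section: the higher genus log/Gopakumar--Vafa correspondence of Theorem~\ref{thm:tropical_local} and the winding-$1$ open/closed correspondence of Corollary~\ref{cor:winding-1}. Since $\bP$ is assumed to be a toric Fano surface, we may fix a \emph{toric} blow up $\pi\colon\widehat{\bP}\to\bP$ at a torus-fixed point, so that $K_{\bP}$ and $K_{\widehat{\bP}}$ are toric Calabi--Yau threefolds and both cited results apply to one and the same geometry. All of the analytic and enumerative content has already been absorbed into those two statements; what remains is an algebraic substitution of generating series.

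First I would apply Theorem~\ref{thm:tropical_local}, which holds for any smooth log Calabi--Yau pair and in particular for $(\bP,E)$, to rewrite the generating series $\sum_{g,\beta}\frac{1}{\beta\cdot E-1}R_{g,(1,\beta\cdot E-1)}(\bP(\log E),\beta)\hbar^{2g}Q^\beta$ in terms of the genus-$g$ Gopakumar--Vafa invariants $n_g(K_{\widehat{\bP}},\pi^*\beta-C)$, up to the discrepancy $\Delta$ of Equation~\eqref{eq:delta_ol}. Next I would invoke Corollary~\ref{cor:winding-1}, valid precisely because $\pi$ is a toric blow up at a point, to substitute $n_g(K_{\widehat{\bP}},\pi^*\beta-C)=(-1)^{g}\,n_g^{open}(K_\bP/L,\beta+\beta_0,(1))$ term by term in $g$ and $\beta$. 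The sign $(-1)^{\beta\cdot E}$ appearing in Theorem~\ref{thm:tropical_local} then combines with $(-1)^{g}$ to give the stated $(-1)^{\beta\cdot E+g}$, while the factor $(2\sin\frac{\hbar}{2})^{2g-2}$ and the discrepancy $\Delta$ pass through unchanged; this yields exactly the displayed equation.

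The routine but essential bookkeeping step is to check that the curve-class correspondence is consistent: $\pi^*\beta-C$ on $\widehat{\bP}$ matches the class $\beta$ on $\bP$ (equivalently $\beta+\beta_0\in H_2(K_\bP,L)$ on the open side) under the identification $Q^{-C}=Q^E$ used in Corollary~\ref{cor:winding-1}, and that $n_g^{open}(K_\bP/L,\beta+\beta_0,(1))$ is indeed the winding-$1$, one-boundary-component open-BPS invariant entering the LMOV-to-open-BPS transformation \eqref{eq:change_of_basis} for the length-one partition $(1)$. The only genuine subtlety — and the point I would be most careful about — is that Theorem~\ref{thm:tropical_local} is stated for arbitrary smooth log Calabi--Yau pairs whereas Corollary~\ref{cor:winding-1} requires the toric hypotheses, so the assumption ``$\bP$ toric Fano'' in Theorem~\ref{thm:ol_correspondence} cannot be dropped; once the two statements are applied to a common toric model, no further work is needed.
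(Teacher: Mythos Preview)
Your proposal is correct and follows exactly the paper's own approach: the paper's proof is the single line ``Apply Corollary~\ref{cor:winding-1} to Theorem~\ref{thm:tropical_local},'' and your argument spells out precisely that substitution together with the sign bookkeeping $(-1)^{\beta\cdot E}\cdot(-1)^g=(-1)^{\beta\cdot E+g}$.
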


\begin{proof}
    Apply Corollary \ref{cor:winding-1} to Theorem \ref{thm:tropical_local}.
\end{proof}
Thus, by Theorem \ref{thm:ol_correspondence}, we can express log invariants $R_{g, (1, \beta\cdot E - 1)}(\bP(\log E), \beta)$ by open-BPS invariants $n_g^{open}(K_{\bP}/L, \beta+\beta_0, (1))$, or equivalently the open Gromov-Witten invariants $O_g(K_{\bP}/L, \beta + w\beta_0, \vec{k})$, and vice versa.

\begin{example}
\label{ex:explicitdelta}
    Let $\bP = \bP^2$. We write $O_1(K_{\bP^2}/L, \beta+\beta_0, (1)) = -n_1^{open}(K_{\bP^2}/L, \beta+\beta_0, (1)) + \frac{1}{24}n_0^{open}(K_{\bP^2}/L, \beta+\beta_0, (1))$ by the open multiple cover formulas \cite{MV} \footnote{\label{footnote:sign}In order to match with open Gromov-Witten invariants computed in \cite{AKMV} \cite{LLW} \cite{GZ}, we introduce a global sign to the open multiple cover formula in \cite{MV}, Equation 2.10.}. Specializing Theorem \ref{thm:ol_correspondence} to genus-1 and degree-4, we have,
    
    \begin{multline*}
        O_1(K_{\bP^2}/L, 4H + \beta_0, (1)) = \frac{1}{11}R_{1, (1,11)}(\bP^2(\log E), 4H) +  \frac{11}{24}R_{0, (1,11)}(\bP^2(\log E), 4H)  \\
        -3R_{0, (3)}(\mathbb{F}_1(\log \pi^*E - C), \pi^*H) - 2R_{0, (1)}(\FF_1(\log \pi^*E - C), C)R_{0, (2)}(\FF_1(\log \pi^*E - C), F)
    \end{multline*}
    From Equation \ref{eq:delta_ol}, we can compute that,
    
    \[
    \Delta(1, 4H) = O_1(K_{\bP^2}/L, 4H + \beta_0, (1)) - \frac{1}{11}R_{1, (1,11)}(\bP^2(\log E), 4H) + \frac{1}{24}n_0(K_{\FF_1}, \pi^*4H - C)
    \]
    where $n_0(K_{\FF_1}, \pi^*4H- C) = 11R_{0, (11,1)}(\bP^2(\log E), 4H)$ by the log-local principle \cite{vGGR} and \cite{GRZ}, Corollary 6.6.
    
    The open Gromov-Witten invariant $O_1(K_{\bP^2}/L, 4H+\beta_0, (1))$ was computed by localization to be $\frac{-3313}{12}$ \cite{GZ}. The relevant right hand side invariants $R_{g, (11,1)}(\bP^2(\log E), 4H)$ and $R_{0}(\FF_1(\log \pi^*E - C), \beta)$ can be computed by $q$-refined tropical curve counting in the scattering diagram of $(\bP^2, E)$ (\cite{GRZ}, Appendix B) and/or using the formula of \cite{vGGR}. The first equality above is explicitly, 

    \[
    \frac{-3313}{12} = \frac{1}{11}\cdot (-18513) + \frac{11}{24}\cdot 3146 - 3\cdot 9 - 2\cdot 1 \cdot 4
    \]
\end{example}

\begin{remark}
    Open-log correspondences in all-genus have been proven when the anti-canonical divisor is singular. In \cite{BBvG}, an open-log correspondence is proven for Looijenga pairs satisfying a deformation property, and is extended to quasi-tame Looijenga pairs in \cite{BS2}. In \cite{Sch}, an all genus open-log correspondence is established for two-component Looijenga pairs.
\end{remark}

\begin{remark}
Recall from Theorem \ref{thm:agrees} that $\vartheta_1(q)$ or equivalently $M(Q, q)$ encode the higher-genus, two-pointed log invariants of $(\bP, E)$. We note that, unlike in genus-0, $M(Q, q)$ is not the generating series of higher-genus, 1 boundary component, winding-1 open invariants of an outer Aganagic-Vafa brane $L \subset K_{\bP}$. For $\bP = \bP^2$, we have from Example \ref{ex:qM} that,
\begin{multline*}
M(Q,q) = 1 - (q^{\frac{1}{2}} + q^{\frac{-1}{2}})Q + (q^{-2} + q^{-1} + 1 + q + q^2)Q^2 \\
 - \left( q^{\frac{9}{2}} + 3q^{\frac{7}{2}} + 4q^{\frac{5}{2}}+ 4q^{\frac{3}{2}}+ 4q^{\frac{1}{2}}+ 4q^{-\frac{1}{2}}+ 4q^{-\frac{3}{2}}+ 4q^{-\frac{5}{2}}+ 3q^{-\frac{7}{2}}+ q^{-\frac{9}{2}}\right)Q^3 + \ldots
\end{multline*}
whereas the higher-genus open invariants of $L \subset K_{\bP^2}$ can be computed by the Topological Vertex (\cite{AKMV}, Equation 9.10),

\[
\widehat{f}_{\square \, \varnothing \, \varnothing}(z, Q) = 1 - 2Q + 5Q^2 - (32 + 9z)Q^3 + (286 + 288z + 108z^2 + 14z^3)Q^4 + \ldots
\]
where $z = (q^{\frac{1}{2}} - q^{\frac{-1}{2}})^2$. The coefficient of $z^g Q^d$ in $\widehat{f}_{\square \,\varnothing\, \varnothing}$ is the genus-$g$, degree-$d$, LMOV invariant of $L \subset K_{\bP^2}$ in representation $\square$. By Equation \ref{eq:change_of_basis}, the LMOV invariant in representation $\square$ is equal to the winding-1, open-BPS invariant $n_g^{open}(K_{\bP^2}/L, \beta+\beta_0, (1))$. In the limit as $z \rightarrow 0$ or $q\rightarrow 1$, we recover the genus-0, open mirror map $M(Q, q)$ from $\widehat{f}_{\square \,\varnothing\, \varnothing}$. Hence, we see that $\widehat{f}_{\square \,\varnothing\, \varnothing} \neq M(Q, q)$, with the difference $M(Q, q) - \widehat{f}_{\square \,\varnothing\, \varnothing}$ described in Theorem \ref{thm:ol_correspondence}.
\end{remark}

\subsection{Verification of Corollary \ref{cor:winding-1}}

\label{S:verification}
We provide a computational check of Corollary \ref{cor:winding-1} and Theorem \ref{thm:ol_correspondence} for $\bP = \bP^2$ for curve classes $\beta = dH$ for $d \leq 4$ and all-genus. To perform calculations with the topological
vertex, we use the diagrams in Figure \ref{fig:toric_diagrams}.
The winding-1, open topological string partition function $Z_{\bP^2}(V)$ of an outer $AV$-brane in local $\bP^2$ in framing-0 is given by,

\[
Z_{\bP^2}(V) = \sum_{\lambda_1, \lambda_2, \lambda_3}  (-1)^{\sum_i |\lambda_i|} Q^{\left(\sum_i |\lambda_i|\right)H} q^{\sum_i \kappa_{\lambda_i}} C_{\square \lambda_2 \lambda^T_3}C_{\varnothing \lambda_1 \lambda_2^T}C_{\varnothing\lambda_3 \lambda_1^T}Tr_{\square} V,
\]
where the topological vertex is given by $C_{\lambda_1,\lambda_2,\lambda_3},$ a Laurent series in $q^{\frac{1}{2}}$ with integer coefficients
indexed by three partitions. The closed topological string partition function $Z_{\FF_1}$ of local $\FF_1$ with exceptional curve $C$ is given by,

\[
Z_{\FF_1} = \sum_{\lambda_1, \lambda_2, \lambda_3, \lambda_4} (-1)^{\sum_i |\lambda_i|}Q^{(|\lambda_4|-|\lambda_1|-|\lambda_3|)C}Q^{(|\lambda_1| + |\lambda_2| + |\lambda_3|)H}q^{\sum_i \kappa_{\lambda_i}}C_{\varnothing \lambda_1 \lambda_4^T}C_{\varnothing \lambda_2 \lambda_1^T}C_{\varnothing \lambda_3 \lambda_2^T}C_{\varnothing \lambda_4 \lambda_3^T}
\]

\begin{figure}[ht]
    \centering

\tikzset{every picture/.style={line width=0.75pt}} 

\tikzset{every picture/.style={line width=0.75pt}} 

\begin{tikzpicture}[x=0.75pt,y=0.75pt,yscale=-1,xscale=1]

\draw    (150.33,189.67) -- (111.03,230.82) ;
\draw    (241,189.67) -- (295.1,206.42) ;
\draw    (134.97,55.75) -- (150,101.33) ;
\draw    (150,101.33) -- (150.33,189.67) ;
\draw [shift={(150.19,151.5)}, rotate = 269.78] [color={rgb, 255:red, 0; green, 0; blue, 0 }  ][line width=0.75]    (10.93,-3.29) .. controls (6.95,-1.4) and (3.31,-0.3) .. (0,0) .. controls (3.31,0.3) and (6.95,1.4) .. (10.93,3.29)   ;
\draw    (150.33,189.67) -- (241,189.67) ;
\draw [shift={(201.67,189.67)}, rotate = 180] [color={rgb, 255:red, 0; green, 0; blue, 0 }  ][line width=0.75]    (10.93,-4.9) .. controls (6.95,-2.3) and (3.31,-0.67) .. (0,0) .. controls (3.31,0.67) and (6.95,2.3) .. (10.93,4.9)   ;
\draw    (150,101.33) -- (241,189.67) ;
\draw [shift={(190.48,140.62)}, rotate = 44.15] [color={rgb, 255:red, 0; green, 0; blue, 0 }  ][line width=0.75]    (10.93,-3.29) .. controls (6.95,-1.4) and (3.31,-0.3) .. (0,0) .. controls (3.31,0.3) and (6.95,1.4) .. (10.93,3.29)   ;

\draw (119,72.07) node [anchor=north west][inner sep=0.75pt]    {$\square $};
\draw (120.33,135.07) node [anchor=north west][inner sep=0.75pt]    {$R_{2}$};
\draw (199.33,126.73) node [anchor=north west][inner sep=0.75pt]    {$R_{3}$};
\draw (181.33,205.07) node [anchor=north west][inner sep=0.75pt]    {$R_{1}$};
\end{tikzpicture}
\qquad\quad
\begin{tikzpicture}[x=0.75pt,y=0.75pt,yscale=-1,xscale=1]

\draw    (170.33,209.67) -- (133,247.67) ;
\draw    (320.5,209.17) -- (374.6,225.92) ;
\draw    (131,79.75) -- (170,121.33) ;
\draw    (170,121.33) -- (170.33,209.67) ;
\draw [shift={(170.19,171.5)}, rotate = 269.78] [color={rgb, 255:red, 0; green, 0; blue, 0 }  ][line width=0.75]    (10.93,-3.29) .. controls (6.95,-1.4) and (3.31,-0.3) .. (0,0) .. controls (3.31,0.3) and (6.95,1.4) .. (10.93,3.29)   ;
\draw    (170.33,209.67) -- (320.5,209.17) ;
\draw [shift={(251.42,209.4)}, rotate = 179.81] [color={rgb, 255:red, 0; green, 0; blue, 0 }  ][line width=0.75]    (10.93,-4.9) .. controls (6.95,-2.3) and (3.31,-0.67) .. (0,0) .. controls (3.31,0.67) and (6.95,2.3) .. (10.93,4.9)   ;
\draw    (229.5,120.83) -- (320.5,209.17) ;
\draw [shift={(269.98,160.12)}, rotate = 44.15] [color={rgb, 255:red, 0; green, 0; blue, 0 }  ][line width=0.75]    (10.93,-3.29) .. controls (6.95,-1.4) and (3.31,-0.3) .. (0,0) .. controls (3.31,0.3) and (6.95,1.4) .. (10.93,3.29)   ;
\draw    (170,121.33) -- (229.5,120.83) ;
\draw [shift={(192.75,121.14)}, rotate = 359.52] [color={rgb, 255:red, 0; green, 0; blue, 0 }  ][line width=0.75]    (10.93,-3.29) .. controls (6.95,-1.4) and (3.31,-0.3) .. (0,0) .. controls (3.31,0.3) and (6.95,1.4) .. (10.93,3.29)   ;
\draw    (229.67,78.33) -- (229.5,120.83) ;

\draw (233.33,221.57) node [anchor=north west][inner sep=0.75pt]    {$R_{2}$};
\draw (283.33,147.73) node [anchor=north west][inner sep=0.75pt]    {$R_{3}$};
\draw (141.33,151.57) node [anchor=north west][inner sep=0.75pt]    {$R_{1}$};
\draw (186,93.57) node [anchor=north west][inner sep=0.75pt]    {$R_{4}$};
\end{tikzpicture}
    \caption{The toric diagrams of local $\bP^2$ (left) and local $\FF_1$ (right).  Internal edges have representations $R_i$ attached, where for the open calculation of local $\bP^2$ with winding $1$, we have the fundamental representation $\square$ on the external edge.}
    \label{fig:toric_diagrams}
\end{figure}
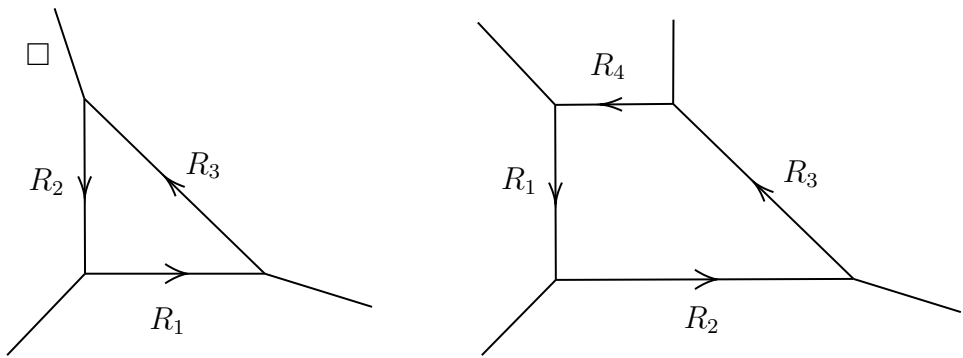

We begin with degree one, $d=1$. 
Then all representation labels are either trivial or $\square$, simplifying the expressions for the vertex functions $C_{\lambda\mu\nu}$ (see \cite{AKMV}, pg. 53 for explicit calculations). Extracting integer invariants, we get
\begin{equation}
\label{eq:g=0_closed_BPS}
   n_0(K_{\FF_1}, \pi^*H - C) = -2, \quad n_g(K_{\FF_1}, \pi^*H - C) = 0 \text{ for }g>0 
\end{equation}
and 
\begin{equation}
    \label{eq:g=0_open_BPS}
    n^{open}_0(K_{\bP^2}/L, H+\beta_0, (1)) = 2, \quad n^{open}_g(K_{\bP^2}/L, H+\beta_0, (1)) = 0 \text{ for }g>0
\end{equation}

For degrees $2 \leq d \leq 4$, we 
can use the calculations for the refined
Topological Vertex in \cite[\S 5.6.1]{IKV},
then un-refine them.  These are listed in 
\ref{table:open_closed_BPS}. 

\begin{table}[h]
\centering
 \begin{tabular}{||c c c||} 
 \hline
 $d$ & $\sum_{j_L, j_R} N_C^{(j_L, j_R)}(j_L, j_R)$ & $\widehat{f}_{\square \,\varnothing \, \varnothing}$  \\ [0.5ex] 
 \hline\hline
 2 & $(0,2)$ & $5$ \\ 
 3 & $(\frac{1}{2}, 4) \oplus (0, \frac{7}{2}) \oplus (0, \frac{5}{2})$ & $-(32+9z)$ \\ 
 4 & $\left(\frac{3}{2}, \frac{13}{2}\right) \oplus (1, 6) \oplus (1, 5) \oplus 2\left(\frac{1}{2}, \frac{11}{2}\right) \oplus (0, 6) \oplus 2\left(\frac{1}{2}, \frac{9}{2}\right)$ & \\
 & $\oplus (0, 5) \oplus \left(\frac{1}{2}, \frac{7}{2}\right) \oplus 2(0, 4) \oplus (0, 3) \oplus (0, 2)$  & $286+288z+108z^2+14z^3$  \\
 \hline
 \end{tabular}
     \caption{Open and closed BPS degeneracies.}
    \label{table:open_closed_BPS}
\end{table}

The table lists the Gopakumar-Vafa invariants of local $\FF_1$ in curve classes $(d-1)B + dF$, with the $SU(2)_L \times SU(2)_R$ spin content of BPS states supported on those curves.  To un-refine, we want to ignore $j_R,$
meaning each such representation contributes a factor
of $(2j_R+1),$ so $(j_L,j_R)$
generates a term
$(2j_R+1)(\q^{-2j_L} + \q^{-2j_L+2} + \ldots + \q^{2j_L-2} + \q^{2j_L})$ to the curve counting function (see also \cite[Appendix B]{HKR} for some of these BPS invariants). For given $d$, we compute the total contribution from all of the $(j_L, j_R)$. After making a change of variables $z := (\q^{\frac{1}{2}} - \q^{\frac{-1}{2}})^2$, the computed expression matches, up to a convention-dependent overall sign, the coefficient of $Q^d$ in $-\widehat{f}_{\square\,\varnothing\,\varnothing}$ (defined in Section \ref{sec:GRZ}) containing the open-BPS invariants in curve class $dH + \beta_0$.\footnote{In Table \ref{table:open_closed_BPS}, the coefficient of $(0,4)$ is listed in \cite{IKV} as 1 instead of 2 --- a typo, we believe. Indeed, the coefficient should be 2 in order to recover the genus-0 BPS invariant of local $\FF_1$ in class $3B+4F$ in the limit $\q\rightarrow 1$, which is 286. This invariant is computed in \cite{LLW} \cite{CKYZ} \cite{AKMV}.} This completes the verification
of Corollary \ref{cor:winding-1} for $\bP^2$
for degrees $d \leq 4.$

\begin{appendix}

\section{Evidence for Conjecture \ref{conj:GV_LMOV}: GV and LMOV invariants}

\label{sec:LMOV_GV}

To provide evidence for Conjecture \ref{conj:GV_LMOV}, we list
here some tables of Gopakumar-Vafa (GV) and Labastida-Mari\~no-Ooguri-Vafa (LMOV) invariants.  
The GV invariants
$n_g(K_{\FF_1}, \pi^*\beta - wC)$
for $K_{\FF_1}$ in curve class $\pi^*dH-wC$ come from \cite{HKR}.
We abbreviate them $n_g(d,w)$ here, for simplicity.
The LMOV invariants
$N^{\mathrm{LMOV}}_{g, (w)}(K_\bP/L, \beta)$
in representation $(w)$
are obtained from \cite{AKMV}.\footnote{We thank Albrecht Klemm and Jie Gu for calculating $N^{\mathrm{LMOV}}_{g, (w)}(K_{\bP^2}/L, dH)$ for $w = 3, 4$ and $d \leq 5$,
beyond what appears in \cite{AKMV}.}  We abbreviate them
$N^{\mathrm{LMOV}}_{g,(w)}(d)$ here for simplicity.
These are listed for total winding $w\leq 4,$ genus $g\leq 3,$ and degree $d \leq 5.$

\begin{figure}[H]
\centering
\begin{subfigure}[b]{0.45\textwidth}
\centering
 \begin{tabular}{||c c c c c||} 
 \hline
 $d$ & $g = 0$ & $g = 1$ & $g = 2$ & $g = 3$ \\ [0.5ex] 
 \hline\hline
 1 & -2 & 0 & 0 & 0\\ 
 2 & 5 & 0 & 0 & 0 \\
 3 & -32 & 9 & 0 & 0 \\
 4 & 286 & -288 & 108 & -14\\
 5 & -3038 & 6984 & -7506 & 4519\\
 \hline
 \end{tabular}
 \caption{$w=1$:  $n_g(d,1) = (-1)^gN^{\mathrm{LMOV}}_{g,(1)}(d)$}
 \label{subtable:winding1_LMOV}
\end{subfigure}
\hfill
\begin{subfigure}[b]{0.45\textwidth}
\centering
 \begin{tabular}{||c c c c c||} 
 \hline
 $d$ & $g = 0$ & $g = 1$ & $g = 2$ & $g = 3$ \\ [0.5ex] 
 \hline\hline
 1 & 0 & 0 & 0 & 0\\ 
 2 & 0 & 0 & 0 & 0 \\
 3 & 7 & 0 & 0 & 0 \\
 4 & -110 & 68 & -12 & 0\\
 5 & 1651 & -2938 & 2353 & -992\\
 \hline
 \end{tabular}
 \caption{$w=2$:  $n_g(d,2) = (-1)^gN^{\mathrm{LMOV}}_{g,(2)}(d)$}
 \label{subtable:winding2_LMOV}
\end{subfigure}
\hfill

\vspace{0.5cm}

\begin{subfigure}[b]{0.45\textwidth}
\centering
 \begin{tabular}{||c c c c c||} 
 \hline
 $d$ & $g = 0$ & $g = 1$ & $g = 2$ & $g = 3$ \\ [0.5ex] 
 \hline\hline
 1 & 0 & 0 & 0 & 0\\ 
 2 & 0 & 0 & 0 & 0 \\
 3 & 0 & 0 & 0 & 0 \\
 4 & 9 & 0 & 0 & 0\\
 5 & -288 & 300 & -116 & 15\\
 \hline
 \end{tabular}
 \caption{$w=3$:  $n_g(d,3) = (-1)^gN^{\mathrm{LMOV}}_{g,(3)}(d)$} \label{subtable:winding3_LMOV}
\end{subfigure}
\hfill
\begin{subfigure}[b]{0.45\textwidth}
\centering
 \begin{tabular}{||c c c c c||} 
 \hline
 $d$ & $g = 0$ & $g = 1$ & $g = 2$ & $g = 3$ \\ [0.5ex] 
 \hline\hline
 1 & 0 & 0 & 0 & 0\\ 
 2 & 0 & 0 & 0 & 0 \\
 3 & 0 & 0 & 0 & 0 \\
 4 & 0 & 0 & 0 & 0\\
 5 & 11 & 0 & 0 & 0\\
 \hline
 \end{tabular}
 \caption{$w=4$:  $n_g(d,4) = (-1)^gN^{\mathrm{LMOV}}_{g,(4)}(d)$}
 \label{subtable:winding4_LMOV}
\end{subfigure}

\caption{GV invariants $n_g(K_{\FF_1}, \pi^*dH - wC)$ equal $(-1)^g$ times LMOV invariants $N^{\mathrm{LMOV}}_{g, (w)}(K_{\bP^2}/L, dH)$ for various genera and degrees, for representations/total winding $w = 1, 2, 3, 4$. Tables shown are GV invariants.}

\label{fig:LMOV_GV_tables}
\end{figure}

\section{Combinatorial results}
\label{sec:combinatorics}

\subsection{Laurent series inversion and substitution}
Recall that we write $[x^m]f(x)$ to refer to the coefficient of $x^m$ in a series $f(x)$.

Let $f(x)$ be a formal Laurent series with complex coefficients of the form
$f(x)=\sum_{k\geq -1}f_kx^k$ with $f_{-1}\neq 0$. A compositional inverse of $f$ is a series $g(z)$ that solves $f(g(z))=z$. 

\begin{lemma}[Lagrange inversion for Laurent series]
\label{lem:inversion}
As an element $g\in\CC\lfor z^{-1}\rfor$, there is a unique compositional inverse $g$ of $f$ given by the expression
\[ g(z) = \sum_{k> 0}\frac{z^{-k}}{k}[x^{-1}]f^k. \]
Moreover, $f(x)$ of the given form is uniquely determined by $g(z)$.
\end{lemma}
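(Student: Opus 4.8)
\textbf{Proof proposal for Lemma \ref{lem:inversion} (Lagrange inversion for Laurent series).}

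The plan is to reduce the statement to classical Lagrange inversion for power series by a change of variable that swaps $0$ and $\infty$. First I would set $u = 1/x$ and $w = 1/z$, so that $f(x) = \sum_{k\geq -1} f_k x^k$ becomes $\tilde f(u) := f(1/u) = f_{-1} u + \sum_{k\geq 0} f_k u^{-k}$; multiplying by $u$ gives $u\,\tilde f(u) = f_{-1} + f_0 u + f_1 u^2 + \cdots \in \CC\lfor u\rfor$, a unit since $f_{-1}\neq 0$. In particular $\tilde f(u) = f_{-1}u(1 + \cdots)$ has a compositional inverse in $u\CC\lfor u\rfor$ by the standard inverse function theorem for formal power series: there is a unique $\tilde g(w)\in \CC\lfor w\rfor$ with $\tilde g(0)=0$, $\tilde g'(0) = 1/f_{-1}\neq 0$, and $\tilde f(\tilde g(w)) = w$. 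Transporting back via $g(z) := 1/\tilde g(1/z)$ produces an element of $z\CC\lfor z^{-1}\rfor \cdot(\text{unit})$, more precisely $g(z) = \sum_{k>0} g_k z^{-k}$ with $g_1\neq 0$, satisfying $f(g(z)) = z$ as formal Laurent series (substitution is legitimate because $g(z)$ has a pole of order exactly $1$ at $z=\infty$, matching the order of the pole of $f$, so all compositions are well-defined in $\CC\lfor z^{-1}\rfor$). Uniqueness of $g$ in $\CC\lfor z^{-1}\rfor$ follows from uniqueness of $\tilde g$.

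For the explicit formula, I would apply the classical Lagrange inversion formula to $\tilde g$. Writing $\tilde f(u) = u/\varphi(u)$ with $\varphi(u) = u/\tilde f(u) = 1/(f_{-1} + f_0 u + \cdots) \in \CC\lfor u\rfor$ a unit, Lagrange inversion gives $[w^k]\tilde g(w) = \tfrac1k [u^{k-1}]\varphi(u)^k = \tfrac1k[u^{k-1}](u/\tilde f(u))^k = \tfrac1k [u^{-1}] \tilde f(u)^{-k}$. Now I translate the residue in $u$ back to a residue in $x$: since $\tilde f(u) = f(x)$ under $u = 1/x$, and $[u^{-1}] F(u) = [x^{-1}]\big(x^{-2} F(1/x)\big)$ for any Laurent series (the formal residue changes sign under $u\mapsto 1/u$, with the Jacobian factor), one computes $[u^{-1}]\tilde f(u)^{-k} = -[x^{-1}]\big(x^{-2} f(x)^{-k}\big)$... — actually the cleanest route is to observe directly that $g_k := [z^{-k}]g(z)$ and to verify the claimed identity $g_k = \tfrac1k [x^{-1}] f^k$ by a residue computation: from $f(g(z)) = z$, differentiate to get $f'(g(z)) g'(z) = 1$, and extract $g_k = [z^{-k}] g(z) = -\res_{z} z^{k-1} g(z)\,dz$; substituting $z = f(x)$, $dz = f'(x)\,dx$ turns this into $-\res_x f(x)^{k-1} x\, f'(x)\, dx = \tfrac1k \res_x f(x)^k\, dx = \tfrac1k[x^{-1}] f^k$ after integration by parts (the boundary term vanishes formally). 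This reproduces the stated formula $g(z) = \sum_{k>0} \tfrac{z^{-k}}{k}[x^{-1}]f^k$.

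For the final clause — that $f$ is uniquely determined by $g$ — I would simply note that $g$ is the two-sided compositional inverse (in the appropriate formal sense: $g\circ f = \mathrm{id}$ as well, since both $f$ and $g$ lie in the group-like set of Laurent series with a simple pole at infinity and nonzero leading coefficient, which forms a group under composition), so $f$ is recovered as the compositional inverse of $g$ by the same construction applied to $g$ in place of $f$.

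The main obstacle I anticipate is bookkeeping around formal residues and the sign/Jacobian conventions when passing between the variable at $0$ and the variable at $\infty$: one must be careful that "substitution" $f(g(z))$ is well-defined (it is, because $g$ has a pole of order exactly $1$, so $g(z)^k$ for $k\geq -1$ only involves finitely many positive powers of $z$ and the composition converges coefficientwise), and that the integration-by-parts step $\res_x f^{k-1} x f'\,dx = \tfrac1k \res_x f^k\,dx$ is valid for formal Laurent series (it is: $\res_x\, d(h) = 0$ for any formal Laurent series $h$, applied to $h = \tfrac1k x f^k$, noting $d(xf^k) = f^k dx + k x f^{k-1} f' dx$). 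Everything else is routine once the classical power-series Lagrange inversion is invoked as a black box.
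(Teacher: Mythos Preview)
Your overall plan --- reduce to classical power-series Lagrange inversion by a change of variable --- is exactly the paper's approach, but your execution of the reduction contains an algebraic error that breaks the argument. You write $\tilde f(u) := f(1/u) = f_{-1}u + \sum_{k\ge 0} f_k u^{-k}$ and then claim $u\tilde f(u) = f_{-1} + f_0 u + f_1 u^2 + \cdots \in \CC\lfor u\rfor$. This is false: $u\tilde f(u) = f_{-1}u^2 + f_0 u + f_1 + f_2 u^{-1} + \cdots$, which still has infinitely many negative powers of $u$. In other words, substituting $u = 1/x$ into $f$ does \emph{not} land you in a situation where the standard inverse function theorem for $\CC\lfor u\rfor$ applies. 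Consequently your construction of $\tilde g$, and hence of $g$, is not justified.

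The paper repairs this by inverting in the \emph{target} rather than the source: set $h(x) = xf(x) = f_{-1} + f_0 x + f_1 x^2 + \cdots \in \CC\lfor x\rfor$, which is a genuine power series with $h(0) = f_{-1}\neq 0$, and observe $1/f(x) = x/h(x)$ is a power series vanishing to first order at $0$. Classical Lagrange--B\"urmann then produces a unique $\tilde g \in \CC\lfor z\rfor$ with $(1/f)(\tilde g(z)) = z$, i.e.\ $f(\tilde g(z)) = z^{-1}$, so $g(z) := \tilde g(z^{-1})$ works, and the formula $[z^k]\tilde g = \tfrac1k[x^{k-1}]h^k = \tfrac1k[x^{-1}]f^k$ falls out immediately. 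Uniqueness of $g$ and the recovery of $f$ from $g$ both reduce to the corresponding statements for $\tilde g$ and $1/f$.

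Your second route via residues and integration by parts is a legitimate alternative for deriving the explicit formula, but as written it presupposes both the existence of $g$ and the two-sidedness $g(f(x)) = x$, which you obtained from the flawed first step; it also has a sign slip (in fact $g_k = +\res_z z^{k-1} g(z)\,dz$, and the change-of-variables for formal residues under a map of order $-1$ introduces a factor of $\ord(f) = -1$, cf.\ the paper's Lemma on residue substitution, which is where the signs ultimately balance). So the residue argument can be made to work, but only after existence is secured by the corrected reduction above.
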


\begin{proof}
The multiplicative inverse of $f$ can be written as
\[ \frac{1}{f(x)}=\frac{x}{h(x)}, \quad h(x)=xf(x)=\sum_{k\geq -1}f_kx^{k+1} \]
Note that $h(x)$ is a formal power series with $h(x)\neq 0$. The Lagrange inversion formula states that there exists a unique power series $\tilde{g}(z)$ which is the compositional inverse of $1/f(x)$. From this one can derive the Lagrange-B\"urmann formula for formal power series, which gives an explicit expression for the coefficients of $\tilde{g}(z)$. This is \cite[Theorem 7.5]{burmann}, which gives
\[ \tilde{g}(z) = \sum_{k>0}\frac{z^k}{k}[x^{k-1}]h^k = \sum_{k>0}\frac{z^k}{k}[x^{-1}]f^k. \]
The equation $1/f(\tilde{g}(z))=z$ is equivalent to $f(\tilde{g}(z))=z^{-1}$ and to $f(\tilde{g}(z^{-1}))=z$. So the inverse of $f(x)$ is $g(z)=\tilde{g}(z^{-1})$ as claimed. Conversely, for any compositional inverse $g(z)$ of $f(x)$ we have that $\tilde{g}(z)=g(z^{-1})$ is a compositional inverse of $1/f(x)$. If $g(z)$ is a formal power series in $z^{-1}$, then $\tilde{g}(z)$ is a formal power series in $z$. This is unique by the Lagrange inversion formula for formal power series. Hence, $g(z)$ is unique as an element of $\CC\lfor z^{-1}\rfor$. The Lagrange inversion formula for formal power series also states that the power series $1/f(z)$ is uniquely determined by $\tilde{g}(z)$. Hence, $f(x)$ of the given form is uniquely determined by $g(z)$.
\end{proof}

\begin{definition}
\label{def:ord}
The ring $\CC\llbracket x^{-1}\rrbracket[x]$ is a field. An element $f\in\CC\llbracket x^{-1}\rrbracket[x]$ takes the form
\[ f(x)=\sum_{k=-\infty}^{\ord(f)} f_kx^k, \quad f_{\ord(f)} \neq 0, \]
and we call $\ord(f)$ the \emph{order} of $f$.
\end{definition}

\begin{lemma}
\label{lem:ord1}
Let 
\[ f(x)=\sum_{k=-\infty}^1 f_kx^k \]
be a Laurent series with $\ord(f)=1$. There exists a unique and explicitly computable formal Laurent series $g(z)$ of order $\ord(g)=1$ that is the compositional inverse of $f$.
\end{lemma}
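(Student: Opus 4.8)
The statement to prove is Lemma~\ref{lem:ord1}: if $f(x)=\sum_{k\le 1}f_kx^k$ with $f_1\neq 0$, then $f$ has a unique compositional inverse $g$ of order~$1$, computable explicitly.

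\medskip

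\textbf{Proof plan.} The plan is to reduce to Lemma~\ref{lem:inversion} (Lagrange inversion for Laurent series) by a simple change of variable that turns the ``order~$1$'' situation into the ``order~$-1$'' situation handled there. Concretely, set $\widetilde f(x) := 1/f(x)$. Since $f$ is a unit in the field $\CC\llbracket x^{-1}\rrbracket[x]$ with $\ord(f)=1$, its reciprocal $\widetilde f$ lies in the same field with $\ord(\widetilde f) = -1$; writing out the geometric-series expansion of $1/f = x^{-1}/(f_1 + f_0 x^{-1} + \cdots)$ shows $\widetilde f(x) = f_1^{-1}x^{-1} + (\text{lower order terms})$, so $\widetilde f$ has the exact form required as input to Lemma~\ref{lem:inversion} (leading coefficient $f_1^{-1}\neq 0$ in degree $-1$). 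Then Lemma~\ref{lem:inversion} produces a unique $\widetilde g \in \CC\llbracket z^{-1}\rrbracket$ with $\widetilde f(\widetilde g(z)) = z$, given explicitly by $\widetilde g(z) = \sum_{k>0}\frac{z^{-k}}{k}[x^{-1}]\widetilde f^{\,k}$.

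\medskip

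\textbf{Key steps.} First I would record that $\widetilde f = 1/f$ has order $-1$ and extract its leading coefficient, so that the hypothesis of Lemma~\ref{lem:inversion} is literally met. Second, I would unwind the functional equation: from $\widetilde f(\widetilde g(z))=z$, i.e.\ $1/f(\widetilde g(z)) = z$, we get $f(\widetilde g(z)) = z^{-1}$, hence $f(\widetilde g(z^{-1})) = z$. Therefore $g(z) := \widetilde g(z^{-1})$ is a compositional inverse of $f$. Since $\widetilde g(z) = f_1^{-1}z^{-1}+\cdots$ as an element of $\CC\llbracket z^{-1}\rrbracket$, substituting $z\mapsto z^{-1}$ gives $g(z) = f_1^{-1}z + \cdots$, which shows $\ord(g)=1$ and, in particular, that $g\in\CC\llbracket z^{-1}\rrbracket[z]$. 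Third, for uniqueness: if $g_1,g_2$ are two order-$1$ Laurent-series compositional inverses of $f$, then $\widetilde g_i(z) := g_i(z^{-1})$ are both compositional inverses of $\widetilde f = 1/f$ lying in $\CC\llbracket z^{-1}\rrbracket$ (the order-$1$ condition on $g_i$ guarantees this membership after the substitution), so the uniqueness clause of Lemma~\ref{lem:inversion} forces $\widetilde g_1 = \widetilde g_2$, hence $g_1=g_2$. Explicit computability is immediate from the explicit formula for $\widetilde g$: unwinding, $g(z) = \sum_{k>0}\frac{z^{k}}{k}[x^{-1}](1/f)^k$, and each coefficient $[x^{-1}](1/f)^k$ is a finite universal polynomial in the $f_j$ with $j\le 1$ (and in $f_1^{-1}$), since $(1/f)^k$ has order $-k$ and its $x^{-1}$-coefficient only involves finitely many terms of $1/f$.

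\medskip

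\textbf{Main obstacle.} The only real subtlety is bookkeeping about which formal-series ring each object lives in and whether the substitution $z\mapsto z^{-1}$ is legitimate — this is exactly the same delicate point that appears in the proof of Lemma~\ref{lem:inversion} itself. The key observation making everything work is that requiring $\ord(g)=1$ (rather than allowing arbitrary order) is precisely what ensures $g(z^{-1})$ is a genuine element of $\CC\llbracket z^{-1}\rrbracket$ with nonzero leading term in degree $-1$, so that Lemma~\ref{lem:inversion}'s existence and uniqueness both transfer. No convergence or analytic input is needed; everything is formal. I do not anticipate any computational difficulty beyond verifying that $1/f$ indeed has the normalized leading behavior, which is a one-line geometric-series argument.
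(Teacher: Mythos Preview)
Your reduction has a gap: the hypothesis of Lemma~\ref{lem:inversion} requires a series of the form $\sum_{k\ge -1}f_kx^k$, i.e.\ an element of $x^{-1}\CC\llbracket x\rrbracket$ with a single negative-degree term and arbitrarily many nonnegative ones. Your $\widetilde f = 1/f$, however, is computed in the field $\CC\llbracket x^{-1}\rrbracket[x]$ and lands in $x^{-1}\CC\llbracket x^{-1}\rrbracket$: it has \emph{only} negative-degree terms. These are different completions, and Lemma~\ref{lem:inversion} does not apply to the latter. Concretely, for $k\ge 2$ the series $(1/f)^k$ starts at $x^{-k}$ and contains no $x^{-1}$ term at all, so your final formula $g(z)=\sum_{k>0}\tfrac{z^k}{k}[x^{-1}](1/f)^k$ collapses to $g(z)=f_1^{-1}z$; this is already wrong for $f(x)=x+1$, where the inverse is $z-1$.

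The paper sidesteps this by substituting $x\mapsto x^{-1}$ rather than taking the reciprocal: $\widetilde f(x):=f(x^{-1})=\sum_{j\ge -1}f_{-j}x^{j}$ is a genuine element of $x^{-1}\CC\llbracket x\rrbracket$, so Lemma~\ref{lem:inversion} applies and gives $\widetilde g$ with $\widetilde f(\widetilde g(z))=f(\widetilde g(z)^{-1})=z$; the desired inverse is then $g(z)=\widetilde g(z)^{-1}$, which one checks has order~$1$. Your argument can be repaired by making this same substitution first (after which taking the reciprocal becomes unnecessary), or by stating and proving a mirror-image version of Lemma~\ref{lem:inversion} for series in $\CC\llbracket x^{-1}\rrbracket$; but as written, the sentence ``$\widetilde f$ has the exact form required as input to Lemma~\ref{lem:inversion}'' is false.
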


\begin{proof}
By Lemma \ref{lem:inversion} the series 
\[ \tilde{f}(x) := f(x^{-1})=\sum_{k=-1}^\infty f_{-k}x^k \]
has inverse
\[ \tilde{g}(z) = \sum_{k=1}^\infty \frac{z^{-k}}{k}[x^{-1}]\tilde{f}^k = \sum_{k=1}^\infty \frac{z^{-k}}{k}[x^1]f^k = f_1z^{-1}\left(1+\sum_{k=1}^\infty \frac{z^{-k}}{(k+1)f_1}[x^1]f^{k+1}\right). \]
This means $\tilde{f}(\tilde{g}(z))=f(\tilde{g}(z)^{-1})=z$, hence $f(g(z))=z$ for
\[ g(z)=\tilde{g}(z)^{-1} = f_1^{-1}z\left(1+\sum_{k=1}^\infty \frac{z^{-k}}{(k+1)f_1}[x^1]f^{k+1}\right)^{-1}, \]
and this is a Laurent series with $\ord(g)=1$.
\end{proof}

\begin{lemma}
\label{lemA4}
If $f\in\CC\llbracket x^{-1}\rrbracket[x]$ and $f\neq 0$, then $f'/f\in\CC\llbracket x^{-1}\rrbracket \subset \CC\llbracket x^{-1}\rrbracket[x]$ with $\ord(f'/f)=-1$ and
\[ [x^{-1}](f'/f) = \ord(f). \]
\end{lemma}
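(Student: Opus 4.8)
The plan is to reduce everything to the behaviour of a normalized unit in $\CC\llbracket x^{-1}\rrbracket$. Set $n=\ord(f)$ and factor $f = f_n x^n(1+g)$, where $f_n\in\CC^\times$ is the leading coefficient and $g=\sum_{j\geq 1}(f_{n-j}/f_n)x^{-j}\in x^{-1}\CC\llbracket x^{-1}\rrbracket$. Since $g$ has no constant term, $1+g$ is a unit in $\CC\llbracket x^{-1}\rrbracket$, with $(1+g)^{-1}=\sum_{m\geq 0}(-g)^m$ converging $x^{-1}$-adically. The formal derivative $d/dx$ preserves $\CC\llbracket x^{-1}\rrbracket[x]$ (it acts termwise by $x^k\mapsto kx^{k-1}$) and obeys the Leibniz and quotient rules on this field, so
\[ \frac{f'}{f} = \frac{n}{x} + \frac{(1+g)'}{1+g} = \frac{n}{x} + g'\,(1+g)^{-1}. \]

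Next I would check that both summands lie in the claimed ring. The term $n/x$ lies in $x^{-1}\CC\llbracket x^{-1}\rrbracket$. Differentiating $g$ termwise gives $g'=\sum_{j\geq 1}(-j)(f_{n-j}/f_n)x^{-j-1}\in x^{-2}\CC\llbracket x^{-1}\rrbracket$, and multiplying by $(1+g)^{-1}\in\CC\llbracket x^{-1}\rrbracket$ keeps the product in $x^{-2}\CC\llbracket x^{-1}\rrbracket$. Hence $f'/f\in x^{-1}\CC\llbracket x^{-1}\rrbracket\subseteq\CC\llbracket x^{-1}\rrbracket\subset\CC\llbracket x^{-1}\rrbracket[x]$; the coefficient of $x^{-1}$ receives a contribution only from the first summand, so $[x^{-1}](f'/f)=n=\ord(f)$, and when $n\neq 0$ the leading exponent of $f'/f$ is $-1$, i.e. $\ord(f'/f)=-1$.

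There is no serious obstacle here; the only points needing a word of care are (i) justifying that $d/dx$ is well-defined and satisfies the quotient rule on $\CC\llbracket x^{-1}\rrbracket[x]$, which follows by checking the identities at the level of each $x^{-1}$-adic truncation, and (ii) the borderline case $\ord(f)=0$, where $f'/f\in x^{-2}\CC\llbracket x^{-1}\rrbracket$: the coefficient identity $[x^{-1}](f'/f)=0=\ord(f)$ still holds, so the order statement is read as $\ord(f'/f)\leq -1$ with equality precisely when $\ord(f)\neq 0$.
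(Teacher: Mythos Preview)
Your proof is correct and follows essentially the same approach as the paper: both factor $f = f_n x^n(1+g)$ with $g\in x^{-1}\CC\llbracket x^{-1}\rrbracket$ and observe that $f'/f$ has leading term $n/x$ plus something of order $\leq -2$. Your use of the logarithmic-derivative identity $(AB)'/(AB)=A'/A+B'/B$ is marginally cleaner than the paper's direct division of the two factored series, and you rightly flag the borderline case $\ord(f)=0$, where the order assertion $\ord(f'/f)=-1$ fails (one only has $\ord(f'/f)\leq -2$, or $f'/f=0$ if $f$ is constant) but the residue identity $[x^{-1}](f'/f)=\ord(f)$ survives---a point the paper's own argument glosses over by tacitly dividing by $n$.
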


\begin{proof}
Write $n=\ord(f)$ and 
\[ f(x) = \sum_{k=-\infty}^{n} f_kx^k. \]
Then
\[ f'(x) = \sum_{k=-\infty}^{n} kf_kx^{k-1} \]
and
\[ \frac{f'(x)}{f(x)} = \frac{nf_nx^{n-1}\left(1+\sum_{k=1}^\infty \frac{n-k}{n}\frac{f_{n-k}}{f_n}x^{-k}\right)}{f_nx^n\left(1+\sum_{k=1}^\infty \frac{f_{n-k}}{f_n}x^{-k}\right)} = nx^{-1}\left(1 + \sum_{k=1}^\infty a_k x^{-1}\right) \]
for some $a_k\in\CC$. Hence, $\ord(f'/f)=-1$ and $[x^{-1}](f'/f)=n$ as claimed.
\end{proof}

\begin{definition}
\label{defi:consta}
For $f\in \CC\llbracket x^{-1}\rrbracket[x]$, we write $\res_x(f)=[x^{-1}]f$ and $\const_x(f)=[x^0]f$. Moreover, for $f\in (\CC\llbracket x^{-1}\rrbracket[x])\llbracket t\rrbracket$,
we write $f = \sum_{k=0}^\infty f_k(x) t^k$ and define
\[ \res_x(f) = \sum_{t=0}^\infty \res_x(f_k)t^k, \qquad \const_x(f) = \sum_{t=0}^\infty \const_x(f_k)t^k.  \]
\end{definition}
There is a notion of formal derivation $f'=\partial_xf$ for $f \in \CC\llbracket x^{-1}\rrbracket[x]$ and then we define $\partial_x$ by $\CC$-linear extension also for $f \in(\CC\llbracket x^{-1}\rrbracket[x])\llbracket t \rrbracket$.

\begin{lemma}
\label{lem:subst}
Let $f \in (\CC\llbracket x^{-1}\rrbracket[x])\llbracket t \rrbracket$ and $x \in \CC\llbracket u^{-1}\rrbracket[u]\setminus\{0\}$. Then
\[ \res_u\Big(f(x(u))x'(u)\Big) = \ord(x)\res_x\left(f\right), \]
and
\[ \const_u\left(f(x(u))\frac{u\partial_ux(u)}{x(u)}\right) = \ord(x)\const_x(f). \]
\end{lemma}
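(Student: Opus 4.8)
The two identities are essentially residue-substitution and constant-term-substitution formulas for Laurent series in the field $\CC\llbracket x^{-1}\rrbracket[x]$, extended $t$-adically. The plan is to prove them first for a single monomial $f=x^m$ (with $m\in\ZZ$) and then extend by $\CC$-linearity and $t$-adic continuity.

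\emph{Residue identity.} First I would reduce to $f=x^m$. For $f=x^m$ with $m\neq -1$ we have $f(x(u))x'(u)=x(u)^m x'(u)=\frac{1}{m+1}\partial_u\big(x(u)^{m+1}\big)$, which is an exact derivative in $\CC\llbracket u^{-1}\rrbracket[u]$, so its $u^{-1}$-coefficient vanishes; and on the right side $\res_x(x^m)=[x^{-1}]x^m=0$ as well. For $m=-1$, $f(x(u))x'(u)=x'(u)/x(u)$, and by Lemma \ref{lemA4} applied to $x(u)\in\CC\llbracket u^{-1}\rrbracket[u]\setminus\{0\}$ we get $\res_u(x'/x)=[u^{-1}](x'/x)=\ord(x)$, while $\res_x(x^{-1})=1$, so both sides equal $\ord(x)$. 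By $\CC$-linearity this gives the identity for all $f\in\CC\llbracket x^{-1}\rrbracket[x]$ (a convergent sum of monomials in the $x^{-1}$-adic sense, which $u\mapsto x(u)$ and $\res_u$ respect since $x(u)$ has $\ord(x)\ge$ a fixed integer and plugging in raises $u$-order linearly); then summing over powers of $t$ with Definition \ref{defi:consta} gives the statement for $f\in(\CC\llbracket x^{-1}\rrbracket[x])\llbracket t\rrbracket$.

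\emph{Constant-term identity.} The cleanest route is to deduce it from the residue identity. Observe that $\const_u(g)=\res_u(g/u)=\res_u\!\big(g\cdot u^{-1}\big)$ for $g\in\CC\llbracket u^{-1}\rrbracket[u]$, since multiplying by $u^{-1}$ shifts the $u^0$-coefficient to the $u^{-1}$-coefficient. Apply this to $g=f(x(u))\,\dfrac{u\,\partial_u x(u)}{x(u)}$: then
\[
\const_u\!\left(f(x(u))\frac{u\,\partial_u x(u)}{x(u)}\right)=\res_u\!\left(f(x(u))\frac{\partial_u x(u)}{x(u)}\right).
\]
Now write $f(x)\cdot\frac{1}{x}=:\widetilde f(x)$, so the right-hand side is $\res_u\big(\widetilde f(x(u))\,x'(u)\big)$; by the already-proved residue identity this equals $\ord(x)\,\res_x(\widetilde f)=\ord(x)\,[x^{-1}]\!\big(x^{-1}f(x)\big)=\ord(x)\,[x^0]f=\ord(x)\const_x(f)$, as desired. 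The same $\CC$-linearity and $t$-adic extension as above handles the general $f$.

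\emph{Main obstacle.} There is no deep obstacle; the only points requiring care are (i) checking that the substitution $u\mapsto x(u)$ is well-defined and continuous on $\CC\llbracket x^{-1}\rrbracket[x]$ and commutes with taking coefficients — this uses that $x(u)$ has a well-defined order and that substitution into $x^m$ increases $u$-order proportionally to $m$, so any $x^{-1}$-adically convergent series maps to a $u^{-1}$-adically convergent one; and (ii) the bookkeeping to pass from the monomial case to series and then to the $t$-adic completion via Definition \ref{defi:consta}. The residue-vs-constant-term trick $\const_u(g)=\res_u(g\,u^{-1})$ is what keeps the second identity from needing a separate monomial computation.
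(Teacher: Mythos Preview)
Your proof is correct and follows essentially the same approach as the paper: both single out the $x^{-1}$ term, use that every other term is an exact derivative (the paper phrases this abstractly as $f=\res_x(f)\,x^{-1}+\partial_x\widetilde f$ via $\im(\partial_x)=\ker(\res_x)$, you phrase it concretely as $x^m x'(u)=\frac{1}{m+1}\partial_u(x(u)^{m+1})$ for $m\neq -1$), invoke Lemma~\ref{lemA4} for the remaining $x'(u)/x(u)$ piece, and then deduce the constant-term identity from the residue identity applied to $f/x$. The only cosmetic difference is that the paper's $\widetilde f$ packages all the $m\neq -1$ monomials at once, avoiding your explicit linearity/continuity step.
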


\begin{proof}
Note that $\im(\partial_x)=\ker(\res_x)$, so we can write $f(x)=\res_x(f)x^{-1}+\partial_x \widetilde{f}(x)$. Insertion yields
\[ f(x(u))x'(u) = \left(\res_x(f)x(u)^{-1} + \partial_x \widetilde{f}(x(u))\right)x'(u) = \res_x(f)x(u)^{-1}x'(u) + \partial_u \widetilde{f}(x(u)). \]
Lemma~\ref{lemA4} gives $\res_u(x'(u)/x(u))=\ord(x)$ and $\res_u(\partial_u \widetilde{f}(x(u))=0$, since $\im(\partial_u)=\ker(\res_u)$. Hence, $\res_u\left(f(x(u))x'(u)\right) = \ord(x)\res_x\left(f(x)\right)$ as claimed. The second statement follows from the first statement applied to $f(x)/x$.
\end{proof}

\subsection{Bell polynomial identities}
In the following we consider polynomials and power series over a $\CC$-algebra $A$ that is an integral domain.

\begin{definition}
For a sequence of complex numbers $a=(a_1,a_2,\ldots)$ and $k\ge 0$, let $\widehat{B}_{n,k}(a)$ be the coefficient of $x^n$ in $(a_1x+a_2x^2+\ldots)^k$,
\[ \left(\sum_{i=1}^\infty a_ix^i\right)^k = \sum_{n=1}^\infty \widehat{B}_{n,k}(a) x^n. \]
For $n<k$ we have $\widehat{B}_{n,k}(a)=0$ and we define $\widehat{B}_{n,k}(a):=0$ whenever $k<0$.
\end{definition}

\begin{proposition}
\label{prop:bell}
$\widehat{B}_{n,k}(a)$ is equal to the partial ordinary Bell polynomial
\[ \widehat{B}_{n,k}(a) = \widehat{B}_{n,k}(a_1,\ldots,a_{n-k+1}) = \sum_{\substack{(j_1,\ldots,j_{n-k+1}) \in \mathbb{Z}_{\geq 0}^{n-k+1} \\ \sum j_i=k \\ \sum ij_i=n}}  \frac{k!}{j_1!\cdots j_{n-k+1}!} a_1^{j_1}\cdots a_{n-k+1}^{j_{n-k+1}}. \]
\end{proposition}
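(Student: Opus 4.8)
The statement identifies the coefficient $\widehat{B}_{n,k}(a)$ of $x^n$ in the $k$-th power of the formal power series $\sum_{i\ge 1}a_ix^i$ with the classical partial ordinary Bell polynomial. The plan is to expand the power directly by the multinomial theorem and collect terms. First I would write
\[
\left(\sum_{i=1}^\infty a_ix^i\right)^k = \sum_{(j_1,j_2,\ldots)} \binom{k}{j_1,j_2,\ldots} \prod_{i\ge 1}(a_ix^i)^{j_i},
\]
where the sum runs over all sequences $(j_1,j_2,\ldots)$ of non-negative integers with $\sum_i j_i = k$ (only finitely many nonzero, so the multinomial coefficient $k!/(j_1!j_2!\cdots)$ makes sense). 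The monomial $\prod_i(a_ix^i)^{j_i}$ contributes to $x^n$ precisely when $\sum_i i j_i = n$, so extracting $[x^n]$ gives
\[
\widehat{B}_{n,k}(a) = \sum_{\substack{\sum j_i=k\\ \sum i j_i=n}} \frac{k!}{j_1!j_2!\cdots} \prod_{i\ge 1}a_i^{j_i}.
\]

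The remaining point is to see that this sum is supported on sequences with $j_i=0$ for $i > n-k+1$, so that only the variables $a_1,\ldots,a_{n-k+1}$ appear, matching the stated finite form $\widehat{B}_{n,k}(a_1,\ldots,a_{n-k+1})$. This is the one genuinely substantive (though still elementary) step: if some $j_m\neq 0$ with $m\ge n-k+2$, then since $\sum_i j_i=k$ there are at most $k-1$ of the other $j_i$ nonzero, and using $\sum_i i j_i = n$ together with $i j_i\ge j_i$ one gets $n = \sum_i i j_i \ge m j_m + \sum_{i\neq m} j_i \ge m + (k-1) \ge (n-k+2)+(k-1) = n+1$, a contradiction. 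Hence the support condition holds and the two expressions for $\widehat{B}_{n,k}(a)$ agree. For the degenerate conventions ($\widehat B_{n,k}=0$ for $n<k$ or $k<0$) I would simply note the sum is empty in those ranges, consistent with the stated definition.

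I expect no real obstacle here; the only care needed is bookkeeping with infinite-but-finitely-supported index sequences and the observation above pinning down the range of relevant variables. One should also remark that the manipulation is purely formal — valid in $A\llbracket x\rrbracket$ for any commutative ring $A$ — since at each fixed power $x^n$ only finitely many terms contribute, so convergence issues do not arise.
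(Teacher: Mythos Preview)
Your argument is correct: the multinomial expansion of $\bigl(\sum_{i\ge1}a_ix^i\bigr)^k$ followed by extraction of $[x^n]$ is exactly the standard derivation, and your support bound showing $j_i=0$ for $i>n-k+1$ is fine (the key inequality $mj_m+(k-j_m)=(m-1)j_m+k\ge m+k-1$ holds since $j_m\ge1$ and $m\ge1$). The paper itself does not give a proof at all---it simply cites Bell's original paper \cite{Bell} for this identity---so your write-up is in fact more self-contained than what appears there.
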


\begin{proof}
E.T.\,Bell introduced the Bell polynomials for this property, see \cite{Bell}.
\end{proof}

Recall that we write $[x^m]g(x)$ to refer to the coefficient of $x^m$ in $g(x)$.

\begin{lemma}
\label{lem:0}
For a power series $f=\sum_{k\geq 0}a_kx^k$ and $n,m\geq 0$, we have
\[ [x^m]f^n = \sum_{k=0}^m \binom{n}{k}a_0^{n-k}\widehat{B}_{m,k}(a). \]
\end{lemma}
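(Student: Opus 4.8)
The statement to prove is Lemma \ref{lem:0}: for a power series $f=\sum_{k\geq 0}a_kx^k$ and $n,m\geq 0$,
\[ [x^m]f^n = \sum_{k=0}^m \binom{n}{k}a_0^{n-k}\widehat{B}_{m,k}(a). \]
The natural approach is to split off the constant term and use the binomial theorem together with the definition of $\widehat B_{n,k}$. Write $f = a_0 + g$ where $g = \sum_{i\geq 1} a_i x^i$ has no constant term. Since $a_0$ is a scalar (an element of the base $\CC$-algebra $A$) it commutes with everything, so the ordinary binomial theorem applies:
\[ f^n = (a_0+g)^n = \sum_{k=0}^n \binom{n}{k} a_0^{n-k} g^k. \]

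\textbf{Key steps.} First I would extract the coefficient of $x^m$ from each summand. By definition of $\widehat B_{m,k}(a)$ (the coefficient of $x^m$ in $(a_1x+a_2x^2+\cdots)^k = g^k$), we have $[x^m]g^k = \widehat B_{m,k}(a)$. Hence
\[ [x^m]f^n = \sum_{k=0}^n \binom{n}{k} a_0^{n-k}\, [x^m]g^k = \sum_{k=0}^n \binom{n}{k} a_0^{n-k}\, \widehat B_{m,k}(a). \]
Second, I would observe that the upper limit of summation can be lowered from $n$ to $m$: since $g$ has lowest-order term of degree $1$, the series $g^k$ has lowest-order term of degree $k$, so $\widehat B_{m,k}(a) = 0$ whenever $k > m$ (this is exactly the vanishing recorded in the definition, $\widehat B_{n,k}=0$ for $n<k$). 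Therefore the terms with $k>m$ contribute nothing and
\[ [x^m]f^n = \sum_{k=0}^{m} \binom{n}{k} a_0^{n-k}\, \widehat B_{m,k}(a), \]
which is the claimed identity. One should also note the edge cases are consistent: if $m < n$ the binomial coefficients $\binom{n}{k}$ for $k\le m$ are still the usual ones, and if $m=0$ the only surviving term is $k=0$ with $\widehat B_{0,0}(a) = 1$, giving $[x^0]f^n = a_0^n$ as expected.

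\textbf{Main obstacle.} There is essentially no obstacle here; the only points requiring a word of care are (i) confirming that $a_0$ is central so the binomial theorem holds verbatim over the (commutative) $\CC$-algebra $A$ — which it is, since $A$ is a commutative integral domain in this subsection — and (ii) justifying the interchange of "take the $n$-th power" with "extract the $x^m$ coefficient", which is legitimate because the sum $f^n = \sum_k \binom nk a_0^{n-k} g^k$ is finite, so no convergence issue arises. Thus the proof is a direct three-line computation once $f$ is written as constant-plus-positive-degree.
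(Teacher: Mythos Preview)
Your proof is correct and takes essentially the same approach as the paper: write $f=a_0+g$ with $g$ of positive order, apply the binomial theorem, and identify $[x^m]g^k=\widehat B_{m,k}(a)$. Your explicit justification for replacing the upper limit $n$ by $m$ (via $\widehat B_{m,k}=0$ for $k>m$) is slightly more careful than the paper's version, which writes the truncated sum directly.
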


\begin{proof}
For $a_0=0$ the claimed equation reduces to $[x^m]f^n=\widehat{B}_{m,n}(a)$, which is Proposition \ref{prop:bell}.
In general, we can write $f(x)=a_0+g(x)$, where $g(x)=\sum_{k>0}a_kx^k$ has no constant term. Then
\[ [x^m]f^n = \sum_{k=0}^m \binom{n}{k}a_0^{n-k}[x^m]g^k. \]
It remains to observe that $[x^m]g^k=\widehat{B}_{m,k}(a)$ since $g$ has no constant term and otherwise the same coefficients as $f$. This completes the proof.
\end{proof}

\begin{lemma} 
\label{lem:1}
Let $n,k \in \mathbb{Z}_{\geq 0}$, $\tau\in\mathbb{C}$, and $a=(a_1,a_2,\ldots)$. Let $\alpha(l,m)$ be a polynomial in $l$ and $m$ of degree at most one taking non-zero values for $l,m\in\{0,...,n\}$. We have
\[ \binom{\tau}{k}\widehat{B}_{n,k}(a) = \sum_{m=0}^n\sum_{l=0}^m \frac{\alpha(0,0)}{\alpha(l,m)}\binom{\tau-\alpha(l,m)}{k-l}\binom{\alpha(l,m)}{l}\widehat{B}_{m,l}(a)\widehat{B}_{n-m,k-l}(a). \]
\end{lemma}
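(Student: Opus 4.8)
\textbf{Proof proposal for Lemma \ref{lem:1}.}

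The plan is to prove the identity as a coefficient identity for formal power series by comparing $x^n$-coefficients of a suitable product. Let $g(x) = \sum_{i \geq 1} a_i x^i$, so that by Proposition \ref{prop:bell} we have $[x^n] g^k = \widehat{B}_{n,k}(a)$ for all $n,k \geq 0$. The left-hand side is then $\binom{\tau}{k}[x^n]g^k$. For the right-hand side, the key observation is that the sum over $m$ and $l$ has the shape of a convolution: $\widehat{B}_{m,l}(a)\widehat{B}_{n-m,k-l}(a)$ summed over $0 \le m \le n$ and $0 \le l \le k$ is exactly $[x^n](g^l \cdot g^{k-l}) = [x^n] g^k$ after summing over the "inner" pair $(m,l)$, so the combinatorial content is entirely in the rational prefactor $\frac{\alpha(0,0)}{\alpha(l,m)}\binom{\tau - \alpha(l,m)}{k-l}\binom{\alpha(l,m)}{l}$ and how it collapses to $\binom{\tau}{k}$.

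So the first step I would carry out is to fix $l$ and $m$ and ask: treating $\alpha(l,m)$ as a free variable $\sigma$ (legitimate since $\alpha$ takes nonzero values on the relevant range and the identity to be checked is polynomial in $\sigma$), does $\sum \frac{\alpha(0,0)}{\sigma}\binom{\tau-\sigma}{k-l}\binom{\sigma}{l}$... — no, more carefully: the dependence on $m$ enters only through $\alpha(l,m)$, and $\alpha$ has degree at most one in $(l,m)$. I would split into the two cases according to whether $\alpha$ genuinely depends on $m$ or not. If $\alpha(l,m) = \alpha(l,0)$ is independent of $m$, the sum over $m$ at fixed $l$ is the pure Vandermonde convolution $\sum_{m} \widehat{B}_{m,l}(a)\widehat{B}_{n-m,k-l}(a) = \widehat{B}_{n,k}(a)$, and one is reduced to the classical Vandermonde-type identity $\sum_{l=0}^{k} \frac{\alpha(0,0)}{\alpha(l,0)}\binom{\tau-\alpha(l,0)}{k-l}\binom{\alpha(l,0)}{l} = \binom{\tau}{k}$; this is a known polynomial identity (it is the "Rothe–Hagen" / Abel–Jensen type identity, or can be derived from Lagrange inversion applied to $w \mapsto w(1+w)^{-c}$). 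If $\alpha$ does depend on $m$ — say $\alpha(l,m) = c_0 + c_1 l + c_2 m$ with $c_2 \neq 0$ — then one cannot factor the $m$-sum off first, and I would instead write $\widehat{B}_{m,l}(a)$ as $[x^m]g^l$, interchange the order of summation so that the coefficient extraction $[x^n]$ comes out front, and reduce to an identity of the form $\sum_{l=0}^k \binom{\tau}{k}^{-1}\cdots = 1$ to be verified coefficientwise in a generating variable tracking $m$.

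The cleanest route, and the one I would actually write up, is generating-functional: introduce a bookkeeping variable and use the Lagrange inversion formula (available here as Lemma \ref{lem:inversion}, or its power-series precursor cited as \cite{burmann}). Specifically, I expect that $\sum_{k} \binom{\tau}{k} g^k = (1+g)^\tau$ and the right-hand side, after resummation, equals $(1+G)^\tau$ where $G$ is implicitly defined by an equation encoding the substitution $\sigma \mapsto \alpha(l,m)$; the linearity of $\alpha$ in $(l,m)$ is exactly what makes this implicit equation solvable by Lagrange inversion and forces $G = g$. The main obstacle will be handling the $\alpha$-dependence cleanly: because $\alpha$ couples $l$ (the "depth" in the exponent) and $m$ (the "degree"), the naive factorization fails, and one must set up the Lagrange-inversion argument so that $\frac{\alpha(0,0)}{\alpha(l,m)}$ appears as the Jacobian-type factor $\frac{G'(0)}{\ldots}$ that Lagrange inversion naturally produces. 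Once that bookkeeping is arranged correctly the verification is routine; getting the implicit equation and the prefactor to match is the delicate point. I would also double-check the degenerate cases ($k=0$, $\tau$ a nonnegative integer $< k$, $\alpha$ constant) directly, since these pin down any sign or normalization ambiguity.
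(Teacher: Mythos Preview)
Your proposal takes a genuinely different route from the paper. The paper does not prove the identity from scratch: it quotes \cite{BGW}, Corollary~10, which states exactly this convolution identity for \emph{exponential} Bell polynomials $B_{n,k}$, and then converts to ordinary Bell polynomials via
\[
\widehat{B}_{n,k}(a_1,\ldots,a_{n-k+1}) = \tfrac{k!}{n!}\,B_{n,k}(1!a_1,\ldots,(n-k+1)!a_{n-k+1}),
\]
checking that the extra factor $\tfrac{\binom{n}{m}}{\binom{k}{l}}\tfrac{k!}{n!}\tfrac{m!}{l!}\tfrac{(n-m)!}{(k-l)!}$ collapses to~$1$. So the paper's argument is a three-line reduction to an existing result.

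Your plan, by contrast, attempts a self-contained proof. The case where $\alpha$ is independent of $m$ is handled correctly: the $m$-sum is a pure convolution giving $\widehat{B}_{n,k}(a)$, and the remaining $l$-sum is precisely the Rothe--Hagen identity $\sum_{l}\tfrac{p}{p+ql}\binom{p+ql}{l}\binom{r-ql}{k-l}=\binom{p+r}{k}$ with $p=\alpha(0,0)$, $q=c_1$, $r=\tau-p$.

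The gap is in the case $c_2\neq 0$. Here the coupling of $\alpha$ to $m$ means the $m$-sum is no longer a straight convolution, and your description---``set up the Lagrange-inversion argument so that $\tfrac{\alpha(0,0)}{\alpha(l,m)}$ appears as the Jacobian-type factor''---is not yet a proof. You have not specified which implicit functional equation you would invert, nor shown that its solution reproduces $g$; and the heuristic ``linearity of $\alpha$ is exactly what makes this solvable'' is the assertion to be proved, not an argument for it. This is indeed the hard case (it is also the case the paper actually uses downstream, with $\alpha(l,m)=m+1$), and it is essentially the content of the cited theorem in \cite{BGW}. If you want a self-contained proof you will need to carry out that Lagrange-inversion computation explicitly, likely by considering the compositional inverse of $w\mapsto w(1+w)^{-c_2}$ and tracking the extra $l$-dependence; otherwise, citing \cite{BGW} as the paper does is the efficient route.
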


\begin{proof}
The exponential Bell polynomials $B_{n,k}(a)$ are by definition related to $\widehat{B}_{n,k}(a)$ by
\begin{equation}
\widehat{B}_{n,k}(a_1,\ldots,a_{n-k+1}) = \frac{k!}{n!}B_{n,k}(1!a_1,\ldots,(n-k+1)!a_{n-k+1}). 
\label{Bell-types}
\end{equation}
The second equation of \cite{BGW}, Corollary 10, is
\[ \binom{\tau}{k}B_{n,k}(x) = \sum_{l=0}^k\sum_{m=l}^n \frac{\alpha(0,0)}{\alpha(l,m)}\binom{\tau-\alpha(l,m)}{k-l}\binom{\alpha(l,m)}{l}\frac{\binom{n}{m}}{\binom{k}{l}}B_{m,l}(x)B_{n-m,k-l}(x). \]
We substitute $x_i=i!a_i$ and use \eqref{Bell-types}
 to find that $\binom{\tau}{k}B_{n,k}(a)$ equals
\[ \sum_{l=0}^k\sum_{m=l}^n \frac{\alpha(0,0)}{\alpha(l,m)}\binom{\tau-\alpha(l,m)}{k-l}\binom{\alpha(l,m)}{l}\underbrace{\frac{\binom{n}{m}}{\binom{k}{l}}\frac{k!}{n!}\frac{m!}{l!}\frac{(n-m)!}{(k-l)!}}_{=1}B_{m,l}(a)B_{n-m,k-l}(a). \]
The binomials and factorials in the last sum that do not show in the asserted identity combine to the factor $1$. It remains to observe that the last summation only differs from the one in the assertion by removing terms where $l>k$ and these terms contain a zero factor $\widehat{B}_{n-m,k-l}(a)$, so we have verified the assertion.
\end{proof}

\begin{lemma}
\label{lem:2}
For a power series $f(x)=1+\sum_{k\geq 1}f_kx^k$ one has, for all $n\geq 0$,
\[ [x^{n-1}]f^n = \sum_{\substack{a>0,b\geq 0 \\ a+b=n}}\frac{1}{a}[x^{a-1}]f^{a}[x^b]f^{b}. \]
\end{lemma}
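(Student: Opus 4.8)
The plan is to interpret both sides as coefficients of an identity obtained by applying the exponentiation/Bell-polynomial machinery of Lemma~\ref{lem:1} and Lemma~\ref{lem:2}... wait, \ref{lem:2} is the statement itself. Let me restate: the plan is to derive this from Lemma~\ref{lem:0} together with the Bell polynomial identity in Lemma~\ref{lem:1}. First I would write $f(x)=1+g(x)$ where $g(x)=\sum_{k\ge 1}f_kx^k$ has no constant term, so that $a_0=1$ in the notation of Lemma~\ref{lem:0}, and use Lemma~\ref{lem:0} to express $[x^{m}]f^{n}=\sum_{k=0}^{m}\binom{n}{k}\widehat B_{m,k}(f)$, where $f=(f_1,f_2,\dots)$. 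Thus the left-hand side of the asserted identity becomes $[x^{n-1}]f^{n}=\sum_{k=0}^{n-1}\binom{n}{k}\widehat B_{n-1,k}(f)$, and the right-hand side, after the same substitution on each factor $[x^{a-1}]f^{a}$ and $[x^{b}]f^{b}$, becomes a double convolution sum in the $\widehat B$'s weighted by binomial coefficients and the factor $1/a$.

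The core step is then to match these two expressions coefficient-by-coefficient in the $\widehat B_{m,l}(f)$'s; since the $f_k$ are formal independent variables this reduces to a pure binomial identity. The natural way to produce it is to invoke Lemma~\ref{lem:1} with a well-chosen affine function $\alpha(l,m)$. Tracking the structure of Lemma~\ref{lem:2}: the target has the shape $\sum_{a+b=n}\frac1a[x^{a-1}]f^a[x^b]f^b$, which is exactly the ``self-convolution with a $1/a$ weight'' pattern that appears when one differentiates $\log$ or applies Abel-type summation. So I would take $\alpha(l,m)$ to be (up to normalization) something like $\alpha(l,m)=l-\lambda m$ or $\alpha(l,m)=$ an affine function chosen so that the factor $\alpha(0,0)/\alpha(l,m)$ produces precisely the $1/a$ weight after reindexing $a=$ (something linear in $l$ and $m$), and so that $\tau=n$ and $k=n-1$ (or $k=n$) line up with the exponents on the left. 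Concretely, with $\tau=n$, $k=n-1$, and an appropriate $\alpha$, the right side of Lemma~\ref{lem:1} should telescope into the stated convolution after summing over $k$ and relabeling; the key algebraic check is that $\binom{\tau-\alpha}{k-l}\binom{\alpha}{l}$ collapses to the product of ``diagonal'' coefficients $[x^{a-1}]f^a=\sum_l\binom{a}{l}\widehat B_{a-1,l}(f)$ type expressions.

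Alternatively — and this may be cleaner — I would prove it directly by a generating-function argument using the Lagrange inversion formula (Lemma~\ref{lem:inversion}): since $f(x)=1+\sum_{k\ge1}f_kx^k$ with $f(0)=1$, consider the formal series $\phi(x)=x/f(x)$, which has a compositional inverse $\psi$ whose coefficients are $[z^{a}]\psi=\frac1a[x^{a-1}]f^{a}$ by Lagrange--Bürmann. The quantity $\sum_{a>0}\frac1a[x^{a-1}]f^a z^a=\psi(z)$ is then the compositional inverse of $x/f(x)$, and $\sum_{b\ge 0}[x^b]f^b z^b$ is a related ``diagonal'' series which by Lagrange inversion equals $\psi'(z)\cdot f(\psi(z))/\text{(something)}$, more precisely $\sum_b[x^b]f^b z^b = \frac{\psi(z)}{z}\big/\big(1-z(\log f)'(\psi(z))\cdot\text{adjustment}\big)$; the cleanest statement is the classical fact $\sum_{b\ge0}[x^b]f^b\,z^b=\frac{1}{1 - z\,\phi^{-1}{}'}$-type identity. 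Then the right-hand side $\sum_{a+b=n}\frac1a[x^{a-1}]f^a[x^b]f^b$ is the $z^n$-coefficient of the product $\psi(z)\cdot\big(\sum_b[x^b]f^bz^b\big)$, and one checks this product equals $\sum_{n}[x^{n-1}]f^n z^n$ using $\psi'(z)=1/\phi'(\psi(z))$ and $\phi(x)=x/f(x)\Rightarrow \phi'(x)=(f(x)-xf'(x))/f(x)^2$, so $z\,\psi'(z) = \psi(z) f(\psi(z))/(f(\psi(z))-\psi(z)f'(\psi(z)))$, combined with the diagonal formula $\sum_b[x^b]f^bz^b = f(\psi(z))/(f(\psi(z))-\psi(z)f'(\psi(z)))$ — multiply and the denominators cancel against the Lagrange formula $[z^n]\psi(z)^{?}$ to leave $[x^{n-1}]f^n$.

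\textbf{Main obstacle.} The hard part is pinning down the correct diagonal-series identity $\sum_{b\ge 0}[x^b]f^b\,z^b = \text{(closed form in }\psi\text{)}$ and verifying the algebra that makes the product of the two generating functions collapse to $\sum_n[x^{n-1}]f^n z^n$; the signs and the off-by-one shifts between $[x^{a-1}]f^a$ and $[x^a]f^a$ are exactly where errors creep in. If the generating-function route proves fiddly, the fallback is the Bell-polynomial route via Lemma~\ref{lem:1}, where the obstacle instead becomes guessing the right affine $\alpha(l,m)$ and the right values of $\tau,k$; I expect $\alpha(l,m)=l - $ (linear in $m$) with $\tau=n$ to work, but identifying it precisely requires matching low-degree cases ($n=1,2,3$) by hand first. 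I would begin by checking $n=1,2,3$ explicitly to fix conventions, then commit to the Lagrange-inversion proof.
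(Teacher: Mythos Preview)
Your Bell-polynomial approach is exactly the paper's proof, but you have not landed on the right specialization: the paper takes $\tau=n$, applies Lemma~\ref{lem:1} at level $n-1$ (not $n$), and the affine function is simply $\alpha(l,m)=m+1$, depending on $m$ only. With this choice the factor $\alpha(0,0)/\alpha(l,m)=1/(m+1)$ becomes, after the reindexing $a=m+1$, precisely the weight $1/a$; and $\binom{\tau-\alpha}{k-l}\binom{\alpha}{l}=\binom{n-a}{k-l}\binom{a}{l}$ matches $[x^{a-1}]f^a\cdot[x^{n-a}]f^{n-a}$ via Lemma~\ref{lem:0}. Your guesses of the form $\alpha(l,m)=l-\lambda m$ would not produce the $1/a$ weight cleanly, so your plan to fix $\alpha$ by checking $n=1,2,3$ is the right safeguard.

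Your Lagrange-inversion route is a genuinely different and arguably cleaner argument, and it does go through. With $\psi$ the compositional inverse of $x/f(x)$ one has $\psi(z)=\sum_{a>0}\tfrac1a[x^{a-1}]f^a\,z^a$, and the diagonal identity you are groping for is the standard generalized Lagrange--B\"urmann formula
\[
\sum_{n\ge 0}\bigl([x^n]\,F(x)f(x)^n\bigr)\,z^n \;=\; \frac{F(\psi(z))}{1-z\,f'(\psi(z))}.
\]
Taking $F=1$ gives $\sum_{b\ge 0}[x^b]f^b\,z^b=\bigl(1-zf'(\psi(z))\bigr)^{-1}$; taking $F(x)=x$ gives $\sum_{n\ge 1}[x^{n-1}]f^n\,z^n=\psi(z)\bigl(1-zf'(\psi(z))\bigr)^{-1}$. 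The lemma is then literally the $z^n$-coefficient of the product of the first two series equalling that of the third. This bypasses the Bell-polynomial identity of Lemma~\ref{lem:1} entirely; the paper instead keeps Lagrange inversion in reserve for the surrounding results and proves the present lemma purely combinatorially via \cite{BGW}.
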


\begin{proof}
By Lemma \ref{lem:0}, the right hand side of the claim can be written as
\begin{eqnarray*}
&& \sum_{\substack{a>0,b\geq 0 \\ a+b=n}} \frac{1}{a} \left(\sum_{k_1=0}^{a-1} \binom{a}{k_1}\widehat{B}_{a-1,k_1}(f)\right) \left(\sum_{k_2=0}^b \binom{b}{k_2}\widehat{B}_{b,k_2}(f)\right) \\
&=& \sum_{\substack{a>0,b\geq 0 \\ a+b=n}}\sum_{k_1=0}^{a-1}\sum_{k_2=0}^b \frac{1}{a}\binom{a}{k_1}\binom{b}{k_2} \widehat{B}_{a-1,k_1}(f)\widehat{B}_{b,k_2}(f) \\
&\stackrel{\substack{k:=k_1+k_2\\b=n-a}}{=}& \sum_{k=0}^{n-1}\sum_{a=1}^n\sum_{k_1=0}^{a-1} \frac{1}{a}\binom{a}{k_1}\binom{n-a}{k-k_1} \widehat{B}_{a-1,k_1}(f)\widehat{B}_{n-a,k-k_1}(f) \\
&\stackrel{\substack{m:=a-1\\l:=k_1}}{=}& \sum_{k=0}^{n-1}\sum_{m=0}^{n-1}\sum_{l=0}^m \frac{1}{m+1}\binom{m+1}{l}\binom{n-(m+1)}{k-l} \widehat{B}_{m,l}(f)\widehat{B}_{n-m-1,k-l}(f) \\
&\stackrel{\textrm{Lem.\,\ref{lem:1}}}{=}& \sum_{k=0}^{n-1} \binom{n}{k}\widehat{B}_{n-1,k}(f) = [x^{n-1}]f^n.
\end{eqnarray*}
In the last equality we used Lemma~\ref{lem:0} and in the one before we used Lemma~\ref{lem:1} for the parameters $n-1,k,\tau=n$ and $\alpha(l,m)=m+1$.
\end{proof}

\begin{proposition}
\label{lem:3}
For a power series $f(x)=1+\sum_{k\geq 1}f_kx^k$ one has
\[ \textup{exp}\left(\sum_{k>0}\frac{1}{k}[x^k]f^kz^k\right) = \sum_{k>0}\frac{1}{k}[x^{k-1}]f^kz^{k-1}. \]
\end{proposition}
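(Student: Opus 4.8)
The plan is to prove the identity as a formal power series identity in $z$ by showing that the right-hand side $R(z):=\sum_{k>0}\frac{1}{k}[x^{k-1}]f^k z^{k-1}$ and the argument of the exponential on the left, $L(z):=\sum_{k>0}\frac{1}{k}[x^k]f^k z^k$, satisfy the first-order linear relation $R'(z)=R(z)\,L'(z)$ together with $R(0)=1$ and $L(0)=0$; integrating then gives $\log R(z)=L(z)$, i.e.\ $R(z)=\exp(L(z))$, as claimed. Note that all coefficients $[x^m]f^k$ are well defined since $f=1+\sum_{k\ge 1}f_kx^k$ is a power series, so $R(z),L(z)$ are genuine elements of $A\lfor z\rfor$, with constant terms $R(0)=[x^0]f=1$ and $L(0)=0$.

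First I would form $G(z):=z\,R(z)=\sum_{n>0}\frac{1}{n}[x^{n-1}]f^n z^n$ and differentiate:
\[
G'(z)=\sum_{n>0}[x^{n-1}]f^n\,z^{n-1}.
\]
The crucial input is Lemma~\ref{lem:2}, which rewrites each coefficient as a convolution,
\[
[x^{n-1}]f^n=\sum_{\substack{a>0,\ b\ge 0\\ a+b=n}}\frac{1}{a}\,[x^{a-1}]f^a\,[x^b]f^b .
\]
Substituting this into $G'(z)$ and using $z^{n-1}=z^{a-1}z^{b}$, the double sum factors as a Cauchy product:
\[
G'(z)=\Bigl(\sum_{a>0}\tfrac{1}{a}[x^{a-1}]f^a z^{a-1}\Bigr)\Bigl(\sum_{b\ge 0}[x^b]f^b z^b\Bigr)=R(z)\,T(z),\qquad T(z):=\sum_{b\ge 0}[x^b]f^b z^b .
\]

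Next I would identify $T(z)$. Since $[x^0]f^0=1$ and $\sum_{b\ge 1}[x^b]f^b z^b=\sum_{k>0}[x^k]f^k z^k=z\,L'(z)$, we get $T(z)=1+z\,L'(z)$. On the other hand $G(z)=z\,R(z)$ gives $G'(z)=R(z)+z\,R'(z)$, so the identity $G'(z)=R(z)\,T(z)$ becomes $R(z)+z\,R'(z)=R(z)\bigl(1+z\,L'(z)\bigr)$, i.e.\ $z\,R'(z)=z\,R(z)\,L'(z)$, hence $R'(z)=R(z)\,L'(z)$. Because $R(0)=1$ the formal series $\log R(z)$ is defined, $(\log R)'(z)=R'(z)/R(z)=L'(z)$, and matching constant terms $\log R(0)=0=L(0)$ yields $\log R(z)=L(z)$, which is the assertion.

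The only point requiring care, and hence the main (if modest) obstacle, is the bookkeeping in the Cauchy product: one must check that the $b=0$ term of $T(z)$ contributes exactly the summand $1$ that cancels the copy of $R(z)$ coming from $G(z)=z\,R(z)$, and that the constant terms $R(0)=1$, $L(0)=0$ are correctly pinned down so that passing to the formal logarithm and integrating is legitimate. Everything else is immediate from Lemma~\ref{lem:2}.
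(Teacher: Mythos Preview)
Your proof is correct and follows essentially the same approach as the paper: both reduce the identity to the first-order relation $R'=R\,L'$ (equivalently $B'=BA'$) together with the initial condition $R(0)=1$, and both rely on Lemma~\ref{lem:2} to verify the convolution identity underlying that relation. Your device of working with $G(z)=zR(z)$ so that the $b=0$ term appears naturally in the Cauchy product is a mild repackaging of the paper's step of adding $\frac{1}{n}[x^{n-1}]f^n$ to both sides to extend the sum to $b\ge 0$, but the argument is otherwise the same.
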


\begin{proof}
These functions have the same value $1$ at $z=0$, so it is enough to show that their derivatives agree\footnote{We thank Ezra Getzler for suggesting this approach.}. Deriving the equation $\textup{exp}(A)=B$ gives $BA'=B'$, thus it suffices to show
\[ \left(\sum_{k>0}\frac{1}{k}[x^{k-1}]f^kz^{k-1}\right)\left(\sum_{k>0}[x^k]f^kz^{k-1}\right) = \sum_{k>0}\frac{k-1}{k}[x^{k-1}]f^kz^{k-2}. \]
By comparing the $z^{n-2}$-coefficients, this translates, for every $n$, into the identity
\[ \sum_{\substack{a,b>0 \\ a+b=n}}\frac{1}{a}[x^{a-1}]f^{a}[x^b]f^{b} = \frac{n-1}{n}[x^{n-1}]f^n. \]
We can add $\frac{1}{n}[x^{n-1}]f^n$ to both sides and incorporate it into the sum as $b=0$ to get the equivalent identity
\[ \sum_{\substack{a>0,b\geq 0 \\ a+b=n}}\frac{1}{a}[x^{a-1}]f^{a}[x^b]f^{b} = [x^{n-1}]f^n. \]
that we recognize as the statement of Lemma \ref{lem:2} which concludes the proof.
\end{proof}

\end{appendix}

\end{document}